\theoremstyle{plain}
\newtheorem{proposition}{Proposition}[section]
\newtheorem{theorem}[proposition]{Theorem}
\newtheorem{lemma}[proposition]{Lemma}
\newtheorem{corollary}[proposition]{Corollary}
\newtheorem{observation}[proposition]{Observation}
\theoremstyle{definition}
\newtheorem{example}[proposition]{Example}
\newtheorem{definition}[proposition]{Definition}
\theoremstyle{remark}
\newtheorem{remark}[proposition]{Remark}
\DeclareMathOperator{\Aut}{Aut}
\DeclareMathOperator{\diam}{diam}
\DeclareMathOperator{\Hom}{Hom}
\DeclareMathOperator{\End}{End}
\DeclareMathOperator{\Gr}{Gr} 
\DeclareMathOperator{\id}{id} 
\DeclareMathOperator{\Haus}{Haus} 
\DeclareMathOperator{\Isom}{Isom}
\DeclareMathOperator{\SL}{\mathsf{SL}}
\DeclareMathOperator{\GL}{\mathsf{GL}}
\DeclareMathOperator{\SU}{\mathsf{SU}}
\DeclareMathOperator{\PSL}{\mathsf{PSL}}
\DeclareMathOperator{\PGL}{\mathsf{PGL}}
\DeclareMathOperator{\dist}{d}
\DeclareMathOperator{\Ac}{\mathcal{A}}
\DeclareMathOperator{\Bc}{\mathcal{B}}
\DeclareMathOperator{\Cc}{\mathcal{C}}
\DeclareMathOperator{\Fc}{\mathcal{F}}
\DeclareMathOperator{\Gc}{\mathcal{G}}
\DeclareMathOperator{\Hc}{\mathcal{H}}
\DeclareMathOperator{\Kc}{\mathcal{K}}
\DeclareMathOperator{\Lc}{\mathcal{L}}
\DeclareMathOperator{\Nc}{\mathcal{N}}
\DeclareMathOperator{\Oc}{\mathcal{O}}
\DeclareMathOperator{\Uc}{\mathcal{U}}
\DeclareMathOperator{\Wc}{\mathcal{W}}
\DeclareMathOperator{\Pc}{\mathcal{P}}
\DeclareMathOperator{\Bb}{\mathbb{B}}
\DeclareMathOperator{\Cb}{\mathbb{C}}
\DeclareMathOperator{\Hb}{\mathbb{H}}
\DeclareMathOperator{\Kb}{\mathbb{K}}
\DeclareMathOperator{\Nb}{\mathbb{N}}
\DeclareMathOperator{\Rb}{\mathbb{R}}
\DeclareMathOperator{\Sb}{\mathbb{S}}
\DeclareMathOperator{\Zb}{\mathbb{Z}}
\DeclareMathOperator{\Psf}{\mathsf{P}}
\newcommand{\abs}[1]{\left|#1\right|}
\newcommand{\norm}[1]{\left\|#1\right\|}
\newcommand{\wt}[1]{\widetilde{#1}}
\newcommand{\wh}[1]{\widehat{#1}}
\newcommand{\ip}[1]{\left\langle #1\right\rangle}
\newcommand{\geodflow}{\phi}
\newcommand{\flatflow}{\varphi}
\newcommand{\homflow}{\psi}
\newcommand{\into}{\hookrightarrow}
\newcommand{\proj}{\mathbf{P}}
\newcommand{\peripherals}{\Pc}
\begin{document}

\title[Relatively Anosov representations]{Relatively Anosov representations via flows II: examples}
\author{Feng Zhu}
\email{fzhu52@wisc.edu}
\author{Andrew Zimmer}\address{Department of Mathematics, University of Wisconsin-Madison, Madison, WI 53706, U.S.A}\email{amzimmer2@wisc.edu}
\date{\today}
\keywords{(relatively) Anosov representations, relatively hyperbolic groups, geometrically finite groups, dominated splittings, convex real projective geometry}
\subjclass[2020]{Primary 22E40; Secondary 37D20, 20H10, 37B05, 20F67}

\begin{abstract} This is the second in a series of two papers that develops a theory of relatively Anosov representations using the original ``contracting flow on a bundle" definition of Anosov representations introduced by Labourie and Guichard--Wienhard. In this paper we focus on building families of examples. 

\end{abstract}

\maketitle

\tableofcontents

\section{Introduction}

Anosov representations were introduced by Labourie~\cite{L2006}, and further developed by Guichard--Wienhard~\cite{GW}, as a generalization of convex cocompact representations into the isometry group of real hyperbolic space. Informally speaking, an Anosov representation is a representation of a word-hyperbolic group into a semisimple Lie group which has a equivariant boundary map into a flag manifold with good dynamical properties.

This is the second in a series of two papers whose purpose is to develop a theory of relatively Anosov representations, extending the theory of Anosov representations to relatively hyperbolic groups, using the original  ``contracting flow on a bundle" definition of Labourie and Guichard--Wienhard.  The general theory was developed in the first paper. In this paper we will focus on examples. 

Throughout the paper, we will let $\Kb$ denote either the real numbers $\Rb$ or the complex numbers $\Cb$.

\subsection{Some results from the first paper}\label{sec:results from first paper} We briefly recall some of the results from the first paper. Relatively Anosov representations are perhaps most naturally defined using the following boundary map definition (which is equivalent to being ``asymptotically embedded'' in the sense of Kapovich--Leeb~\cite{KL} and ``relatively dominated'' in the sense of~\cite{reldomreps}, see~\cite[Sec.\ 4]{ZZ2022a} for details). 

\begin{definition}\label{defn:Pk Anosov} Suppose that $(\Gamma,\peripherals)$ is relatively hyperbolic with Bowditch boundary $\partial(\Gamma, \peripherals)$. A representation $\rho\colon \Gamma \to \SL(d,\Kb)$ is \emph{$\Psf_k$-Anosov relative to $\peripherals$} if there exists a continuous map 
$$
\xi = (\xi^k, \xi^{d-k}) \colon \partial(\Gamma, \peripherals) \to \Gr_k(\Kb^d) \times \Gr_{d-k}(\Kb^d)
$$
which is 
\begin{enumerate} 
\item \emph{$\rho$-equivariant}:  if $\gamma \in \Gamma$, then $\rho(\gamma) \circ \xi = \xi \circ \gamma$,
\item \emph{transverse}: if $x,y \in \partial(\Gamma, \peripherals)$ are distinct, then $\xi^k(x) \oplus \xi^{d-k}(y) = \Kb^d$, 
\item \emph{strongly dynamics preserving}: if $(\gamma_n)_{n \geq 1}$ is a sequence of elements in $\Gamma$ where $\gamma_n \to x \in \partial(\Gamma, \peripherals)$ and $\gamma_n^{-1} \to y \in \partial(\Gamma, \peripherals)$, then 
$$
\lim_{n \to \infty} \rho(\gamma_n)V = \xi^k(x)
$$
for all $V \in \Gr_k(\Kb^d)$ transverse to $\xi^{d-k}(y)$. 
\end{enumerate}
\end{definition} 

One of the main results in the first paper shows that the definition above can be recast in terms of a contracting flow on a certain vector bundle associated to the representation.

Given a relatively hyperbolic group $(\Gamma, \peripherals)$ we can realize $\Gamma$ as a subgroup of $\Isom(X)$ where $X$ is a proper geodesic Gromov-hyperbolic metric space such that every point in $X$ is within a uniformly bounded distance of a geodesic, $\Gamma$ acts geometrically finitely on the Gromov boundary $\partial_\infty X$ of $X$, and the stabilizers of the parabolic fixed points are exactly the conjugates of $\peripherals$. Following the terminology in~\cite{HealyHruska}, we call such an $X$ a \emph{weak cusp space for $(\Gamma, \peripherals)$}.

Given such an $X$, let $\Gc(X)$ denote the space of parametrized geodesic lines in $X$ and for $\sigma \in \Gc(X)$ let $\sigma^\pm : = \lim_{t \to \pm\infty} \sigma(t) \in \partial_\infty X$. The space $\Gc(X)$ has a natural flow $\geodflow^t$  given by 
$\geodflow^t(\sigma) = \sigma(\cdot + t)$ which descends to a flow, which we also denote by $\geodflow^t$, on the quotient $\wh{\Gc}(X) := \Gamma \backslash \Gc(X)$. 

Given a representation $\rho\colon \Gamma \to \SL(d,\Kb)$, let 
$$
E(X) := \Gc(X) \times \Kb^d \quad \text{and} \quad \wh{E}_\rho(X) := \Gamma \backslash E(X)
$$
where $\Gamma$ acts on  $E(X)$ by 
$\gamma \cdot (\sigma, Y) = (\gamma \circ \sigma, \rho(\gamma) Y)$. 
The flow $\geodflow^t$ extends to a flow on $E(X)$, which we call $\flatflow^t$, which acts trivially on the second factor. This in turn descends to a flow on $\wh{E}_\rho(X)$ which we also call $\flatflow^t$. 

Given a continuous, $\rho$-equivariant, transverse map 
$$
\xi = (\xi^k, \xi^{d-k}) \colon \partial(\Gamma, \peripherals) \to \Gr_k(\Kb^d) \times \Gr_{d-k}(\Kb^d)
$$
we can define vector bundles $\Theta^k, \Xi^{d-k} \to \Gc(X)$ by setting 
$\Theta^k(\sigma) := \xi^k(\sigma^+)$ and $\Xi^{d-k}(\sigma) := \xi^{d-k}(\sigma^-)$.
Since $\xi$ is transverse, we have $E(X) = \Theta^k \oplus \Xi^{d-k}$. Since $\xi$ is $\rho$-equivariant, this descends to a vector bundle decomposition  $\wh{E}_\rho(X) = \wh{\Theta}^k \oplus \wh{\Xi}^{d-k}$. Also, by construction, these subbundles are $\flatflow^t$-invariant. We can then consider the bundle 
$
\Hom(\wh{\Xi}^{d-k}, \wh{\Theta}^k) \to \wh{\Gc}(X)
$
and, since the subbundles are $\flatflow^t$-invariant, we can define a flow on $\Hom(\wh{\Xi}^{d-k}, \wh{\Theta}^k)$ by 
$\homflow^t(f) := \flatflow^t \circ f \circ \flatflow^{-t}$.
Finally, we note that any metric on $\wh{E}_\rho(X) \to \wh{\Gc}(X)$ induces, via the operator norm, a continuous family of norms on the fibers of $\Hom(\wh{\Xi}^{d-k}, \wh{\Theta}^k) \to \wh{\Gc}(X)$. 

\begin{definition}\label{defn:flow definition}
With the notation above, we say that $\rho$ is  \emph{$\Psf_k$-Anosov relative to $X$} if there exists a metric $\norm{\cdot}$ on the vector bundle $\wh{E}_\rho(X)\to \wh{\Gc}(X)$ such that the flow $\homflow^t$ on $\Hom(\wh{\Xi}^{d-k}, \wh{\Theta}^k)$ is exponentially contracting (with respect to the associated operator norms).
\end{definition}

In~\cite{ZZ2022a} we proved these two definitions are equivalent, and indeed one can always make a particular choice of weak cusp space. These are what are  often called \emph{Groves--Manning cusp spaces} and they are formed by attaching so-called combinatorial horoballs to a Cayley graph of the group (see Definition~\ref{def:cusp spaces}). These spaces are perhaps the most canonical choice of weak cusp space, see~\cite{HealyHruska,GrovesManning}. 

\begin{theorem}[{\cite[Th.\ 1.3]{ZZ2022a}}] \label{thm:equivalence of definitions}
Suppose that $(\Gamma,\peripherals)$ is relatively hyperbolic and $\rho\colon \Gamma \to \SL(d,\Kb)$ is a representation. Then the following are equivalent: 
\begin{enumerate}
\item $\rho$ is $\Psf_k$-Anosov relative to $\peripherals$,
\item there is a weak cusp space $X$ for $(\Gamma,\peripherals)$ such that $\rho$ is $\Psf_k$-Anosov relative to $X$,
\item if $X$ is any Groves--Manning cusp space of $(\Gamma,\peripherals)$, then $\rho$ is $\Psf_k$-Anosov relative to $X$.
\end{enumerate}
\end{theorem}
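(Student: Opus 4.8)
The plan is to establish the cycle of implications (3) $\Rightarrow$ (2) $\Rightarrow$ (1) $\Rightarrow$ (3). The first implication is immediate, since every Groves--Manning cusp space of $(\Gamma,\peripherals)$ is in particular a weak cusp space. Before anything else I would record the standard fact that a continuous, $\rho$-equivariant, transverse boundary map as in Definition~\ref{defn:Pk Anosov} is unique when it exists: conical limit points are dense in $\partial(\Gamma,\peripherals)$, and the value of $\xi$ at a conical limit point is forced by equivariance and transversality. Consequently the boundary map implicit in the flow definition agrees with the one in Definition~\ref{defn:Pk Anosov}, and one loses nothing in passing between the two formulations.

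For (2) $\Rightarrow$ (1), I would start from a weak cusp space $X$, its boundary map $\xi$, and a metric on $\wh{E}_\rho(X)$ for which $\homflow^t$ is exponentially contracting, and deduce that $\xi$ is strongly dynamics preserving by the usual translation between the flow and the group action. Given sequences $\gamma_n \to x$ and $\gamma_n^{-1} \to y$ in $\partial_\infty X = \partial(\Gamma,\peripherals)$, I would choose geodesics $\sigma_n \in \Gc(X)$ with $\sigma_n(0)$ near a fixed basepoint $o$, with $\sigma_n^\pm$ converging to $x$ and $y$ respectively, and times $t_n \to \infty$ such that $\geodflow^{t_n}(\sigma_n)$ is coarsely the $\gamma_n$-translate of a geodesic based near $o$; this uses Gromov hyperbolicity together with the hypothesis that every point of $X$ lies boundedly close to a geodesic. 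Under the fibrewise identifications, $\rho(\gamma_n)$ is then conjugate to $\homflow^{t_n}$ along the corresponding flow orbit, so exponential contraction forces $\rho(\gamma_n)$, written in the splitting $\Kb^d = \xi^k(\sigma_n^+) \oplus \xi^{d-k}(\sigma_n^-)$, to be exponentially small on the second summand relative to the first. Continuity and transversality of $\xi$, together with the fact that the metric and the contraction estimate live over \emph{all} of $\wh{\Gc}(X)$, then yield $\rho(\gamma_n) V \to \xi^k(x)$ for every $V$ transverse to $\xi^{d-k}(y)$.

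The real work is in (1) $\Rightarrow$ (3). Fix a Groves--Manning cusp space $X$ and let $\xi$ be the boundary map supplied by (1); the task is to build a metric on $\wh{E}_\rho(X)$ making $\homflow^t$ exponentially contracting. I would proceed in two steps. \textbf{Step 1: uniform quantitative divergence.} Upgrade the qualitative ``strongly dynamics preserving'' property to a uniform estimate: there should exist $C \geq 1$ and $c > 0$ such that $\sigma_k(\rho(\gamma))/\sigma_{k+1}(\rho(\gamma)) \geq C^{-1} e^{c\, d_X(o,\gamma o)}$ for all $\gamma \in \Gamma$, together with matching control showing that the singular subspaces of $\rho(\gamma)^{\pm1}$ approximate the relevant values of $\xi^k$ and $\xi^{d-k}$ with error $\lesssim e^{-c\, d_X(o,\gamma o)}$. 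One extracts this from strong dynamics preservation and the convergence-group dynamics on $\partial(\Gamma,\peripherals)$; the point is the \emph{uniformity} of $C$ and $c$ over the non-cocompact group $\Gamma$. The delicate regime is $\gamma$ lying deep inside a coset of a peripheral subgroup $P \in \peripherals$, where $d_X(o,\gamma o)$ grows only logarithmically in the $P$-word length of $\gamma$. Here I would use that parabolic points of $\partial(\Gamma,\peripherals)$ are bounded parabolic, that the combinatorial horoball attached to $P$ in $X$ has explicit geometry --- the $d_X$-distance along the $P$-orbit growing like $2\log_2$ of the $P$-word length --- and that applying strong dynamics preservation to sequences inside $P$ pins down the growth of $\rho|_P$ so that it matches this horoball distortion. \textbf{Step 2: constructing the metric.} Declare $\wh{\Theta}^k$ and $\wh{\Xi}^{d-k}$ orthogonal and, on each subbundle, modify a chosen continuous background metric by a standard averaging/integral construction along the flow (as in the proof of the equivalence of definitions in the classical Anosov case) so that $\flatflow^t$ expands $\wh{\Theta}^k$ and contracts $\wh{\Xi}^{d-k}$ at uniform exponential rates; then $\homflow^t(f) = \flatflow^t \circ f \circ \flatflow^{-t}$ is exponentially contracting. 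The estimates of Step 1 are exactly what makes this construction converge with rates uniform over the \emph{non-compact} base $\wh{\Gc}(X)$, in particular inside the horoballs, and a final routine argument deduces continuity of the resulting metric from joint continuity of $\xi$ and of the background metric.

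I expect the main obstacle to be Step 1 of the last implication: extracting \emph{uniform} exponential divergence, at a rate compatible with the logarithmically compressed horoball geometry, from the purely qualitative strong dynamics preservation, simultaneously over all peripheral cosets. In the classical Anosov setting the base $\wh{\Gc}(X)$ is compact and both steps above are essentially routine; here the non-compactness of $\wh{\Gc}(X)$ and the need to control $\homflow^t$ along geodesics that remain arbitrarily long inside combinatorial horoballs is precisely what forces the fine analysis of the peripheral restrictions $\rho|_P$.
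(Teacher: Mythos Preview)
This theorem is not proved in the present paper: it is quoted from the companion paper~\cite{ZZ2022a} (see the sentence preceding the statement, ``In~\cite{ZZ2022a} we proved these two definitions are equivalent''), and no argument for it appears here. Consequently there is no ``paper's own proof'' to compare your proposal against.

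That said, your outline has the right shape --- the cycle (3) $\Rightarrow$ (2) $\Rightarrow$ (1) $\Rightarrow$ (3), with the substance in (1) $\Rightarrow$ (3) --- and you have correctly identified the main obstacle: obtaining \emph{uniform} singular-value gap estimates $\mu_k/\mu_{k+1}(\rho(\gamma)) \gtrsim e^{c\,\dist_X(o,\gamma o)}$ over the non-compact base, especially for $\gamma$ deep in peripheral cosets where the Groves--Manning distance grows only logarithmically in the peripheral word length. One caution: your Step~1 treats this as something to be ``extracted'' from strong dynamics preservation plus bounded-parabolic geometry, but in practice this is the heart of the matter and typically requires a separate structural argument about how $\rho|_P$ acts (e.g.\ that the peripheral images are weakly unipotent with a single limit flag, as in Proposition~\ref{prop:eigenvalue data in rel Anosov repn} and Observation~\ref{obs:limit set of rel anosov}), rather than a soft compactness extraction. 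Your Step~2 (averaging along the flow to build the metric) is standard once Step~1 is in hand. To assess whether your argument matches the original, you would need to consult~\cite{ZZ2022a} directly.
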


\begin{remark} Theorem~\ref{thm:equivalence of definitions} leaves open the question if the above conditions are equivalent to $\rho$ is being $\Psf_k$-Anosov relative to any weak cusp space. Using a different definition of flow spaces (which is equivalent to ours when $X$ is ${\rm CAT}(-1)$), Wang showed that this is the case~\cite{Wang2023}.
\end{remark}

As a consequence of Theorem~\ref{thm:equivalence of definitions}, standard dynamical arguments can be used to prove a relative stability result. Given a representation $\rho_0\colon (\Gamma, \peripherals) \to \SL(d,\Kb)$, we let $\Hom_{\rho_0} (\Gamma, \SL(d,\Kb))$ denote the set of representations $\rho\colon \Gamma \to \SL(d,\Kb)$ such that for each $P \in \peripherals$, the representations $\rho|_P$ and $\rho_0|_P$ are conjugate.

\begin{theorem}[{\cite[Th.\ 1.6]{ZZ2022a}}]\label{thm:stability}
Suppose that $(\Gamma, \peripherals)$ is relatively hyperbolic and $X$ is a weak cusp space for $(\Gamma, \peripherals)$. If $\rho_0\colon\Gamma \to \SL(d,\Kb)$ is $\Psf_k$-Anosov relative to $X$, then there exists an open neighborhood $\Oc$ of $\rho_0$ in $\Hom_{\rho_0}(\Gamma, \SL(d,\Kb))$ such that every representation in $\Oc$ is $\Psf_k$-Anosov relative to $X$. 
\end{theorem}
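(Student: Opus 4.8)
The plan is to localize the classical perturbation theory of dominated splittings, which is available over a compact base, exploiting the fact that restricting to $\Hom_{\rho_0}(\Gamma,\SL(d,\Kb))$ freezes the peripheral behavior up to conjugacy. By Theorem~\ref{thm:equivalence of definitions} it suffices to verify Definition~\ref{defn:flow definition} for the given weak cusp space $X$. Fix a metric $\norm{\cdot}_0$ on $\wh{E}_{\rho_0}(X)$ witnessing exponential contraction of $\homflow_0^t$, fix the induced $\flatflow^t$-invariant splitting $\wh{E}_{\rho_0}(X) = \wh{\Theta}_0 \oplus \wh{\Xi}_0$ coming from $\xi_{\rho_0}$, and, after a standard normalization of the metric, assume that $\homflow_0^t$ contracts with no multiplicative constant. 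The first ingredient is a description of the base: $\wh{\Gc}(X)$ is the union of a compact core $\cptcore$ and finitely many cusp neighborhoods $\Uc_P$, $P \in \peripherals$, where $\Uc_P$ consists of (classes of) geodesics whose midpoint lies past a fixed depth in the combinatorial horoball at the parabolic point fixed by $P$. Two features will be used: along any flow line, excursions into the $\Uc_P$ take place in pairwise distinct, uniformly separated horoballs — a geodesic cannot exit and re-enter a single horoball, and distinct horoballs of a Groves--Manning cusp space stay a definite distance apart — so a flow segment of length $T$ meets at most $O(T)$ of the sets $\Uc_P$; and, as a flowed bundle, $\wh{E}_\rho(X)\vert_{\Uc_P}$ is a quotient of a horoball region on which only $P$ acts, through $\rho\vert_P$.

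Second, I would analyze the two pieces for $\rho$ near $\rho_0$ in $\Hom_{\rho_0}(\Gamma,\SL(d,\Kb))$. Over $\Uc_P$: writing $\rho\vert_P = g_P\, \rho_0\vert_P\, g_P^{-1}$, the fibrewise map induced by $g_P^{-1}$ is an isomorphism of flowed bundles $\wh{E}_\rho(X)\vert_{\Uc_P} \to \wh{E}_{\rho_0}(X)\vert_{\Uc_P}$; transporting $\wh{\Theta}_0\oplus\wh{\Xi}_0$ and $\norm{\cdot}_0$ through it yields, over $\Uc_P$, a transverse $\flatflow^t$-invariant splitting together with a metric for which $\homflow_\rho^t$ contracts, over flow segments contained in $\Uc_P$, with exactly the constants of $\rho_0$ — and this needs no smallness of $\rho$ beyond $g_P$ lying near the identity modulo the centralizer of $\rho_0\vert_P$, which holds automatically for $\rho$ close to $\rho_0$ inside $\Hom_{\rho_0}$. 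Over $\cptcore$: only finitely many group elements occur in the return cocycle of a compact set over a bounded time window, so the linear cocycle of $\rho$ over $\cptcore$ is $C^0$-close to that of $\rho_0$ once $\rho$ is close enough to $\rho_0$; the classical persistence of dominated splittings — detected in finite time, hence applicable over the non-invariant compact set $\cptcore$ — then provides over $\cptcore$ an $\flatflow^t$-invariant splitting close to $\wh{\Theta}_0\oplus\wh{\Xi}_0$, together with an adapted metric whose contraction constants, over flow segments in $\cptcore$, are close to those of $\rho_0$.

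Third, I would glue. On each overlap the two local splittings coincide, since a dominated splitting is uniquely characterized by the flow; this produces a global $\flatflow^t$-invariant decomposition $\wh{E}_\rho(X) = \wh{\Theta}_\rho \oplus \wh{\Xi}_\rho$, whose fibres over a geodesic depend only on its forward, respectively backward, endpoint — two geodesics with a common forward endpoint are forward asymptotic, forcing their attracting subspaces to agree — and hence define a continuous, $\rho$-equivariant map $\xi_\rho = (\xi_\rho^k, \xi_\rho^{d-k})$ on $\partial(\Gamma,\peripherals)$, transverse because the two summands span $\Kb^d$ fibrewise. For the metric, arrange the core metric to agree with the transported cusp metrics on the collars $\cptcore \cap \Uc_P$ and glue to a single metric $\norm{\cdot}_\rho$ on $\wh{E}_\rho(X)$; that $\homflow_\rho^t$ is still exponentially contracting for $\norm{\cdot}_\rho$ follows by a telescoping estimate: a flow segment of length $T$ decomposes into its maximal sub-segments in $\cptcore$ and in the $\Uc_P$, on each of which $\homflow_\rho^t$ contracts with uniform constants, there is no mismatch at the junctions because the metric is global, and by the separation feature there are at most $O(T)$ such sub-segments, so the product of the per-segment estimates is bounded by $C\,\kappa^T$ with $\kappa < 1$ — the normalization of the constants near $1$ and the shrinking of the neighborhood of $\rho_0$ being exactly what makes $\kappa < 1$. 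One then takes $\Oc$ to be the set of $\rho \in \Hom_{\rho_0}(\Gamma,\SL(d,\Kb))$ for which the core cocycle lies within the persistence tolerance and each $g_P$ is close enough to the identity; this is an open neighborhood of $\rho_0$ with the required property.

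The step I expect to be the main obstacle is precisely the non-compactness of $\wh{\Gc}(X)$: over a compact base the statement is a routine consequence of the stability of dominated splittings, and all of the genuine content lies in controlling the flow and the bundle in the cusps. Passing to $\Hom_{\rho_0}(\Gamma,\SL(d,\Kb))$ neutralizes the perturbation there, but assembling the local (core and cusp) data into a single global splitting and a single global metric with a uniform exponential contraction rate — in particular the telescoping estimate, which relies both on normalizing the contraction constants and on the geometric fact that cusp excursions occur in distinct, uniformly separated horoballs — is where the real work is concentrated.
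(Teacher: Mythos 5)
Your overall architecture — reduce to the flow definition via Theorem~\ref{thm:equivalence of definitions}, split $\wh{\Gc}(X)$ into a compact core and cusp neighborhoods, freeze the cusp dynamics using the conjugacy built into $\Hom_{\rho_0}$, perturb over the core, glue, and telescope — is the right shape of argument, and the observations about why $\Hom_{\rho_0}$ neutralizes the cusps and why cusp excursions are separated are both correct and essential. However, the step on which everything hinges is not supported as written, and it is a genuine gap rather than a detail.

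The problem is the gluing: you assert that \emph{``on each overlap the two local splittings coincide, since a dominated splitting is uniquely characterized by the flow.''} Uniqueness of dominated splittings is a global statement about a flow-invariant splitting over the whole flow space, characterized via long-time asymptotics. Neither $\cptcore$ nor a cusp collar $\cptcore \cap \Uc_P$ is flow-invariant, and a typical orbit leaves the collar in bounded time; over such a set there is no uniqueness principle for ``flow-invariant splittings,'' and indeed many pairwise distinct splittings with the contraction property exist over a collar. Concretely, the transported splitting on $\Uc_P$ equals $g_P\cdot\xi_{\rho_0}$ evaluated at the endpoints, while the splitting you get on $\cptcore$ from a persistence argument is merely \emph{close} to $\xi_{\rho_0}$; both are close to each other on the collar (since $g_P$ is near the identity) but there is no reason they coincide there. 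The same issue underlies your appeal to ``classical persistence of dominated splittings over the non-invariant compact set $\cptcore$'': the standard persistence theorems are for compact \emph{invariant} sets, and the output of such a theorem — a flow-invariant splitting defined by long-time limits — cannot literally be produced over a set that orbits leave immediately. Your parenthetical ``detected in finite time'' points toward the fix, but the fix changes the logical order of the argument rather than repairing this one.

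The cleaner route, which I expect is what the cited paper does, is to work with the finite-time, $C^0$-open condition \emph{globally} and only afterward extract the splitting: fix $T>0$ and a field of cones around $\wh{\Theta}_0$ in $\wh{E}_{\rho_0}(X)$ such that $\flatflow_0^T$ maps each cone strictly and uniformly into the cone at the image point. This condition is a pointwise, finite-time inequality, so it is open in the cocycle over the compact core, and it is invariant under conjugation by elements near the identity, which handles the cusps exactly via your $g_P$; no gluing of splittings is needed, since no local splittings are ever produced. The global splitting for $\rho$ is then obtained in one step from the cone field by flowing, and the endpoint-dependence and transversality you argue at the end are derived from the cone field rather than from the ``coincidence on overlaps'' you could not actually establish. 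Your telescoping estimate and the separation-of-excursions observation remain useful in this framework — they control the exponent — but they sit downstream of the cone-field argument rather than of a gluing of local splittings.

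One smaller caution: the geometric facts you invoke (no geodesic re-enters a horoball, horoballs are uniformly separated) are stated for Groves--Manning cusp spaces, but the theorem is asserted for an arbitrary weak cusp space $X$, and the conclusion is ``Anosov relative to $X$'' for that $X$. Since Theorem~\ref{thm:equivalence of definitions} passes freely between weak cusp spaces, this is not a fatal issue, but the reduction should be made explicit rather than treated as interchangeable.
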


\begin{remark} In recent work, Weisman~\cite{W2022} introduces a new class of representations of relatively hyperbolic groups called extended geometrically finite representations which includes the class of relatively Anosov representations. For this class of representations, Weisman  proves a general stability result which implies, in the context of  Theorem~\ref{thm:stability}, that being $\Psf_k$-Anosov relative to $\peripherals$ is an open condition in $\Hom_{\rho_0}(\Gamma, \SL(d,\Kb))$.
\end{remark}

In the relatively hyperbolic case, the space $\wh{\Gc}(X)$ will be non-compact and thus it is possible for a metric on the vector bundle $\wh{E}_\rho(X) \to \wh{\Gc}(X)$ to be quite badly behaved. In~\cite{ZZ2022a}  we introduced a subclass of relatively Anosov representations where the metric is assumed to have additional regularity properties and proved that this special class has nicer properties. This class is defined as follows.  

\begin{definition}\label{defn:locally uniform norms} Suppose that $(\Gamma,\peripherals)$ is relatively hyperbolic, $X$ is a weak cusp space for $(\Gamma, \peripherals)$, and $\rho \colon \Gamma \to \SL(d,\Kb)$ is a representation. 

\begin{itemize}
\item A metric $\norm{\cdot}$ on $\wh{E}_\rho(X) \to \wh{\Gc}(X)$ is \emph{locally uniform} if its lift to $\Gc(X) \times \Kb^d \to \Gc(X)$ has the following property: For any $r > 0$ there exists $L_r > 1$ such that
\begin{align*}
\frac{1}{L_r} \norm{\cdot}_{\sigma_1} \leq \norm{\cdot}_{\sigma_2} \leq L_r \norm{\cdot}_{\sigma_1}
\end{align*}
for all $\sigma_1, \sigma_2 \in \Gc(X)$ with $\dist_X(\sigma_1(0), \sigma_2(0)) \leq r$. 
\item $\rho$ is \emph{uniformly $\Psf_k$-Anosov relative to $X$} if it is $\Psf_k$-Anosov relative to $\peripherals$ and there exists a locally uniform metric $\norm{\cdot}$ on $\wh{E}_\rho(X) \to \wh{\Gc}(X)$ such that the flow $\homflow^t$ on $\Hom(\wh{\Xi}^{d-k}, \wh{\Theta}^k)$ is exponentially contracting (with respect to the associated operator norms).
\end{itemize}
\end{definition} 

In~\cite{ZZ2022a}  we proved that uniformly relatively Anosov representations are very nicely behaved. In particular, one can construct an equivariant quasi-isometric map of the entire weak cusp space into the symmetric space associated to $\SL(d,\Kb)$ and the boundary map is H\"older relative to any visual metric on the Bowditch boundary and Riemannian distance on the Grassmanian \cite[Th.\ 1.13]{ZZ2022a}. We also proved that the uniformly Anosov representations form an open set in the constrained space of representations considered in Theorem~\ref{thm:stability}.

\subsection{Results of this paper} The main aim of this paper is to produce classes of examples of relatively Anosov representations. Just as Anosov representations can be thought of as a generalization of convex cocompact representations, so relatively Anosov representations can be thought of as a generalization of geometrically finite representations into rank-one semisimple Lie groups. 

In fact, essentially by definition, these two notions coincide for rank-one semisimple Lie groups. More precisely, in~\cite[Sec.\ 13]{ZZ2022a}, we extended Definition~\ref{defn:Pk Anosov} to relatively Anosov representations into general semisimple Lie groups and with that definition we have the following observation.

\begin{observation} Suppose that $X$ is a negatively-curved symmetric space and $\mathsf{P}^+$, $\mathsf{P}^-$ is a pair of opposite parabolic subgroups in $\Isom_0(X)$, the connected component of the identity in the isometry group of $X$.

If $(\Gamma, \peripherals)$ is relatively hyperbolic and $\rho \colon \Gamma \to \Isom_0(X)$ is a representation, then the following are equivalent:
\begin{enumerate}
\item $\rho$ is $\mathsf{P}^\pm$-Anosov relative to $\peripherals$ (in the sense of~\cite[Def.\ 13.1]{ZZ2022a}). 
\item $\ker \rho$ is finite, $\rho(\Gamma)$ is geometrically finite, and $\rho(\peripherals)$ is a set of representatives of the conjugacy classes of maximal parabolic subgroups in $\rho(\Gamma)$. 
\end{enumerate}
\end{observation}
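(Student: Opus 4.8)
The plan is to translate the boundary-map definition into the language of convergence actions and then feed it into Bowditch's equivalent characterizations of geometric finiteness in variable pinched negative curvature together with Yaman's topological characterization of relatively hyperbolic groups. The first step is to unwind the rank-one case of~\cite[Def.\ 13.1]{ZZ2022a}. Since $X$ is negatively curved, every proper parabolic subgroup of $\Isom_0(X)$ is the stabilizer of a point of $\partial_\infty X$, the flag manifold $\Isom_0(X)/\mathsf{P}^\pm$ is canonically $\Isom_0(X)$-equivariantly identified with $\partial_\infty X$, and two such parabolics are opposite precisely when the corresponding points of $\partial_\infty X$ are distinct. Hence a boundary map $(\xi^+,\xi^-)$ amounts to a single continuous, $\rho$-equivariant map $\xi\colon\partial(\Gamma,\peripherals)\to\partial_\infty X$; transversality becomes injectivity of $\xi$, and strong dynamics preservation becomes the statement that whenever $\gamma_n\to x$ and $\gamma_n^{-1}\to y$ in $\partial(\Gamma,\peripherals)$ one has $\rho(\gamma_n)p\to\xi(x)$ locally uniformly for $p\in\partial_\infty X\setminus\{\xi(y)\}$. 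I will assume $\Gamma$ is non-elementary, so that $\partial(\Gamma,\peripherals)$ is a perfect compactum; the elementary cases are a direct check.

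For $(1)\Rightarrow(2)$ I would first note that $\rho$ has discrete image and finite kernel --- this is standard for relatively Anosov representations, but one can also see it directly: if infinitely many distinct $\rho(\gamma_n)$ converged in $\Isom_0(X)$, then after extracting a subsequence with $\gamma_n\to x$, $\gamma_n^{-1}\to y$ in the Bowditch boundary, the homeomorphisms $\rho(\gamma_n)$ of $\partial_\infty X$ would converge both to a homeomorphism and, by strong dynamics preservation, to a constant off $\{\xi(y)\}$; and $\rho(\gamma)=1$ forces $\gamma$ to fix $\partial(\Gamma,\peripherals)$ pointwise by equivariance and injectivity of $\xi$, so $\ker\rho$ lies in the finite kernel of the convergence action of $\Gamma$ on its Bowditch boundary. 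Writing $\Lambda:=\rho(\Gamma)$ with limit set $\Lambda_\infty\subseteq\partial_\infty X$, the compactum $\xi(\partial(\Gamma,\peripherals))$ is a nonempty closed $\Lambda$-invariant set on which $\Lambda$ acts minimally, and since $\Lambda_\infty$ is contained in every nonempty closed $\Lambda$-invariant set, the two coincide. Thus $\xi$ conjugates the convergence action $\Gamma\curvearrowright\partial(\Gamma,\peripherals)$, which is geometrically finite because $(\Gamma,\peripherals)$ is relatively hyperbolic, to $\Lambda\curvearrowright\Lambda_\infty$, up to the finite kernel; conicality and bounded parabolicity of points are invariants of a topological convergence action, so every point of $\Lambda_\infty$ is conical or bounded parabolic for $\Lambda$, whence $\Lambda$ is geometrically finite by Bowditch's characterization. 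Finally, the bounded parabolic points of $\Lambda\curvearrowright\Lambda_\infty$ are exactly the $\xi$-images of the parabolic points of $\Gamma\curvearrowright\partial(\Gamma,\peripherals)$, whose stabilizers in $\Lambda$ are the $\rho$-images of the conjugates of $\peripherals$; since the maximal parabolic subgroups of a geometrically finite subgroup of $\Isom_0(X)$ are precisely the stabilizers of its bounded parabolic points, $\rho(\peripherals)$ represents the conjugacy classes of maximal parabolic subgroups of $\Lambda$.

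For $(2)\Rightarrow(1)$, a geometrically finite $\Lambda:=\rho(\Gamma)\leq\Isom_0(X)$ is discrete, acts as a convergence group on all of $\partial_\infty X$, and acts as a geometrically finite convergence group on its limit set $\Lambda_\infty$, whose maximal parabolic subgroups are, by hypothesis, the conjugates of $\rho(\peripherals)$. By Yaman's theorem $\Lambda$ is hyperbolic relative to $\rho(\peripherals)$ with Bowditch boundary $\Lambda_\infty$; since $\ker\rho$ is finite, the quotient $\Gamma\to\Lambda$ changes neither relative hyperbolicity nor the Bowditch boundary, so there is a $\Gamma$-equivariant homeomorphism $\xi\colon\partial(\Gamma,\peripherals)\to\Lambda_\infty\subseteq\partial_\infty X$, automatically continuous, $\rho$-equivariant and injective, hence transverse. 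To finish I would check strong dynamics preservation: given $\gamma_n\to x$, $\gamma_n^{-1}\to y$ in $\partial(\Gamma,\peripherals)$, the $\rho(\gamma_n)$ are eventually distinct, so the convergence property on $\partial_\infty X$ gives, along any subsequence, a further subsequence with $\rho(\gamma_n)\to b$ locally uniformly off a point $a$, where $a,b\in\Lambda_\infty$; restricting to $\Lambda_\infty$, where $\xi$ conjugates the action to the convergence action on the Bowditch boundary and the corresponding limits are $\xi(x)$ and $\xi(y)$, forces $b=\xi(x)$ and $a=\xi(y)$, and since this pins down the limit of every subsequence the full sequence $\rho(\gamma_n)$ converges to $\xi(x)$ locally uniformly on $\partial_\infty X\setminus\{\xi(y)\}$.

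The step I expect to require the most care is the bookkeeping around the finite kernel and the two ambient compacta: checking that passing from $\Gamma$ to $\rho(\Gamma)$ leaves the peripheral structure intact (so that ``$\rho(\peripherals)$ represents the maximal parabolic subgroups'' is exactly the hypothesis Yaman needs) and the Bowditch boundary unchanged, identifying $\xi(\partial(\Gamma,\peripherals))$ with the limit set, and locating the attracting and repelling points of $\rho(\gamma_n)$ on the whole visual boundary from their behaviour on the limit set. None of this is deep, but it is where the equivalence actually lives; the genuine geometric content is imported wholesale from Bowditch's equivalent formulations of geometric finiteness in pinched negative curvature and from Yaman's characterization of relatively hyperbolic groups.
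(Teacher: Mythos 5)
Your proposal is correct and takes essentially the same approach the paper intends: the paper's proof is a one-sentence observation that the equivalence follows directly from Bowditch's ``F2'' characterization of geometric finiteness in pinched negative curvature (limit set consists of conical limit points and bounded parabolic points) together with the definition of a relatively Anosov representation into a semisimple Lie group, and your argument is exactly the careful unwinding of those definitions --- identifying the flag manifold with $\partial_\infty X$, transversality with injectivity, strong dynamics preservation with the convergence-group behaviour, showing discreteness and finiteness of the kernel, identifying $\xi(\partial(\Gamma,\peripherals))$ with the limit set, and checking the peripheral bookkeeping.
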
 

\begin{proof} This follows directly from the ``F2'' definition in~\cite{Bowditch_GF_Riem} of  geometrically finite subgroups in $\Isom(X)$ and~\cite[Def.\ 13.1]{ZZ2022a}. \end{proof} 

\begin{remark}  $\Isom_0(X)$ only contains one conjugacy class of opposite parabolic subgroups and so by definition (see ~\cite[Def.\ 13.1]{ZZ2022a}) a representation is $\mathsf{P}^\pm$-Anosov relative to $\peripherals$ if and only if it is $\mathsf{Q}^\pm$-Anosov relative to $\peripherals$ for any choice of opposite parabolic subgroups in $\mathsf{Q}^\pm \leq \Isom_0(X)$. 
\end{remark} 

Motivated by this observation, we construct additional examples of relatively Anosov representations. The first set of examples come from considering representations of geometrically finite subgroups of rank-one semisimple Lie groups. 

The second set of examples are motivated by the Klein-Beltrami model of hyperbolic geometry. In particular, this model realizes real hyperbolic $n$-space as a convex domain of $\proj(\Rb^{n+1})$ in such a way that the hyperbolic metric coincides with the Hilbert metric on the convex domain. We observe that one can consider ``geometrically finite'' subgroups acting on more general convex domains to construct additional examples of relatively Anosov representations. 

We also consider additional classes of examples, described in Section~\ref{sec:misc_examples below}.

\subsubsection{Geometric finiteness in rank one}\label{subsec: Geometric finiteness in rank one} For the rest of this subsection, suppose $X$ is a negatively-curved symmetric space and let $\mathsf{G}:=\Isom_0(X)$ denote the connected component of the identity in the isometry group of $X$. Let $\partial_\infty X$ denote the geodesic boundary of $X$. Then given a discrete group $\Gamma \leq \mathsf{G}$, let $\Lambda_X(\Gamma) \subset \partial_\infty X$ denote the limit set of $\Gamma$ and let $\Cc_X(\Gamma)$ denote the convex hull of the limit set in $X$. 

When $\Gamma \leq \mathsf{G}$ is geometrically finite, we will let $\peripherals(\Gamma)$ denote a set of representatives of the conjugacy classes of maximal parabolic subgroups in $\Gamma$. Then $(\Gamma,\peripherals(\Gamma) )$ is relatively hyperbolic and $\Cc_X(\Gamma)$ is a weak cusp space for $(\Gamma,\peripherals(\Gamma) )$. 

We will observe that restricting a proximal linear representation of $\mathsf{G}$ to a geometrically finite subgroup produces a uniformly relatively Anosov representation. 

\begin{proposition}[see Proposition~\ref{prop:representations of rank one groups}]\label{prop:homog_cusps} Suppose that $\tau \colon \mathsf{G} \to \SL(d,\Kb)$ is $\Psf_k$-proximal (i.e.\ the image of $\tau$ contains a $\Psf_k$-proximal element). If $\Gamma \leq \mathsf{G}$ is geometrically finite, then $ \tau|_{\Gamma}$ is uniformly $\Psf_k$-Anosov relative to $\Cc_X(\Gamma)$.
 \end{proposition}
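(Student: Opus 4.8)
The plan is to produce $\tau$-equivariant boundary maps from the proximality of $\tau$ together with the rank-one structure of $\mathsf{G}$, and then to write down one explicit metric on the bundle — the one pulled back along $\tau$ from the symmetric space of $\SL(d,\Kb)$ — which is simultaneously locally uniform (because the pullback map is $\mathsf{G}$-equivariant, hence Lipschitz) and sharply contracting along the flow (because the geodesic-flow generator of $\mathsf{G}$ maps to an element with an eigenvalue gap at position $k$).

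First I would set up the boundary maps. Fix $p_0 \in X$ with stabilizer $K \leq \mathsf{G}$, and let $(a_t)_{t \in \Rb}$ be the one-parameter subgroup with $t \mapsto a_t p_0$ a unit-speed geodesic, so that $\partial_\infty X$ is identified with $\mathsf{G}/\mathsf{Q}$ for the parabolic $\mathsf{Q}$ fixing $a_{+\infty} := \lim_{t \to +\infty} a_t p_0$. After conjugating $\tau$ we may assume $\tau(K)$ preserves the standard norm $\abs{\cdot}$ on $\Kb^d$ and that $A_t := \tau(a_t)$ is positive and $\abs{\cdot}$-self-adjoint, with eigenvalues $e^{t\mu_1} \geq \dots \geq e^{t\mu_d}$ on pairwise $\abs{\cdot}$-orthogonal eigenspaces. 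A Jordan decomposition argument in $\mathsf{G}$ shows that a $\Psf_k$-proximal element of $\tau(\mathsf{G})$ forces $\mu_k > \mu_{k+1}$; set $\delta := \mu_k - \mu_{k+1} > 0$. Writing $W^k$, respectively $W^{d-k}_-$, for the span of the top $k$, respectively bottom $d-k$, eigenspaces of $A_t$, standard facts about proximal representations of semisimple Lie groups give $\tau$-equivariant maps $\zeta^k \colon \partial_\infty X \to \Gr_k(\Kb^d)$ and $\zeta^{d-k} \colon \partial_\infty X \to \Gr_{d-k}(\Kb^d)$ with $\zeta^k(g\mathsf{Q}) = \tau(g) W^k$ and $\zeta^{d-k}(g a_{-\infty}) = \tau(g) W^{d-k}_-$; since $W^k \oplus W^{d-k}_- = \Kb^d$ and $\mathsf{G}$ acts transitively on pairs of distinct points of $\partial_\infty X$, this pair of maps is transverse.

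Next I would restrict to $\Gamma$. Since $\Gamma$ is geometrically finite, $(\Gamma, \peripherals(\Gamma))$ is relatively hyperbolic with Bowditch boundary $\Gamma$-equivariantly homeomorphic to $\Lambda_X(\Gamma) \subseteq \partial_\infty X$, and $\Cc_X(\Gamma)$ is a weak cusp space. Then $\xi := (\zeta^k, \zeta^{d-k})|_{\Lambda_X(\Gamma)}$ is continuous, $\rho$-equivariant for $\rho := \tau|_\Gamma$, and transverse. If $\gamma_n \to x$ and $\gamma_n^{-1} \to y$ in $\Lambda_X(\Gamma)$ with $x \neq y$, then $\dist_X(p_0, \gamma_n p_0) \to \infty$ and, since $\mu_k > \mu_{k+1}$, the $k$-th and $(k+1)$-st singular values of $\tau(\gamma_n)$ have ratio growing like $e^{\delta \dist_X(p_0, \gamma_n p_0)}$, so $\tau(\gamma_n)$ is eventually $\Psf_k$-proximal with attracting $k$-plane converging to $\zeta^k(x)$ and repelling $(d-k)$-plane converging to $\zeta^{d-k}(y)$; this is strong dynamics preservation. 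Hence $\rho$ is $\Psf_k$-Anosov relative to $\peripherals(\Gamma)$ in the sense of Definition~\ref{defn:Pk Anosov}, and $\xi$ determines the subbundles $\Theta^k, \Xi^{d-k}$ over $\Gc(\Cc_X(\Gamma))$.

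Finally I would build the metric and estimate the flow. Let $\Xb$ denote the space of determinant-one positive Hermitian forms on $\Kb^d$, i.e.\ the symmetric space of $\SL(d,\Kb)$, with basepoint $q_0$ the form $\abs{\cdot}^2$. The map $F \colon X \to \Xb$, $g p_0 \mapsto \tau(g) q_0$, is well defined (as $\tau(K)$ fixes $q_0$), smooth and $\mathsf{G}$-equivariant, hence globally Lipschitz. Define a metric on $E(\Cc_X(\Gamma)) = \Gc(\Cc_X(\Gamma)) \times \Kb^d$ by letting $\norm{v}_\sigma$ be the length of $v$ in the form $F(\sigma(0))$. By $\mathsf{G}$-equivariance of $F$ this metric is $\Gamma$-invariant, so it descends to $\wh{E}_\rho(\Cc_X(\Gamma))$; and since $F$ is Lipschitz while the norms attached to points of $\Xb$ at bounded distance are uniformly comparable, the metric is locally uniform. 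Writing a geodesic as $\sigma(t) = g a_t p_0$ gives $\norm{v}_{\sigma(t)} = \abs{A_t^{-1}\tau(g)^{-1}v}$, and since $\Theta^k(\sigma) = \tau(g)W^k$ and $\Xi^{d-k}(\sigma) = \tau(g)W^{d-k}_-$, conjugating by $\tau(g)$ identifies the operator norm of $\homflow^t(f)$ at $\geodflow^t\sigma$, for $f \in \Hom(\Xi^{d-k}(\sigma), \Theta^k(\sigma))$, with $\sup_{u \in W^{d-k}_-} \abs{A_t^{-1}\wt{f} u}/\abs{A_t^{-1}u}$ where $\wt{f} \colon W^{d-k}_- \to W^k$ has $\abs{\cdot}$-operator norm $\norm{f}_\sigma$. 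Using $\abs{A_t^{-1}w} \leq e^{-t\mu_k}\abs{w}$ for $w \in W^k$ and $\abs{A_t^{-1}u} \geq e^{-t\mu_{k+1}}\abs{u}$ for $u \in W^{d-k}_-$ yields $\norm{\homflow^t(f)}_{\geodflow^t\sigma} \leq e^{-\delta t}\norm{f}_\sigma$ for all $t \geq 0$, so $\homflow^t$ is exponentially contracting for this locally uniform metric and $\rho$ is uniformly $\Psf_k$-Anosov relative to $\Cc_X(\Gamma)$. No step here is deep; the point needing the most care is the first — extracting the eigenvalue gap $\mu_k > \mu_{k+1}$ and the equivariant, transverse boundary maps from the bare hypothesis that $\tau(\mathsf{G})$ contains a single $\Psf_k$-proximal element, and then checking strong dynamics preservation for sequences $\gamma_n$ that need not be semisimple in $\mathsf{G}$.
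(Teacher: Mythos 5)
Your proposal is essentially the paper's argument (Proposition~\ref{prop:representations of rank one groups}): conjugate $\tau$ so that $\tau(\mathsf{K})\leq\SU(d,\Kb)$ and $\tau(\mathsf{A})$ is diagonal, build the boundary map from the attracting/repelling spaces $V^\pm$ of $\tau(a_t)$ using $\tau(\mathsf{P}^\pm)V^\pm=V^\pm$, and take the canonical $\tau$-equivariant metric $\norm{\cdot}_{g(v_0)}=\norm{\tau(g)^{-1}(\cdot)}_2$ (your pullback via $F\colon X\to\Xb$ is the same thing), verifying local uniformity and flow contraction by the same eigenvalue/singular-value computation. One cosmetic slip: you state strong dynamics preservation only for $x\neq y$, but Definition~\ref{defn:Pk Anosov} allows $x=y$ (parabolic sequences); your argument handles that case with no change since it only uses $\dist_X(p_0,\gamma_n p_0)\to\infty$.
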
 
 
\begin{remark}  A version of Proposition~\ref{prop:homog_cusps} also holds for representations into general semisimple Lie groups, in fact using~\cite[Proposition 13.4]{ZZ2022a} the general case follows immediately from the $\SL(d,\Kb)$ case. 
 \end{remark}
 
 In the context of Proposition~\ref{prop:homog_cusps}, we can obtain additional examples by starting with the representation $\rho_0 : =\tau|_\Gamma$ and deforming it in $\Hom_{\rho_0}(\Gamma, \SL(d,\Kb))$. By Theorem~\ref{thm:stability}, any sufficiently small deformation will be a uniformly relatively Anosov representation. 
  
 Using Proposition~\ref{prop:homog_cusps} we will also construct the following example. 
 
 \begin{example}[see Section~\ref{sec:not uniform rel GM cusp space}]\label{ex:non uniform to GM cusp space} Let $X:=\Hb^2_{\Cb}$ denote complex hyperbolic 2-space. There exists a geometrically finite subgroup $\Gamma \leq \Isom_0(X)$ and a representation $\rho\colon \Gamma \to \SL(3,\Cb)$ which is uniformly $\Psf_1$-Anosov relative to $\Cc_{X}(\Gamma)$, but not uniformly $\Psf_1$-Anosov relative to any Groves--Manning cusp space associated to $(\Gamma,\peripherals(\Gamma) )$. 
 \end{example} 
 
We remark that the example makes crucial use of the fact that for horoballs in complex hyperbolic space, distances decay at different exponential rates as we approach the cusp. In fact in real hyperbolic geometry, one can show that the convex hull of the limit set of a geometrically finite group is quasi-isometric to the associated Groves--Manning cusp space. 
 
 This example shows that there is value in studying bundles associated to general weak cusp spaces and not just the Groves--Manning cusp spaces. In future work we will further explore how to select the ``best'' weak cusp spaces to study a given relatively Anosov representation. 
 
We can relax the condition in Proposition~\ref{prop:homog_cusps} to only assuming that the representation extends on each peripheral  subgroup. More precisely, if $\Gamma \leq \mathsf{G}$ is geometrically finite and  $\rho \colon \Gamma \to \SL(d,\Kb)$ is $\Psf_k$-Anosov relative to $\peripherals(\Gamma)$, then we say that $\rho$ has \emph{almost homogeneous cusps} if there exists a finite cover $\pi\colon \widetilde{\mathsf{G}} \to \mathsf{G}$ such that for each $P \in\peripherals(\Gamma)$ there is a representation $\tau_P\colon \widetilde{\mathsf{G}} \to \SL(d,\Kb)$ where 
\begin{align*}
\left\{ \tau_P(g)(\rho\circ \pi)(g)^{-1} : g \in \pi^{-1}(P)\right\}
\end{align*}
is relatively compact in $\SL(d,\Kb)$. This technical definition informally states that the representation restricted to each peripheral subgroup extends to a representation of $\mathsf{G}$. 

\begin{theorem}[see Theorem~\ref{thm:repn with almost homog cusps}]\label{thm:repn with almost homog cusps in intro} Suppose that $\Gamma \leq\mathsf{G}$ is geometrically finite and  $\rho \colon \Gamma \to \SL(d,\Kb)$ is $\Psf_k$-Anosov relative to $\peripherals(\Gamma)$. If $\rho$ has almost homogeneous cusps, then $\rho$ is uniformly $\Psf_k$-Anosov relative to $\Cc_X(\Gamma)$.
\end{theorem}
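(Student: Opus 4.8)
The plan is to combine Theorem~\ref{thm:stability} (relative stability) with Proposition~\ref{prop:homog_cusps} (homogeneous cusps give uniformly relatively Anosov representations), glued together by the almost-homogeneous-cusps hypothesis. The key observation is that uniform relative Anosovness is, up to the locally uniform metric, a local condition near each cusp: away from the cusps $\wh{\Gc}(X)$ is compact (after quotienting by $\Gamma$), so any continuous metric there is automatically locally uniform and exponential contraction on that compact part follows from $\rho$ being $\Psf_k$-Anosov relative to $\peripherals(\Gamma)$. So the content is entirely about constructing a good metric on the combinatorial horoballs / cusp regions of $\Cc_X(\Gamma)$, and for that we will borrow the metric produced by Proposition~\ref{prop:homog_cusps} applied to the homogeneous model representations $\tau_P$.

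First I would pass to the finite cover $\pi\colon \widetilde{\mathsf{G}} \to \mathsf{G}$ and note that since $\ker\pi$ is finite and central, replacing $\Gamma$ by $\pi^{-1}(\Gamma)$ changes neither geometric finiteness, nor $\Cc_X(\Gamma)$ as a weak cusp space, nor the relatively Anosov property; so without loss of generality we may assume $\rho$ is defined on a subgroup of $\widetilde{\mathsf{G}}$ and each $\tau_P$ is a genuine representation of $\widetilde{\mathsf{G}}$. Next, for each $P \in \peripherals(\Gamma)$, Proposition~\ref{prop:homog_cusps} applied to $\tau_P$ and to the (geometrically finite, virtually parabolic) subgroup generated by $P$ — equivalently, an application of the homogeneous-model argument in the proof of Proposition~\ref{prop:homog_cusps} restricted to a horoball-neighborhood $\mathcal{H}_P \subset \Cc_X(\Gamma)$ of the parabolic fixed point of $P$ — produces a locally uniform metric $\norm{\cdot}^P$ on the restriction of $\wh{E}_{\tau_P}(X)$ over the image of $\mathcal{H}_P$ in $\wh{\Gc}(X)$, with respect to which the relevant flow is exponentially contracting. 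Here I must use that the relevant flag data for $\tau_P$ along $\mathcal{H}_P$ agree with the boundary map $\xi$ of $\rho$ at the parabolic fixed point: this is the role of the definition of almost homogeneous cusps together with the fact (from the first paper) that the boundary map at a parabolic fixed point is determined by $\rho|_P$.

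The crucial step is to transfer $\norm{\cdot}^P$ from $\wh{E}_{\tau_P}(X)$ to $\wh{E}_\rho(X)$ over the cusp region. The almost-homogeneous-cusps condition says precisely that $\{\tau_P(g)\rho(g)^{-1} : g \in P\}$ is relatively compact in $\SL(d,\Kb)$; lifting to $\Gc(X)$, this means the bundle map identifying $E_{\tau_P}(X)$ with $E_\rho(X)$ over the $P$-invariant set $\mathcal{H}_P$ and conjugating the $P$-action, distorts any metric by a bounded factor, and hence carries a locally uniform metric to a locally uniform metric and preserves exponential contraction of $\homflow^t$ up to bounded multiplicative error (which does not affect the exponential rate for large $t$). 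This produces a locally uniform metric on $\wh{E}_\rho(X)$ over each cusp region with good contraction. I then glue: choose a partition-of-unity-type interpolation between these finitely many cusp metrics and an arbitrary continuous metric on the compact core, noting local uniformity is preserved under such interpolation (the cutoff functions are $\Gamma$-invariant and vary on bounded scale) and that exponential contraction on a flow-invariant-up-to-compact-error decomposition survives gluing — here one uses the standard fact, available from the machinery of the first paper, that exponential contraction of $\homflow^t$ is equivalent to a ``dominated splitting'' type estimate that can be checked piecewise and patched, since trajectories spend all but boundedly-much time either in the compact core or deep in a single cusp.

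The main obstacle I expect is the gluing/patching step: exponential contraction is not a local pointwise condition but an integrated one along flow lines, so one must control trajectories that repeatedly transit between the compact core and the cusps. The clean way to handle this is to invoke the characterization of (uniform) relative Anosovness in terms of singular-value / contraction estimates that are \emph{additive along concatenations of flow segments} — effectively, reduce to showing that the metric has the right behavior on ``excursion'' segments into each cusp (handled by the $\tau_P$ model) and on core segments (handled by compactness plus $\rho$ being relatively Anosov), then sum. Making this precise uses results of the first paper relating Definition~\ref{defn:flow definition} to quantitative contraction along the flow; modulo citing those, the argument is a bookkeeping exercise rather than a new idea.
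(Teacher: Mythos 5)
Your high-level plan is the right one and matches the paper's structure: build a good norm over each cusp region using the homogeneous model representation $\tau_P$, glue with a partition of unity on the compact core, and then get the global contraction estimate by decomposing flow trajectories into thick and thin excursions (the paper's Lemmas~\ref{lem:contraction between the thick parts} and~\ref{lem:contraction everywhere} do exactly this patching). But your ``crucial step''—transferring the $\tau_P$-equivariant metric and its contraction to $\wh{E}_\rho$—conceals two genuine gaps, and the second one is the actual mathematical content of the theorem.

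First, the transfer is not realized by any canonical bundle map. The identity map on $\Gc(X)\times\Kb^d$ does not intertwine the $\tau_P$-action and the $\rho|_P$-action, and an intertwining map $A_\sigma$ defined by $A_{g\sigma_0}=\rho(g)\tau_P(g)^{-1}$ on an orbit is discontinuous off that orbit. The relative compactness of $\{\tau_P(g)\rho(g)^{-1}\}$ tells you that \emph{if} a $\rho$-equivariant metric comparable to the $\tau_P$-equivariant one existed, it would be locally uniform; it does not \emph{produce} such a metric. The paper constructs it by explicit averaging against a $P$-invariant partition of unity on the horoboundary (see the norm in the proof of Proposition~\ref{prop:almost homogeneous norms}), and proving this average is uniformly comparable to the $\tau_P$-model norm requires the additional compactness upgrade $\{\tau(a_{-t})\tau(g)\rho(g)^{-1}\tau(a_t): g\in P,\ t\geq 0\}$ relatively compact (Lemma~\ref{lem:upgrading relative compactness}), which is strictly stronger than the raw hypothesis and whose proof uses that $\rho(P)$ and $\tau_P(P)$ fix a common partial flag.

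Second, and more seriously, the contraction does \emph{not} transfer ``up to bounded multiplicative error,'' because the subbundles $\wh\Theta^k,\wh\Xi^{d-k}$ for $\rho$ are built from the Anosov boundary map $\xi$ of $\rho$, while those for $\tau_P$ are built from $\zeta_{\tau_P}$, and these two maps agree only at the single parabolic fixed point $x^+$. A geodesic $\sigma$ deep in the horoball has endpoints $\sigma^\pm$ which are generically different from $x^+$, so $\xi^k(\sigma^+)\neq\zeta_{\tau_P}^k(\sigma^+)$ and the two bundle decompositions of $E(X)|_{\mathcal H_P}$ are genuinely different; a bounded comparison of the ambient norms says nothing about the ratio $\norm{Y}/\norm{Z}$ with $Y,Z$ taken from the $\xi$-subbundles. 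What makes the argument work is the infinitesimal homogeneity result Proposition~\ref{prop:local homogeneity}: the renormalized boundary map $\tau(\gamma)^{-n}\circ\xi\circ\gamma^n$ converges to $\zeta_\tau$ as one pushes deeper into the cusp, and the paper's contraction estimate (Proposition~\ref{prop:almost homogeneous norms}(3)) is a compactness/contradiction argument that leverages precisely this convergence, at the cost of shrinking to a smaller horoball $H'\subset H$. This is the key idea your proposal is missing; without it, ``bounded multiplicative error'' is not available and the cusp contraction does not follow.
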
 

Proposition 3.6 in ~\cite{CZZ2021} implies that every relatively Anosov representation of a geometrically finite Fuchsian group has almost homogeneous cusps and hence is uniform. This also follows directly from the construction of canonical norms in~\cite[Sec.\ 3.1]{CZZ2021}. 

\begin{corollary}\label{cor:geometrically finite Fuchsian groups} If $X = \Hb^2_{\Rb}$ is real hyperbolic 2-space, $\Gamma \leq \Isom_0(X)$ is geometrically finite, and $\rho \colon \Gamma \to \SL(d,\Kb)$ is $\Psf_k$-Anosov relative to $\peripherals(\Gamma)$, then $\rho$ is uniformly $\Psf_k$-Anosov relative to $\Cc_X(\Gamma)$.
\end{corollary}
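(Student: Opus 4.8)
The plan is to deduce the corollary from Theorem~\ref{thm:repn with almost homog cusps in intro}: since $\rho$ is already assumed $\Psf_k$-Anosov relative to $\peripherals(\Gamma)$, it suffices to verify that $\rho$ has almost homogeneous cusps, and this is precisely the point at which the hypothesis $X=\Hb^2_{\Rb}$ is used. The first step is to record the structure of the peripheral subgroups. Because $\Gamma\leq\Isom_0(\Hb^2_{\Rb})=\PSL(2,\Rb)$ is geometrically finite, each maximal parabolic subgroup of $\Gamma$ fixes a single boundary point and acts properly discontinuously by translations on a horocycle, so every $P\in\peripherals(\Gamma)$ is infinite cyclic, $P=\langle p_P\rangle$ with $p_P$ parabolic, and $p_P$ lies in a one-parameter unipotent subgroup $\{u_t\}_{t\in\Rb}$ of $\PSL(2,\Rb)$ with $u_1=p_P$; moreover there are only finitely many conjugacy classes of such $P$.

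The heart of the argument is the analysis of $\rho|_P$. Since $p_P$ is parabolic, its fixed point $x_P\in\partial(\Gamma,\peripherals(\Gamma))$ is the limit of both $(p_P^n)$ and $(p_P^{-n})$, so applying strong dynamics preservation to these two sequences forces $\lim_{n\to\infty}\rho(p_P)^{\pm n}V=\xi^k(x_P)$ for every $V\in\Gr_k(\Kb^d)$ transverse to $\xi^{d-k}(x_P)$; in particular the forward and backward limit $k$-planes coincide. A direct examination of the Cartan projection of $\rho(p_P)^n$ --- equivalently, Proposition~3.6 of \cite{CZZ2021} --- shows that this is possible only if every eigenvalue of $\rho(p_P)$ has modulus $1$. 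Writing the Jordan decomposition $\rho(p_P)=s_Pu_P$ with $s_P$ semisimple and $u_P$ unipotent, it follows that $s_P$ has spectrum on the unit circle, hence $\{s_P^n:n\in\Zb\}$ is relatively compact in $\SL(d,\Kb)$, and that $s_P$ commutes with $u_P$.

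It remains to produce the extensions $\tau_P$. Fix $P$ and put $u_P$ into Jordan normal form; realize the corresponding direct sum of Jordan blocks as the image of the standard unipotent element of $\SL(2,\Rb)$ under a suitable direct sum of symmetric-power representations $\Sym^{m-1}$ of the standard representation (each of which lands in $\SL$), and conjugate so that this distinguished unipotent is carried exactly to $u_P$. This produces $\tau_P\colon\SL(2,\Rb)\to\SL(d,\Kb)$ taking a lift of $p_P$ to $u_P$; the double cover $\SL(2,\Rb)\to\PSL(2,\Rb)$ serves for all of the finitely many $P$ at once, and is needed only to accommodate even-dimensional Jordan blocks. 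Because $\ker(\SL(2,\Rb)\to\PSL(2,\Rb))$ is central and finite, for $g$ in the preimage of $P$ one finds $\tau_P(g)(\rho\circ\pi)(g)^{-1}\in\{\tau_P(z)\,s_P^{-n}:z\in\ker\pi,\ n\in\Zb\}$, which is relatively compact. Thus $\rho$ has almost homogeneous cusps and Theorem~\ref{thm:repn with almost homog cusps in intro} finishes the proof.

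The main obstacle is the middle step: extracting from the merely qualitative relatively Anosov hypothesis the rigid eigenvalue and Jordan-block structure of $\rho(p_P)$, which is exactly where cyclicity of the peripheral subgroups --- special to geometrically finite Fuchsian groups --- is decisive; this input can be imported from \cite{CZZ2021}. Alternatively, one can bypass Theorem~\ref{thm:repn with almost homog cusps in intro} altogether: the canonical norms of \cite[Sec.\ 3.1]{CZZ2021} directly furnish a locally uniform metric on $\wh{E}_\rho(\Cc_X(\Gamma))\to\wh{\Gc}(\Cc_X(\Gamma))$ relative to which $\homflow^t$ is exponentially contracting, which is by definition what it means for $\rho$ to be uniformly $\Psf_k$-Anosov relative to $\Cc_X(\Gamma)$.
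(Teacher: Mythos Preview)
Your proposal is correct and follows essentially the same approach as the paper: the paper derives the corollary from Theorem~\ref{thm:repn with almost homog cusps in intro} by invoking \cite[Prop.\ 3.6]{CZZ2021} to conclude that any relatively Anosov representation of a geometrically finite Fuchsian group has almost homogeneous cusps, and also notes the alternative route via the canonical norms of \cite[Sec.\ 3.1]{CZZ2021}. You spell out more of the content behind that citation---the cyclicity of peripherals, the weak unipotence of $\rho(p_P)$ (which is also recorded in Proposition~\ref{prop:eigenvalue data in rel Anosov repn}), and the realization of the unipotent part via symmetric powers of the standard $\SL(2,\Rb)$-representation---but the logical structure and the two alternative arguments you give are exactly those of the paper.
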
 

Allowing representations of finite covers in the definition of almost homogeneous cusps is motivated by the following examples. 

\begin{example} Identify $\Isom_0(\Hb_{\Rb}^2)$ with $\PSL(2,\Rb)$ and let $\pi \colon \SL(2,\Rb) \to \PSL(2,\Rb)$ denote the double cover.  Let $P \leq \PSL(2,\Rb)$ be the cyclic subgroup generated by the projection of 
$$
\mathscr{u} := \begin{pmatrix} 1 & 1 \\ 0 & 1 \end{pmatrix} \in \SL(2,\Rb)
$$ 
to $\PSL(2,\Rb)$. Also, let $\tau_d \colon \SL(2,\Rb) \to \SL(d,\Rb)$ denote the standard irreducible representation.
\begin{itemize}
\item The representation $\rho_1 \colon P \to \SL(5,\Rb)$ defined by 
$$
\rho_1( [\mathscr{u}]) = (\tau_2 \oplus \tau_3)(\mathscr{u})
$$
does not extend to a representation of $\PSL(2,\Rb)$ since $(\tau_2 \oplus \tau_3)(-\id_2) \neq \id_5$. However, 
$$
\{ (\tau_2 \oplus \tau_3)(g) \cdot (\rho_1 \circ \pi)(g)^{-1} : g \in \pi^{-1}(P)\} = \{ (-\id_2) \oplus \id_3\}
$$
is compact. 
\item The representation $\rho_2 \colon P \to \SL(4,\Rb)$ defined by 
$$
\rho_2( [\mathscr{u}]) = (-\tau_2(\mathscr{u})) \oplus \tau_2(\mathscr{u})
$$
also does not extend to a representation of $\PSL(2,\Rb)$. However, 
$$
\{ (\tau_2 \oplus \tau_2)(g) \cdot (\rho_2 \circ \pi)(g)^{-1} : g \in \pi^{-1}(P)\} = \{ (-\id_2) \oplus \id_2\}
$$
is compact. 
\end{itemize}
\end{example} 

\subsubsection{Geometric finiteness in convex projective geometry} \label{introsec:convex projective geometry}

We will also apply our general results to the setting of convex real projective geometry.

Given a properly convex domain $\Omega \subset \proj(\Rb^d)$, the \emph{automorphism group of $\Omega$}, denoted $\Aut(\Omega)$, is the subgroup of $\PGL(d,\Rb)$ which preserves $\Omega$. Such a domain also has a natural $\Aut(\Omega)$-invariant metric, the Hilbert metric $\dist_\Omega$ (see Section \ref{subsec:hilbert metric} for the definition).
The \emph{limit set} of a subgroup $\Gamma \leq \Aut(\Omega)$ is defined to be 
$$
\Lambda_\Omega(\Gamma) := \partial \Omega \cap \bigcup_{p \in \Omega} \overline{\Gamma \cdot p}.
$$
Following~\cite{CZZ2022}, we say that $\Gamma$ is a \emph{projectively visible subgroup of $\Aut(\Omega)$} if 
\begin{enumerate}
\item for all $p,q \in \Lambda_\Omega(\Gamma)$ distinct, the open line segment in $\overline{\Omega}$ joining $p$ to $q$ is contained in $\Omega$, and
\item every point in $\Lambda_\Omega(\Gamma)$ is a $\Cc^1$-smooth point of $\partial \Omega$.
\end{enumerate} 

\begin{example}
The Klein-Beltrami model identifies real hyperbolic $n$-space with the properly convex domain 
$$
\Bb := \left\{ [1:x_1 : \dots : x_n] \in \proj(\Rb^{n+1}) : \sum x_j^2 < 1\right\}
$$
endowed with its Hilbert metric $\dist_{\Bb}$. The domain $\Bb$ is strictly convex and has $\Cc^\infty$-smooth boundary, so any discrete subgroup in $\Aut(\Bb)$ is a projectively visible subgroup. 
\end{example} 

A projectively visible subgroup acts as a convergence group on its limit set and if, in addition, the action on the limit set is geometrically finite then the inclusion representation is relatively $\Psf_1$-Anosov. These assertions follow from~\cite[Prop.\ 3.5]{CZZ2022}, see Proposition~\ref{prop:projvis+geomfin=relAnosov} below. 

Conversely, we characterize exactly when the image of a relatively $\mathsf{P}_1$-Anosov representation  is a projectively visible subgroup which acts geometrically finitely on its limit set. This characterization is in terms of a lifting property of the Anosov boundary map, see Definition~\ref{defn:the lifting property}  below. 

\begin{proposition}[see Proposition~\ref{prop:lifting property one}]\label{prop:lifting property one in intro} Suppose that $(\Gamma, \peripherals)$ is relatively hyperbolic and $\rho \colon \Gamma \to \PGL(d,\Rb)$ is $\Psf_1$-Anosov relative to $\peripherals$. Then the following are equivalent: 
\begin{enumerate}
\item $\rho$ has the lifting property (in the sense of Definition~\ref{defn:the lifting property}), 
\item there exists a properly convex domain $\Omega_0 \subset \proj(\Rb^d)$ where $\rho(\Gamma) \leq \Aut(\Omega_0)$,
\item there exists a properly convex domain $\Omega \subset \proj(\Rb^d)$ where $\rho(\Gamma) \leq \Aut(\Omega)$ is a projectively visible subgroup which acts geometrically finitely on its limit set. 
\end{enumerate}
\end{proposition}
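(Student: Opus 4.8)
The plan is to establish the chain of implications $(3) \Rightarrow (2) \Rightarrow (1) \Rightarrow (3)$. The implications $(3) \Rightarrow (2)$ is immediate, since a visible subgroup of $\Aut(\Omega)$ is by definition contained in $\Aut(\Omega)$. For $(2) \Rightarrow (1)$, I would argue as follows. Given a properly convex $\Omega_0$ preserved by $\rho(\Gamma)$, fix a point $p_0 \in \Omega_0$ and consider the convex hull $\Cc$ of the limit set $\Lambda_{\Omega_0}(\rho(\Gamma))$. Since $\rho$ is $\Psf_1$-Anosov relative to $\peripherals$, the boundary map $\xi = (\xi^1, \xi^{d-1})$ has image consisting of a line $\xi^1(x)$ and a transverse hyperplane $\xi^{d-1}(x)$ for each $x \in \partial(\Gamma,\peripherals)$. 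The key observation is that the extreme points of $\Omega_0$ in the limit set provide a continuous, $\rho$-equivariant lift of $\xi^1$ to the sphere $\Sb(\Rb^d) = (\Rb^d \setminus \{0\})/\Rb_{>0}$: each point of $\Lambda_{\Omega_0}(\rho(\Gamma))$, viewed in the boundary of the cone over $\Omega_0$, determines a ray and hence a point of $\Sb(\Rb^d)$, and the convergence-group dynamics on the limit set (together with strong dynamics preservation of $\xi$) identify this with $\xi^1$. One must also check the hyperplane map $\xi^{d-1}$ lifts compatibly, using the dual convex domain $\Omega_0^*$; this yields the lifting property in the sense of Definition~\ref{defn:the lifting property}.

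For the main implication $(1) \Rightarrow (3)$, I would start from a lift $\hat\xi^1 \colon \partial(\Gamma,\peripherals) \to \Sb(\Rb^d)$ of the boundary map and build the domain $\Omega$ directly. The idea is to take the cone $C$ over $\hat\xi^1(\partial(\Gamma,\peripherals))$ in $\Rb^d$, form its convex hull, and let $\Omega$ be (the interior of the image in $\proj(\Rb^d)$ of) that convex hull, or rather a slight enlargement to ensure proper convexity and that the boundary map lands in the extreme points. Properness of the convex hull should follow from transversality of $\xi$: the hyperplanes $\xi^{d-1}(y)$ separate $\hat\xi^1(x)$ from $-\hat\xi^1(x)$, so the cone contains no line. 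Equivariance of $\hat\xi^1$ gives $\rho(\Gamma) \leq \Aut(\Omega)$. I would then identify $\Lambda_\Omega(\rho(\Gamma))$ with $\hat\xi^1(\partial(\Gamma,\peripherals))$ using strong dynamics preservation, verify the two visibility conditions — the segment condition from transversality (the segment $[\hat\xi^1(x), \hat\xi^1(y)]$ misses $\partial\Omega$ because any supporting hyperplane at an interior point of that segment would have to contain both endpoints, violating $\xi^1(x) \oplus \xi^{d-1}(z) = \Rb^d$ appropriately) and the $\Cc^1$-smoothness at limit points from the transverse hyperplane $\xi^{d-1}$ being the unique supporting hyperplane. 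Finally, geometric finiteness of the action on $\Lambda_\Omega(\rho(\Gamma))$ follows because the convergence action of $\rho(\Gamma)$ on its limit set is, via $\xi^1$, conjugate to the action of $(\Gamma,\peripherals)$ on its Bowditch boundary, and a geometrically finite convergence action is detected by the conical/bounded-parabolic dichotomy, which is preserved under this conjugacy (here one invokes Proposition~\ref{prop:projvis+geomfin=relAnosov} or its underlying mechanism in the converse direction).

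The main obstacle I anticipate is in $(1) \Rightarrow (3)$: ensuring that the constructed domain $\Omega$ is simultaneously properly convex, has $\rho(\Gamma)$ acting on it with limit set exactly $\hat\xi^1(\partial(\Gamma,\peripherals))$, \emph{and} is visible — these requirements pull in slightly different directions, since the naive convex hull of the cone may fail to be properly convex or may have extra limit points, while enlarging it risks destroying $\Cc^1$-smoothness or the segment condition. The resolution should be a careful choice (as in the Benoist/Cooper--Long--Tillmann tradition and as used in~\cite{CZZ2022}) taking $\Omega$ to be the interior of the convex hull of $\hat\xi^1(\partial(\Gamma,\peripherals))$ together with an analysis, using the dual map $\xi^{d-1}$, showing the dual domain is also properly convex — properness of both $\Omega$ and $\Omega^*$ being equivalent to $\Omega$ being properly convex with the limit set consisting of $\Cc^1$ extreme points. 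Transversality of $\xi$ is exactly what makes this work, and the visible-subgroup axioms (1) and (2) are precisely encodings of, respectively, transversality of the pair $(\xi^1,\xi^1)$ along segments and of the pair $(\xi^1, \xi^{d-1})$ at each point.
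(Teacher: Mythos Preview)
Your implication $(2)\Rightarrow(1)$ is essentially the paper's argument: once $\rho(\Gamma)$ preserves a properly convex $\Omega_0$, strong dynamics preservation forces $\xi^1(\partial(\Gamma,\peripherals))\subset\partial\Omega_0$ and $\xi^{d-1}(\partial(\Gamma,\peripherals))\subset\partial\Omega_0^*$, and then the cone over $\Omega_0$ and the dual cone give the required lifts. So that part is fine.

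Where you diverge from the paper is in closing the cycle. The paper does \emph{not} prove $(1)\Rightarrow(3)$ directly. Instead it proves only $(1)\Rightarrow(2)$ --- constructing \emph{some} $\rho(\Gamma)$-invariant properly convex domain, with no visibility claim --- and then invokes \cite[Prop.~4.4]{CZZ2022} for the equivalence $(2)\Leftrightarrow(3)$ (this uses that $\rho(\Gamma)$ is a $\Psf_{1,d-1}$-transverse group in the sense of that paper). The paper's $(1)\Rightarrow(2)$ construction is: form the projective image $C_0$ of positive combinations of the lifted boundary points $\tilde\xi^1(x)$, observe from positivity of $\tilde\xi$ that $C_0$ is bounded in an affine chart and disjoint from every $\proj(\ker\xi^{d-1}(y))$, then use strong dynamics preservation to find an \emph{open} neighborhood $U$ of a point of $C_0$ with $\rho(\Gamma)\cdot U$ still bounded, and take $\Omega_0=\mathrm{ConvHull}(C_0\cup\rho(\Gamma)\cdot U)$.

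Your direct attack on $(1)\Rightarrow(3)$ runs straight into the obstacle you yourself flag: the naive convex hull of the cone over $\hat\xi^1(\partial(\Gamma,\peripherals))$ can have empty interior, and an arbitrary ``slight enlargement'' need not give $\Cc^1$-smoothness of $\partial\Omega$ at the limit points (why should the supporting hyperplane at $\xi^1(x)$ be \emph{unique}?). The resolution you gesture at --- using the dual map $\xi^{d-1}$ to build $\Omega^*$ first --- is exactly the content of \cite[Prop.~4.4]{CZZ2022}, which the paper simply cites. So your route is not wrong, but it reproves a result already available, and the details you would need (the dual convex-hull construction and the argument that every supporting hyperplane at a limit point lies in the image of $\xi^{d-1}$) are more delicate than your sketch indicates. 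The paper's decomposition buys a cleaner division of labor: $(1)\Leftrightarrow(2)$ is elementary, and $(2)\Leftrightarrow(3)$ is a general fact about transverse groups.
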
 

We will also prove that the lifting property is an open and closed condition in the following sense. 

\begin{proposition}[see Proposition~\ref{prop:lifting property two}]\label{prop:lifting property two in intro} Suppose that $(\Gamma, \peripherals)$ is relatively hyperbolic and $\rho_0 \colon \Gamma \to \PGL(d,\Rb)$ is a representation. Let $\Ac_1(\rho_0)$ denote the set of representations in $ \Hom_{\rho_0}(\Gamma, \PGL(d,\Rb))$  which are $\Psf_1$-Anosov relative to $\peripherals$. Then the subset $\Ac_1^+(\rho_0) \subset \Ac_1(\rho_0)$ of representations with the lifting property is open and closed in $\Ac_1(\rho_0)$. 

\end{proposition}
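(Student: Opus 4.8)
The plan is to reduce the lifting property to the geometric descriptions in Proposition~\ref{prop:lifting property one in intro} and then to combine two soft inputs. The first is the relative stability theorem (Theorem~\ref{thm:stability}), which gives that $\Ac_1(\rho_0)$ is open in $\Hom_{\rho_0}(\Gamma, \PGL(d,\Rb))$; the second is the continuity of the Anosov boundary maps along this locus, i.e.\ that $\rho \mapsto \xi_\rho$ is continuous from $\Ac_1(\rho_0)$ into $C^0\bigl(\partial(\Gamma, \peripherals),\, \Gr_1(\Rb^d)\times\Gr_{d-1}(\Rb^d)\bigr)$, which is part of the structural theory of the first paper. Since the Bowditch boundary $\partial(\Gamma, \peripherals)$ is compact, this says the limit curves move uniformly along paths in $\Ac_1(\rho_0)$, which is exactly what covering-space arguments require. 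Throughout, recall that $\Hom_{\rho_0}(\Gamma,\PGL(d,\Rb))$ is metrizable, so it suffices to argue with sequences.

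Unwinding Definition~\ref{defn:the lifting property}, the lifting property for $\rho$ amounts to the existence of a lift $\hat\rho\colon \Gamma \to \SL^{\pm}(d,\Rb)$ of $\rho$ through the quotient $\SL^{\pm}(d,\Rb)\to\PGL(d,\Rb)$ together with a continuous $\hat\rho$-equivariant lift $\hat\xi^1\colon \partial(\Gamma, \peripherals)\to\Sb^{d-1}$ of $\xi^1_\rho$ through the double cover $\Sb^{d-1}\to\proj(\Rb^d)$, subject to the positivity/convexity condition built into that definition. For openness, fix $\rho\in\Ac_1^+(\rho_0)$ with such data $(\hat\rho,\hat\xi^1)$. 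The obstruction to lifting a nearby $\rho'\in\Ac_1(\rho_0)$ to $\SL^{\pm}(d,\Rb)$ is a class in $H^2(\Gamma;\Zb/2)$ which is locally constant in $\rho'$, so, shrinking the neighborhood, we may choose lifts $\hat\rho'$ with $\hat\rho'\to\hat\rho$. Likewise, the obstruction to continuously lifting $\xi^1_{\rho'}$ through $\Sb^{d-1}\to\proj(\Rb^d)$ is the isomorphism class of the pulled-back $\Zb/2$-bundle over $\partial(\Gamma, \peripherals)$, an element of $H^1(\partial(\Gamma, \peripherals);\Zb/2)$, and, since $\rho'\mapsto\xi^1_{\rho'}$ is continuous into $C^0$ and $\partial(\Gamma, \peripherals)$ is compact, this class too is locally constant in $\rho'$. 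Hence a continuous lift of $\xi^1_{\rho'}$ exists; normalizing it to agree with $\hat\xi^1$ at a basepoint makes it $C^0$-close to $\hat\xi^1$, and by uniqueness of lifts over the connected space $\partial(\Gamma, \peripherals)$ it is $\hat\rho'$-equivariant. One then checks that the positivity/convexity condition is open in the pair $(\hat\rho',\hat\xi^1_{\rho'})$ — the associated family of half-spaces still has nonempty common interior, say — so $\rho'\in\Ac_1^+(\rho_0)$.

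For closedness, let $\rho_n\to\rho$ with $\rho_n\in\Ac_1^+(\rho_0)$ and $\rho\in\Ac_1(\rho_0)$. By Proposition~\ref{prop:lifting property one in intro} each $\rho_n(\Gamma)$ preserves a properly convex domain $\Omega_n$, and, as above, there are compatible lifts $(\hat\rho_n,\hat\xi^1_n)$; since the lifting obstructions are locally constant we may pass to a subsequence and arrange $\hat\rho_n\to\hat\rho$ with $\hat\rho$ a lift of $\rho$. As $\xi^1_n\to\xi^1_\rho$ uniformly and each $\hat\xi^1_n$ is a lift through $\Sb^{d-1}\to\proj(\Rb^d)$, fixing a basepoint $b$ and passing to a further subsequence so that $\hat\xi^1_n(b)$ converges to a lift $\hat v$ of $\xi^1_\rho(b)$, a standard compactness-and-uniqueness argument for covering maps gives $\hat\xi^1_n\to\hat\xi^1_\rho$ uniformly, where $\hat\xi^1_\rho$ is the unique continuous lift of $\xi^1_\rho$ with $\hat\xi^1_\rho(b)=\hat v$; passing to the limit in $\hat\xi^1_n\circ\gamma=\hat\rho_n(\gamma)\circ\hat\xi^1_n$ shows $\hat\xi^1_\rho$ is $\hat\rho$-equivariant. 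The remaining point is to verify the positivity/convexity condition in the limit: one shows the domains $\Omega_n$ subconverge in the Hausdorff topology to a $\rho(\Gamma)$-invariant domain $\Omega$ which is still properly convex — using that $\rho$ is $\Psf_1$-Anosov relative to $\peripherals$, so transversality and the dynamics of $\xi_\rho$ prevent the limit from collapsing or from containing a projective segment through its interior. Then $\rho(\Gamma)\leq\Aut(\Omega)$ with $\Omega$ properly convex, so $\rho\in\Ac_1^+(\rho_0)$ by Proposition~\ref{prop:lifting property one in intro}, and $\Ac_1^+(\rho_0)$ is closed in $\Ac_1(\rho_0)$.

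The main obstacle is this last step in either direction: showing that the positivity/convexity part of the lifting property is stable under small deformations and survives Hausdorff limits — equivalently, that the perturbed or limiting configuration of lifted boundary points and their dual hyperplanes bounds a genuinely properly convex domain and not a degenerate one. Under limits this is where one must invoke the Anosov property of the limit representation $\rho$ (transversality and strong dynamics preservation), rather than any continuity estimate; all the rest is covering-space uniqueness together with the fact that the relevant $\Zb/2$-valued obstructions are locally constant functions of the representation on $\Ac_1(\rho_0)$.
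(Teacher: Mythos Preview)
Your overall strategy—covering-space arguments plus continuity of the boundary maps—matches the paper's, but there are two genuine gaps.

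First, you repeatedly assume $\partial(\Gamma,\peripherals)$ is connected (normalizing lifts at a basepoint, invoking ``uniqueness of lifts over the connected space''). This is false in general: for the free products appearing in Sections~\ref{sec:not uniform rel GM cusp space} and~\ref{sec:pingpong} the Bowditch boundary is totally disconnected. The paper avoids this by characterizing the desired lift as the unique one lying within the evenly-covered radius $\epsilon$ of a given lift (Observation~\ref{obs:lifting_two}), which needs no connectedness and simultaneously handles both components of $\xi$.

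Second, your closedness argument detours through Hausdorff limits of the invariant domains $\Omega_n$, and you correctly flag nondegeneracy of the limit as the main obstacle but do not resolve it. The detour is unnecessary: once the lifted boundary maps $\tilde\xi_n$ converge to a lift $\tilde\xi$ of $\xi_\rho$, positivity $\tilde\xi_n^{d-1}(y)(\tilde\xi_n^1(x))>0$ passes to $\tilde\xi^{d-1}(y)(\tilde\xi^1(x))\geq 0$ in the limit, and transversality of $\xi_\rho$—available precisely because $\rho\in\Ac_1(\rho_0)$—says this quantity is nonzero, hence strictly positive. That is the entire positivity argument in the paper's closedness proof; no domains enter.

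For openness you assert positivity is open without detail. The positivity condition ranges over all distinct pairs $(x,y)$, a noncompact set, so openness is not automatic. The paper uses that $\Gamma$ acts cocompactly on pairs of distinct points in $\partial(\Gamma,\peripherals)$ (a result of Tukia): one checks positivity on a compact fundamental set $K$, shrinks the neighborhood so positivity persists on $K$, and then propagates to all pairs by the equivariance just established. Finally, note that the lifting property requires lifting both $\xi^1$ and $\xi^{d-1}$ to $\Sb\times\Sb^*$; your write-up only tracks $\xi^1$.
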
 

\begin{remark} In the case when $\peripherals=\varnothing$ (i.e. $\Gamma$ is word hyperbolic), the above proposition follows from \cite[Prop.\ 1.2]{ST2018}. In fact, in ~\cite{ST2018}, they consider lifting properties for Anosov representations into general semisimple Lie groups. It seems likely that some version of their result should hold in the relative case as well. 
\end{remark}

As a corollary to~\cite[Cor.\ 13.6]{ZZ2022a} and Proposition~\ref{prop:lifting property two in intro}, we obtain the following stability result. 

\begin{corollary}\label{cor:real proj stability in intro} Suppose that $\Gamma \leq \Aut(\Omega)$ is a projectively visible subgroup acting geometrically finitely on its limit set and $\iota \colon \Gamma \into \PGL(d,\Rb)$ is the inclusion representation. Then there is an open neighborhood $\Oc \subset \Hom_{\iota}(\Gamma, \PGL(d,\Rb))$ of $\iota$ such that: if $\rho \in \Oc$, then there exists a properly convex domain $\Omega_\rho \subset \proj(\Rb^d)$ where $\rho(\Gamma) \leq \Aut(\Omega_\rho)$ is a projectively visible subgroup acting geometrically finitely on its limit set.
\end{corollary}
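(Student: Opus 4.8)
The plan is to combine the cited stability result \cite[Cor.\ 13.6]{ZZ2022a} with Proposition \ref{prop:lifting property two in intro} and Proposition \ref{prop:lifting property one in intro}. First I would observe that since $\Gamma \leq \Aut(\Omega)$ is a visible subgroup acting geometrically finitely on its limit set, the inclusion representation $\iota$ is $\Psf_1$-Anosov relative to $\peripherals(\Gamma)$ by Proposition \ref{prop:projvis+geomfin=relAnosov}; in particular $(\Gamma,\peripherals)$ is relatively hyperbolic, so the hypotheses of the two lifting-property propositions are satisfied with $\rho_0 = \iota$. By the relative stability theorem \cite[Cor.\ 13.6]{ZZ2022a} applied in $\PGL(d,\Rb)$, there is an open neighborhood $\Oc_1 \subset \Hom_{\iota}(\Gamma,\PGL(d,\Rb))$ of $\iota$ consisting of representations that are $\Psf_1$-Anosov relative to $\peripherals$; that is, $\Oc_1 \subset \Ac_1(\iota)$ in the notation of Proposition \ref{prop:lifting property two in intro}.

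Next I would use the lifting property to descend to the convex projective picture. By the implication (iii)$\Rightarrow$(i) of Proposition \ref{prop:lifting property one in intro}, the inclusion $\iota$ has the lifting property, i.e.\ $\iota \in \Ac_1^+(\iota)$. By Proposition \ref{prop:lifting property two in intro}, the set $\Ac_1^+(\iota)$ is open in $\Ac_1(\iota)$; hence there is a neighborhood $\Oc_2$ of $\iota$ in $\Ac_1(\iota)$ (equivalently, an open subset of $\Hom_{\iota}(\Gamma,\PGL(d,\Rb))$ intersected with $\Ac_1(\iota)$) all of whose members have the lifting property. Setting $\Oc := \Oc_1 \cap \Oc_2$, every $\rho \in \Oc$ is $\Psf_1$-Anosov relative to $\peripherals$ and has the lifting property. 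Applying the implication (i)$\Rightarrow$(iii) of Proposition \ref{prop:lifting property one in intro} to such a $\rho$ produces a properly convex domain $\Omega_\rho \subset \proj(\Rb^d)$ with $\rho(\Gamma) \leq \Aut(\Omega_\rho)$ a visible subgroup acting geometrically finitely on its limit set, which is exactly the asserted conclusion.

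The only mild subtlety — and the step I would be most careful about — is bookkeeping of topologies: one must check that $\Oc_1$, produced as an open subset of $\Hom_{\iota}(\Gamma,\PGL(d,\Rb))$ by the stability theorem, and $\Oc_2$, produced as an open subset of the subspace $\Ac_1(\iota)$ by Proposition \ref{prop:lifting property two in intro}, intersect in a set that is open in $\Hom_{\iota}(\Gamma,\PGL(d,\Rb))$ and contained in $\Ac_1^+(\iota)$. Since $\Ac_1(\iota)$ carries the subspace topology and $\Oc_1 \subset \Ac_1(\iota)$ is already open in the ambient space, $\Oc_1 \cap \Oc_2$ is open in $\Ac_1(\iota)$ and also open in $\Hom_{\iota}(\Gamma,\PGL(d,\Rb))$, so no real difficulty arises. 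Everything else is a direct invocation of the quoted results, so the proof is short.
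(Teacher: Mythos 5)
Your proof is correct and is exactly the intended argument: the paper states the corollary as a direct consequence of the cited stability result and Proposition~\ref{prop:lifting property two in intro}, and your chain (stability gives an open set in $\Ac_1(\iota)$; $\iota \in \Ac_1^+(\iota)$ via Proposition~\ref{prop:lifting property one in intro}; openness of $\Ac_1^+(\iota)$ from Proposition~\ref{prop:lifting property two in intro}; conclude via Proposition~\ref{prop:lifting property one in intro} again) reproduces that reasoning. Your topological bookkeeping at the end is also sound, since for $\rho \in \Oc_1 \subset \Ac_1(\iota)$ and $\Oc_2 = \Ac_1(\iota) \cap U$ with $U$ ambient-open, one has $\Oc_1 \cap \Oc_2 = \Oc_1 \cap U$, which is ambient-open.
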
 

\begin{remark} For other stability results in the context of convex real projective geometry, see~\cite{Koszul1968,BenoistIII,Marquis2010,CLT2018,Choi}. \end{remark}

Using the methods in~\cite{DGK2017} and~\cite{Z2017}, we will construct the following examples, which brings the examples in Sections~\ref{subsec: Geometric finiteness in rank one} into the convex real projective setting. 

\begin{proposition}[see Propositions~\ref{prop:not real hyperbolic plane}  and \ref{prop: if tau preserves a properly convex domain}]\label{prop: repn of rank one groups in convex setting intro} Suppose that $X$ is a negatively-curved symmetric space which is not isometric to real hyperbolic 2-space and $\mathsf{G} := \Isom_0(X)$. If $\tau \colon \mathsf{G} \to \PGL(d,\Rb)$ is $\Psf_1$-proximal, then there exists a $\tau(\mathsf{G})$-invariant properly convex domain $\Omega \subset \proj(\Rb^d)$ such that: if $\Gamma \leq \mathsf{G}$ is geometrically finite, then 
\begin{enumerate}
\item $\tau(\Gamma)$ is a projectively visible subgroup of $\Aut(\Omega)$ and acts geometrically finitely on its limit set. 
\item If $\Cc_\Gamma: = \Cc_\Omega(\tau(\Gamma))$, then $(\Cc_\Gamma, \dist_\Omega)$ is Gromov-hyperbolic. 
\end{enumerate}
\end{proposition}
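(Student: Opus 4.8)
The plan is to deduce the proposition by combining two statements: the existence, when $X\not\cong\Hb^2_\Rb$, of a $\tau(\mathsf{G})$-invariant properly convex domain $\Omega\subset\proj(\Rb^d)$ (Proposition~\ref{prop:not real hyperbolic plane}), and the fact that any such $\Omega$ has the asserted features for every geometrically finite $\Gamma\leq\mathsf{G}$ (Proposition~\ref{prop: if tau preserves a properly convex domain}). Throughout, let $\xi=(\xi^1,\xi^{d-1})\colon\partial_\infty X\to\proj(\Rb^d)\times\Gr_{d-1}(\Rb^d)$ be the continuous, transverse, $\tau$-equivariant boundary map of the proximal representation $\tau$. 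Since $\mathsf{G}$ has real rank one, its restricted Weyl group contains $-1$, so the set of restricted weights of $\Rb^d$ is symmetric with equal multiplicities; hence $\Psf_1$-proximality of $\tau$ in fact makes $\tau$ biproximal. In particular $\xi^1(\partial_\infty X)$ is a set of extreme points of the convex hull of the limit set, $\{\xi^{d-1}(x):x\in\partial_\infty X\}$ is (dually) a set of extreme supporting hyperplanes, and $\xi^1(x)\subset\xi^{d-1}(x)$ for every $x$.

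For the construction of $\Omega$ I would follow~\cite{DGK2017,Z2017}. After replacing $\Rb^d$ by the span of $\xi^1(\partial_\infty X)$ where convenient, the candidate domain is a suitable connected component of the complement of $\bigcup_{x\in\partial_\infty X}\xi^{d-1}(x)$ (equivalently, the dual of the interior of the convex hull of the dual limit set). This is an open, convex, $\tau(\mathsf{G})$-invariant set with $\xi^1(\partial_\infty X)\subset\partial\Omega$, and the whole difficulty is to show that it is \emph{properly} convex; this is exactly the point at which the hypothesis $X\not\cong\Hb^2_\Rb$, i.e.\ $\dim\partial_\infty X\geq 2$, is essential. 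Indeed, when $\partial_\infty X=\Sb^1$ and $\tau$ is, say, a sufficiently high-degree irreducible representation of $\PSL(2,\Rb)$, no $\tau(\mathsf{G})$-invariant properly convex domain exists at all (the image lands in some $\mathsf{O}(p,q)$ with $\min(p,q)\geq 2$), whereas when $\dim\partial_\infty X\geq 2$ the homogeneity of the picture forces the limit set and its dual to be non-degenerate enough — in particular $\mathrm{C}^1$ — that the construction goes through. I expect this step, including pinning down precisely how rank one and $\dim\partial_\infty X\geq 2$ are used to get proper convexity, to be the main obstacle.

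Given a $\tau(\mathsf{G})$-invariant properly convex $\Omega$, fix a geometrically finite $\Gamma\leq\mathsf{G}$. First I would compute the projective limit set: since for any proximal $\tau(\gamma_n)$ the attracting fixed line converges to $\xi^1(\gamma_n^+)$, one obtains $\Lambda_\Omega(\tau(\Gamma))=\xi^1(\Lambda_X(\Gamma))$, and transversality makes $\xi^1$ a $\tau$-equivariant homeomorphism from $\Lambda_X(\Gamma)$ onto $\Lambda_\Omega(\tau(\Gamma))$ intertwining the convergence action of $\Gamma$ on its $X$-limit set with that of $\tau(\Gamma)$. Conclusion (1) then follows: for distinct $x,y\in\partial_\infty X$, transversality gives $\xi^1(x)\notin\xi^{d-1}(y)$, and $\xi^{d-1}(y)$ is a supporting hyperplane of $\Omega$ at $\xi^1(y)$, so the open segment from $\xi^1(x)$ to $\xi^1(y)$ lies in $\Omega$; and at each $\xi^1(x)$ the supporting hyperplane is unique (equal to $\xi^{d-1}(x)$) because the $\{\xi^{d-1}(z)\}$ are extreme points of the dual body. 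Hence $\tau(\mathsf{G})$, and a fortiori $\tau(\Gamma)$, is a projectively visible subgroup of $\Aut(\Omega)$. Since geometric finiteness of a convergence action is invariant under topological conjugacy, the $\tau(\Gamma)$-action on $\Lambda_\Omega(\tau(\Gamma))$ is geometrically finite; and $\ker(\tau|_\Gamma)\subseteq\ker\tau$ is finite, $\tau$ being a proximal representation of a simple Lie group.

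For conclusion (2), once (1) is in hand Proposition~\ref{prop:projvis+geomfin=relAnosov} (following~\cite[Prop.\ 3.5]{CZZ2022}) applies to $\tau(\Gamma)\leq\Aut(\Omega)$, and its proof realizes $\Cc_\Gamma=\Cc_\Omega(\tau(\Gamma))$ with $\dist_\Omega$ as a weak cusp space for $(\Gamma,\peripherals(\Gamma))$; weak cusp spaces are Gromov-hyperbolic by definition. Alternatively, one can argue directly: the orbit map $gK\mapsto\tau(g)\cdot o$ (for a basepoint $o\in\Omega$) is a $\tau$-equivariant quasi-isometric embedding of $X$ into $(\Omega,\dist_\Omega)$, since $\tau$ is a Lie group homomorphism and $\mathsf{G}$ has rank one, so $\dist_\Omega(\tau(g)o,o)$ is comparable to $\dist_X(gK,K)$; one then checks that the image of $\Cc_X(\Gamma)$ lies within bounded Hilbert distance of $\Cc_\Gamma$, after which Gromov-hyperbolicity of $(\Cc_\Gamma,\dist_\Omega)$ is inherited from $\Cc_X(\Gamma)$, a convex subset of the $\CAT(-1)$ space $X$.
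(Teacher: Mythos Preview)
Your outline correctly identifies the two-step structure, but there is a genuine gap at the first and most important step: you do not actually prove that the candidate $\Omega$ is \emph{properly} convex. You construct (essentially) the maximal domain $\Omega=\big(\mathrm{ConvHull}\{\xi^{d-1}(x)\}\big)^*$ and say ``I expect this step\ldots to be the main obstacle,'' but never supply the argument. The paper's key idea here is different from anything you wrote: since $X\not\cong\Hb^2_\Rb$, the boundary $\partial_\infty X$ is a sphere of dimension $\geq 2$, hence \emph{simply connected}, so the maps $\xi^1,\xi^{d-1}$ lift continuously to the spheres $\Sb\subset\Rb^d$ and $\Sb^*\subset\Rb^{d*}$. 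Transversality then says $\tilde\xi^{d-1}(y)(\tilde\xi^1(x))\neq 0$ for $x\neq y$, and since $\partial_\infty X$ minus a point is connected, this quantity has constant sign. After a sign flip this is exactly the lifting property of Definition~\ref{defn:the lifting property}, and the proof of Lemma~\ref{lem:1=>2 in lifting property} then produces the properly convex $\Omega$. Your ``lands in $\mathsf{O}(p,q)$ with $\min(p,q)\geq 2$'' heuristic for the $\PSL(2,\Rb)$ obstruction is also inaccurate; the actual criterion (Proposition~\ref{prop:real hyperbolic plane case}) is that the largest irreducible summand has odd dimension.

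For part~(2), your first suggestion does not work as written: Proposition~\ref{prop:projvis+geomfin=relAnosov} shows the inclusion is relatively Anosov, but its proof does not realize $(\Cc_\Gamma,\dist_\Omega)$ as a weak cusp space (and there is no reason a priori that every point of $\Cc_\Gamma$ is uniformly close to a geodesic line). The paper instead argues globally: starting from any invariant $\Omega_0$ it rebuilds a specific $\Omega$ (Lemma~\ref{lem:constructing Omega}) whose dual boundary is generated by $\tau(\mathsf{G})\cdot K\cup\zeta^{d-1}(\partial_\infty X)$, which is exactly what makes the $\Cc^1$-smoothness claim (your ``extreme points of the dual body'' assertion) provable rather than hopeful. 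Then, using a \emph{uniform} lattice $\Gamma_0\leq\mathsf{G}$ and \cite[Lem.~8.7]{DGK2017}, it shows $\tau(\mathsf{G})$ acts cocompactly on the full convex hull $\Cc$ of $\zeta^1(\partial_\infty X)$; Milnor--\v{S}varc makes $(\Cc,\dist_\Omega)$ quasi-isometric to $X$, hence Gromov-hyperbolic, and $\Cc_\Gamma\subset\Cc$ inherits this because it is convex (so projective segments in $\Cc_\Gamma$ are ambient geodesics). Your alternative orbit-map argument is in the right spirit but would still need the cocompactness of $\tau(\mathsf{G})$ on $\Cc$ to control the Hausdorff distance between the image of $\Cc_X(\Gamma)$ and $\Cc_\Gamma$.
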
 

\begin{remark} 
We also characterize the $\Psf_1$-proximal representations of $\Isom_0(\Hb_{\Rb}^2)$ which satisfy the conclusion of Proposition~\ref{prop: repn of rank one groups in convex setting intro}, see Proposition~\ref{prop:real hyperbolic plane case} below. 
\end{remark} 

In the context of Proposition~\ref{prop: repn of rank one groups in convex setting intro}, we can obtain additional examples in the convex real projective setting by starting with the representation $\rho_0 : =\tau|_\Gamma$ and deforming it in $\Hom_{\rho_0}(\Gamma, \PGL(d,\Rb))$. By Corollary~\ref{cor:real proj stability in intro}, any sufficiently small deformation will be a projectively visible subgroup of some properly convex domain which acts geometrically finitely on its limit set.

\subsubsection{Examples beyond geometric finiteness}\label{sec:misc_examples below}

We also describe three more families of examples which do not clearly fit within either of the two geometric finiteness frameworks above.

In Section~\ref{sec:pingpong}, we use a ping-pong argument to show that certain free-products of linear discrete groups give rise to relatively Anosov representations. This effort is motivated by the following question: which linear discrete groups appear as the image of a peripheral subgroup under a relatively $\Psf_k$-Anosov representation? Delaying definitions until later, it follows fairly easily from the definition that  any such linear group is
\begin{enumerate}
\item weakly unipotent, 
\item $\Psf_k$-divergent, and
\item has $(k,d-k)$-limit set consisting of a single point
\end{enumerate} 
(see Proposition~\ref{prop:eigenvalue data in rel Anosov repn} and Observation~\ref{obs:limit set of rel anosov}).  Using a ping-pong argument, we will show that these properties are essentially the only constraints. More precisely, we have the following. 

\begin{proposition}[see Proposition~\ref{prop:pingpong with unipotents}] Suppose that $U \leq \SL(d,\Kb)$ is a discrete group which is weakly unipotent, $\Psf_k$-divergent, and whose $(k,d-k)$-limit set is a single point. Then there is a relatively hyperbolic group $(\Gamma, \peripherals)$, a $\Psf_k$-Anosov representation $\rho\colon \Gamma \to  \PSL(d,\Kb)$, and $P \in  \peripherals$ such that $\rho(P) \leq U$ has finite index. 
\end{proposition}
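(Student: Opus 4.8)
The plan is to realize $(\Gamma,\peripherals)$ as a free product $V \ast \langle g\rangle$, where $V\leq U$ is a finite‑index subgroup (so that $\rho(P)=\rho(V)$ will be the promised finite‑index subgroup of $U$) and $\langle g\rangle\cong\Zb$ is generated by a single $\Psf_k$‑proximal element chosen ``in general position'' relative to $V$, and then to build $\rho$ by letting this free product act on $\proj(\Kb^d)$ and its Grassmannians by ping‑pong, producing the equivariant boundary map of Definition~\ref{defn:Pk Anosov} directly. I work in $\PSL(d,\Kb)$ from the start; replacing $\SL(d,\Kb)$ by $\PSL(d,\Kb)$ changes nothing, since every relevant condition is about the induced actions on Grassmannians. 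First I would record the dynamics of $U$: let $\xi_0=(\xi_0^k,\xi_0^{d-k})$ be the single point of the $(k,d-k)$‑limit set of $U$; it is a transverse pair and, the limit set being $U$‑invariant, is fixed by $U$. Since $U$ is $\Psf_k$‑divergent with single‑point limit set, every sequence $u_n\to\infty$ in $U$ satisfies $u_n W\to\xi_0^k$ for all $W\in\Gr_k(\Kb^d)$ transverse to $\xi_0^{d-k}$, and the same for $(u_n^{-1})$; informally, on the peripheral level $U$ already ``looks relatively $\Psf_k$‑Anosov'' with constant boundary value $\xi_0$. Passing to a finite‑index subgroup $V\leq U$ (if convenient) preserves weak unipotence, $\Psf_k$‑divergence, and the unique limit point $\xi_0$, which remains $V$‑fixed.

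Next I would build the Schottky datum. Weak unipotence forces $V$ to contain no $\Psf_k$‑proximal element (a $\Psf_k$‑proximal element generates a cyclic group with a gap in eigenvalue moduli, hence is not weakly unipotent), so I may choose a $\Psf_k$‑proximal $h\in\PSL(d,\Kb)$ whose attracting $k$‑plane and repelling $(d-k)$‑plane, together with those of $h^{-1}$ and with $\xi_0^k,\xi_0^{d-k}$, are in general position: every $k$‑plane among them is transverse to every $(d-k)$‑plane among them. A generic choice works. Replacing $h$ by a high power $g:=h^N$, the attracting neighborhoods of $g^{\pm1}$ in $\Gr_k(\Kb^d)$ and $\Gr_{d-k}(\Kb^d)$ become arbitrarily small while still absorbing everything transverse to the relevant repelling plane (the standard Schottky estimate); in particular $\langle g\rangle\cong\Zb$ and the inclusion $\langle g\rangle\hookrightarrow\PSL(d,\Kb)$ is $\Psf_k$‑Anosov. (One could use several proximal elements to get a free Schottky group of higher rank, but one suffices.)

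Finally I would run the ping‑pong. The general‑position hypotheses are precisely what is needed to play ping‑pong between $V$ and $\langle g\rangle$ on small neighborhoods of $\xi_0$ and of the Schottky flags: this yields $\Gamma:=\langle V,g\rangle\cong V\ast\langle g\rangle$, and since $\langle g\rangle$ is word‑hyperbolic, $(\Gamma,\peripherals)$ with $\peripherals:=\{V\}$ is relatively hyperbolic. The Bowditch boundary $\partial(\Gamma,\peripherals)$ consists of the parabolic points $\gamma V$ (one per coset of $V$) together with the conical points, which are carried by infinite reduced words in $V\ast\langle g\rangle$. I would define $\xi\colon\partial(\Gamma,\peripherals)\to\Gr_k(\Kb^d)\times\Gr_{d-k}(\Kb^d)$ by $\xi(\gamma V):=\rho(\gamma)\cdot\xi_0$ (well defined since $\xi_0$ is $V$‑fixed) and, on a conical point represented by a reduced word $s_1s_2\cdots$ (letters alternating between $V\setminus\{e\}$ and nonzero powers of $g$), by $\lim_j\rho(s_1\cdots s_j)F_0$ for a fixed transverse flag $F_0$ — the limit existing because the images nest into ever‑smaller ping‑pong regions. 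Equivariance is automatic; transversality of $\xi$ and the strongly dynamics preserving property follow from the general position of the ping‑pong regions in the two Grassmannians, the Schottky contraction, and the single‑point dynamics of $V$. Thus $\rho$ is $\Psf_k$‑Anosov relative to $\peripherals$ by Definition~\ref{defn:Pk Anosov}, $P:=V\in\peripherals$, and $\rho(P)$ has finite index in $U$.

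I expect the main obstacle to be this last step: obtaining the \emph{free} product structure and the dynamical conditions on $\xi$ simultaneously (equivalently, verifying the contracting‑flow condition of Definition~\ref{defn:flow definition} on an associated weak cusp space) when one free factor, $V$, is ``parabolic‑like'' — every escaping sequence limits to the single flag $\xi_0$, and only cofinitely many elements of $V$ contract into a prescribed neighborhood. Controlling the alternating words $v_1g^{n_1}v_2g^{n_2}\cdots$ and extracting uniform contraction from the general‑position hypotheses is exactly what the ping‑pong combination theorem of Section~\ref{sec:pingpong} is built to do; the work above is to supply its hypotheses.
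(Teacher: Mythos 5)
Your proposal follows essentially the same route as the paper's Section~\ref{sec:pingpong}: pick a biproximal element $g$ in general position with the single limit flag $F_U$ of $U$, pass to a high power of $g$ and a finite-index subgroup of $U$, play ping-pong to get the free product $\Gamma = \langle g^N\rangle * U'$, describe the Bowditch boundary via alternating reduced words, and define $\xi$ by $\rho$-translates of $F_U$ on parabolic points and nested limits on conical points. One point you wave at but do not justify is the existence of a finite-index subgroup $U' \leq U$ avoiding the finitely many elements of $U$ that fail to contract the ping-pong region: the paper gets this by noting that $U$ is finitely generated (citing~\cite[Th.\ 8.1(2)]{ZZ2022a}, since $U$ must occur as a peripheral image), hence residually finite by Malcev's theorem, so a finite-index subgroup missing any prescribed finite set exists. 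Without this justification, ``passing to a finite-index subgroup if convenient'' does not suffice, because ping-pong requires every nontrivial element of $U'$ to satisfy the contraction estimate, not merely all but finitely many.
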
 

This allows us to construct new examples of relatively Anosov representations where the peripherals are non-Abelian nilpotent groups, for instance using the linear representation of the integer Heisenberg group constructed in \cite{Cooper_Heis}.

In Section~\ref{sec:pappus--schwartz}, we show that certain representations of $\PSL(2,\Zb)$ into $\PGL(3,\Rb)$ constructed by Rich Schwartz~\cite{Schwartz}  are $\Psf_1$-Anosov relative to certain cyclic subgroups. Schwartz' beautiful construction comes from iterating Pappus's theorem \cite{Schwartz} and he also showed that these representations have many of the properties that relatively Anosov representations (not yet defined at the time) have. We should also note that Barbot--Lee--Val\'erio proved that these representations are limits of families of Anosov representations of word hyperbolic groups \cite{BLV}.

Finally, in Section~\ref{sec:semisimplification}, we show that if a representation $\rho\colon \Gamma \to \SL(d,\Kb)$ is $\Psf_k$-Anosov relative to $\peripherals$, then so is any semisimplification $\rho^{ss}\colon \Gamma \to \SL(d,\Kb)$ of $\rho$. On the other hand, we exhibit a counter-example to the statement that if some semisimplification $\rho^{ss}$ of $\rho$ is $\Psf_k$-Anosov relative to $\peripherals$, then $\rho$ is $\Psf_k$-Anosov relative to $\peripherals$. In particular, the notion of relative Anosovness is not well-defined on the level of the character variety of $\Gamma$ in $\SL(d,\Kb)$, which can be viewed as the quotient of $\Hom(\Gamma,\SL(d,\Kb))$ by the relation ``having the same semisimplification.'' One can ask if there is some finer equivalence relation on the space of representations, such that the notion of relative Anosovness is well-defined with respect to this equivalence relation.

\subsection*{Acknowledgements} 
Zhu was partially supported by Israel Science Foundation grants 18/171 and 737/20.
Zimmer was partially supported by grants DMS-2105580 and DMS-2104381 from the National Science Foundation.
We thank Sara Maloni and Kostas Tsouvalas for ideas and questions which inspired parts of this work. We also thank Fanny Kassel and Ilia Smilga for pointing out a mistake in the first version of this paper, and the referee for a thorough reading and helpful comments.

\section{Preliminaries}

\subsection{Ambiguous notation} \label{sec:notation}
Here we fix any possibly ambiguous notation. 
\begin{itemize}
\item We let $\norm{\cdot}_2$ denote the standard Euclidean norm on $\Kb^d$. 
\item A \emph{metric} $\norm{\cdot}$ on a vector bundle $V \to B$ is a continuous varying family of norms on the fibers each of which is induced by an inner product. 
\item Given a metric space $X$, we will use $\Bc_X(p,r)$ to denote the open ball of radius $r$ centered at $p \in X$ and $\Nc_X(A,r)$ to denote the $r$-neighborhood of a subset $A \subset X$. 
\item Given functions $f,g \colon S \to [0,\infty)$ we write $f \lesssim g$ or equivalently $g \gtrsim f$ if there exists a constant $C > 0$ such that $f(s) \leq C g(s)$ for all $s \in S$. If $f \lesssim g$ and $g \lesssim f$, then we write $f \asymp g$. 

\item Except where otherwise specified, all logarithms are taken to base $e$.
\item Note that constants often carry over between statements in the same section, but not across sections.
\end{itemize}

\subsection{Weak cusp spaces}

Here we recall facts about weak cusp spaces that are used in the paper. For a more in-depth discussion of relative hyperbolicity using the same notation/perspective, we refer the reader to Section 3 in~\cite{ZZ2022a}. 

\begin{definition} Suppose that  $(\Gamma,\peripherals)$ is relatively hyperbolic and $\Gamma$ acts properly discontinuously and by isometries on a proper geodesic Gromov-hyperbolic metric space $X$. If 
\begin{enumerate}
\item $\Gamma$ acts on $\partial_\infty X$ as a geometrically finite convergence group and the maximal parabolic subgroups are exactly $\{ \gamma P \gamma^{-1} : P \in \peripherals, \gamma \in \Gamma\}$,
\item every point in $X$ is within a uniformly bounded distance of a geodesic line, 
\end{enumerate}
then $X$ is a \emph{weak cusp space of $(\Gamma,\peripherals)$}. 
\end{definition} 

The main result in~\cite{Yaman} implies that any relatively hyperbolic group has a weak cusp space. 

By work of Bowditch \cite{Bowditch_relhyp} (also see the exposition in \cite[Sec.\ 3]{HealyHruska}), one can alternatively define weak cusp spaces in terms of the action of $\Gamma$ on $X$.

A relatively hyperbolic group can have non-quasi-isometric weak cusp spaces, see~\cite{Healy2020}. Perhaps the most canonical is the construction due to Groves--Manning, obtained by attaching combinatorial horoballs to a standard Cayley graph. The precise construction is described as follows. 

\begin{definition} Suppose $Y$ is a graph with the simplicial distance $\dist_Y$. The \emph{combinatorial horoball} $\Hc(Y)$ is the graph, also equipped with the simplicial distance, that has vertex set $Y^{(0)} \times \Nb$ and two types of edges:
	\begin{itemize}
		\item \emph{vertical edges} joining vertices $(v,n)$ and $(v,n+1)$, 
		\item \emph{horizontal edges} joining vertices $(v,n)$ and $(w,n)$ when $d_Y(v,w) \leq 2^{n-1}$. 
	\end{itemize}
\end{definition} 

\begin{definition} \label{def:cusp spaces} \label{defn: cusped Cayley graph}
	Let $(\Gamma,\peripherals)$ be a relatively hyperbolic group. A finite symmetric generating set $S \subset \Gamma$ is \emph{adapted} if $S \cap P$ is a generating set of $P$ for every $P \in \peripherals$. Given such an $S$, we let $\Cc(\Gamma, S)$ and $\Cc(P, S \cap P)$ denote the associated Cayley graphs. Then the associated \emph{Groves--Manning cusp space}, denoted $\Cc_{GM}(\Gamma, \peripherals, S)$, is obtained from the Cayley graph $\Cc(\Gamma, S)$ by attaching, for each $P \in \peripherals$ and $\gamma \in \Gamma$, a copy of the combinatorial horoball $\Hc( \gamma\Cc(P, S \cap P))$  by identifying $\gamma\Cc(P, S \cap P)$ with the $n=1$ level of $\Hc( \gamma\Cc(P, S \cap P))$.

\end{definition}

\begin{theorem}[{\cite[Th.\ 3.25]{GrovesManning}}] If $(\Gamma, \peripherals)$ is relatively hyperbolic and $S$ is an adapted finite generating set, then $\Cc_{GM}(\Gamma, \peripherals, S)$ is a weak cusp space for $(\Gamma, \peripherals)$. 
\end{theorem}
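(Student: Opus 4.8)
The plan is to verify, for $X := \Cc_{GM}(\Gamma, \peripherals, S)$, the defining conditions of a weak cusp space in turn: that $X$ is a proper geodesic Gromov-hyperbolic space carrying a properly discontinuous isometric $\Gamma$-action, that the induced boundary action is geometrically finite with maximal parabolics the conjugates of the members of $\peripherals$, and that every point of $X$ lies uniformly close to a bi-infinite geodesic. The only substantial point is hyperbolicity; the rest is soft once that is in hand. For the elementary structure: since $S$ is finite and each $\Cc(P, S\cap P)$ is locally finite, every vertex of $X$ has finite valence — a horoball vertex $(v,n)$ has finitely many horizontal neighbours because the $2^{n-1}$-ball in $\gamma\Cc(P,S\cap P)$ is finite — so $X$ with its path metric is a proper geodesic metric space, and left translation on $\Cc(\Gamma,S)$ extends to $X$ by carrying the horoball glued along $\gamma\Cc(P,S\cap P)$ isometrically onto the one glued along the coset $g\gamma P$, giving an isometric action which is properly discontinuous because it is already so on the (discrete, finite-stabilizer) vertex set.

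For hyperbolicity, which I expect to be the technical heart, the natural route is the Groves--Manning argument, resting on two pillars. First, the combinatorial horoball $\Hc(Y)$ over any graph $Y$ is $\delta_0$-hyperbolic for a universal $\delta_0$ and obeys the coarse distance estimate $d_{\Hc(Y)}\big((v,n),(w,m)\big) \asymp |n-m| + \log_2\!\big(1 + d_Y(v,w)/2^{\min(n,m)}\big)$ up to universal additive error; in particular the level-$1$ copy of $Y$ is exponentially distorted and a geodesic dipping to depth $k$ bridges $Y$-distance $\lesssim 2^k$. Second, by Farb, Bowditch and Osin, relative hyperbolicity of $(\Gamma,\peripherals)$ is equivalent to the coned-off Cayley graph $\wh{\Cc}(\Gamma,S)$ being hyperbolic and satisfying Bounded Coset Penetration (BCP). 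Collapsing each horoball of $X$ to the cone vertex of its coset yields a coarsely Lipschitz, coarsely surjective comparison $X \to \wh{\Cc}(\Gamma,S)$ under which horoball excursions correspond to cone-vertex penetrations; one then shows geodesic triangles in $X$ are uniformly thin by a case analysis on how the three sides distribute between the thick part and the horoballs — sides in the thick part compared via hyperbolicity of $\wh{\Cc}$ plus BCP (which bounds how deep, and how coherently, geodesics enter a common horoball), two sides sharing a horoball handled by $\delta_0$-hyperbolicity of that $\Hc$, mixed configurations by splicing the two estimates. (Alternatively, if one takes the cusped-space formulation as the working definition of relative hyperbolicity, hyperbolicity of $X$ is immediate and only the matching of definitions remains.)

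Granted hyperbolicity, geometric finiteness follows from standard arguments. A properly discontinuous isometric action on a proper hyperbolic space is a convergence action on $\partial_\infty X$, and since $\Gamma$ acts cocompactly on the thick part $\Cc(\Gamma,S)$ while the cusps accumulate only at horoball apices, $\Lambda_X(\Gamma)=\partial_\infty X$. By Bowditch's criterion it suffices to show each boundary point is conical or bounded parabolic: the vertical rays $n\mapsto(v,n)$ in the horoball glued along $\gamma\Cc(P,S\cap P)$ all converge — by the distance estimate — to one point $\mathfrak p_{\gamma P}$ fixed by $\gamma P\gamma^{-1}$, and the same estimate shows $\gamma P\gamma^{-1}$ acts cocompactly on a horosphere about $\mathfrak p_{\gamma P}$, hence on $\partial_\infty X\setminus\{\mathfrak p_{\gamma P}\}$, so $\mathfrak p_{\gamma P}$ is a bounded parabolic point with stabilizer exactly $\gamma P\gamma^{-1}$ (maximality by discreteness: a larger subgroup fixing $\mathfrak p_{\gamma P}$ preserves the horoball and lies in $\gamma P\gamma^{-1}$). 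Any other boundary point is the endpoint of a geodesic ray whose horoball excursions have bounded depth — else the ray would converge to some $\mathfrak p_{\gamma P}$ — so projecting to the thick part and invoking cocompactness of $\Gamma$ there exhibits it as a conical limit point. Thus the action is geometrically finite with maximal parabolics precisely the conjugates of the members of $\peripherals$.

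Finally, for the last axiom it suffices, by hyperbolicity, to place a uniform-quality bi-infinite quasigeodesic near each point. If $\Gamma$ is non-elementary (the elementary case being trivial), fix a geodesic line between two points of $\partial_\infty X$; by cocompactness of $\Gamma$ on $\Cc(\Gamma,S)$ its orbit passes within $\diam(\Gamma\backslash\Cc(\Gamma,S))+O(1)$ of every thick vertex. For a horoball vertex $(v,n)$, using that $P$ is infinite choose $w_\pm\in\Cc(P,S\cap P)$ with $d(v,w_\pm)\asymp 2^{n}$ and $v$ coarsely between them; the $\Hc$-geodesic from $(w_-,1)$ to $(w_+,1)$ passes within $O(1)$ of $(v,n)$ by the distance estimate, and splicing it with geodesic rays in the thick part from $w_\pm$ to two distinct conical limit points produces a bi-infinite quasigeodesic through a bounded neighbourhood of $(v,n)$. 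This establishes all the defining conditions, so $X$ is a weak cusp space for $(\Gamma,\peripherals)$.
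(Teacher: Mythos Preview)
The paper does not give a proof of this statement: it is quoted as a result from the literature, with the citation \cite[Th.\ 3.25]{GrovesManning} serving in place of an argument. So there is no ``paper's own proof'' to compare against; your sketch is an attempt to reprove (or at least outline) the cited theorem rather than something the authors undertake.

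That said, your outline is broadly sound and tracks the standard route. A few remarks. First, Groves--Manning's Theorem~3.25 is precisely the hyperbolicity statement; the additional clauses in this paper's definition of ``weak cusp space'' (geometrically finite action on the boundary with the prescribed parabolics, and every point near a geodesic line) are not literally in that theorem, so strictly speaking the paper is packaging Groves--Manning's hyperbolicity together with follow-up facts (cf.\ the discussion in \cite{HealyHruska}) under a single citation. Your sketch correctly separates these and supplies arguments for each. Second, your parenthetical is apt: in Groves--Manning's own setup, hyperbolicity of the cusped space is taken as the \emph{definition} of relative hyperbolicity, so depending on one's starting definition the hyperbolicity clause is either the substantial content (if starting from Farb/Bowditch/Osin) or a tautology. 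Third, in your verification that every point lies near a bi-infinite geodesic, the horoball case is fine in spirit but you should be a little careful: you need $P$ to be infinite (guaranteed, since peripheral subgroups in a relatively hyperbolic pair are infinite by convention here) and you need the two rays from $w_\pm$ to land at \emph{distinct} boundary points and to concatenate with the horoball segment into a uniform quasigeodesic; both are routine but deserve a sentence each.

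In short: nothing is wrong, but for the purposes of this paper a citation suffices, and that is exactly what the authors do.
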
 

\subsection{The geometry of the Grassmanians} \label{sec:angle metrics}
Throughout the paper, we will let $\dist_{\proj(\Kb^d)}$ denote the \emph{angle distance} on $\proj(\Kb^d)$, that is: if $\ip{\cdot,\cdot}$ is the standard Euclidean inner product on $\Kb^d$, then 
$$
\dist_{\proj(\Kb^d)}([v], [w]) = \cos^{-1} \left( \frac{\abs{\ip{v,w}}}{\sqrt{\ip{v,v}}\sqrt{\ip{w,w}}} \right)
$$
for all non-zero $v,w \in \Kb^d$. 

Using the Pl\"ucker embedding, we can view $\Gr_k(\Kb^d)$ as a subset of $\proj(\wedge^k \Kb^d)$. Let $\dist_{\proj(\wedge^k \Kb^d)}$ denote the angle distance associated to the inner product on $\wedge^k \Kb^d$ which makes 
$$
\{ e_{i_1} \wedge \dots \wedge e_{i_k} : i_1 < \dots < i_k\}
$$
an orthonormal basis. We then let $\dist_{\Gr_k(\Kb^d)}$ denote the distance on $\Gr_k(\Kb^d)$ obtained by restricting $\dist_{\proj(\wedge^k \Kb^d)}$.

\subsection{The singular value decomposition} Given  $g \in \SL(d,\Kb)$, we let 
$$
\mu_1(g) \geq \dots \geq \mu_d(g)
$$
denote the singular values of $g$. By the singular value decomposition, we can write $g = m a \ell$ where $m,\ell \in \SU_{\Kb}(d)$ and $a$ is a diagonal matrix with $\mu_1(g) \geq \dots \geq \mu_d(g)$
down the diagonal. In general this decomposition is not unique, but when $\mu_k(g) > \mu_{k+1}(g)$ the subspace 
$$
U_k(g) := m \ip{e_1,\dots, e_k}
$$
is well-defined. Geometrically, $U_k(g)$ is the subspace spanned by the $k$ largest axes of the ellipse $g \cdot \{ x \in \Kb^d : \norm{x}_2=1\}$. 

We will frequently use the following observation. 

\begin{observation} \label{obs:strongly_dynamics_pres_div_cartan}
Suppose $(g_n)_{n \geq 1}$ is a sequence in $\SL(d,\Kb)$, $V_0 \in \Gr_k(\Kb^d)$, and $W_0 \in \Gr_{d-k}(\Kb^d)$. Then the following are equivalent: 
\begin{enumerate}
\item $g_n(V) \to V_0$ uniformly on compact subsets of 
$$
\left\{ V \in \Gr_k(\Kb^d) : V \text{ transverse to }W_0\right\}.
$$
\item $\frac{\mu_k}{\mu_{k+1}} (g_n) \to \infty$, $U_k(g_n) \to V_0$, and $U_{d-k}(g_n^{-1}) \to W_0$. 
\item There exist open sets $\Oc \subset \Gr_k(\Kb^d)$ and $\Oc^\prime \subset \Gr_{d-k}(\Kb^d)$ such that $g_n(V) \to V_0$ for all $V \in \Oc$ and $g_n^{-1}(W) \to W_0$ for all $W \in \Oc^\prime$. 
\end{enumerate}
\end{observation}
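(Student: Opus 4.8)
The plan is to prove Observation~\ref{obs:strongly_dynamics_pres_div_cartan} by showing the cycle of implications $(2)\Rightarrow(1)\Rightarrow(3)\Rightarrow(2)$. The implication $(1)\Rightarrow(3)$ is immediate (take $\Oc$ to be the set of $k$-planes transverse to $W_0$, which is open and dense, and note that $g_n(V)\to V_0$ for all transverse $V$ in particular forces the convergence on $\Oc$; for the second condition, apply the hypothesis with the roles symmetric — but one must first observe that $(1)$ for $g_n$ implies the analogous statement for $g_n^{-1}$, which is a standard fact: if $g_n$ contracts all transverse $k$-planes to $V_0$, then $\mu_k/\mu_{k+1}(g_n)\to\infty$, and then $g_n^{-1}$ contracts all transverse $(d-k)$-planes to some $W_0$, which one identifies with the given $W_0$ by testing against a generic plane). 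Actually, to keep the logical structure clean, I would instead prove $(1)\Leftrightarrow(2)$ directly and then $(1)\Rightarrow(3)\Rightarrow(2)$, since $(2)$ is the most symmetric and computation-friendly condition.

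For $(2)\Rightarrow(1)$: write $g_n = m_n a_n \ell_n$ in singular value decomposition. Given $V$ transverse to $W_0$, I want to show $g_n(V)\to V_0 = \lim U_k(g_n) = \lim m_n\ip{e_1,\dots,e_k}$. The key point is that $\ell_n(V)$ stays a bounded distance away from $\ip{e_{k+1},\dots,e_d}$: indeed $\ell_n(V)$ transverse to $\ip{e_{k+1},\dots,e_d}$ is equivalent to $V$ transverse to $\ell_n^{-1}\ip{e_{k+1},\dots,e_d} = U_{d-k}(g_n^{-1})$ (up to the ambiguity in the decomposition, but $U_{d-k}(g_n^{-1})$ is well-defined when $\mu_k/\mu_{k+1}\to\infty$), and since $U_{d-k}(g_n^{-1})\to W_0$ while $V$ is transverse to $W_0$, the transversality is uniform for large $n$. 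Then applying $a_n$, which scales the first $k$ coordinates by at least $\mu_k$ and the last $d-k$ by at most $\mu_{k+1}$, with gap ratio going to infinity, pushes $a_n\ell_n(V)$ toward $\ip{e_1,\dots,e_k}$; a quantitative estimate in Plücker coordinates (writing a unit $k$-vector representative of $a_n\ell_n(V)$ and bounding the components involving $e_j$ with $j>k$ by $O(\mu_{k+1}/\mu_k)$) gives $\dist_{\Gr_k}(a_n\ell_n(V), \ip{e_1,\dots,e_k})\to 0$. Finally apply $m_n$ (an isometry of the angle metric, hence of the Plücker/angle metric on $\Gr_k$) and use $m_n\ip{e_1,\dots,e_k} = U_k(g_n)\to V_0$, together with continuity, to conclude.

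For $(1)\Rightarrow(2)$: first, $\mu_k/\mu_{k+1}(g_n)\to\infty$. Suppose not; pass to a subsequence where this ratio is bounded and also (by compactness of $\SU_\Kb(d)$) where $m_n\to m$, $\ell_n\to\ell$, and $\mu_{k+1}/\mu_k(g_n)\to c>0$ — more carefully, one extracts a subsequence on which, after renormalizing, the action on Plücker coordinates converges to a map that is injective on an open set, contradicting that every transverse $V$ maps to the single point $V_0$ while transverse $V$'s form an open set of positive dimension (for $0<k<d$; if $k=0$ or $k=d$ the statement is trivial as $\Gr_k$ is a point, and actually the hypotheses implicitly assume $0<k<d$). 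Then $U_k(g_n)\to V_0$: pick any fixed $V$ transverse to $W_0$ and also, if needed, perturb so that $V$ is transverse to all the relevant limit planes; since $g_n(V)\to V_0$ and, by the $(2)\Rightarrow(1)$-type estimate, $\dist(g_n(V), U_k(g_n))\to 0$ whenever $V$ avoids a neighborhood of $U_{d-k}(g_n^{-1})$, we get $U_k(g_n)\to V_0$ along a subsequence, and since every subsequence has a further subsequence where $U_k(g_n)$ converges (compactness) and the limit is forced to be $V_0$ by this argument applied to a generic $V$, the full sequence converges. The statement $U_{d-k}(g_n^{-1})\to W_0$ requires knowing that $(1)$ for $g_n$ forces the analogous contraction statement for $g_n^{-1}$ onto some limit $(d-k)$-plane; this is where I would invoke the standard "north-south"/proximality dichotomy or simply prove it by the same compactness extraction (any subsequential limit $W'$ of $U_{d-k}(g_n^{-1})$: test a $V$ transverse to both $W_0$ and $W'$, run the estimate to see $g_n(V)\to V_0$ forces nothing new, but test a $V$ NOT transverse to $W'$ — actually $V\subset$ something — to see it cannot converge to $V_0$ unless $W'=W_0$, using that $V$ in the "bad cone" of $U_{d-k}(g_n^{-1})$ has $g_n(V)$ escaping toward other planes).

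The main obstacle I anticipate is the bookkeeping around the non-uniqueness of the singular value decomposition and making the transversality estimates genuinely uniform: specifically, cleanly proving that "$V$ transverse to $W_0$" together with "$U_{d-k}(g_n^{-1})\to W_0$" yields a uniform lower bound on the angle between $\ell_n(V)$ and $\ip{e_{k+1},\dots,e_d}$, and then converting a uniform transversality bound plus $\mu_k/\mu_{k+1}\to\infty$ into Grassmannian-distance convergence via Plücker coordinates. This is essentially the standard "a divergent sequence with a Cartan gap acts with source–sink dynamics" lemma (cf.\ Guichard--Wienhard, Kapovich--Leeb--Porti), so I would either cite it or include the short computation: represent $k$-planes by unit decomposable vectors in $\wedge^k\Kb^d$, observe $\dist_{\Gr_k}([u],[v]) \asymp \|u\wedge\text{-normalized}\| $ type comparisons, and estimate the ratio of the $\wedge^k e_1\wedge\dots\wedge e_k$-component to the rest after applying the diagonal $\wedge^k a_n$.
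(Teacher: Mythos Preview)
The paper states this observation without proof, treating it as a standard fact about singular value dynamics. Your approach is exactly the standard one and is correct in outline: the cycle $(2)\Rightarrow(1)$ via the Cartan decomposition and a uniform-transversality-plus-gap estimate in Pl\"ucker coordinates, and the converse via compactness extractions, is precisely how this is usually done (and is what the paper is implicitly relying on).

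There is one step in your sketch that is genuinely incomplete and that you yourself flag: in $(1)\Rightarrow(2)$, showing $U_{d-k}(g_n^{-1})\to W_0$. Your proposed argument (``test a $V$ not transverse to a subsequential limit $W'$ and see that $g_n(V)$ cannot converge to $V_0$'') does not work as stated, because even when $V$ has nontrivial intersection with $W'=\lim U_{d-k}(g_n^{-1})$, it can happen that $g_n(V)\to V_0$ along that subsequence; whether it does depends on the rate at which $\ell_n(V)$ approaches $\ip{e_{k+1},\dots,e_d}$ versus the rate at which $\mu_k/\mu_{k+1}$ blows up. The clean fix is the one you also allude to: once you know $\mu_k/\mu_{k+1}(g_n)\to\infty$ and $U_k(g_n)\to V_0$, apply the already-proven $(2)\Rightarrow(1)$ to $g_n^{-1}$ (with $k$ replaced by $d-k$) along any subsequence where $U_{d-k}(g_n^{-1})\to W'$. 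This gives $g_n^{-1}(W)\to W'$ for all $W\in\Gr_{d-k}$ transverse to $V_0$. Now pick any $V\in\Gr_k$ transverse to $W_0$ and set $W_n:=g_n(V)$; by hypothesis (1), $W_n\to V_0$, hence $V=g_n^{-1}(W_n)$\dots\ this is going the wrong direction dimensionally, so instead argue via duality: take $W\in\Gr_{d-k}$ transverse to both $V_0$ and $W'$, note $g_n^{-1}(W)\to W'$, and observe that if $W'\neq W_0$ then choosing $W$ additionally transverse to $W_0$ and using that $g_n$ maps the open set of $k$-planes transverse to $W_0$ into any neighborhood of $V_0$ (hence eventually into planes transverse to $W$) forces $g_n^{-1}(W)$ to lie in the complement of that open set, i.e.\ in the non-transverse locus of $W_0$, so $W'$ is non-transverse to every $k$-plane transverse to $W_0$---impossible unless $W'=W_0$. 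Alternatively, and more simply, pass to $\wedge^k$ and use that the Pl\"ucker image of $\Gr_k$ spans $\proj(\wedge^k\Kb^d)$: condition (1) then forces $\frac{1}{\mu_1(\wedge^k g_n)}\wedge^k g_n$ to converge to a rank-one map with image $\wedge^k V_0$ and kernel the hyperplane $\{\omega:\omega\wedge w_1\wedge\dots\wedge w_{d-k}=0\}$ determined by $W_0$, from which $U_{d-k}(g_n^{-1})\to W_0$ follows directly.

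With that step filled in, your proof is complete and is the standard argument the paper is taking for granted.
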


\begin{proof} See for instance Appendix A in~\cite{ZZ2022a}.\end{proof} 

\subsection{Eigenvalues and proximal/weakly unipotent elements}\label{sec:proximal and weakly unipotent elements} Given  $g \in \SL(d,\Kb)$, we let 
$$
\lambda_1(g) \geq \cdots \geq \lambda_d(g) 
$$
denote the absolute values of the eigenvalues of $g$

An element $g \in \SL(d,\Kb)$ is \emph{$\Psf_k$-proximal} if $\lambda_k(g) > \lambda_{k+1}(g)$. In this case, $g$ has a unique attracting fixed point  $V_g^+\in\Gr_k(\Kb^d)$, namely the space corresponding to $\lambda_1(g), \dots, \lambda_k(g)$, and a unique repelling point $W_g^- \in \Gr_{d-k}(\Kb^d)$, namely the space corresponding to $\lambda_{k+1}(g), \dots, \lambda_d(g)$. By writing $g$ is its normal form, it is easy to see that 
$$
g^n(V) \to V_g^+
$$
for all $V \in \Gr_k(\Kb^d)$ transverse to $W_g^-$. Further, $V_g^+ \oplus W_g^- = \Kb^d$. 

An element $g \in \SL(d,\Kb)$ is \emph{weakly unipotent} if $\lambda_j(g) = 1$ for all $j$ and a subgroup $U \leq \SL(d,\Kb)$ is \emph{weakly unipotent} if every element in $U$ is weakly unipotent. 

In~\cite{ZZ2022a} we observed the following. 

\begin{proposition}[{\cite[Prop.\ 4.2]{ZZ2022a}}]\label{prop:eigenvalue data in rel Anosov repn} Suppose that $(\Gamma,\peripherals)$ is relatively hyperbolic and $\rho\colon \Gamma \to \SL(d,\Kb)$ is $\Psf_k$-Anosov relative to $\peripherals$. 
\begin{enumerate}
\item If $P \in \peripherals$, then $\rho(P)$ is weakly unipotent. 
\item If $\gamma \in \Gamma$ is non-peripheral and has infinite order, then $\rho(\gamma)$ is $\mathsf{P}_k$-proximal.
\end{enumerate}
\end{proposition}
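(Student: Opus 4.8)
The plan is to derive both statements from the strong dynamics preserving property of the boundary map $\xi$, the dynamics of the convergence group action of $\Gamma$ on $\partial(\Gamma,\peripherals)$, and Observation~\ref{obs:strongly_dynamics_pres_div_cartan}. For~(2), note first that a non-peripheral infinite-order element $\gamma$ is loxodromic for the convergence action, so it has distinct fixed points $\gamma^\pm\in\partial(\Gamma,\peripherals)$ with $\gamma^n\to\gamma^+$ and $\gamma^{-n}\to\gamma^-$. Applying strong dynamics preserving to $(\gamma^n)_{n\ge1}$ gives $\rho(\gamma)^nV\to\xi^k(\gamma^+)$ for every $V\in\Gr_k(\Kb^d)$ transverse to $\xi^{d-k}(\gamma^-)$. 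By equivariance $\rho(\gamma)$ preserves both $V^+:=\xi^k(\gamma^+)$ and $W^-:=\xi^{d-k}(\gamma^-)$, and by transversality $\Kb^d=V^+\oplus W^-$. Setting $A:=\rho(\gamma)|_{V^+}$ and $B:=\rho(\gamma)|_{W^-}$, every $V$ transverse to $W^-$ is the graph of a unique $T\in\Hom(V^+,W^-)$, and $\rho(\gamma)^nV$ is then the graph of $B^nTA^{-n}$; hence $B^nTA^{-n}\to0$ for all such $T$. Testing this against rank-one maps $T$ carrying a smallest-modulus eigenvector of $A$ to a largest-modulus eigenvector of $B$ (complexifying if $\Kb=\Rb$) shows that every eigenvalue of $B$ has modulus strictly less than every eigenvalue of $A$. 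Since the eigenvalue moduli of $\rho(\gamma)$ are the union of the $k$ moduli of $A$ and the $d-k$ moduli of $B$, this is exactly the inequality $\lambda_k(\rho(\gamma))>\lambda_{k+1}(\rho(\gamma))$, i.e.\ $\rho(\gamma)$ is $\Psf_k$-proximal, with $V_{\rho(\gamma)}^+=V^+$ and $W_{\rho(\gamma)}^-=W^-$.

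For~(1), fix $P\in\peripherals$ and $p\in P$. If $p$ has finite order, then so does $\rho(p)$, so all of its eigenvalues have modulus $1$. If $p$ has infinite order, then it is parabolic for the convergence action and its unique fixed point is the parabolic point $x_P$ fixed by $P$, so $p^n\to x_P$ and $p^{-n}\to x_P$. Applying strong dynamics preserving to $(p^n)_{n\ge1}$ and to $(p^{-n})_{n\ge1}$, followed by Observation~\ref{obs:strongly_dynamics_pres_div_cartan}, one obtains $\mu_k(\rho(p)^{\pm n})/\mu_{k+1}(\rho(p)^{\pm n})\to\infty$, $U_k(\rho(p)^{\pm n})\to\xi^k(x_P)$ and $U_{d-k}(\rho(p)^{\pm n})\to\xi^{d-k}(x_P)$. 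Now pass to the $k$-th exterior power: with $N:=\binom{d}{k}$ and $h:=\wedge^k\rho(p)\in\SL(N,\Kb)$, the Pl\"ucker embedding expresses $U_1(\wedge^kM)$ and $U_{N-1}(\wedge^kM)$ in terms of $U_k(M)$ and $U_{d-k}(M)$, so the convergences above, together with Observation~\ref{obs:strongly_dynamics_pres_div_cartan} applied once more (to $h^n$ and to $h^{-n}$), yield a single point $\ell_0\in\proj(\Kb^N)$ (the Pl\"ucker image of $\xi^k(x_P)$) and a single hyperplane $H_0\subset\proj(\Kb^N)$ with $h^nV\to\ell_0$ and $h^{-n}V\to\ell_0$ for every $V$ transverse to $H_0$.

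It then suffices to prove the following linear-algebra claim: if $h\in\SL(N,\Kb)$ and both $(h^nV)$ and $(h^{-n}V)$ converge to a common point $\ell_0$ for every $V$ outside a common hyperplane $H_0$, then $h$ is weakly unipotent. To see this, observe that $\ell_0$ is a fixed point of $h$; if $\lambda_1(h)>1$ then $\lambda_N(h)<1$ since $\det h=1$, so the top- and bottom-modulus generalized eigenspaces $E_+$ and $E_-$ of $h$ are nonzero and disjoint. Choosing $v$ outside $H_0$ with nonzero $E_+$-component, the dominant term of $h^nv$ lies in $E_+$, forcing $\ell_0\in\proj(E_+)$; applying the same reasoning to $h^{-1}$ gives $\ell_0\in\proj(E_-)$; but $\proj(E_+)\cap\proj(E_-)=\varnothing$, a contradiction. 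Hence $\lambda_1(h)=1$, so all eigenvalue moduli of $h$ equal $1$, and therefore so do those of $\rho(p)$: since $\wedge^k\rho(p)$ is weakly unipotent, $\lambda_1(\rho(p))\cdots\lambda_k(\rho(p))$ and $\lambda_{d-k+1}(\rho(p))\cdots\lambda_d(\rho(p))$ both equal $1$, which with $\lambda_1(\rho(p))\ge\cdots\ge\lambda_d(\rho(p))$ and $\prod_i\lambda_i(\rho(p))=1$ forces every $\lambda_i(\rho(p))=1$. Thus $\rho(P)$ is weakly unipotent. I expect this last linear-algebra claim to be the only genuinely substantive step; the remaining ingredients — the loxodromic/parabolic trichotomy for convergence groups, the operator-norm estimate in~(2), and the singular-value/Pl\"ucker identities used to package everything through $\wedge^k$ — are routine.
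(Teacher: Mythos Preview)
The paper does not prove this proposition; it is quoted verbatim from the companion paper \cite[Prop.~4.2]{ZZ2022a}, so there is no in-paper argument to compare against. Your proposal is correct and self-contained, and the strategy (strong dynamics preserving plus Observation~\ref{obs:strongly_dynamics_pres_div_cartan}, then a reduction to $\wedge^k$ for part~(1)) is a standard and natural one.

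Two minor points worth tightening. First, in the linear-algebra claim for~(1), the step ``choose $v\notin H_0$ with nonzero $E_+$-component'' needs the observation that the vectors with zero $E_+$-component form a proper subspace $F_+$, and over an infinite field $\Kb^N$ is never the union of two proper subspaces $F_+\cup H_0$; so such a $v$ exists. Second, your final deduction that weak unipotency of $\wedge^k\rho(p)$ forces all $\lambda_i(\rho(p))=1$ is cleanest if you use that \emph{every} $k$-fold product of the $\lambda_i$ equals~$1$: comparing two such products differing in a single index gives $\lambda_i=\lambda_j$ for all $i,j$, and then $\prod_i\lambda_i=1$ finishes it. Your two-product version also works, but requires the extra squeeze $\lambda_k\le 1\le\lambda_{d-k+1}$ together with $\lambda_k\ge\lambda_{d-k+1}$ (using $k\le d-k$) to pin down the middle block first.
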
 

\begin{remark} Recall that an element $\gamma \in \Gamma$ of a relatively hyperbolic group $(\Gamma,\peripherals)$ is non-peripheral if it is not contained in $\bigcup_{\gamma \in \Gamma} \bigcup_{P \in \peripherals} \gamma P \gamma^{-1}$. \end{remark}

\subsection{The symmetric space associated to the special linear group} We will consider the symmetric spaces $M := \SL(d,\Kb) / \SU(d,\Kb)$ normalized so that the distance is given by 
\begin{equation}
\label{eqn:symmetric distance in prelims}
\dist_M\left( g \SU(d,\Kb), h \SU(d,\Kb) \right) = \sqrt{ \sum_{j=1}^d (\log \mu_j(g^{-1} h) )^2 },
\end{equation}
see~\cite[Chap.\ II.10]{BH1999} for more details.

\subsection{Dominated splitting and contraction on Hom bundles} In this section we observe that the exponential contraction of the flow on the Hom bundle described in Section~\ref{sec:results from first paper} can be recast in terms of a dominated splitting condition. This is well known in the word-hyperbolic case~\cite{BCLS2015, BPS} and the same arguments work in the relative case as well. 

Suppose, for the rest of this section, that $(\Gamma, \peripherals)$ is a relatively hyperbolic group, $\rho \colon \Gamma \to \SL(d,\Kb)$ is a representation, $X$ is a weak cusp space for $(\Gamma, \peripherals)$, and $\norm{\cdot}$ is a metric on the vector bundle $\wh{E}_\rho(X) \to \wh{\Gc}(X)$. 

If $V,W \subset \wh{E}_\rho(X)$ are subbundles, we can consider the bundle $\Hom(V,W) \to \wh{\Gc}(X)$ with the associated family of operator norms defined by
$$
\norm{f}_\sigma : = \max \left\{\norm{f(Y)}_{\sigma} : Y \in V|_\sigma, \ \norm{Y}_{\sigma} = 1\right\}
$$
when $f \in \Hom(V,W)|_\sigma$. In particular, given a continuous $\rho$-equivariant transverse map 
$$
\xi \colon \partial(\Gamma, \peripherals) \to \Gr_k(\Kb^d) \times \Gr_{d-k}(\Kb^d)
$$
let $\wh{\Theta}^k, \wh{\Xi}^{d-k} \subset \wh{E}_\rho(X)$ denote the subbundles defined in Section~\ref{sec:results from first paper} and endow 
$$
\Hom\left( \wh{\Xi}^{d-k}, \wh{\Theta}^{k}\right) \to \wh{\Gc}(X)
$$
with the operator norm. We then have the following connection between the dynamics on these bundles. 

\begin{proposition}\label{prop: Hom bundles contraction/expansions} With the notation above and $c,C > 0$ fixed, the following are equivalent: 
\begin{enumerate}
\item For all $t \geq 0$, $\sigma \in \wh{\Gc}(X)$, $Y \in \wh{\Theta}^k|_{\sigma}$, and $Z\in \wh{\Xi}^{d-k}|_{\sigma}$  non-zero,  
$$
\frac{\norm{\flatflow^t(Y) }_{\geodflow^t(\sigma)}}{\norm{\flatflow^t(Z) }_{\geodflow^t(\sigma)}} \leq Ce^{-ct} \frac{\norm{Y}_{\sigma} }{\norm{Z}_{\sigma} }. 
$$
\item For all $t \geq 0$, $\sigma \in \wh{\Gc}(X)$, and $f \in \Hom\left( \wh{\Xi}^{d-k}, \wh{\Theta}^{k}\right)|_{\sigma}$, 
$$
\norm{\homflow^t(f)}_{\geodflow^t(\sigma)} \leq C e^{-ct} \norm{f}_\sigma.
$$
\end{enumerate}
\end{proposition}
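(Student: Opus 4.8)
The plan is to prove the two statements are equivalent by translating between the pointwise bound on vectors in $\wh{\Theta}^k$ and $\wh{\Xi}^{d-k}$ and the operator norm bound on $\Hom$, using that the flow $\flatflow^t$ acts trivially on $\Kb^d$ and preserves both subbundles, so $\homflow^t(f) = \flatflow^t \circ f \circ \flatflow^{-t}$ genuinely lands in $\Hom(\wh{\Xi}^{d-k}, \wh{\Theta}^k)|_{\geodflow^t(\sigma)}$. The key point is that the operator norm $\norm{f}_\sigma$ is a supremum over unit vectors $Z \in \wh{\Xi}^{d-k}|_\sigma$ of $\norm{f(Z)}_\sigma$, and by homogeneity this equals $\sup_{Z \neq 0} \norm{f(Z)}_\sigma / \norm{Z}_\sigma$; since $f(Z) \in \wh{\Theta}^k|_\sigma$, one should think of $\norm{f}_\sigma$ as controlling the ratio $\norm{Y}_\sigma / \norm{Z}_\sigma$ for $Y = f(Z)$.

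For the implication (1) $\Rightarrow$ (2): fix $t \geq 0$, $\sigma$, and $f \in \Hom(\wh{\Xi}^{d-k}, \wh{\Theta}^k)|_\sigma$ nonzero. For any nonzero $Z' \in \wh{\Xi}^{d-k}|_{\geodflow^t(\sigma)}$, write $Z = \flatflow^{-t}(Z') \in \wh{\Xi}^{d-k}|_\sigma$ (nonzero, since $\flatflow^{-t}$ is invertible on fibers) and $Y = f(Z) \in \wh{\Theta}^k|_\sigma$. If $Y = 0$ then $\homflow^t(f)(Z') = \flatflow^t(f(\flatflow^{-t}Z')) = \flatflow^t(Y) = 0$ and the bound is trivial; otherwise apply (1) to $Y$ and $Z$ to get
\begin{equation*}
\frac{\norm{\homflow^t(f)(Z')}_{\geodflow^t(\sigma)}}{\norm{Z'}_{\geodflow^t(\sigma)}} = \frac{\norm{\flatflow^t(Y)}_{\geodflow^t(\sigma)}}{\norm{\flatflow^t(Z)}_{\geodflow^t(\sigma)}} \leq Ce^{-ct} \frac{\norm{Y}_\sigma}{\norm{Z}_\sigma} \leq Ce^{-ct}\norm{f}_\sigma,
\end{equation*}
where the last step uses $\norm{Y}_\sigma = \norm{f(Z)}_\sigma \leq \norm{f}_\sigma \norm{Z}_\sigma$. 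Taking the supremum over $Z'$ gives $\norm{\homflow^t(f)}_{\geodflow^t(\sigma)} \leq Ce^{-ct}\norm{f}_\sigma$.

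For (2) $\Rightarrow$ (1): given nonzero $Y \in \wh{\Theta}^k|_\sigma$ and $Z \in \wh{\Xi}^{d-k}|_\sigma$, transversality of $\xi$ gives $\wh{\Theta}^k|_\sigma \oplus \wh{\Xi}^{d-k}|_\sigma$ spanning a subspace, so one can build $f \in \Hom(\wh{\Xi}^{d-k}, \wh{\Theta}^k)|_\sigma$ with $f(Z) = Y$ and $\norm{f}_\sigma = \norm{Y}_\sigma / \norm{Z}_\sigma$ — e.g. the rank-one map $f = \frac{\norm{Y}_\sigma}{\norm{Z}_\sigma^2}\, Y \otimes Z^\ast$ where $Z^\ast = \ip{\cdot, Z}_\sigma$, whose operator norm is exactly $\norm{Y}_\sigma/\norm{Z}_\sigma$ since the fiber norms come from inner products. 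Applying (2) to this $f$ and evaluating $\homflow^t(f)$ on $\flatflow^t(Z)$ yields $\norm{\flatflow^t(Y)}_{\geodflow^t(\sigma)} \leq \norm{\homflow^t(f)}_{\geodflow^t(\sigma)} \norm{\flatflow^t(Z)}_{\geodflow^t(\sigma)} \leq Ce^{-ct}\frac{\norm{Y}_\sigma}{\norm{Z}_\sigma}\norm{\flatflow^t(Z)}_{\geodflow^t(\sigma)}$, which rearranges to the inequality in (1). The only place requiring slight care — and the mild ``obstacle'' — is the construction of a norm-optimal rank-one homomorphism realizing a prescribed ratio; this is where the hypothesis that the metric is induced by inner products (the definition of metric in Section~\ref{sec:notation}) is used, so that the Cauchy--Schwarz bound for $Y \otimes Z^\ast$ is sharp. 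Everything else is bookkeeping with the invertibility and subbundle-preservation of $\flatflow^t$.
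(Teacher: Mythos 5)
Your argument is correct and is essentially the standard translation between the contraction property on vector ratios and the operator-norm contraction on the $\Hom$ bundle; the paper simply defers to the analogous computation in \cite[Prop.\ 2.3]{BCLS2015}, which proceeds the same way. One small slip: the rank-one map realizing a prescribed ratio should be $f = \frac{1}{\norm{Z}_\sigma^2}\, Y \otimes Z^\ast$ (so that $f(Z)=Y$ and $\norm{f}_\sigma = \norm{Y}_\sigma/\norm{Z}_\sigma$) rather than $\frac{\norm{Y}_\sigma}{\norm{Z}_\sigma^2}\, Y \otimes Z^\ast$, though the extra scalar factor is harmless by homogeneity in $Y$.
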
 

\begin{proof} 
One can argue exactly as in Proposition 2.3 in~\cite{BCLS2015}.
\end{proof}

\part{Representations of geometrically finite groups in negatively-curved symmetric spaces} 

\section{Reminders on negatively-curved symmetric spaces} \label{sec:reminders negatively curved symspaces} 

Suppose $\mathsf{G}$ is a connected simple non-compact Lie group with rank one and finite center. Fix a maximal compact subgroup $\mathsf{K} \leq \mathsf{G}$, then the quotient manifold $X = \mathsf{G}/\mathsf{K}$ is simply connected and has a $\mathsf{G}$-invariant negatively-curved symmetric Riemannian metric. The possible spaces $X$ are described in ~\cite[Chap.\ 19]{Mostow1973}.

Since $X$ is simply connected and has pinched negative curvature, it is Gromov-hyperbolic, and we will let $\partial_\infty X$ denote the Gromov boundary of $X$. We will also let $T^1 X$ denote the unit tangent bundle of $X$ and let $\pi \colon T^1 X \to X$ denote the natural projection. We will use $\geodflow^t$ to denote the geodesic flow on $T^1 X$. Also, for $v \in T^1 X$ we let $v^+, v^- \in \partial_\infty X$ denote the forward/backward endpoint of the geodesic line tangent to $v$, equivalently 
$$
v^\pm = \lim_{t \rightarrow \pm \infty} \pi( \geodflow^t(v)). 
$$

By construction $\mathsf{G}$ acts isometrically on $X$. The induced homomorphism $\Phi\colon \mathsf{G} \to \Isom(X)$ maps onto $\Isom_0(X)$, the connected component of the identity, and has kernel $Z(\mathsf{G})$, the center of $\mathsf{G}$. Given a sequence $(g_n)_{n \geq 1}$ and $x \in \partial_\infty X$, we write 
\begin{equation}\label{eqn:boundary compactification of G}
g_n \to x 
\end{equation}
if $g_n(p) \to x$ for some (any) $p \in X$.

An element of $\mathsf{G}$ is either 
\begin{itemize}
\item \emph{elliptic}, that is it fixes a point in $X$,
\item \emph{parabolic}, that is it is not elliptic and fixes exactly one point in $\partial_\infty X$, or 
\item \emph{loxodromic}, that is it is not elliptic and fixes exactly two points in $\partial_\infty X$.
\end{itemize} 
Parabolic and loxodromic elements have the following behavior:
\begin{enumerate}
\item If $g \in \mathsf{G}$ is parabolic and $x_g^+$ is the unique fixed point of $g$, then 
$$
\lim_{n \to \pm \infty} g^n(y) =x_g^+
$$ 
for all $y \in (X \cup \partial_\infty X) \smallsetminus \{x_g^+\}$. 
\item If $g \in \mathsf{G}$ is loxodromic, then it is possible to label the fixed points of $g$ as $x_g^+, x_g^-$ so that 
$$
\lim_{n \to \pm \infty} g^n(y) =x_g^\pm
$$
for all $y \in (X \cup \partial_\infty X) \smallsetminus \{x_g^\mp\}$. 
\end{enumerate}
In both cases, the limits are locally uniform.

Given a discrete subgroup $\Gamma \leq \mathsf{G}$, we can consider the limit set $\Lambda_X(\Gamma) \subset \partial_\infty X$ of all accumulation points of any $\Gamma$-orbit in $X$. We then define $\Cc_X(\Gamma)$ to be the convex hull of $\Lambda_X(\Gamma)$ in $X$, i.e. the smallest closed geodesically convex subset of $X$ whose closure in $X \cup \partial_\infty X$ contains $\Lambda_X(\Gamma)$. Finally, we define $\Uc(\Gamma)$ to be the subspace of the unit tangent bundle $T^1 X$ consisting of vectors tangent to geodesics with both endpoints in the limit set $\Lambda_X(\Gamma)$ and let $\wh{\Uc}(\Gamma) = \Gamma \backslash \Uc(\Gamma)$. 

\begin{example}
If $\Gamma$ is a lattice in $\mathsf{G}$, then $\Lambda_X(\Gamma) = \partial_\infty X$, $\Cc_X(\Gamma) = X$, and $\Uc(\Gamma) = T^1 X$.
\end{example} 

A discrete group $\Gamma \leq \mathsf{G}$ acts as a convergence group on $\partial_\infty X$ and such a group is \emph{geometrically finite} if it acts its limit set $\Lambda_X(\Gamma)$ as a geometrically finite convergence group (for definitions, see for instance \cite[Sec.\ 3.3]{ZZ2022a}). There are also equivalent characterizations in terms of the action of $\Gamma$ on $X$, see \cite{Bowditch_GF_Riem}. 

In this case, if $\peripherals$ is a set of representatives of the conjugacy classes of maximal parabolic subgroups in $\Gamma$, then $(\Gamma, \peripherals)$ is a relatively hyperbolic group. Moreover, $\Cc_X(\Gamma)$ is a weak cusp space of $(\Gamma, \peripherals)$ (see the ``F4'' definition and Section 3.5 in \cite{Bowditch_GF_Riem}). The flow space $\Uc(\Gamma)$ then naturally identifies with the space of geodesics $\Gc( \Cc_X(\Gamma))$ in $\Cc_X(\Gamma)$. When considering a relatively Anosov representation $\rho$ of $\Gamma$ it is more convenient to view the bundles in Definition~ \ref{defn:flow definition} as having base $\wh{\Uc}(\Gamma)$.

\section{Representations of rank one groups}\label{sec:homogeneous_repn}

Let $\mathsf{G}$, $\mathsf{K}$, and $X=\mathsf{G}/\mathsf{K}$ be as in Section~\ref{sec:reminders negatively curved symspaces}.
In this section we will prove the following expanded version of Proposition~\ref{prop:homog_cusps} from the introduction. First we present a definition. 

\begin{definition} Given a representation $\tau \colon \mathsf{G} \to \SL(d,\Kb)$, we say that a continuous $\tau$-equivariant map $\zeta \colon \partial_\infty X \to \Gr_k(\Kb^d) \times \Gr_{d-k}(\Kb^d)$ is 
\begin{enumerate} 
\item \emph{transverse}: if $x,y \in \partial_\infty X$ are distinct, then $\zeta^k(x) \oplus \zeta^{d-k}(y) = \Kb^d$, 
\item \emph{strongly dynamics preserving}: if $(g_n)_{n \geq 1}$ is a sequence of elements in $\mathsf{G}$ where $\gamma_n \to x \in \partial_\infty X$ and $\gamma_n^{-1} \to y \in \partial_\infty X$ (here we use the notation from Equation~\eqref{eqn:boundary compactification of G}), then 
$$
\lim_{n \to \infty} \tau(\gamma_n)V = \zeta^k(x)
$$
for all $V \in \Gr_k(\Kb^d)$ transverse to $\zeta^{d-k}(y)$. 
\end{enumerate}
\end{definition} 

\begin{proposition}\label{prop:representations of rank one groups} If $\tau \colon \mathsf{G} \to \SL(d,\Kb)$ is $\Psf_k$-proximal (i.e.\ $\tau(\mathsf{G})$ contains a $\Psf_k$-proximal element) and $\norm{\cdot}_{v \in T^1 X}$ is a $\tau$-equivariant family of norms on $\Kb^d$, then the following statements hold:
\begin{enumerate}
\item There exists a continuous $\tau$-equivariant, transverse, strongly dynamics preserving map
$$
\zeta_\tau =(\zeta_\tau^k, \zeta_\tau^{d-k})\colon \partial_\infty X \to \Gr_k(\Kb^d) \times \Gr_{d-k}(\Kb^d).
$$
 \item There exist $C, c > 0$ such that: if $t \geq 0$, $v \in T^1 X$, $Y \in \zeta_\tau^k(v^+)$, and $Z \in \zeta_\tau^{d-k}(v^-)$ is non-zero, then 
 \begin{align*}
 \frac{\norm{Y}_{\geodflow^t(v)}}{\norm{Z}_{\geodflow^t(v)}} \leq C e^{-ct} \frac{\norm{Y}_v}{\norm{Z}_v}.
 \end{align*}
 \item For any $r > 0$ there exists $L_r > 1$ such that: if $v,w \in T^1 X$ satisfy $\dist_X(\pi(v), \pi(w)) \leq r$, then 
 $$
 \frac{1}{L_r} \norm{\cdot}_v \leq \norm{\cdot}_w \leq L_r \norm{\cdot}_v.
 $$
 \end{enumerate}
 In particular,  if $\Gamma \leq \mathsf{G}$ is geometrically finite, then $\rho = \tau|_{\Gamma}$ is uniformly $\Psf_k$-Anosov relative to $\Cc_X(\Gamma)$.
 \end{proposition}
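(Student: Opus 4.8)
The plan is to establish the three numbered assertions and then deduce the final statement by invoking Definition~\ref{defn:flow definition} and Definition~\ref{defn:locally uniform norms}, together with the fact (recalled at the end of Section~\ref{sec:reminders negatively curved symspaces}) that for geometrically finite $\Gamma$ the flow space $\Uc(\Gamma)$ identifies with $\Gc(\Cc_X(\Gamma))$ and $\Cc_X(\Gamma)$ is a weak cusp space for $(\Gamma, \peripherals(\Gamma))$.

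For (1): since $\mathsf{G}$ is a connected simple rank-one Lie group, it has (up to conjugacy) a unique proper parabolic subgroup $\mathsf{Q}$, and $\partial_\infty X \cong \mathsf{G}/\mathsf{Q}$. The hypothesis that $\tau(\mathsf{G})$ contains a $\Psf_k$-proximal element forces $\tau$ to be non-trivial on the simple factor, hence (after quotienting by the center) a faithful representation of $\Isom_0(X)$; standard highest-weight/Lie-theoretic considerations then show the $\Psf_k$-proximal locus is a $\tau(\mathsf{G})$-invariant open dense set, so there is a $\tau$-equivariant map $\partial_\infty X = \mathsf{G}/\mathsf{Q} \to \Gr_k(\Kb^d)$, $x\mapsto V^+_{\tau(g)}$ for any loxodromic $g$ fixing $x$ attractively, and similarly into $\Gr_{d-k}$; continuity and transversality follow from the homogeneity (the map is an orbit map of a smooth action) and from the fact that for $x\neq y$ one can find a single loxodromic $g$ with $x^+_g=x$, $x^-_g=y$. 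Strong dynamics preservation is then immediate from Observation~\ref{obs:strongly_dynamics_pres_div_cartan} together with the KAK/Cartan description: if $g_n\to x$ and $g_n^{-1}\to y$ in $\partial_\infty X$, then the Cartan projections of $g_n$ diverge in the (one-dimensional) Weyl chamber, which forces $\frac{\mu_k}{\mu_{k+1}}(\tau(g_n))\to\infty$ with the appropriate singular subspaces converging to $\zeta^k_\tau(x)$ and $\zeta^{d-k}_\tau(y)$.

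For (3): a $\tau$-equivariant continuous family of norms exists (average any norm over $\mathsf{K}$ and translate by $\tau(\mathsf{G})$, using that $T^1X \cong \mathsf{G}/\mathsf{M}$ for $\mathsf{M}$ the centralizer of a Cartan subgroup); local uniformity is then exactly the statement that the ``norm distortion'' $\log\|\tau(g)\|$ is controlled by $\dist_X(o,g\cdot o)$ plus a bounded error, which holds because $\mathsf{G}$ acts transitively and cocompactly on pairs $(v,w)$ with $\dist_X(\pi(v),\pi(w))\leq r$, so the set $\{\tau(g): \dist_X(o, g\cdot o)\leq r\}$ is relatively compact. Assertion (2) is the heart of the matter, and I expect the main obstacle to be there: one must show the Hom-bundle contraction along the geodesic flow on all of $T^1X$ (not just on the convex core), uniformly up the cusps. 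The plan is to reduce (2) to an eigenvalue/Cartan-projection estimate via Proposition~\ref{prop: Hom bundles contraction/expansions}: writing $v$ with endpoints $v^\pm$ and using a loxodromic one-parameter subgroup in $\mathsf{G}$ translating along the geodesic through $v$, the ratio $\|Y\|_{\geodflow^t v}/\|Z\|_{\geodflow^t v}$ is comparable — with multiplicative error bounded using (3) and a compactness argument over $\mathsf{G}/\mathsf{M}$ — to $\mu_k/\mu_{k+1}$ of $\tau$ applied to the flow for time $t$, which decays like $e^{-ct}$ with $c$ the gap between the relevant weights of $\tau$ paired against the (unique) simple restricted root; $\Psf_k$-proximality of some element of $\tau(\mathsf{G})$ guarantees this gap is strictly positive.

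Once (1)--(3) are in hand, the final statement follows: restricting $\zeta_\tau$ to $\Lambda_X(\Gamma) = \partial(\Gamma,\peripherals(\Gamma))$ gives a continuous, $\rho$-equivariant, transverse, strongly dynamics preserving boundary map, so $\rho$ is $\Psf_k$-Anosov relative to $\peripherals(\Gamma)$ by Definition~\ref{defn:Pk Anosov}; the restriction of the $\tau$-equivariant family of norms to $\Uc(\Gamma) \cong \Gc(\Cc_X(\Gamma))$ descends to a metric on $\wh E_\rho(\Cc_X(\Gamma))$, which is locally uniform by (3), and for which the Hom-flow $\homflow^t$ is exponentially contracting by (2) and Proposition~\ref{prop: Hom bundles contraction/expansions}. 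Hence $\rho$ is uniformly $\Psf_k$-Anosov relative to $\Cc_X(\Gamma)$.
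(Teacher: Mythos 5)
Your proposal is essentially the same as the paper's: both reduce everything to a one-parameter Cartan subgroup $\{a_t\}$, define the boundary map via the attracting/repelling spaces of $\tau(a_t)$ and $\mathsf{G}$-homogeneity of $\partial_\infty X$, and obtain the exponential rate in (2) from the weight gap of $\tau$ along the simple restricted root. One remark on (2): you anticipate the main obstacle to be controlling the contraction ``uniformly up the cusps,'' but for a $\tau$-equivariant family of norms on the homogeneous space $T^1X \cong \mathsf{G}/\mathsf{M}$ there is no such issue --- the paper notes that any two $\tau$-equivariant families are uniformly bi-Lipschitz, so it suffices to verify (2) for the specific family $\norm{\cdot}_{g(v_0)} = \norm{\tau(g)^{-1}(\cdot)}_2$, for which flowing by time $t$ corresponds to right-multiplying by the diagonal matrix $\tau(a_t)^{-1}$ and the contraction is an exact identity ($C = 1$, $c = \lambda$), with no compactness argument or error term required.
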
 
 
 \begin{remark} To be precise, a family of norms $\norm{\cdot}_{v \in T^1 X}$ is $\tau$-equivariant if 
 $$
 \norm{\cdot}_v = \norm{\tau(g)(\cdot)}_{g(v)}
 $$
 for all $v \in T^1 X$ and $g \in \mathsf{G}$. 
 \end{remark}

 The rest of the section is devoted to the proof of the proposition. So fix a representation $\tau \colon \mathsf{G} \to \SL(d,\Kb)$ as in the statement.

Let $p_0:=[\mathsf{K}] \in X$ and notice that $\mathsf{K} = {\rm Stab}_{\mathsf{G}}(p_0)$. Fix a unit vector $v_0 \in T^1_{p_0} X$ and a Cartan subgroup $\mathsf{A}=\{a_t\}$ of $\mathsf{G}$ such that $t \mapsto a_t(p_0)$ parametrizes the geodesic through $p_0$ with initial velocity $v_0$. Let $\mathsf{M}$ denote the centralizer of $\mathsf{A}$ in $\mathsf{K}$. 
 
We can conjugate $\tau$ so that $\tau(\mathsf{A})$ is a subgroup of the diagonal matrices and $\tau(\mathsf{K}) \leq \SU(d,\Kb)$, see for instance~\cite{M1955}.
 
The next two lemmas are used to define the maps in part (1) of the proposition.

 \begin{lemma} If $t > 0$, then $\tau(a_t)$ is $\Psf_k$-proximal. \end{lemma}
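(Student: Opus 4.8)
The plan is to reduce the statement to a single $t$-independent condition on eigenvalue moduli. Write $\mathsf{A}=\{a_t=\exp(tH_0)\}$ for a generator $H_0$ of its Lie algebra; since $\tau(\mathsf{A})$ has already been conjugated into the diagonal matrices (see \cite{M1955}), the matrix $D:=\tfrac{d}{dt}\big|_{t=0}\tau(a_t)$ is diagonal and $\tau(a_t)=\exp(tD)$ for all $t$. Hence $\lambda_j(\tau(a_t))=e^{t\Real(D_{jj})}$, so if we set $\delta_j:=\Real(D_{jj})$ and relabel so that $\delta_1\geq\cdots\geq\delta_d$, then for every $t>0$ the matrix $\tau(a_t)$ is $\Psf_k$-proximal \emph{if and only if} $\delta_k>\delta_{k+1}$ — a condition not involving $t$. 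So it is enough to produce a single $s>0$ for which $\tau(a_s)$ is $\Psf_k$-proximal.

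To do this I would pass through the real Jordan decomposition in $\mathsf{G}$. By hypothesis $\tau(\mathsf{G})$ contains a $\Psf_k$-proximal element $\tau(g_0)$; write $g_0=g_eg_hg_u$ with $g_e$ elliptic, $g_h$ hyperbolic, and $g_u$ unipotent, all pairwise commuting. Because $\mathsf{G}$ has real rank one, $g_h$ is conjugate to $a_s$ for some $s\in\Rb$, and conjugating if necessary by a representative in $\mathsf{K}$ of the nontrivial element of the Weyl group $W(\mathsf{G},\mathsf{A})\cong\Zb/2$ (which sends $a_s$ to $a_{-s}$), we may take $s\geq 0$. Now $\tau(g_e)$ lies in a compact subgroup of $\SL(d,\Kb)$, and $\tau(g_u)$ is unipotent (the differential of $\tau$ carries the nilpotent element of $\gL$ underlying $g_u$ to a nilpotent operator); since $\tau(g_e)$, $\tau(g_h)$, $\tau(g_u)$ commute, simultaneous triangularization shows that the multiset of eigenvalue moduli of $\tau(g_0)$ equals that of $\tau(g_h)$, hence that of $\tau(a_s)$. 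Thus $\tau(a_s)$ has $\lambda_k>\lambda_{k+1}$. Finally, $\tau(g_0)$ being $\Psf_k$-proximal forces some eigenvalue modulus of $\tau(g_0)$ to differ from $1$ (strict inequality together with $\det=1$), so $\tau(a_s)\neq\id$, i.e.\ $s>0$. Combined with the first paragraph this yields $\delta_k>\delta_{k+1}$, and therefore $\tau(a_t)$ is $\Psf_k$-proximal for every $t>0$.

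I expect the only point needing care to be the reduction to the hyperbolic part $g_h$ — that is, the assertion that the elliptic and unipotent factors contribute only unit eigenvalue moduli and that these moduli multiply correctly for commuting matrices. The elliptic case is immediate (compactness), and the unipotent case is standard for representations of semisimple Lie groups; the commuting-product bookkeeping is elementary but should be stated explicitly. Everything else — the identity $\tau(a_t)=\exp(tD)$ with $D$ diagonal, and the monotonicity of $t\mapsto e^{t\delta}$ used to transfer the gap from a single $s>0$ to all $t>0$ — is routine.
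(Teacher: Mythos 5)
Your proof is correct, but it takes a genuinely different route from the paper's. The paper extracts the eigenvalue gap of $\tau(a_t)$ from the given proximal element $\tau(g)$ by applying the Cartan decomposition to the \emph{powers} $g^n = m_n a_{t_n}\ell_n$ and invoking the spectral-radius formula
$\log\frac{\lambda_k}{\lambda_{k+1}}(\tau(g)) = \lim_n \tfrac1n\log\frac{\mu_k}{\mu_{k+1}}(\tau(g^n))$,
together with $\mu_j(\tau(g^n)) = \mu_j(\tau(a_{t_n})) = \lambda_j(\tau(a_{t_n}))$ (unitarity of $\tau(\mathsf{K})$ and diagonality of $\tau(\mathsf{A})$); this gives the gap for a single large $t_n$ and then for all $t>0$ by the one-parameter structure. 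You instead pass through the complete multiplicative (real) Jordan decomposition $g_0 = g_e g_h g_u$ inside $\mathsf{G}$, use rank one to conjugate $g_h$ into $\mathsf{A}$, and observe that $\tau(g_e)$ and $\tau(g_u)$ contribute only unit eigenvalue moduli since they commute with $\tau(g_h)$. Both roads converge on the same final step (transferring the gap from one $s>0$ to all $t>0$ via the diagonal form $\tau(a_t)=\exp(tD)$), and both rely on the Mostow conjugation making $\tau(\mathsf{A})$ diagonal and $\tau(\mathsf{K})$ unitary. The trade-off: the paper's argument is more self-contained, needing only the singular value decomposition and the elementary limit formula for eigenvalue moduli, while yours is conceptually crisper but imports the Jordan decomposition for $\mathsf{G}$ (fine here since $\mathsf{G}$ has finite center and rank one, but a heavier hammer) and the bookkeeping that eigenvalue moduli of commuting factors multiply, which you rightly flag as the step needing explicit care. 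Both are complete.
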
 
 
 \begin{proof} By hypothesis, there exists $g \in \mathsf{G}$ such that $\tau(g)$ is $\Psf_k$-proximal. By the Cartan decomposition,  there exist $m_n, \ell_n \in \mathsf{K}$ and $t_n \to \infty$ such that $g^n = m_n a_{t_n} \ell_n$. Then 
 \begin{align*}
 0 < \log \frac{\lambda_k}{\lambda_{k+1}}(\tau(g)) &= \lim_{n \to \infty} \frac{1}{n} \log \frac{\mu_k}{\mu_{k+1}}(\tau(g^n)) =  \lim_{n \to \infty} \frac{1}{n} \log \frac{\mu_k}{\mu_{k+1}}(\tau(a_{t_n}))\\
 &=  \lim_{n \to \infty} \frac{1}{n} \log \frac{\lambda_k}{\lambda_{k+1}}(\tau(a_{t_n}))
 \end{align*}
 (the first equality follows from Gelfand's formula for the spectral radius applied to the linear operators $\wedge^k g$ and $\wedge^{k+1}g$; in the last equality we use the fact that $\tau(a_t)$ is diagonal). So when $n$ is large $\lambda_k(\tau(a_{t_n})) > \lambda_{k+1}(\tau(a_{t_n}))$. Since $\tau(a_t)$ is diagonal, this implies that $\lambda_k(\tau(a_t)) > \lambda_{k+1}(\tau(a_t))$ for all $t > 0$. 
 \end{proof} 
 
 Let $V^+ \in \Gr_k(\Kb^d)$ and $V^- \in \Gr_{d-k}(\Kb^d)$ denote the attracting and repelling fixed points of $\tau(a_t)$ when $t > 0$. Then $\Kb^d = V^+ \oplus V^-$ and 
 \begin{equation}
 \label{eqn: a_t acts nicely}
 \lim_{t \to \infty} \tau(a_t) V = V^+
 \end{equation}
 for all $V \in \Gr_k(\Kb^d)$ transverse to $V^-$. Let $\mathsf{P}^\pm$ denote the stabilizer of $v_0^\pm \in \partial_\infty X$ in $\mathsf{G}$.

 \begin{lemma} $\tau(\mathsf{P}^\pm) V^\pm = V^\pm$. \end{lemma}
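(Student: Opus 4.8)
The plan is to reduce the statement to the Langlands decomposition of the minimal parabolic subgroups of the rank-one group $\mathsf{G}$. Recall from the standard structure theory of rank-one groups that every proper parabolic subgroup of $\mathsf{G}$ is a minimal parabolic, and that the stabilizers of the two endpoints of the geodesic $t \mapsto a_t(p_0)$ are the two opposite minimal parabolics
\[
\mathsf{P}^+ = \mathsf{M}\mathsf{A}\mathsf{N}, \qquad \mathsf{P}^- = \mathsf{M}\mathsf{A}\overline{\mathsf{N}},
\]
where $\mathsf{N}$ (resp.\ $\overline{\mathsf{N}}$) is the horospherical subgroup at $v_0^+$ (resp.\ $v_0^-$), equivalently the unipotent radical of the parabolic in question. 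So it suffices to show that $\tau(\mathsf{A})$, $\tau(\mathsf{M})$ and $\tau(\mathsf{N})$ each preserve $V^+$, and that $\tau(\mathsf{A})$, $\tau(\mathsf{M})$ and $\tau(\overline{\mathsf{N}})$ each preserve $V^-$.

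The cases of $\mathsf{A}$ and $\mathsf{M}$ are immediate. By definition $V^+$ and $V^-$ are sums of eigenspaces of $\tau(a_1)$ grouped by eigenvalue modulus, hence they are $\tau(a_t)$-invariant for every $t \in \Rb$. Since $\mathsf{M}$ centralizes $\mathsf{A}$, each $\tau(m)$ with $m \in \mathsf{M}$ commutes with $\tau(a_1)$ and therefore preserves every eigenspace of $\tau(a_1)$, in particular $V^+$ and $V^-$.

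The substantive case is the invariance of $V^+$ under $\tau(\mathsf{N})$; the invariance of $V^-$ under $\tau(\overline{\mathsf{N}})$ is symmetric. Fix $n \in \mathsf{N}$ and, for $t \geq 0$, put $n_t := a_{-t}na_t$; since $\mathsf{N}$ is the unipotent radical of $\mathrm{Stab}_{\mathsf{G}}(v_0^+)$ we have $n_t \to e$ in $\mathsf{G}$ as $t \to +\infty$. Using $\tau(a_t)V^+ = V^+$ we obtain, for every $t \geq 0$, the identity $\tau(n)V^+ = \tau(a_t)\,\tau(n_t)\,V^+$, and I would let $t \to +\infty$. On the one hand $\tau(n_t)V^+ \to V^+$ because $\tau(n_t) \to \id$, so for large $t$ the subspace $\tau(n_t)V^+$ is transverse to $V^-$. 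On the other hand, identifying the subspaces of $\Gr_k(\Kb^d)$ transverse to $V^-$ with the graphs of linear maps $V^+ \to V^-$, the transformation $\tau(a_t)$ acts on this affine chart fixing the origin $V^+$ and --- since $\tau(\mathsf{A})$ was conjugated into the diagonal matrices --- with operator norm $\lesssim \big(\lambda_{k+1}/\lambda_k\big)(\tau(a_1))^t$, which tends to $0$ since $\lambda_k(\tau(a_1)) > \lambda_{k+1}(\tau(a_1))$ by the previous lemma. Hence $\tau(a_t)W_t \to V^+$ for every sequence of subspaces $W_t \to V^+$, and taking $W_t = \tau(n_t)V^+$ in the identity above gives $\tau(n)V^+ = V^+$. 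For $\overline{\mathsf{N}}$ and $V^-$ one repeats the argument verbatim with $\{a_t\}$ replaced by $\{a_{-t}\}$, using that $\tau(a_{-1})$ is $\Psf_{d-k}$-proximal with attracting fixed point $V^- \in \Gr_{d-k}(\Kb^d)$ and repelling subspace $V^+$ --- which holds precisely because $\lambda_k(\tau(a_1)) > \lambda_{k+1}(\tau(a_1))$. Combining these six invariance statements with the decompositions $\mathsf{P}^+ = \mathsf{M}\mathsf{A}\mathsf{N}$ and $\mathsf{P}^- = \mathsf{M}\mathsf{A}\overline{\mathsf{N}}$ yields $\tau(\mathsf{P}^\pm)V^\pm = V^\pm$.

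The only real obstacle is bookkeeping rather than any genuine difficulty: one must correctly invoke the rank-one structural fact that $\mathsf{P}^+$ and $\mathsf{P}^-$ admit the Langlands decompositions above, and keep track of which one-parameter subgroup and which Grassmannian furnish the relevant proximal contraction in each of the two dual cases. The dynamical input --- that a proximal transformation contracts the affine chart toward its attracting fixed point uniformly enough to carry a convergent sequence of subspaces to that fixed point --- is standard.
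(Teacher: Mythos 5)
Your proof is correct and follows essentially the same route as the paper's: the paper fixes $g\in\mathsf{P}^+$ and uses the fact (cited to Eberlein) that $g':=\lim_{t\to\infty}a_{-t}ga_t$ exists in $\mathsf{M}\mathsf{A}$, which is just the Langlands decomposition $\mathsf{P}^+=\mathsf{M}\mathsf{A}\mathsf{N}$ applied implicitly, and then invokes the contraction $\tau(a_t)V\to V^+$ exactly as you do. The only difference is that you spell out the moving-target form of the contraction via the affine chart, a step the paper passes over as a direct appeal to its Equation (4.1).
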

 
 \begin{proof} Fix $g \in \mathsf{P}^+$. Then 
 $$
g^\prime : = \lim_{t \to \infty} a_{-t} g a_{t}
 $$
exists and is contained in $\mathsf{M}\mathsf{A}$, see for instance~\cite[Prop.\ 2.17.3]{E1996}. Since $\mathsf{M}$ commutes with $\mathsf{A}$, $\tau(\mathsf{M})$ fixes $V^+$. Hence $\tau(g^\prime) V^+ = V^+$. So 
 $$
  \lim_{t \to \infty} \tau(a_{-t} g a_t) V^+ = \tau(g^\prime)V^+=V^+
  $$
  which implies, by Equation~\eqref{eqn: a_t acts nicely}, that
 $$
\tau(g)V^+ = \lim_{t \to \infty} \tau(a_{t}) \tau(a_{-t} g a_t) V^+ =  V^+. 
$$
Thus $\tau(\mathsf{P}^+)V^+= V^+$. 

Similar reasoning shows that $\tau(\mathsf{P}^-)V^- = V^-$. 
\end{proof} 
 
 Since $\mathsf{G}$ acts transitively on $\partial_\infty X$ and ${\rm Stab}_{\mathsf{G}}(v_0^\pm) = \mathsf{P}^\pm$, the last lemma implies that the expressions 
 $$
 \zeta^k( gv_0^+) = \tau(g) V^+ \quad \text{and} \quad \zeta^{d-k}(g v_0^-) = \tau(g) V^- \quad \text{for all} \quad g \in \mathsf{G}
 $$ 
define a smooth $\tau$-equivariant map $\zeta=(\zeta^k, \zeta^{d-k}) \colon \partial_\infty X \to \Gr_k(\Kb^d) \times \Gr_{d-k}(\Kb^d)$. 

\begin{lemma} $\zeta$ is transverse. \end{lemma}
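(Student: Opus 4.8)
The plan is to use the two-point homogeneity of the rank-one symmetric space $X$. First I would record the standard fact that $\mathsf{K}$ acts transitively on the unit sphere of $T_{p_0}X$, so that $\mathsf{G}$ acts transitively on $T^1X$; since $X$ has pinched negative curvature, any two distinct points of $\partial_\infty X$ are the backward and forward endpoints of a (unique) geodesic line, and it follows that $\mathsf{G}$ acts transitively on ordered pairs of distinct points of $\partial_\infty X$. Concretely, given distinct $x,y \in \partial_\infty X$, let $\sigma$ be the geodesic line with backward endpoint $y$ and forward endpoint $x$, and choose $g \in \mathsf{G}$ with $g(v_0) = \dot\sigma(0)$; since $t \mapsto a_t(p_0)$ is the geodesic with initial velocity $v_0$ and endpoints $v_0^\pm$, we get $g v_0^+ = x$ and $g v_0^- = y$.

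With such a $g$ fixed, the definition of $\zeta$ gives $\zeta^k(x) = \tau(g)V^+$ and $\zeta^{d-k}(y) = \tau(g)V^-$. Using $\Kb^d = V^+ \oplus V^-$ and the invertibility of $\tau(g)$,
$$
\zeta^k(x) \oplus \zeta^{d-k}(y) = \tau(g)V^+ \oplus \tau(g)V^- = \tau(g)\bigl(V^+ \oplus V^-\bigr) = \Kb^d,
$$
which is transversality.

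There is essentially no obstacle here beyond isolating the correct homogeneity statement: the one nontrivial input is that $\mathsf{G}$ acts transitively on ordered pairs of distinct boundary points, and this is precisely where the rank-one hypothesis enters (it fails in higher rank). One can cite this from standard references on symmetric spaces, or derive it from the $\mathsf{K}\mathsf{A}\mathsf{K}$-decomposition together with transitivity of $\mathsf{K}$ on unit tangent vectors at $p_0$. A more computational alternative --- writing $x = g v_0^+$ and $y = h v_0^-$ using only transitivity of $\mathsf{G}$ on $\partial_\infty X$, and reducing via $g' := g^{-1}h$ to the claim that $V^+ \oplus \tau(g')V^- = \Kb^d$ whenever $g' v_0^- \neq v_0^+$ --- also works but is longer, so I would favor the homogeneity argument above.
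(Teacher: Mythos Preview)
Your proof is correct and is essentially identical to the paper's: both use transitivity of $\mathsf{G}$ on ordered pairs of distinct boundary points to reduce to the base pair $(v_0^+,v_0^-)$, then push $V^+ \oplus V^- = \Kb^d$ through $\tau(g)$. The paper simply asserts the two-point transitivity without justification, whereas you sketch it via transitivity on $T^1X$; otherwise the arguments coincide.
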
 

\begin{proof} Fix $x,y \in \partial_\infty X$ distinct. Since $\mathsf{G}$ acts transitively on pairs of distinct points in $\partial_\infty X$, there exists $g \in \mathsf{G}$ such that $(x,y) = g \cdot (v_0^+, v_0^-)$. Then
 \begin{equation*}
 \zeta^k(x) +  \zeta^{d-k}(y) = \tau(g) (\zeta^k(v_0^+) + \zeta^{d-k}(v_0^-)) = \tau(g) (V^+ + V^-)=\Kb^d. \qedhere
\end{equation*}
\end{proof} 

\begin{lemma} $\zeta$ is strongly dynamics preserving. 
 \end{lemma}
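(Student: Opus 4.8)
The plan is to verify the Cartan-decomposition criterion of Observation~\ref{obs:strongly_dynamics_pres_div_cartan}, exploiting that, after the normalization already made, $\tau$ carries the Cartan decomposition of $\mathsf{G}$ to a singular value decomposition in $\SL(d,\Kb)$. First I would permute the standard basis so that $\tau(a_t) = \operatorname{diag}(e^{t\beta_1},\dots,e^{t\beta_d})$ with $\beta_1 \geq \cdots \geq \beta_d$; this preserves the normalizations $\tau(\mathsf{A})$ diagonal and $\tau(\mathsf{K}) \leq \SU(d,\Kb)$. By the lemma above, $\tau(a_t)$ is $\Psf_k$-proximal for $t > 0$, so $\beta_k > \beta_{k+1}$, and with this ordering $V^+ = \ip{e_1,\dots,e_k}$ and $V^- = \ip{e_{k+1},\dots,e_d}$.

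Given a sequence $(g_n)$ in $\mathsf{G}$ with $g_n \to x$ and $g_n^{-1} \to y$, and $V \in \Gr_k(\Kb^d)$ transverse to $\zeta^{d-k}(y)$, it suffices (by compactness of $\Gr_k(\Kb^d)$) to show that every subsequence of $(\tau(g_n)V)_n$ has a further subsequence converging to $\zeta^k(x)$. So I would pass to a subsequence, choose a Cartan decomposition $g_n = k_n a_{t_n} \ell_n$ with $k_n,\ell_n \in \mathsf{K}$ and $t_n \geq 0$, and pass to a further subsequence with $k_n \to k$ and $\ell_n \to \ell$ in $\mathsf{K}$. Since $g_n(p_0) = k_n a_{t_n}(p_0) \to x \in \partial_\infty X$ and $\mathsf{K}$ is compact, necessarily $t_n \to \infty$; then continuity of the $\mathsf{G}$-action on $X \cup \partial_\infty X$ together with $a_{t_n}(p_0) \to v_0^+$ gives $x = k(v_0^+)$, hence $\zeta^k(x) = \tau(k)V^+$. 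Likewise $g_n^{-1}(p_0) = \ell_n^{-1} a_{-t_n}(p_0) \to \ell^{-1}(v_0^-)$, so $y = \ell^{-1}(v_0^-)$, $\zeta^{d-k}(y) = \tau(\ell^{-1})V^-$, and in particular $V$ is transverse to $\tau(\ell^{-1})V^-$.

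Finally I would read off the singular value data of $\tau(g_n)$: since $\tau(k_n),\tau(\ell_n) \in \SU(d,\Kb)$ and $\tau(a_{t_n})$ is diagonal with non-increasing positive diagonal, $\tau(g_n) = \tau(k_n)\tau(a_{t_n})\tau(\ell_n)$ is a singular value decomposition, so $\mu_i(\tau(g_n)) = e^{t_n\beta_i}$, $U_k(\tau(g_n)) = \tau(k_n)V^+$, and (using the standard description of $U_{d-k}$ of an inverse from a singular value decomposition) $U_{d-k}(\tau(g_n)^{-1}) = \tau(\ell_n^{-1})V^-$. Thus $\mu_k(\tau(g_n))/\mu_{k+1}(\tau(g_n)) = e^{t_n(\beta_k-\beta_{k+1})} \to \infty$, while $U_k(\tau(g_n)) \to \tau(k)V^+ = \zeta^k(x)$ and $U_{d-k}(\tau(g_n)^{-1}) \to \tau(\ell^{-1})V^- = \zeta^{d-k}(y)$ by continuity of $\tau$. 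Applying Observation~\ref{obs:strongly_dynamics_pres_div_cartan}, implication (2)$\Rightarrow$(1), with $V_0 = \zeta^k(x)$ and $W_0 = \zeta^{d-k}(y)$ then gives $\tau(g_n)V \to \zeta^k(x)$, since $V$ is transverse to $W_0$; this closes the subsequence argument. The only steps requiring real care are organizational: fixing the basis ordering so that the Cartan decomposition becomes a genuine singular value decomposition (thereby pinning $V^\pm$ down as coordinate subspaces) and correctly identifying $U_{d-k}(\tau(g_n)^{-1})$ from it; the subsequence reduction is forced on us because the $\mathsf{K}$-factors of a Cartan decomposition are not canonical, while all the dynamical content is supplied by Observation~\ref{obs:strongly_dynamics_pres_div_cartan}.
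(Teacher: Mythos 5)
Your proof is correct and follows essentially the same route as the paper: both use a Cartan decomposition $g_n = m_n a_{t_n}\ell_n$, pass to a subsequence on which the $\mathsf{K}$-factors converge, and identify $x$ and $y$ as the images of $v_0^\pm$ under the limits of the $\mathsf{K}$-factors. The only difference is in closing the argument --- the paper directly shows $\tau(a_{t_n})\tau(\ell_n)V \to V^+$ by noting $\tau(\ell_n)V$ is eventually transverse to $V^-$, while you reorder the basis so the Cartan decomposition becomes a genuine singular value decomposition of $\tau(g_n)$ and then invoke criterion~(2) of Observation~\ref{obs:strongly_dynamics_pres_div_cartan}; this is an equivalent and slightly tidier finish that makes the implicit locally-uniform convergence explicit.
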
 
 
 \begin{proof} Suppose that $(g_n)_{n \geq 1}$ is a sequence in $\mathsf{G}$ such that $g_n \to x \in \partial_\infty X$ and $g_n^{-1} \to y \in \partial_\infty X$. By the Cartan decomposition, there exist $m_n,\ell_n \in \mathsf{K}$ and $t_n \to \infty$ such that $g_n = m_n a_{t_n} \ell_n$. Passing to a subsequence we can suppose that $m_n \to m$ and $\ell_n \to \ell$. Then $m_n(v_0^+) \to m(v_0^+)=x$ and $\ell_n^{-1}(v_0^-) \to \ell^{-1}(v_0^-)=y$. Then by Equation~\eqref{eqn: a_t acts nicely}, if $V \in \Gr_k(\Kb^d)$ is transverse to $\xi^{d-k}(y) = \tau(\ell)^{-1} V^-$, then 
 $\tau(\ell_n) V$ is transverse to $\tau(\ell_n) \tau(\ell)^{-1} V^-$ and hence, for large $n$, to $V^-$, and so
\begin{equation*}
\lim_{n \to \infty} \tau(g_n) V = \tau(m) \lim_{n \to \infty} \tau(a_{t_n}) \tau(\ell_n) V = \tau(m) V^+ = \xi^k(x). \qedhere
\end{equation*}
\end{proof} 

Next we prove parts (2) and (3). Since any two families of $\tau$-equivariant norms are bi-Lipschitz, it is enough to consider the norms
$$
\norm{\cdot}_{g(v_0)} := \norm{\tau(g)^{-1}(\cdot)}_{2}
$$
where $\norm{\cdot}_2$ is the standard Euclidean norm. Since $\tau(\mathsf{K}) \leq \mathsf{U}(d,\Kb)$ and $\mathsf{K} =  {\rm Stab}_{\mathsf{G}}(p_0)$, this is indeed a well-defined family.

Since each $\tau(a_t)$ is $\Psf_k$-proximal and diagonal, there exists $\lambda > 0$ such that 
$$
\frac{\lambda_{k}}{\lambda_{k+1}}(\tau(a_t)) = e^{\lambda t} 
$$
when $t \geq 0$. 

\begin{lemma}\label{lem: homog implies anosov} If $t \geq 0$, $v \in T^1 X$, $Y \in \zeta^k(v^+)$, and $Z \in \zeta^{d-k}(v^-)$ is non-zero, then 
 \begin{align*}
 \frac{\norm{Y}_{\geodflow^t(v)}}{\norm{Z}_{\geodflow^t(v)}} \leq e^{-\lambda t} \frac{\norm{Y}_v}{\norm{Z}_v}.
 \end{align*}
 \end{lemma}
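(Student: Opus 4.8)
The plan is to reduce everything to the single Cartan direction $\mathsf{A}$ by using the $\tau$-equivariance of both the norm and the maps $\zeta^k,\zeta^{d-k}$, and then to perform a direct computation with the diagonal matrices $\tau(a_t)$. First I would use transitivity: given $v \in T^1 X$, choose $g \in \mathsf{G}$ with $v = g(v_0)$, so that $v^\pm = g(v_0^\pm)$ and $\geodflow^t(v) = g(a_t(v_0)) = (g a_t)(v_0)$. By $\tau$-equivariance of the chosen norm family, $\norm{W}_{(ga_t)(v_0)} = \norm{\tau(ga_t)^{-1}W}_2 = \norm{\tau(a_t)^{-1}\tau(g)^{-1}W}_2$, and similarly $\norm{W}_{g(v_0)} = \norm{\tau(g)^{-1}W}_2$. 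So, writing $Y' := \tau(g)^{-1}Y$ and $Z' := \tau(g)^{-1}Z$, the claimed inequality becomes
\begin{equation*}
\frac{\norm{\tau(a_t)^{-1}Y'}_2}{\norm{\tau(a_t)^{-1}Z'}_2} \leq e^{-\lambda t}\,\frac{\norm{Y'}_2}{\norm{Z'}_2},
\end{equation*}
where now $Y' \in \tau(g)^{-1}\zeta^k(v^+) = \zeta^k(v_0^+) = V^+$ (using $\tau$-equivariance of $\zeta$ and $v^+ = g(v_0^+)$) and likewise $Z' \in V^-$. Thus it suffices to prove the estimate for $Y \in V^+$, $Z \in V^-$ nonzero, with respect to the Euclidean norm and the diagonal matrix $\tau(a_t)^{-1}$.

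Next I would make the diagonal picture explicit. Since $\tau(a_t)$ is diagonal and $\Psf_k$-proximal, the attracting space $V^+$ is a coordinate subspace spanned by the $k$ eigenlines carrying the $k$ largest-modulus eigenvalues of $\tau(a_t)$, and $V^-$ is the complementary coordinate subspace. Write $\nu_1(t) \geq \cdots \geq \nu_d(t) > 0$ for the moduli of the eigenvalues of $\tau(a_t)$ (these are just $\lambda_i(\tau(a_t))$, equal to the diagonal entries up to sign/phase). For $Y \in V^+$ a unit vector, $\norm{\tau(a_t)^{-1}Y}_2 \le \nu_k(t)^{-1}\norm{Y}_2$, because on $V^+$ the smallest modulus appearing is $\nu_k(t)$; for $Z \in V^-$ a unit vector, $\norm{\tau(a_t)^{-1}Z}_2 \ge \nu_{k+1}(t)^{-1}\norm{Z}_2$, because on $V^-$ the largest modulus appearing is $\nu_{k+1}(t)$. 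Dividing, and using the already-established identity $\nu_k(t)/\nu_{k+1}(t) = \tfrac{\lambda_k}{\lambda_{k+1}}(\tau(a_t)) = e^{\lambda t}$ for $t \ge 0$, gives
\begin{equation*}
\frac{\norm{\tau(a_t)^{-1}Y}_2}{\norm{\tau(a_t)^{-1}Z}_2} \leq \frac{\nu_{k+1}(t)}{\nu_k(t)}\,\frac{\norm{Y}_2}{\norm{Z}_2} = e^{-\lambda t}\,\frac{\norm{Y}_2}{\norm{Z}_2},
\end{equation*}
which is exactly what is needed. Homogeneity handles the non-unit case.

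I do not expect a serious obstacle here; the content is entirely in the equivariance bookkeeping of the first paragraph together with the elementary operator-norm bounds of the second. The one point that needs a little care is checking that $V^+$ really is spanned by coordinate eigenlines of $\tau(a_t)$ and that the modulus ordering on $V^+$ versus $V^-$ is as claimed — this follows because $\Psf_k$-proximality of the diagonal matrix $\tau(a_t)$ means precisely $\lambda_k(\tau(a_t)) > \lambda_{k+1}(\tau(a_t))$, so the top $k$ eigenlines are unambiguously separated from the rest, and by definition $V^+$ (resp.\ $V^-$) is the span of the first $k$ (resp.\ last $d-k$) of these — which is where the bound $\nu_k(t)^{-1}$ on $V^+$ and $\nu_{k+1}(t)^{-1}$ on $V^-$ comes from. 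Finally, to conclude parts (2) and (3) of the proposition and hence the ``in particular'' clause: part (2) is Lemma~\ref{lem: homog implies anosov} with $C = 1$ and $c = \lambda$ after unwinding the reduction above (which also shows the bound is independent of $v$); part (3) follows because the chosen norm family is $\tau$-equivariant and $\mathsf{G}$ acts cocompactly enough on the relevant set of pairs $(v,w)$ with $\dist_X(\pi(v),\pi(w)) \le r$ — more precisely, by equivariance one reduces to a compact set of pairs and takes $L_r$ to be the resulting sup of the bi-Lipschitz constants; and then geometric finiteness of $\Gamma$, together with the identification of $\Uc(\Gamma)$ with $\Gc(\Cc_X(\Gamma))$ and Proposition~\ref{prop: Hom bundles contraction/expansions}, upgrades parts (2) and (3) to the statement that $\rho = \tau|_\Gamma$ is uniformly $\Psf_k$-Anosov relative to $\Cc_X(\Gamma)$, using Definitions~\ref{defn:flow definition} and~\ref{defn:locally uniform norms}.
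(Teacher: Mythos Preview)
Your proof is correct and follows essentially the same approach as the paper: choose $g$ with $g(v_0)=v$, use $\tau$-equivariance of the norm family and of $\zeta$ to reduce to the Euclidean norm with $Y'=\tau(g)^{-1}Y\in V^+$ and $Z'=\tau(g)^{-1}Z\in V^-$, and then apply the elementary eigenvalue bounds for the diagonal matrix $\tau(a_t)^{-1}$ on the coordinate subspaces $V^+$ and $V^-$. The paper's proof is just a compressed one-line version of your two paragraphs; your final paragraph on parts (2), (3), and the ``in particular'' clause goes beyond the lemma statement but is handled in the paper by the subsequent Lemma~\ref{lem:homog are uniform} (via an explicit Cartan-decomposition computation rather than a cocompactness argument) and the concluding lemma of the section.
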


 \begin{proof} Fix $g \in \mathsf{G}$ such that $g(v_0)=v$. Then $\geodflow^t(v) = g\geodflow^t(v_0) = g a_t(v_0)$ for all $t$ and $g(V^+, V^-) = (\zeta^k(v^+), \zeta^{d-k}(v^-))$. Since $\tau(\mathsf{A})$ is a subgroup of the diagonal matrices and $V^+,V^-$ are the attracting, repelling spaces of $\tau(a_t)$ when $t > 0$, then 
   \begin{align*}
 \frac{\norm{Y}_{\geodflow^t(v)}}{\norm{Z}_{\geodflow^t(v)}} = \frac{\norm{\tau(a_{-t} g^{-1})Y}_{2}}{\norm{\tau(a_{-t} g^{-1}) Z}_{2}} \leq \frac{\frac{1}{\lambda_{k}(\tau(a_t))}}{\frac{1}{\lambda_{k+1}(\tau(a_t))}}\frac{\norm{\tau(g^{-1})Y}_{2}}{\norm{\tau(g^{-1}) Z}_{2}} =e^{-\lambda t} \frac{\norm{Y}_v}{\norm{Z}_v}.
 \end{align*}
 \end{proof} 
 
 Since $\tau(\mathsf{A})=\{\tau(a_t)\}$ is a one-parameter group of diagonal matrices, there exists $\mu > 0$ such that 
$$
\frac{\mu_{1}}{\mu_{d}}(\tau(a_t)) = e^{\mu t} 
$$
when $t \geq 0$. 
 
 \begin{lemma}\label{lem:homog are uniform} If $v_1, v_2 \in T^1 X$, then 
 $$
 e^{-\mu \dist_X(\pi(v_1), \pi(v_2))}\norm{\cdot}_{v_1}  \leq \norm{\cdot}_{v_2} \leq  e^{\mu \dist_X(\pi(v_1), \pi(v_2))}\norm{\cdot}_{v_1}.
 $$
 \end{lemma}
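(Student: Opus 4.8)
The plan is to reduce the estimate to the singular value decomposition of $\tau(h)$, where $h \in \mathsf{G}$ is an element moving $v_1$ to $v_2$, exploiting that $\tau(\mathsf{K})$ is unitary and $\tau(\mathsf{A})$ is diagonal. First I would write $v_i = g_i(v_0)$ with $g_i \in \mathsf{G}$ for $i=1,2$ (possible since $\mathsf{G}$ acts transitively on $T^1 X$, which is precisely what makes the family $\norm{\cdot}_{g(v_0)} := \norm{\tau(g)^{-1}(\cdot)}_2$ well defined). Then, for a fixed nonzero $Y \in \Kb^d$, setting $Z := \tau(g_1)^{-1} Y$ and $h := g_2^{-1} g_1$, I would observe
$$
\frac{\norm{Y}_{v_2}}{\norm{Y}_{v_1}} = \frac{\norm{\tau(g_2)^{-1} Y}_2}{\norm{\tau(g_1)^{-1} Y}_2} = \frac{\norm{\tau(h) Z}_2}{\norm{Z}_2},
$$
so that this ratio lies in the interval $[\mu_d(\tau(h)), \mu_1(\tau(h))]$.

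The remaining task is to bound $\mu_1(\tau(h))$ and $\mu_d(\tau(h))$ in terms of $\dist_X(\pi(v_1), \pi(v_2))$. Since $\mathsf{G}$ has rank one, the singular value decomposition used earlier in the proof lets me write $h = m a_t \ell$ with $m,\ell \in \mathsf{K}$ and $t \geq 0$. Because $\mathsf{K} = {\rm Stab}_{\mathsf{G}}(p_0)$ and $t \mapsto a_t(p_0)$ is the unit-speed geodesic through $p_0$, we get $t = \dist_X(p_0, h(p_0)) = \dist_X(g_2(p_0), g_1(p_0)) = \dist_X(\pi(v_1), \pi(v_2))$. As $\tau(\mathsf{K}) \leq \mathsf{U}(d,\Kb)$, singular values are unchanged: $\mu_j(\tau(h)) = \mu_j(\tau(a_t))$ for every $j$. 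Finally, $\tau(a_t) \in \SL(d,\Kb)$, so $\prod_j \mu_j(\tau(a_t)) = 1$ and hence $\mu_d(\tau(a_t)) \leq 1 \leq \mu_1(\tau(a_t))$; combined with the identity $\mu_1(\tau(a_t))/\mu_d(\tau(a_t)) = e^{\mu t}$ established just before the lemma, this gives $\mu_1(\tau(a_t)) \leq e^{\mu t}$ and $\mu_d(\tau(a_t)) \geq e^{-\mu t}$.

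Putting the two displays together yields $e^{-\mu t}\norm{Y}_{v_1} \leq \norm{Y}_{v_2} \leq e^{\mu t}\norm{Y}_{v_1}$ with $t = \dist_X(\pi(v_1), \pi(v_2))$, which is exactly the assertion. I do not anticipate a genuine obstacle; the argument is essentially a bookkeeping exercise. The only points requiring care are the identification of the Cartan parameter $t$ with the base distance $\dist_X(\pi(v_1), \pi(v_2))$ (using that $g_2^{-1}$ is an isometry and $\mathsf{K}$ fixes $p_0$), and the upgrade of the known bound on the ratio $\mu_1/\mu_d$ to two-sided bounds on $\mu_1$ and $\mu_d$ separately, which relies on $\tau$ taking values in $\SL(d,\Kb)$.
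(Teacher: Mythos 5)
Your proof is correct and follows essentially the same route as the paper's: Cartan decomposition of the group element moving one base point to the other, the fact that $\tau(\mathsf{K})$ is unitary so only the $\tau(a_t)$ factor affects singular values, and the identification of the Cartan parameter $t$ with $\dist_X(\pi(v_1),\pi(v_2))$. The one place you are more explicit than the paper is in upgrading the bound on $\mu_1/\mu_d$ to two-sided bounds on $\mu_1$ and $\mu_d$ individually via $\det\tau(a_t)=\pm 1$; the paper states the analogous inequality with the ratios $\mu_1/\mu_d$ and $\mu_d/\mu_1$ directly, leaving that small determinant observation implicit.
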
 
 
 \begin{proof} Since the family of norms is $\tau$-equivariant, it is enough to consider the case where $v_1 = v_0$ and $v_2 = g(v_0)$. By the Cartan decomposition, there exist $m,\ell \in \mathsf{K}$ and $t \geq 0$ such that $g = ma_t \ell$. Notice that 
 $$
 \dist_X(\pi(v_1), \pi(v_2)) = \dist_X(p_0, ma_t\ell(p_0)) = \dist_X(p_0, a_t(p_0)) = t
 $$
 since $t \mapsto a_t(p_0)$ is a unit speed geodesic. Further, $\norm{\cdot}_{v_1} = \norm{\cdot}_2$ and
 $$
 \norm{\cdot}_{v_2} = \norm{\tau(g^{-1})(\cdot)}_2 = \norm{\tau(ma_{t})^{-1}( \cdot)}_2.
 $$
 Hence 
 $$
\frac{\mu_{d}}{\mu_{1}}(\tau(ma_{t})^{-1})  \norm{\cdot}_{v_1}  \leq \norm{\cdot}_{v_2} \leq  \frac{\mu_{1}}{\mu_{d}}(\tau(ma_{t})^{-1}) \norm{\cdot}_{v_1}.
 $$
 Since $\frac{\mu_{1}}{\mu_{d}}(\tau(ma_{t})^{-1}) = \frac{\mu_{1}}{\mu_{d}}(\tau(a_{-t}))= e^{\mu  t}$, the lemma follows. 
 \end{proof} 
 
 \begin{lemma}  If $\Gamma \leq \mathsf{G}$ is geometrically finite, then $\rho = \tau|_{\Gamma}$ is uniformly $\Psf_k$-Anosov relative to $\Cc_X(\Gamma)$.\end{lemma}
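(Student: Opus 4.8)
The plan is to assemble the three conclusions of Proposition~\ref{prop:representations of rank one groups} into Definition~\ref{defn:locally uniform norms}. Since $\Gamma \leq \mathsf{G}$ is geometrically finite, recall (from Section~\ref{sec:reminders negatively curved symspaces}) that $\Cc_X(\Gamma)$ is a weak cusp space for $(\Gamma, \peripherals(\Gamma))$, that the Bowditch boundary $\partial(\Gamma, \peripherals(\Gamma))$ is identified with the limit set $\Lambda_X(\Gamma) \subseteq \partial_\infty X$, and that the space $\Gc(\Cc_X(\Gamma))$ of parametrized geodesic lines is $\Gamma$-equivariantly identified with $\Uc(\Gamma) \subseteq T^1 X$ via $\sigma \mapsto v := \sigma'(0)$, in such a way that $\sigma^\pm = v^\pm$ and the flow $\geodflow^t$ on $\Gc(\Cc_X(\Gamma))$ corresponds to the restriction to $\Uc(\Gamma)$ of the geodesic flow on $T^1 X$.

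First I would set up the bundles over $\wh{\Gc}(\Cc_X(\Gamma)) = \Gamma\backslash\Uc(\Gamma)$. Restricting $\zeta_\tau$ from part (1) of Proposition~\ref{prop:representations of rank one groups} gives $\xi := \zeta_\tau|_{\Lambda_X(\Gamma)} \colon \partial(\Gamma, \peripherals(\Gamma)) \to \Gr_k(\Kb^d) \times \Gr_{d-k}(\Kb^d)$, which is continuous, $\rho$-equivariant, and transverse, each as the restriction of the corresponding property of $\zeta_\tau$. This determines the subbundles $\wh{\Theta}^k, \wh{\Xi}^{d-k} \subseteq \wh{E}_\rho(\Cc_X(\Gamma))$ of Section~\ref{sec:results from first paper}, with $\wh{\Theta}^k(\sigma) = \zeta_\tau^k(v^+)$ and $\wh{\Xi}^{d-k}(\sigma) = \zeta_\tau^{d-k}(v^-)$. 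The $\tau$-equivariant family $\norm{\cdot}_v := \norm{\tau(g)^{-1}(\cdot)}_2$ (where $g v_0 = v$) consists of inner-product norms varying continuously in $v$ and is in particular $\rho$-equivariant, so its restriction to $\Uc(\Gamma)$ descends to a metric $\norm{\cdot}$ on $\wh{E}_\rho(\Cc_X(\Gamma)) \to \wh{\Gc}(\Cc_X(\Gamma))$.

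It then remains to check exponential contraction and local uniformity of this metric. Under the identifications above, Lemma~\ref{lem: homog implies anosov} is precisely statement (1) of Proposition~\ref{prop: Hom bundles contraction/expansions} with $C = 1$ and $c = \lambda$ --- here one uses that $\flatflow^t$ acts trivially on the $\Kb^d$-factor, so $\norm{\flatflow^t(Y)}_{\geodflow^t\sigma} = \norm{Y}_{\geodflow^t v}$ --- and therefore statement (2) of that proposition shows $\homflow^t$ on $\Hom(\wh{\Xi}^{d-k}, \wh{\Theta}^k)$ is exponentially contracting. In particular $\rho$ is $\Psf_k$-Anosov relative to the weak cusp space $\Cc_X(\Gamma)$ in the sense of Definition~\ref{defn:flow definition}, hence $\Psf_k$-Anosov relative to $\peripherals(\Gamma)$ by Theorem~\ref{thm:equivalence of definitions}; alternatively, one verifies the boundary-map Definition~\ref{defn:Pk Anosov} directly from $\xi$ and the strongly-dynamics-preserving lemma above, using that $\gamma_n \to x$ in $\partial(\Gamma,\peripherals(\Gamma))$ is equivalent to $\gamma_n(p) \to x$ in $X \cup \partial_\infty X$. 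For local uniformity, since $\Cc_X(\Gamma)$ is geodesically convex in $X$ its induced metric is the restriction of $\dist_X$, and $\dist_X(\pi(v), \pi(w))$ equals the distance between the base points $\sigma_1(0), \sigma_2(0)$ of the corresponding lines, so Lemma~\ref{lem:homog are uniform} gives the locally uniform bound with $L_r := e^{\mu r}$. Together with Definition~\ref{defn:locally uniform norms}, these two facts show $\rho = \tau|_\Gamma$ is uniformly $\Psf_k$-Anosov relative to $\Cc_X(\Gamma)$.

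I do not anticipate a substantive obstacle: the analytic content lies entirely in parts (1)--(3) of Proposition~\ref{prop:representations of rank one groups}, and what remains is bookkeeping --- the identifications of the flow space with $\Uc(\Gamma)$, of the Bowditch boundary with $\Lambda_X(\Gamma)$, and of the induced metric on $\Cc_X(\Gamma)$ with the restricted Riemannian distance --- all of which are standard for geometrically finite actions on negatively curved symmetric spaces.
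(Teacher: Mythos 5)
Your proof is correct and takes essentially the same route as the paper: restrict $\zeta_\tau$ to the limit set, invoke Lemma~\ref{lem: homog implies anosov} for the contraction, Lemma~\ref{lem:homog are uniform} for local uniformity, and Proposition~\ref{prop: Hom bundles contraction/expansions} to convert the dominated-splitting estimate into exponential contraction on the Hom bundle. You spell out the identifications of $\Uc(\Gamma)$ with the flow space and of $\partial(\Gamma,\peripherals(\Gamma))$ with $\Lambda_X(\Gamma)$ more explicitly than the paper does, which is helpful but not a substantive difference.
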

 
 \begin{proof} Recall that $\Cc_X(\Gamma)$ is a weak cusp space of $\Gamma$ and $\Uc(\Gamma)$ naturally identifies with the space of geodesic lines in $\Cc_X(\Gamma)$.  Then Lemma~\ref{lem: homog implies anosov}, Lemma~\ref{lem:homog are uniform}, and Proposition~\ref{prop: Hom bundles contraction/expansions} imply that $\rho = \tau|_{\Gamma}$ is uniformly $\Psf_k$-Anosov relative to $\Cc_X(\Gamma)$.
 \end{proof}

\section{Almost homogeneous cusps}

Let $\mathsf{G}$, $\mathsf{K}$, and $X=\mathsf{G}/\mathsf{K}$ be as in Section~\ref{sec:reminders negatively curved symspaces}. In this section we consider the following setup: 
\begin{enumerate}
\item $\Gamma_0 \leq \mathsf{G}$ is a finitely generated discrete group which fixes a horoball $H \subset X$ and $\overline{H} \cap \partial_\infty X = \{\eta^+\}$.
\item $\tau \colon \mathsf{G} \to \SL(d,\Kb)$ is a $\Psf_k$-proximal representation and 
$$
\zeta_\tau \colon \partial_\infty X \to \Gr_k(\Kb^d) \times \Gr_{d-k}(\Kb^d)
$$
 is the boundary map constructed in Proposition~\ref{prop:representations of rank one groups}. 
\item  \label{cond:three} $\rho \colon \Gamma_0 \to \SL(d,\Kb)$ is a representation where $\left\{ \tau(g)\rho(g)^{-1} : g \in \Gamma_0 \right\}$ is relatively compact in $\SL(d,\Kb)$.
\item $\Lc \subset \partial_\infty X$ is a closed, $\Gamma_0$-invariant set where the quotient $\Gamma_0 \backslash (\Lc \smallsetminus \{\eta^+\})$ is compact.
\item $ \xi \colon \Lc \to \Gr_k(\Kb^d) \times \Gr_{d-k}(\Kb^d)$ is continuous, $\rho$-equivariant, transverse, and $\xi(\eta^+) = \zeta_\tau(\eta^+)$. 
\item $\Uc := \{ v \in T^1 X : v^+, v^- \in \Lc\}$. 
\end{enumerate}

In the next section we will apply the results of this section to the case where $\Gamma_0$ is a peripheral subgroup in a geometrically finite group $\Gamma \leq \mathsf{G}$, $\Lc$ is the limit set of $\Gamma$, and $\Uc$ is the flow space $\Uc(\Gamma)$. 

The first result establishes a type of infinitesimal homogeneity of a limit curve at the fixed point of a peripheral subgroup.

 \begin{proposition}\label{prop:local homogeneity} 
 If 
 \begin{itemize}
 \item $\gamma \in \mathsf{G}$ is a hyperbolic element with $\gamma^+ = \eta^+$, and
 \item $(x_n)_{n \geq 1} \subset \partial_\infty X$ is a sequence where $\{\gamma^n(x_n)\} \subset \Lc$ and $x_n \to x \in \partial_\infty X$,
 \end{itemize}
 then 
 $$
 \lim_{n \to \infty} \tau(\gamma)^{-n} \circ \xi \circ \gamma^n(x_n) = \zeta_\tau(x).
 $$
\end{proposition} 

\begin{remark} In the case when $\Lc = \partial_\infty X$, this says that $\tau(\gamma)^{-n} \circ \xi \circ \gamma^n$ converges uniformly to $\zeta_\tau$. 

\end{remark}

The second result constructs good norms over the horoball $H$. It will be helpful to use the following notation: given a subset $S \subset X$, let 
$$
\Uc|_S :=  \Uc\cap \bigcup_{p \in S} T_p^1 X.
$$
  
 \begin{proposition}\label{prop:almost homogeneous norms} There exists a $\rho$-equivariant family of norms $\norm{\cdot}_{v \in T^1 X}$ on $\Kb^d$ with the following properties:
 \begin{enumerate}
 \item Each $\norm{\cdot}_v$ is induced by an inner product.
 \item For any $r > 1$ there exists $L_r > 1$ such that: if $v,w \in \Uc|_H$ satisfy $\dist_X(\pi(v), \pi(w)) \leq r$, then 
 $$
 \frac{1}{L_r} \norm{\cdot}_v \leq \norm{\cdot}_{w} \leq L_r \norm{\cdot}_v.
 $$
\item There exist $C_1,c_1 > 0$ and a horoball $H^\prime \subset H$ such that: if $t \geq 0$ and $v, \geodflow^t(v) \in \Uc|_{H^\prime}$, then 
$$
\frac{\norm{Y}_{\geodflow^t(v)}}{\norm{Z}_{\geodflow^t(v)}} \leq C_1e^{-c_1t}\frac{\norm{Y}_v}{\norm{Z}_v} 
$$
for all $Y \in \xi^k(v^+)$ and non-zero $Z \in \xi^{d-k}(v^-)$. 
 \end{enumerate}
 \end{proposition}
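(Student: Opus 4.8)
\textbf{Plan for Proposition~\ref{prop:almost homogeneous norms}.}

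The plan is to build the norm in two pieces — one on the horoball $H$ using the homogeneous structure, the other away from $H$ using a partition of unity — and then glue. First I would observe that since $\left\{\tau(g)\rho(g)^{-1} : g \in \Gamma_0\right\}$ is relatively compact, the representation $\rho$ on $\Gamma_0$ is ``close'' to $\tau$ restricted to $\Gamma_0$ in a uniform sense; concretely, fix a compact set $K \subset \SL(d,\Kb)$ containing all $\tau(g)\rho(g)^{-1}$. Let $\norm{\cdot}_{v}^{\tau}$ be the $\tau$-equivariant family of norms from Proposition~\ref{prop:representations of rank one groups} (the one pulled back from the standard Euclidean norm via $\tau$). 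The key point is that $\Gamma_0$ fixes the horoball $H$ and $\overline{H}\cap\partial_\infty X = \{x^+\}$; since $H$ is $\Gamma_0$-invariant and $\Gamma_0\backslash\overline{H}$ need not be compact, I instead use that $\Gamma_0\backslash(\Lc\smallsetminus\{x^+\})$ is compact to control the boundary data, and then average or twist $\norm{\cdot}^{\tau}$ to make it $\rho$-equivariant on the part of $T^1X$ lying over $H$. Precisely, on $\Uc|_H$ one can define $\norm{Y}_v := \norm{\tau(g)\rho(g)^{-1}Y}_v^{\tau}$ for any $g\in\Gamma_0$ with $g^{-1}(v)$ in a chosen fundamental domain — well-definedness and $\rho$-equivariance then follow from the relative compactness, up to passing to a $K$-invariant inner product (which exists by averaging over a maximal compact subgroup of $\SL(d,\Kb)$ containing... more carefully, by taking $\sup$ over $K$ of $\norm{k\,\cdot\,}^{\tau}$, which is equivalent to a genuine inner-product norm up to a bounded factor, and this loss is absorbed into the constants $L_r$, $C_1$).

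Granting such a family over $\Uc|_H$, part (2) is inherited from part (3) of Proposition~\ref{prop:representations of rank one groups} together with the uniform bound from $K$: if $\dist_X(\pi(v),\pi(w))\le r$ and both lie over $H$, push both back by the same element of $\Gamma_0$ into a bounded region, apply the local-uniformity of $\norm{\cdot}^{\tau}$, and note the twisting by $K$ contributes only a factor bounded independently of $v,w$. For part (3), I would use Proposition~\ref{prop:local homogeneity}: the contraction estimate of Proposition~\ref{prop:representations of rank one groups}(2) holds exactly for the map $\zeta_\tau$, and Proposition~\ref{prop:local homogeneity} says that along a geodesic converging to $x^+$ the boundary map $\xi$ agrees asymptotically with $\zeta_\tau$ in the relevant $\tau(\gamma)^{-n}$-rescaled sense. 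Choosing the sub-horoball $H^\prime\subset H$ deep enough, the discrepancy between $\xi^k(v^+),\xi^{d-k}(v^-)$ and $\zeta_\tau^k(v^+),\zeta_\tau^{d-k}(v^-)$ becomes small enough (uniformly, using compactness of $\Gamma_0\backslash(\Lc\smallsetminus\{x^+\})$) that the contraction rate $e^{-\lambda t}$ from Lemma~\ref{lem: homog implies anosov} is preserved up to a multiplicative constant $C_1$ and a slightly worse exponent $c_1$; a standard cone/graph-transform argument converts closeness of subspaces into preservation of the contraction inequality.

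The main obstacle I expect is not the contraction estimate itself but establishing $\rho$-equivariance and local uniformity \emph{simultaneously} over $H$: the naive twist $\tau(g)\rho(g)^{-1}$ depends on the choice of $g\in\Gamma_0$ unless $\tau|_{\Gamma_0} = \rho$ exactly, and in general $\tau(gh)\rho(gh)^{-1}\ne \tau(g)\rho(g)^{-1}$, so one cannot simply transport a single norm around by $\Gamma_0$. The fix is to define the norm by a $\Gamma_0$-invariant construction — e.g. $\norm{Y}_v := \inf_{g\in\Gamma_0,\ g^{-1}(v)\in F}\ \norm{\rho(g)^{-1}Y}_{g^{-1}(v)}^{\tau,\,K}$ for a suitable fundamental domain $F$ of $\Gamma_0$ acting on $\Uc|_H$, where $\norm{\cdot}^{\tau,K}$ is the $K$-symmetrized version of $\norm{\cdot}^{\tau}$ — and then check this is finite, positive-definite, varies continuously, is genuinely $\rho$-equivariant, and still satisfies the two quantitative bounds. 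Verifying that this infimum is attained (or comparable to a single term) uses properness of the $\Gamma_0$-action and the relative compactness of the cocycle; I would spend most of the effort there. Finally, the global family is obtained by patching this norm over $\Uc|_{H}$ with an arbitrary $\rho$-equivariant inner-product norm away from $x^+$ via a $\Gamma_0$-invariant partition of unity, which affects none of (1)--(3) since those concern behavior inside $H$ (resp.\ $H^\prime$) only.
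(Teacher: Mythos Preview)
Your plan has the right large-scale architecture (build a $\rho$-equivariant norm close to the $\tau$-equivariant one, then invoke Proposition~\ref{prop:local homogeneity} for contraction), but there are two genuine gaps.

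First, your proposed norm via an infimum over a fundamental domain will not in general be induced by an inner product, so part~(1) fails; a supremum has the same problem. The paper handles this by a partition-of-unity average: it builds a $\Gamma_0$-invariant partition of unity $\{\chi\circ g\}_{g\in\Gamma_0}$ on $X$ (constant along geodesics to $x^+$) and sets
\[
\norm{\cdot}_v \;=\; \Bigl(\textstyle\sum_{g\in\Gamma_0}(\chi\circ g)(\pi(v))\,\norm{\rho(g)(\cdot)}^{(0)\,2}_{gv}\Bigr)^{1/2},
\]
which is automatically an inner-product norm, continuous, and $\rho$-equivariant with no further argument.

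Second, and more seriously, your argument for part~(2) (``push both back by the same element of $\Gamma_0$ into a bounded region'') cannot work: $\Gamma_0$ preserves each horosphere centered at $x^+$, so it acts cocompactly on $\Uc|_{\partial H}$ but \emph{not} on $\Uc|_H$; a point deep in $H$ has no $\Gamma_0$-translate near $\partial H$. The paper's covering lemma is instead $\Uc|_H \subset \Gamma_0\cdot\{a_t\}_{t\ge 0}\cdot\Kc\cdot v_0$ for a compact $\Kc\subset\mathsf{G}$, and to exploit this one must know not merely that $\{\tau(g)\rho(g)^{-1}:g\in\Gamma_0\}$ is bounded but that
\[
\bigl\{\,\tau(a_{-t})\,\tau(g)\rho(g)^{-1}\,\tau(a_t)\;:\;g\in\Gamma_0,\ t\ge 0\,\bigr\}
\]
is bounded (Lemma~\ref{lem:upgrading relative compactness}). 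This is the key technical point your outline is missing: it requires showing that both $\tau(\Gamma_0)$ and $\rho(\Gamma_0)$ fix the full attracting flag of $\tau(a_t)$, so that each $\tau(g)\rho(g)^{-1}$ is block-upper-triangular in the weight decomposition and hence stable under one-sided $a_t$-conjugation. Once you have this, $\norm{\cdot}_v\asymp\norm{\cdot}^{(0)}_v$ on $\Uc|_H$, and parts~(2) and~(3) follow along the lines you sketch.
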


 \subsection{Proof of Proposition~\ref{prop:local homogeneity}} 
The following argument is similar to the proof of ~\cite[Prop.\ 5.3]{CZZ2021}.
 
Fix a Riemannian distance $\dist_{\Fc}$ on $\Gr_k(\Kb^d) \times \Gr_{d-k}(\Kb^d)$.  Suppose the proposition is false. Then there exist $x \in \partial_\infty X$, a sequence $(x_j)_{j \geq 1}$ in $\partial_\infty X$, and a sequence $(n_j)_{j \geq 0}$ in $\Nb$ such that $x_j \to x$, $n_j \to \infty$, $\{ \gamma^{n_j}(x_j) \} \subset \Lc$, and $\tau(\gamma)^{-n_j} \circ \xi \circ \gamma^{n_j}(x_j)$ does not converge to $\zeta_\tau(x)$. After passing to a subsequence there exists $\epsilon > 0$ such that 
$$
\inf_{j \geq 1} \dist_{\Fc}\left( \tau(\gamma)^{-n_j} \circ \xi \circ \gamma^{n_j}(x_j), \zeta_\tau(x) \right) \geq \epsilon.
$$

Notice that 
$$
\tau(\gamma)^{-n_j} \circ \xi \circ \gamma^{n_j}(\eta^+) = \tau(\gamma)^{-n_j} \circ \xi(\eta^+) = \tau(\gamma)^{-n_j} \circ \zeta_\tau(\eta^+) = \zeta_\tau(\eta^+)
$$
and so after possibly passing to a subsequence $x_j \neq \eta^+$ for all $j$. Then there exists a sequence $(h_j)_{j \geq 1}$ in $\Gamma_0$ such that $y_j : = h_j \gamma^{n_j}(x_j)$ is relatively compact in $\Lc\smallsetminus\{\eta^+\}$. Passing to a subsequence we can suppose that $y_j \to y \in \Lc\smallsetminus\{\eta^+\}$ and 
$$
g:=\lim_{j \to \infty} \tau(h_j) \rho(h_j)^{-1}  \in \SL(d,\Kb). 
$$

Notice that 
$$
g\,\xi(\eta^+) = \lim_{j \to \infty} \tau(h_j) \rho(h_j)^{-1}\xi(\eta^+) = \lim_{j \to \infty} \tau(h_j) \xi(\eta^+)=\lim_{j \to \infty} \tau(h_j) \zeta_\tau(\eta^+) = \zeta_\tau(\eta^+).
$$
Then since $\xi(y)$ is transverse to $\xi(\eta^+)$, we see that $g\,\xi(y)$ is transverse to $\zeta_\tau(\eta^+)$. 

Also, by construction, $h_j \gamma^{n_j}(p) \to \eta^+$ for all $p \in X$. By passing to a subsequence, we can suppose that 
$$
z:=\lim_{j \to \infty} \gamma^{-n_j}h_j^{-1}(p) \in \partial_\infty X
$$
for all $p \in (X \cup \partial_\infty X) \smallsetminus \{\eta^+\}$ and the convergence is locally uniform. Since $\gamma^{-n_j}h_j^{-1}(y_j) = x_j \to x$, and $\{y_j\}$ is relatively compact in $(X \cup \partial_\infty X) \smallsetminus \{\eta^+\}$, we must have $z=x$. So, by the strongly dynamics preserving property of $\tau$,
$$
\lim_{j \to \infty} \tau(\gamma^{-n_j} h_j^{-1}) F =  \zeta_\tau(x)
$$
for all $F=(F^k,F^{d-k}) \in \Gr_k(\Kb^d) \times \Gr_{d-k}(\Kb^d)$  transverse to $\zeta_\tau(\eta^+)$.

Finally, 
$$
\lim_{j \to \infty}   \tau(\gamma)^{-n_j} \circ \xi \circ \gamma^{n_j}(x_j) = \lim_{j \to \infty}   \tau(\gamma^{-n_j} h_j^{-1})\tau(h_j) \rho(h_j)^{-1}  \xi(y_j) = \zeta_\tau(x)
$$
since $\tau(h_j) \rho(h_j)^{-1}  \xi(y_j) \to g\,\xi(y)$ and $g\,\xi(y)$ is transverse to $\zeta_\tau(\eta^+)$. Thus we have a contradiction. 

\subsection{Proof of Proposition~\ref{prop:almost homogeneous norms}} 

Let $\pi_{\partial H} \colon X \to \partial H$ be the map where $\pi_{\partial H}(p)$ is the unique point in $\partial H$ contained in the geodesic line passing through $p$ and limiting to $\eta^+$. 

\begin{lemma} There exists a smooth function $\chi \colon X \to [0,1]$ such that 
\begin{enumerate}
\item $\chi \circ \pi_{\partial H} = \chi$,
\item $\{ \chi \circ g\}_{g \in \Gamma_0}$ is a partition of unity (i.e.\  $\sum_{g \in \Gamma_0} \chi \circ g$ is a locally finite sum which equals one everywhere). 
\end{enumerate}
\end{lemma}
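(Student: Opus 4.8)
The plan is to construct $\chi$ by starting from a bump function supported near the fundamental domain for the $\Gamma_0$-action on $\partial H$ and then normalizing. First I would observe that $\Gamma_0$ acts on $\partial H$ (identified with the horosphere based at $x^+$, a Euclidean-type space on which $\Gamma_0$ acts properly discontinuously by isometries). The set $\Lc \smallsetminus \{x^+\}$ need not contain $\partial H$, but the hypothesis that $\Gamma_0 \backslash(\Lc \smallsetminus \{x^+\})$ is compact, together with properness of the action, gives a compact set $D \subset \partial H$ whose $\Gamma_0$-translates cover a neighborhood of the shadow of $\Lc$ on $\partial H$; in fact, since $\Gamma_0$ is finitely generated and acts properly cocompactly on its minimal invariant subset of $\partial H$, it is cleanest to just choose $D$ so that $\Gamma_0 \cdot D = \partial H$ when $\Gamma_0 \backslash \partial H$ is compact, or more generally work over a $\Gamma_0$-invariant neighborhood of the relevant part of $\partial H$. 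I would pick a precompact open set $U \subset \partial H$ with $\Gamma_0 \cdot U \supseteq \partial H$ (or the needed invariant region) and a smooth $\psi \colon \partial H \to [0,1]$ with $\psi > 0$ on $\overline{U}$ and $\supp \psi$ compact.

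Next I would set $\Psi := \sum_{g \in \Gamma_0} \psi \circ g$ on $\partial H$; by properness of the $\Gamma_0$-action this sum is locally finite, hence smooth, and it is strictly positive everywhere since the translates of $U$ cover $\partial H$. Then $\bar\chi := \psi / \Psi$ is a smooth function $\partial H \to [0,1]$ with $\{\bar\chi \circ g\}_{g \in \Gamma_0}$ a partition of unity on $\partial H$: local finiteness is inherited from $\psi$, and $\sum_g \bar\chi \circ g = \sum_g (\psi \circ g)/(\Psi \circ g) = (\sum_g \psi\circ g)/\Psi = 1$ using $\Gamma_0$-invariance of $\Psi$. Finally I would define $\chi := \bar\chi \circ \pi_{\partial H}$ on $X$. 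Property (1), $\chi \circ \pi_{\partial H} = \chi$, is immediate since $\pi_{\partial H}$ is a retraction onto $\partial H$ (so $\pi_{\partial H}\circ \pi_{\partial H} = \pi_{\partial H}$). Property (2) follows because $\Gamma_0$ commutes with $\pi_{\partial H}$ — each $g \in \Gamma_0$ fixes $x^+$, hence sends geodesics limiting to $x^+$ to geodesics limiting to $x^+$ and preserves $\partial H$ (since $\Gamma_0$ fixes the horoball $H$), so $\pi_{\partial H} \circ g = g \circ \pi_{\partial H}$ as maps $X \to \partial H$ — and therefore $\chi \circ g = \bar\chi \circ \pi_{\partial H} \circ g = \bar\chi \circ g \circ \pi_{\partial H} = (\bar\chi \circ g)\circ \pi_{\partial H}$, so $\sum_{g}\chi\circ g = (\sum_g \bar\chi\circ g)\circ \pi_{\partial H} = 1$, and local finiteness on $X$ follows from local finiteness on $\partial H$ together with continuity of $\pi_{\partial H}$.

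Two points need a little care. First, smoothness of $\chi$ on all of $X$: $\pi_{\partial H}$ is smooth (it is the composition of the normal exponential flow toward $x^+$ with the horosphere projection, all smooth since $X$ is a symmetric space and $\partial H$ a smooth hypersurface), so $\chi = \bar\chi \circ \pi_{\partial H}$ is smooth. Second, the covering hypothesis on $\partial H$: strictly speaking we only need the conclusions (2) and (3) of Proposition~\ref{prop:almost homogeneous norms} to hold over $\Uc|_H$, i.e. for geodesics with both endpoints in $\Lc$, so if $\Gamma_0 \cdot U$ fails to cover all of $\partial H$ one replaces $\partial H$ throughout by the closed $\Gamma_0$-invariant subset $\pi_{\partial H}(\Uc|_H \cap \partial H)$ — which has compact quotient by hypothesis — defines $\Psi, \bar\chi$ there, and extends $\bar\chi$ smoothly by an arbitrary $\Gamma_0$-invariant extension that remains a partition of unity on a neighborhood; in the application one only ever evaluates $\chi$ at points of that invariant region. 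The main obstacle is just this bookkeeping about \emph{where} on $\partial H$ one has cocompactness — making sure the partition-of-unity identity $\sum_g \chi \circ g \equiv 1$ holds on the set where it is actually used — rather than anything analytically deep; the construction itself is the standard "average a bump function over the group" argument, made possible by the properness of the $\Gamma_0$-action on $\partial H$ and the $\Gamma_0$-equivariance of $\pi_{\partial H}$.
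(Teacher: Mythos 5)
Your overall scheme — build the partition of unity on $\partial H$, pull back via $\pi_{\partial H}$, and use $\pi_{\partial H} \circ g = g \circ \pi_{\partial H}$ for $g \in \Gamma_0$ — matches the paper, and your quotient-by-$\Psi$ normalization trick neatly sidesteps the torsion issue (since you never form a manifold quotient, a non-free $\Gamma_0$-action is harmless). However, there is a genuine gap: your construction of $\bar\chi = \psi/\Psi$ needs a compactly supported $\psi$ with $\Gamma_0 \cdot \supp\psi = \partial H$, i.e.\ it needs $\Gamma_0$ to act cocompactly on the horosphere $\partial H$. This is \emph{not} one of the standing hypotheses in this section. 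The hypothesis is that $\Gamma_0 \backslash (\Lc \smallsetminus \{x^+\})$ is compact, which controls the action on a piece of the boundary, not on all of $\partial H$; in the intended application $\Gamma_0$ is a maximal parabolic subgroup of a geometrically finite $\Gamma \leq \mathsf{G}$, and such cusps can have low rank (e.g.\ a rank-$1$ cusp of a geometrically finite Kleinian group in $\Hb^3_\Rb$, where $\Gamma_0 \cong \Zb$ acts non-cocompactly on $\partial H \cong \Rb^2$). Without cocompactness, $\Psi = \sum_g \psi\circ g$ vanishes off $\Gamma_0 \cdot \supp\psi$ and your $\bar\chi$ is undefined there.

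You do flag this, but the proposed fix does not close the gap. The lemma as stated requires $\sum_{g}\chi\circ g \equiv 1$ on \emph{all} of $X$, and this really is used: the norms $\norm{\cdot}_v$ built from $\chi$ are indexed by all of $T^1X$, and Proposition~\ref{prop:almost homogeneous norms}(1) asserts each is a genuine inner-product norm (which fails at any $v$ where $\sum_g(\chi\circ g)(\pi(v)) = 0$). Saying ``extend $\bar\chi$ by an arbitrary $\Gamma_0$-invariant extension that remains a partition of unity on a neighborhood'' is not a construction — producing such an extension on the non-cocompact complement is essentially the problem being solved. The paper's route avoids the issue entirely: by Selberg's lemma take a torsion-free finite-index $\Gamma_0' \leq \Gamma_0$, so $\Gamma_0'\backslash\partial H$ is a (possibly non-compact, but paracompact) manifold; take \emph{any} smooth partition of unity subordinate to an evenly-covered open cover, lift the pieces to $\partial H$, and average over the $n = [\Gamma_0:\Gamma_0']$ cosets. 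That argument needs no cocompactness and handles torsion by the passage to $\Gamma_0'$. If you want to keep your more elementary normalization trick, the honest thing to do is replace your single bump $\psi$ by a locally finite family of bumps adapted to the proper (but not cocompact) action — which, after averaging, is essentially reconstructing the paper's argument.
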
 

\begin{proof} By Selberg's lemma there exists a finite-index torsion-free subgroup $\Gamma_0^\prime \leq \Gamma_0$. Let $n = [\Gamma_0:\Gamma_0^\prime]$.

Consider the manifold quotient $p \colon \partial H \to \Gamma_0^\prime \backslash \partial H$. Fix an open cover $\{ U_i\}_{i \in I}$ of $\Gamma_0^\prime \backslash \partial H$ such that for all $i \in I$ there is a local inverse $U_i \to \wt{U}_i \subset \partial H$ to $p$. Fix a partition of unity $\{\chi_i\}_{i \in I}$ of $\Gamma_0^\prime \backslash \partial H$ subordinate to $\{U_i\}_{i \in I}$. Then for each $i \in I$, let $\wt{\chi}_i \colon \partial H \to [0,1]$ be the lift of $\chi_i$ to $\wt{U}_i$. Finally, let 
$$
\chi:= \frac{1}{n} \sum_{ i \in I} \wt{\chi}_i\circ \pi_{\partial H}.
$$ 

By construction, 
$$
\sum_{g \in \Gamma_0^\prime} \sum_{ i \in I} \wt{\chi}_i\circ \pi_{\partial H} \circ g
$$ 
is a locally finite sum which equals one everywhere. So if $\{\Gamma_0^\prime g_1,\dots, \Gamma_0^\prime g_n \} = \Gamma_0^\prime \backslash \Gamma_0$, then 
$$
\sum_{g \in \Gamma_0} \chi \circ g = \frac{1}{n}  \sum_{k=1}^{n} \left(\sum_{g \in \Gamma_0^\prime} \sum_{ i \in I} \wt{\chi}_i\circ \pi_{\partial H} \circ g\right) \circ g_k
$$ 
is a locally finite sum which equals one everywhere.
\end{proof}

Fix $v_0 \in \Uc$ with $p_0:=\pi(v_0) \in \partial H$ and $v_0^+ = \eta^+$. By conjugating $\mathsf{K}$, we may assume that 
$$
\mathsf{K}= {\rm Stab}_{\mathsf{G}}(p_0).
$$
Since $\mathsf{K}$ is compact, there exists a $\tau(\mathsf{K})$-invariant norm $\norm{\cdot}^{(0)}$ on $\Kb^d$ which is induced by an inner product. Then 
$$
\norm{\cdot}^{(0)}_{gv_0} := \norm{\tau(g)^{-1}(\cdot)}^{(0)}
$$
defines a smooth $\tau$-equivariant family of norms indexed by $T^1X$ where each norm is induced by an inner product. 

Then given $v \in T^1 X$ define
$$
\norm{\cdot}_v = \sqrt{ \sum_{g \in \Gamma_0}  (\chi\circ g)(\pi(v))\left(\norm{\rho(g)(\cdot)}_{gv}^{(0)} \right)^2 }. 
$$
Since $\{\chi \circ g\}_{g \in \Gamma_0}$ is a partition of unity, $\norm{\cdot}_{v \in T^1 X}$ is a smooth family of norms where each $\norm{\cdot}_v$ is induced by an inner product. One can check that it is $\rho$-equivariant. We will show that this family of norms satisfies the remaining conditions in the proposition.

We start by showing some useful compactness / cocompactness properties. Let $\{ a_t\} \leq \mathsf{G}$ be a Cartan subgroup such that $a_t(v_0) = \geodflow^t(v_0)$ for all $t \in \Rb$. 

\begin{lemma}\label{lem:upgrading relative compactness} The set 
\begin{align*}
\left\{ \tau(a_{-t})\tau(g)\rho(g)^{-1}\tau(a_t) : g \in \Gamma_0, t \geq 0 \right\}
\end{align*}
is relatively compact in $\SL(d,\Kb)$. 
\end{lemma}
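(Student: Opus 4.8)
The plan is to reduce the statement to a claim about invariant flags, and then to outline how that claim follows from the hypotheses; the last step is where the real work lies.

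\smallskip

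\emph{Reduction I.} Since $\rho(g)^{-1}=\tau(g)^{-1}\bigl(\tau(g)\rho(g)^{-1}\bigr)$, we may rewrite $\tau(a_{-t})\tau(g)\rho(g)^{-1}\tau(a_t)=\tau(a_{-t})\kappa_g\tau(a_t)$, where $\kappa_g:=\tau(g)\rho(g)^{-1}$ lies in the compact set $K:=\overline{\{\tau(h)\rho(h)^{-1}:h\in\Gamma_0\}}$ provided by condition~(\ref{cond:three}). Conjugating once more we may assume $\tau(\mathsf{A})$ consists of diagonal matrices; writing $\tau(a_t)=\operatorname{diag}(e^{\chi_1 t},\dots,e^{\chi_d t})$ with $\Real\chi_1\ge\cdots\ge\Real\chi_d$, the $(i,j)$ entry of $\tau(a_{-t})B\tau(a_t)$ has modulus $e^{(\Real\chi_j-\Real\chi_i)t}\abs{B_{ij}}$. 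Let $\Fc_\tau$ be the flag whose steps are the sums of the top $\tau(\mathsf{A})$-weight spaces (ordered by $\Real\chi$), and set $\mathbf{P}:=\operatorname{Stab}_{\SL(d,\Kb)}(\Fc_\tau)$, a parabolic subgroup. The entrywise formula shows: if $B\in\mathbf{P}$ then $\{\tau(a_{-t})B\tau(a_t):t\ge 0\}$ lies in a set that is bounded, locally uniformly in $B$, whereas if $B\notin\mathbf{P}$ these conjugates are unbounded. As $\mathbf{P}$ is closed and $K$ is compact, the lemma is therefore \emph{equivalent} to the assertion that $\kappa_g\in\mathbf{P}$ for every $g\in\Gamma_0$, and once this is known the required relative compactness follows from the entrywise bound and the compactness of $K\cap\mathbf{P}$.

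\smallskip

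\emph{Reduction II.} Next, $\tau(g)\in\mathbf{P}$ already: since $\Gamma_0$ fixes $x^+$ it lies in $\mathsf{P}^+$, and $d\tau$ carries $\mathfrak{p}^+$ into the block-upper-triangular matrices for $\Fc_\tau$ (because $\Ad(a_{-t})$ contracts the nilradical of $\mathfrak{p}^+$ as $t\to+\infty$, while $\mathsf{M}$ and $\mathsf{A}$ preserve the weight spaces), so $\tau(\mathsf{P}^+)\subseteq\mathbf{P}$. Hence $\kappa_g\in\mathbf{P}\iff\rho(g)\in\mathbf{P}$, and the whole lemma reduces to showing $\rho(\Gamma_0)\subseteq\mathbf{P}$.

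\smallskip

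\emph{Proof of $\rho(\Gamma_0)\subseteq\mathbf{P}$ (the plan).} First, $\rho$-equivariance of $\xi$ together with $\xi(x^+)=\zeta_\tau(x^+)$ and the fact that $\Gamma_0$ fixes $x^+$ show that $\rho(\Gamma_0)$ fixes both $\zeta_\tau^k(x^+)$ and $\zeta_\tau^{d-k}(x^+)$; these are the $k$-dimensional step of $\Fc_\tau$ and a complementary sum of weight spaces, so $\rho(\Gamma_0)$ is block-diagonal for this splitting, and it remains to see that it preserves the induced sub-weight-flags on each factor. Second, since $\Gamma_0$ fixes the horoball $H$ it lies in the horospherical subgroup $\mathsf{M}\mathsf{N}^+$, so each $\tau(g)$ is weakly unipotent; from $\norm{\rho(g)^{\pm n}}=\norm{\kappa_{g^{\pm n}}^{-1}\tau(g)^{\pm n}}\lesssim\norm{\tau(g)^{\pm n}}$ (which grows polynomially) and passage to exterior powers, $\rho(\Gamma_0)$ is weakly unipotent too. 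Third, for each weight-jump $k_i$ of $\tau(\mathsf{A})$, the representation $\wedge^{k_i}\tau$ is $\Psf_1$-proximal, so Proposition~\ref{prop:representations of rank one groups} furnishes a $\tau$-equivariant, transverse, strongly dynamics preserving map $\zeta_{\wedge^{k_i}\tau}$ whose attracting line over $x^+$ is the Pl\"ucker point $[\wedge^{k_i}W_{(i)}]$ of the step $W_{(i)}$ of $\Fc_\tau$; since $\wedge^{k_i}\tau(g^n)=(\wedge^{k_i}\tau(g))^n$ fixes that line and $\kappa_{g^n}\in K$ for all $n$, the $\wedge^{k_i}\rho(g)$-orbit of $[\wedge^{k_i}W_{(i)}]$ stays in the fixed compact set $\{[\wedge^{k_i}(\kappa^{-1})\cdot\wedge^{k_i}W_{(i)}]:\kappa\in K\}$, and combining this with the weak unipotence of $\wedge^{k_i}\rho(g)$ should force $\rho(g)$ to preserve $W_{(i)}$.

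\smallskip

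\emph{The main obstacle.} It is the third ingredient above. The Anosov boundary map $\xi$ only records data at the levels $k$ and $d-k$, so for the intermediate steps of $\Fc_\tau$ one cannot simply transplant the level-$k$ argument; instead, the flag-invariance of $\rho(\Gamma_0)$ must be extracted from the relative compactness hypothesis~(\ref{cond:three}) by dynamical means, using the polynomial growth of $\norm{\rho(g)^{\pm n}}$ and the proximal dynamics of the maps $\zeta_{\wedge^{j}\tau}$ — the crux being the statement that a weakly unipotent element of $\SL(d,\Kb)$ whose orbit on a flag variety remains in a prescribed compact set must fix an adapted flag there. (One could alternatively argue at the level of Zariski closures, showing that the algebraic hull of $\rho(\Gamma_0)$ lies in $\mathbf{P}$ because, by~(\ref{cond:three}), it is a bounded perturbation of that of $\tau(\Gamma_0)\subseteq\tau(\mathsf{P}^+)\subseteq\mathbf{P}$.) Granting $\rho(\Gamma_0)\subseteq\mathbf{P}$, Reductions I and II conclude the proof.
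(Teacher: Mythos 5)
Your Reductions I and II are sound and match the paper's strategy: after conjugating so that $\tau(a_t)$ is diagonal with real weights, the lemma is equivalent to showing that each $\tau(g)\rho(g)^{-1}$ is block-upper-triangular for the weight flag, i.e.\ that $\rho(\Gamma_0)$ stabilizes the full flag $F^+$ of partial sums of top weight spaces (the paper arrives at exactly this after its appeal to~\cite{M1955}). The gap is precisely where you flag it, and unfortunately the ``crux statement'' you propose to bridge it is false: a weakly unipotent element whose orbit on a Grassmannian stays in a prescribed compact set need not preserve the relevant subspace. Take $h=\begin{pmatrix}1&1\\0&1\end{pmatrix}$ and $W=\Rb e_2$; then $h^nW=\Rb(ne_1+e_2)$ accumulates on $\Rb e_1$, so the orbit $\{h^nW:n\in\Zb\}$ has compact closure, yet $hW\neq W$. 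So even granting weak unipotence of $\rho(\Gamma_0)$ and the bounded-orbit observation via $\kappa_{g^n}\in K$, you cannot conclude $\rho(g)W_{(i)}=W_{(i)}$ from these two facts alone. (The alternative ``Zariski-closure as bounded perturbation'' remark is likewise not an argument; bounded perturbations of a subgroup of $\mathbf{P}$ need not land in $\mathbf{P}$.)

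The paper's actual proof of $\rho(\Gamma_0)F^+=F^+$ is a dynamical argument that crucially uses escaping sequences in the whole group $\Gamma_0$ rather than powers of a single element, together with the strongly-dynamics-preserving property of $\zeta_\tau$ on the flag variety $\Fc$. Concretely, fix $g\in\Gamma_0$ and an escaping sequence $(g_n)$ in $\Gamma_0$; pass to a subsequence so that $\rho(g_n)^{-1}\tau(g_n)\to h_1$ and $\tau(gg_n)^{-1}\rho(gg_n)\to h_2$ (possible by the relative compactness hypothesis~(\ref{cond:three})), and pick $F\in\Fc$ transverse to both $F^+$ and $(h_2h_1)^{-1}F^+$. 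Since $(g_n)$ and $(gg_n)$ are escaping in $\Gamma_0\subset\mathsf{P}^+$, the locally uniform convergence $\tau(h_n)F'\to F^+$ for $F'$ transverse to $F^+$ yields
\begin{align*}
\rho(g)F^+ \;=\; \lim_{n\to\infty}\rho(g)\tau(g_n)F \;=\; \lim_{n\to\infty}\tau(gg_n)\bigl[\tau(gg_n)^{-1}\rho(gg_n)\bigr]\bigl[\rho(g_n)^{-1}\tau(g_n)\bigr]F \;=\; F^+.
\end{align*}
No weak unipotence or exterior powers are needed; the point is to exploit proximality of $\tau$ along an escaping sequence of the group, not the orbit of one weakly unipotent element. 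Your write-up correctly diagnoses that the level-$k$ data from $\xi$ are not enough and that the relative compactness of $\{\tau(g)\rho(g)^{-1}\}$ must carry the intermediate steps, but the mechanism you propose for extracting that invariance does not work.
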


\begin{proof}  By~\cite{M1955} and conjugating $\tau$ and $\rho$ we may assume that 
 \begin{align*}
 \tau(a_t) = \begin{pmatrix} e^{\lambda_1 t} \id_{d_1} & & \\ & \ddots & \\ & & e^{\lambda_{m+1}t} \id_{d_{m+1}} \end{pmatrix}
 \end{align*}
 where $\lambda_1 > \cdots > \lambda_{m+1}$. Since $\tau(a_{-t})$ is conjugate to $\tau(a_t)$, notice that $d_{k} = d_{m-k}$ and $\lambda_{k} = - \lambda_{m-k}$. For $1 \leq n \leq m$, let $k_n = \sum_{j=1}^n d_j$. Then $\tau$ is $\Psf_{k_n}$-proximal for all $1 \leq n \leq m$. Consider the partial flag manifold 
 $$
 \Fc = \left\{ \left(F^{k_n}\right)_{n=1}^{m} : F^{k_1} \subset \cdots \subset F^{k_m} \text{ and } \dim F^{k_n} = k_n \text{ for } n =1,\dots, m\right\}
 $$
 and let 
 $$
 F^+ = ( \ip{e_1,\dots, e_{k_n}})_{n=1}^{m} \in \Fc.
 $$
 Since the boundary map constructed in  Proposition~\ref{prop:representations of rank one groups} is equivariant and strongly dynamics preserving, $\tau(\Gamma_0)$ fixes $F^+$ and if $(g_n)_{n \geq 1}$ is an escaping sequence in $\Gamma_0$, then 
 $$
 \lim_{n \to \infty} \tau(g_n) F = F^+
 $$
 for all $F \in \Fc$ transverse to $F^+$ and the convergence is locally uniform. 
 
 We claim that $\rho(\Gamma_0)$ fixes  $F^+$. Fix $g \in \Gamma_0$ and fix an escaping sequence $(g_n)_{n \geq 1}$ in $\Gamma_0$. Passing to a subsequence we can suppose that 
$$
\rho(g_n)^{-1}\tau(g_n) \to h_1 \quad \text{and} \quad \tau(gg_n)^{-1} \rho(gg_n)  \to h_2.
$$
Fix $F \in \Fc$ transverse to $(h_2 h_1)^{-1} F^+$ and $F^+$. Then 
\begin{align*}
\rho(g) F^+  & = \lim_{n \to \infty} \rho(g) \tau(g_n) F = \lim_{n \to \infty} \tau(g g_n) \tau(gg_n)^{-1} \rho(gg_n) \rho(g_n)^{-1} \tau(g_n) F \\
& = F^+. 
\end{align*}
Since $g \in \Gamma_0$ was arbitrary, $\rho(\Gamma_0)$ fixes  $F^+$.

Finally since $\rho(\Gamma_0)$, $\tau(\Gamma_0)$ both fix $F^+$ and 
\begin{align*}
\left\{ \tau(g)\rho(g)^{-1} : g \in \Gamma_0 \right\}
\end{align*}
is relatively compact in $\SL(d,\Kb)$, for every $1 \leq i \leq j \leq m+1$ there exist compact subsets $K_{i,j}$ of $d_i$-by-$d_j$ matrices such 
\begin{align*}
\left\{ \tau(g)\rho(g)^{-1} : g \in \Gamma_0 \right\} \subset \left\{ \begin{pmatrix} A_{1,1} &   \dots &  A_{1,m+1} \\
 & \ddots & \vdots \\
& & A_{m+1, m+1} \end{pmatrix} : A_{i,j} \in K_{i,j} \right\}. 
\end{align*}
Then
\begin{align*}
\left\{ \tau(a_{-t})\tau(g)\rho(g)^{-1}\tau(a_t) : g \in \Gamma_0, t \geq 0 \right\} 
\end{align*}
is relatively compact in $\SL(d,\Kb)$.
\end{proof} 

Since $X$ is Gromov-hyperbolic, there exists $\delta > 0$ such that every geodesic triangle, including every ideal geodesic triangle, is $\delta$-slim. 

\begin{lemma} $\Gamma_0$ acts cocompactly on $\Uc|_{\partial H}$. \end{lemma}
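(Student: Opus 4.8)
The plan is to exhibit a compact set $K_0 \subset \Uc|_{\partial H}$ with $\Gamma_0 \cdot K_0 = \Uc|_{\partial H}$; since the quotient map $\Uc|_{\partial H} \to \Gamma_0\backslash \Uc|_{\partial H}$ is continuous, this forces $\Gamma_0\backslash \Uc|_{\partial H}$ to be compact.

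First I would set up Busemann bookkeeping. Normalize the Busemann function $\beta := \beta_{x^+}$ of $x^+$ so that $H = \{\beta \leq 0\}$ and $\partial H = \{\beta = 0\}$. Recall $\beta$ is $1$-Lipschitz and decreases at unit speed along every geodesic ray pointing at $x^+$, so for each $a \in \partial_\infty X \smallsetminus \{x^+\}$ the bi-infinite geodesic with endpoints $a$ and $x^+$ meets $\partial H$ in a single point $q_a$; moreover $a \mapsto q_a$ is continuous, and since $\Gamma_0$ fixes $x^+$ and preserves $H$ (hence $\partial H$) we have $g \cdot q_a = q_{ga}$ for every $g \in \Gamma_0$. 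The geometric heart of the argument is the claim that for every $v \in \Uc|_{\partial H}$ the basepoint $\pi(v)$ lies within distance $2\delta$ of $q_{v^+}$ or of $q_{v^-}$ (when $v^\pm = x^+$ one uses the other endpoint, and in that degenerate case $\pi(v)$ is exactly the relevant $q$-point). Indeed, the ideal geodesic triangle with vertices $v^-$, $v^+$, $x^+$ is $\delta$-slim, so $\pi(v)$, lying on the side $[v^-,v^+]$, is within $\delta$ of some point $p'$ on $[v^-,x^+]$ or $[v^+,x^+]$; since $\beta(\pi(v)) = 0$ and $\beta$ is $1$-Lipschitz, $\abs{\beta(p')} \leq \delta$; and since $\beta$ moves at unit speed along $[v^\mp,x^+]$, this gives $\dist_X(p', q_{v^\mp}) = \abs{\beta(p')} \leq \delta$, whence $\dist_X(\pi(v), q_{v^\mp}) \leq 2\delta$.

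With the claim in hand, I would finish as follows. Using condition (4), local compactness of $\Lc \smallsetminus \{x^+\}$, and openness of the quotient map, fix a compact $D \subset \Lc \smallsetminus \{x^+\}$ with $\Gamma_0 \cdot D = \Lc \smallsetminus \{x^+\}$. Then $Q := \{q_a : a \in D\}$ is compact, and $Q' := \overline{\Nc_X(Q, 2\delta)}$ is compact since $X$ is proper. Given $v \in \Uc|_{\partial H}$, its endpoints lie in $\Lc$ and are distinct, so at least one is not $x^+$; by the claim we may pick an endpoint $a \in \{v^+,v^-\} \cap (\Lc \smallsetminus\{x^+\})$ with $\dist_X(\pi(v), q_a) \leq 2\delta$, and then $g \in \Gamma_0$ with $g^{-1}a \in D$. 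Since $g^{-1}$ is an isometry fixing $x^+$ and preserving $\partial H$, $\pi(g^{-1}v) = g^{-1}\pi(v)$ is within $2\delta$ of $g^{-1}q_a = q_{g^{-1}a} \in Q$, so $g^{-1}v \in K_0 := \Uc|_{\partial H} \cap \pi^{-1}(Q')$. The set $K_0$ is closed in $T^1X$ (as $\Uc|_{\partial H}$ is closed, being $\pi^{-1}(\partial H)$ intersected with the preimage of $\Lc\times\Lc$ under $v\mapsto(v^+,v^-)$, and $\pi^{-1}(Q')$ is closed) and is contained in the compact set $\pi^{-1}(Q')$, hence is compact; and $\Gamma_0 \cdot K_0 = \Uc|_{\partial H}$ by the above together with $\Gamma_0$-invariance of $\Uc|_{\partial H}$. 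Therefore $\Gamma_0\backslash \Uc|_{\partial H}$ is the continuous image of the compact set $K_0$, hence compact.

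I expect the main obstacle to be the geometric claim — specifically, recognizing that one must not decide in advance which endpoint of $v$ to push into $D$, but instead let the $\delta$-slim ideal triangle $\{v^-, v^+, x^+\}$ dictate which endpoint the basepoint sits near: a geodesic whose basepoint lies far out along a deep horoball excursion has its basepoint close to whichever endpoint is itself close to $x^+$, and the triangle comparison is exactly what detects this. The remaining ingredients — the unit-speed behavior of $\beta$ along geodesics aimed at $x^+$, continuity and $\Gamma_0$-equivariance of $a \mapsto q_a$, and the covering/compactness manipulations — are routine.
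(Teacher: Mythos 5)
Your proof is correct and follows essentially the same route as the paper's: both rest on the $\delta$-slimness of the ideal triangle $(v^-,v^+,x^+)$ to push $\pi(v)$ within $2\delta$ of the foot on $\partial H$ of a geodesic from one endpoint of $v$ to $x^+$, and then use the compact fundamental domain for $\Gamma_0 \curvearrowright (\Lc \smallsetminus \{x^+\})$. The only presentational difference is that you introduce the Busemann function and the map $a \mapsto q_a$ explicitly (which cleanly justifies the ``unit-speed'' bound that the paper leaves implicit in the inequality $\abs{T} \leq \dist_X(\pi(\geodflow^T(w)),\pi(v))$), whereas the paper works directly with the sets $K_1, K_2 \subset \Uc|_{\partial H}$; minor aside: your notation $v^{\mp}$ in the claim is a slight abuse, but the intent (``whichever side of the triangle contains $p'$'') is clear.
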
 

\begin{proof} Fix a compact subset $K_0 \subset \Lc \smallsetminus \{\eta^+\}$ such that $\Gamma_0 \cdot K_0 =  \Lc \smallsetminus \{\eta^+\}$.  Then let 
$$
K_1 := \{ v \in \Uc|_{\partial H} : v^- \in K_0 \text{ and } v^+ = \eta^+\}
$$
and 
$$
K_2 := \{ v \in \Uc|_{\partial H} : \dist_X( \pi(v), \pi(w)) \leq 2\delta \text{ for some } w \in K_1\}.
$$
Notice that both $K_1$ and $K_2$ are compact subsets. 

We claim that $\Gamma_0 \cdot K_2 = \Uc|_{\partial H}$. Fix $v \in \Uc|_{\partial H}$. By our choice of $\delta$, the ideal geodesic triangle with vertices $\eta^+, v^+, v^-$ is $\delta$-slim. So there exists $s \in \{-,+\}$  such that $\pi(v)$ is within $\delta$ of the geodesic line joining $v^s$ and $\eta^+$. Then let $w \in \Uc$ be the vector with $\pi(w) \in \partial H$, $w^- = v^s$, and $w^+ = \eta^+$. Fix $T \in \Rb$ such that 
 $$
 \dist_X( \pi( \geodflow^T(w)), \pi( v)) \leq \delta.
 $$ 
 Since $w^+ = \eta^+$ and  $\pi(w), \pi(v) \in \partial H$, then 
$$
 \abs{T} \leq \dist_X( \pi( \geodflow^T(w)), \pi( v)) \leq \delta. 
$$
So $\dist_X(\pi(v), \pi(w)) \leq 2 \delta$. 
By our choice of $K_1$, we have $w \in \Gamma_0 \cdot K_1$ which implies that $v \in \Gamma_0 \cdot K_2$. 
\end{proof}

\begin{lemma}\label{lem:covering} There exists a compact subset $\mathcal{K} \subset \mathsf{G}$ such that 
$$
\Uc|_H \subset \Gamma_0 \cdot \{ a_t\}_{t \geq 0} \cdot  \Kc \cdot v_0. 
$$
\end{lemma}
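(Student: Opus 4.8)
The plan is to set up the Iwasawa decomposition $\mathsf{G}=\mathsf{N}\mathsf{A}\mathsf{K}$, where $\mathsf{A}=\{a_t\}$, $\mathsf{K}=\mathrm{Stab}_{\mathsf{G}}(p_0)$, and $\mathsf{N}$ is the horospherical subgroup fixing $x^+$; thus $\mathsf{N}$ preserves $H$, $\partial H$, the Busemann function of $x^+$, and the map $\pi_{\partial H}$, and acts simply transitively on $\partial H$ via $n\mapsto n(p_0)$. Writing $\mathsf{M}$ for the centralizer of $\mathsf{A}$ in $\mathsf{K}$ and $\mathsf{S}:=\mathsf{M}\mathsf{N}$, every element of $\Gamma_0$ fixes $x^+$ and preserves $H$, hence preserves the Busemann function of $x^+$, so $\Gamma_0\le\mathsf{S}$; and $\mathsf{S}$ preserves $H$, $\partial H$ and commutes with $\pi_{\partial H}$. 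Given $v\in\Uc|_H$, write $v=n_v\,a_{t_v}\,\kappa_v\,v_0$ with $n_v\in\mathsf{N}$, $t_v\in\Rb$, $\kappa_v\in\mathsf{K}$. Then $\pi(v)=n_v a_{t_v}(p_0)$ lies on the $x^+$-geodesic $n_v\circ(t\mapsto a_t(p_0))$, whose foot on $\partial H$ is $n_v(p_0)$; so $\pi_{\partial H}(\pi(v))=n_v(p_0)$, and since $n_v$ preserves depth, $\pi(v)\in H$ forces $t_v\ge 0$. The lemma will follow from the following \emph{Claim}: there is a compact set $K_{\partial H}\subset\partial H$ with $\pi_{\partial H}(\pi(v))\in\Gamma_0\cdot K_{\partial H}$ for all $v\in\Uc|_H$.

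Granting the Claim, fix $v\in\Uc|_H$ and $\gamma\in\Gamma_0$ with $\gamma^{-1}\pi_{\partial H}(\pi(v))\in K_{\partial H}$. Since $\gamma\in\mathsf{S}$ commutes with $\pi_{\partial H}$ and preserves $H$, the vector $\gamma^{-1}v$ has $\pi_{\partial H}(\pi(\gamma^{-1}v))\in K_{\partial H}$ and $\pi(\gamma^{-1}v)\in H$; writing its Iwasawa decomposition $\gamma^{-1}v=n'\,a_{s}\,\kappa'\,v_0$ we get $n'(p_0)\in K_{\partial H}$, so $n'$ lies in a compact $\Kc_\mathsf{N}\subset\mathsf{N}$ independent of $v$, and $s\ge 0$. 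Then $\gamma^{-1}v=a_{s}\,(a_{-s}n'a_{s})\,\kappa'\,v_0$, and since $a_{-t}\mathsf{N}a_t$ contracts to $\{e\}$ as $t\to+\infty$, the set $\{a_{-s}n'a_s:s\ge 0,\ n'\in\Kc_\mathsf{N}\}$ is relatively compact; with $\Kc$ the closure of $\{a_{-s}n'a_s\kappa':s\ge0,\ n'\in\Kc_\mathsf{N},\ \kappa'\in\mathsf{K}\}$ we obtain $\gamma^{-1}v\in\{a_t\}_{t\ge0}\cdot\Kc\cdot v_0$, i.e.\ $v\in\Gamma_0\cdot\{a_t\}_{t\ge0}\cdot\Kc\cdot v_0$.

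To prove the Claim, recall that $\Gamma_0$ acts cocompactly on $\Uc|_{\partial H}$ by the previous lemma, so $\pi(\Uc|_{\partial H})$ is contained in $\Gamma_0\cdot K$ for a compact $K\subset\partial H$; hence it suffices to find $R>0$ with $\dist_X(\pi_{\partial H}(\pi(v)),\pi(\Uc|_{\partial H}))\le R$ for all $v\in\Uc|_H$. Fix such a $v$, put $q:=\pi(v)$, let $\sigma$ be the geodesic through $v$ with endpoints $v^\pm$, and $q':=\pi_{\partial H}(q)$. If $v^+=x^+$ or $v^-=x^+$, then $\sigma$ is the $x^+$-geodesic through $q$, so $q'\in\sigma\cap\partial H\subset\pi(\Uc|_{\partial H})$ and we may take $R=0$. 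Otherwise $q'$ is the foot on $\partial H$ of the $x^+$-geodesic $\ell$ through $q$; using $\delta$-hyperbolicity (slimness of the ideal triangle with vertices $x^+,v^+,v^-$) one sees that the other endpoint $y$ of $\ell$ lies in the coarse convex hull of $\{v^+,v^-\}\subset\Lc$, hence of $\Lc$. Since $\Gamma_0\backslash(\Lc\smallsetminus\{x^+\})$ is compact, $\Lc$ is coarsely dense in the shadow from $x^+$ of its coarse convex hull, so there is $p\in\Lc$ within a uniformly bounded visual distance of $y$; the geodesic $[p,x^+]$ then $\delta$-fellow-travels $\ell$ past $q'$ and hence crosses $\partial H$ within a uniformly bounded distance of $q'$. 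As $[p,x^+]$ has both endpoints in $\Lc$, its $\partial H$-crossing vector lies in $\Uc|_{\partial H}$, and this gives the desired bound on $\dist_X(q',\pi(\Uc|_{\partial H}))$.

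The main obstacle is the Claim, and inside it the geometric input that $\Lc$ is coarsely dense in the shadow from $x^+$ of its coarse convex hull. When $X=\Hb_\Rb^n$ this is elementary: $\partial_\infty X\smallsetminus\{x^+\}\cong\Rb^{n-1}$, and $\Lc\smallsetminus\{x^+\}$, being $\Gamma_0$-cocompact, lies within a bounded neighbourhood of an affine subspace that contains the relevant geodesics, inside which the $\Gamma_0$-orbit of $p_0$ is coarsely dense. The complex, quaternionic, and octonionic cases carry the analogous argument, but the Heisenberg-type (Carnot) boundary geometry of $\partial_\infty X\smallsetminus\{x^+\}$ makes the "coarse convex hull / shadow" bookkeeping the delicate point to get right.
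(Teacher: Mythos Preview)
Your Iwasawa-decomposition reduction to the Claim is a reasonable alternative strategy, and the argument \emph{granting the Claim} is correct. However, the proof of the Claim itself has a genuine gap, which you yourself flag in the final paragraph: the assertion that ``$\Lc$ is coarsely dense in the shadow from $x^+$ of its coarse convex hull'' is not established, and in the complex, quaternionic, and octonionic cases the Carnot boundary geometry makes this far from obvious. More concretely, your argument tries to find $p\in\Lc$ visually close to the endpoint $y$ of the $x^+$-geodesic $\ell$ through $\pi(v)$; but $y$ is determined by the horizontal position of $\pi(v)$, and there is no a priori reason this should be near $\Lc$. Even in $\Hb^2_\Rb$, if $\pi(v)$ is at depth $T$ in $H$ and within $\delta$ of the geodesic $[v^s,x^+]$, the foot $q'=\pi_{\partial H}(\pi(v))$ can be at distance of order $e^T\delta$ from $[v^s,x^+]\cap\partial H$, since $\pi_{\partial H}$ expands distances when projecting \emph{outward} from $H$. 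So the simple ``fellow-travel past $q'$'' picture does not work with the obvious choice $p=v^s$.

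The paper's proof avoids the Claim entirely with a much simpler observation. By $\delta$-slimness of the ideal triangle $(x^+,v^+,v^-)$, the point $\pi(v)$ lies within $\delta$ of $[v^s,x^+]$ for some $s\in\{+,-\}$. Since $v^s\in\Lc$ (because $v\in\Uc$) and $x^+\in\Lc$, the vector $w$ on $[v^s,x^+]$ with basepoint on $\partial H$ lies in $\Uc|_{\partial H}$, which is $\Gamma_0$-cocompact by the previous lemma. One then takes $T\ge 0$ with $\dist_X(\pi(\phi^T(w)),\pi(v))\le\delta$ and $\beta\in\Gamma_0$ with $\dist_X(\beta(p_0),\pi(w))\le R$, and a direct triangle-inequality estimate gives $\dist_X(p_0,a_{-T}\beta^{-1}\pi(v))\le R+\delta$; setting $\Kc=\{g:\dist_X(g(p_0),p_0)\le R+\delta\}$ finishes. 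The key difference is that the paper never projects $\pi(v)$ to $\partial H$; instead it flows \emph{back} by $a_{-T}$, which is contracting and thus avoids the expansion problem in your approach. Your Claim may well be true, but it is stronger than needed and its proof is the entire difficulty; the paper sidesteps it with two lines of $\delta$-hyperbolic geometry.
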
  

\begin{proof} By the previous lemma there exists $R > 0$ such that 
$$
\pi(\Uc|_{\partial H}) \subset \Gamma_0 \cdot \mathcal{B}_X(p_0, R). 
$$
Then let 
  $$
 \Kc := \{ g \in \mathsf{G} : \dist_X( g(p_0), p_0) \leq R+ \delta\}.
 $$

 Fix $v \in \Uc|_H$. By our choice of $\delta$, the ideal geodesic triangle with vertices $\eta^+, v^+, v^-$ is $\delta$-slim. So there exists $s \in \{-,+\}$  such that $\pi(v)$ is within $\delta$ of the geodesic line joining $v^s$ and $\eta^+$. Then let $w \in \Uc$ be the vector with $\pi(w) \in \partial H$, $w^- = v^s$, and $w^+ = \eta^+$. Fix $T \geq 0$ such that 
 $$
 \dist_X( \pi( \geodflow^T(w)), \pi( v)) \leq \delta
 $$ 
and fix $\beta \in \Gamma_0$ such that $\dist_X(\beta(p_0), \pi(w)) \leq R$.
 
 Then 
 \begin{align*}
 \dist_X & \Big(  p_0,  a_{-T} \beta^{-1} \pi(v) \Big)=   \dist_X\Big( \beta\pi(  \geodflow^T(v_0)), \pi(v) \Big) \\
 & \leq \dist_X\Big( \beta\pi(  \geodflow^T(v_0)), \pi(\geodflow^T(w))\Big) + \dist_X\Big( \pi(\geodflow^T(w)),  \pi(v) \Big) \leq \dist_X\Big( \beta(p_0), \pi(w)\Big) + \delta \\ 
 & \leq R + \delta. 
 \end{align*}
So we can pick $\alpha \in \Kc$ such that $\alpha(v_0) = a_{-T} \beta^{-1} v$ or equivalently  
 \begin{equation*}
 v = \beta a_T\alpha(v_0) \in \Gamma_0 \cdot \{ a_t\}_{t \geq 0} \cdot  \Kc \cdot v_0. \qedhere
 \end{equation*}
\end{proof} 

\begin{lemma}\label{lem:almost tau equivariant} There exists $C > 1$ such that: If $v \in \Uc|_H$, then 
$$
\frac{1}{C} \norm{\cdot}^{(0)}_{v} \leq \norm{\cdot}_v \leq C \norm{\cdot}_{v}^{(0)}.
$$
\end{lemma}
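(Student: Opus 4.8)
The plan is to expand
\[
\norm{Y}_v^2 = \sum_{g \in \Gamma_0} (\chi\circ g)(\pi(v))\left(\norm{\rho(g)Y}^{(0)}_{gv}\right)^2
\]
and exploit that $\{\chi\circ g\}_{g\in\Gamma_0}$ is a partition of unity: it suffices to produce a constant $C>1$, independent of $g\in\Gamma_0$ and of $v\in\Uc|_H$, such that $\norm{\rho(g)(\cdot)}^{(0)}_{gv}$ and $\norm{\cdot}^{(0)}_v$ are $C$-bi-Lipschitz equivalent, since then the weighted average on the right is caught between $C^{-2}$ and $C^2$ times $\big(\norm{Y}^{(0)}_v\big)^2$. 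To get this, I would write $v = hv_0$ with $h\in\mathsf{G}$ (legitimate since $\norm{\cdot}^{(0)}$ is a well-defined family on $T^1X$) and set $C_g := \tau(g)^{-1}\rho(g)$. Using $\norm{\cdot}^{(0)}_{gv} = \norm{\tau(gh)^{-1}(\cdot)}^{(0)}$ one gets $\norm{\rho(g)Y}^{(0)}_{gv} = \norm{\tau(h)^{-1}C_gY}^{(0)}$ while $\norm{Y}^{(0)}_v = \norm{\tau(h)^{-1}Y}^{(0)}$, so after the substitution $W=\tau(h)^{-1}Y$ the problem becomes: show that $\tau(h)^{-1}C_g\tau(h)$ and its inverse have $\norm{\cdot}^{(0)}$-operator norm bounded uniformly over $g\in\Gamma_0$ and over $h$ with $hv_0\in\Uc|_H$.

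The second step is to control that conjugate. Replacing $g$ by $g^{-1}$ turns the standing assumption that $\{\tau(g)\rho(g)^{-1}:g\in\Gamma_0\}$ is relatively compact into relative compactness of $\{C_g : g\in\Gamma_0\}$, and turns the set in Lemma~\ref{lem:upgrading relative compactness} into $\{\tau(a_{-t})C_g\tau(a_t) : g\in\Gamma_0,\ t\ge0\}$, so both of these are relatively compact. Given $v=hv_0\in\Uc|_H$, Lemma~\ref{lem:covering} lets me write $v=\beta a_s\alpha v_0$ with $\beta\in\Gamma_0$, $s\ge0$, $\alpha\in\Kc$, hence take $\tau(h)=\tau(\beta)\tau(a_s)\tau(\alpha)$. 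Then $\tau(\beta)^{-1}C_g\tau(\beta)=C_{g\beta}C_\beta^{-1}$ lies in the compact set $\overline{\{C_g\}}\cdot\overline{\{C_g\}}^{-1}$; conjugating by $\tau(a_s)$ with $s\ge0$ (splitting the product and applying the rephrased Lemma~\ref{lem:upgrading relative compactness} to each factor) keeps it in a compact set; and conjugating further by $\tau(\alpha)$ with $\alpha\in\Kc$ keeps it in a compact set since $\tau(\Kc)$ is compact. Letting $C$ be the supremum of $\max\{\norm{M}_{\mathrm{op}},\norm{M^{-1}}_{\mathrm{op}}\}$ over this final compact set gives the required uniform bound.

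Assembling: for $v\in\Uc$ with $\pi(v)\in H$ and any $g\in\Gamma_0$ we then have $\tfrac1C\norm{\cdot}^{(0)}_v\le\norm{\rho(g)(\cdot)}^{(0)}_{gv}\le C\norm{\cdot}^{(0)}_v$; squaring, multiplying by $(\chi\circ g)(\pi(v))$, and summing over $g$ (using $\sum_g(\chi\circ g)(\pi(v))=1$) yields $C^{-2}\big(\norm{Y}^{(0)}_v\big)^2 \le \norm{Y}_v^2 \le C^2\big(\norm{Y}^{(0)}_v\big)^2$, which is the claim. The one genuinely delicate point is the uniform control of $\tau(h)^{-1}C_g\tau(h)$ while $h$ ranges over the \emph{non-compact} set $\{h:hv_0\in\Uc|_H\}$; mere relative compactness of $\{C_g\}$ does not suffice, and this is exactly why the preceding two lemmas --- the covering statement of Lemma~\ref{lem:covering} and the conjugation-invariance of Lemma~\ref{lem:upgrading relative compactness} --- are needed. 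Once those are invoked, the remaining cocycle computations and the partition-of-unity averaging are routine.
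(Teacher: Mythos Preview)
Your proof is correct and follows essentially the same approach as the paper: both arguments expand the partition-of-unity sum, use Lemma~\ref{lem:covering} to write $v=\beta a_s\alpha(v_0)$, and then invoke Lemma~\ref{lem:upgrading relative compactness} (together with compactness of $\Kc$) to show that each summand $\norm{\rho(g)(\cdot)}^{(0)}_{gv}$ is uniformly bi-Lipschitz to $\norm{\cdot}^{(0)}_v$. The only cosmetic difference is that you phrase the core estimate as a uniform operator-norm bound on the conjugate $\tau(h)^{-1}C_g\tau(h)$, whereas the paper manipulates the norm expressions directly; the underlying computation (factoring through $C_{g\beta}$ and $C_\beta^{-1}$, conjugating by $\tau(a_s)$, then by $\tau(\alpha)$) is identical.
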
 

\begin{proof} By Lemma~\ref{lem:covering} there exist $\beta \in \Gamma_0$, $T \geq 0$, and $\alpha \in \Kc$ such that $v= \beta a_T\alpha(v_0)$. Then 
\begin{align*}
\norm{\cdot}_v & = \sqrt{ \sum_{g \in \Gamma_0}  (\chi\circ g)(\pi(v))\left( \norm{\rho(g)( \cdot)}_{gv}^{(0)} \right)^2}\\
& =\sqrt{ \sum_{g \in \Gamma_0}  (\chi\circ g)(\pi(v)) \left(\norm{\tau(\alpha)^{-1} \tau(a_{-T}) \tau(g\beta)^{-1}\rho(g)( \cdot)}_{v_0}^{(0)} \right)^2}. 
\end{align*} 

Using the compactness of $\Kc$ and Lemma~\ref{lem:upgrading relative compactness}
\begin{align*}
& \norm{\tau(\alpha)^{-1} \tau(a_{-T}) \tau(g\beta)^{-1}\rho(g)( \cdot)}_{v_0}^{(0)}  \asymp \norm{\tau(a_{-T}) \tau(g\beta)^{-1}\rho(g)( \cdot)}_{v_0}^{(0)}\\
& \quad = \norm{\tau(a_{-T}) \tau(g\beta)^{-1}\rho(g\beta) \tau(a_T) \tau(a_{-T}) \rho(\beta)^{-1}\tau(\beta) \tau(a_T) \tau(\beta a_T)^{-1} ( \cdot)}_{v_0}^{(0)} \\
& \quad \asymp \norm{ \tau(\beta a_T)^{-1} ( \cdot)}_{v_0}^{(0)} \asymp \norm{ \tau(\beta a_T)^{-1} ( \cdot)}_{\alpha(v_0)}^{(0)} = \norm{  \cdot}_{v}^{(0)}.
\end{align*}
Thus 
\begin{equation*}
\norm{\cdot}_v \asymp\sqrt{ \sum_{g \in \Gamma_0} (\chi\circ g)(\pi(v))  \left(\norm{\cdot}_{v}^{(0)}\right)^2}  = \norm{  \cdot}_{v}^{(0)}. \qedhere
\end{equation*} 
\end{proof} 

We can now establish part (2) of the proposition.

\begin{lemma}\label{lem:uniform local bounds on the cusps} For any $r > 1$ there exists $L_r > 1$ such that: if $v,w \in  \Uc|_H$ and $\dist_X(\pi(v),\pi(w)) \leq r$, then 
 $$
 \frac{1}{L_r} \norm{\cdot}_v \leq \norm{\cdot}_{w} \leq L_r \norm{\cdot}_v.
 $$
\end{lemma}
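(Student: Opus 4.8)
The plan is to reduce the statement to the $\tau$-equivariant family $\norm{\cdot}^{(0)}$, for which a locally uniform estimate on $\Uc|_H$ is essentially free, and then to transfer this via the comparison of Lemma~\ref{lem:almost tau equivariant}. More precisely, the first step is to record a locally uniform bound for the background family $\norm{\cdot}^{(0)}_{v \in T^1X}$: since $\norm{\cdot}^{(0)}$ is $\tau$-equivariant and $\tau(\mathsf{A})$ is a fixed one-parameter group of diagonal matrices, the same argument as in Lemma~\ref{lem:homog are uniform} produces, for every $r > 0$, a constant $L_r^{(0)} > 1$ with
$$
\frac{1}{L_r^{(0)}} \norm{\cdot}^{(0)}_{v} \leq \norm{\cdot}^{(0)}_{w} \leq L_r^{(0)} \norm{\cdot}^{(0)}_{v}
$$
whenever $\dist_X(\pi(v),\pi(w)) \leq r$ (indeed this holds for all $v,w \in T^1X$, not just those in $\Uc|_H$).

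The second step is the transfer. Suppose $v,w \in \Uc|_H$ with $\dist_X(\pi(v),\pi(w)) \leq r$. By Lemma~\ref{lem:almost tau equivariant} there is a constant $C > 1$, independent of $v$ and $w$, with $\frac{1}{C}\norm{\cdot}^{(0)}_v \leq \norm{\cdot}_v \leq C\norm{\cdot}^{(0)}_v$ and likewise for $w$. Chaining these three two-sided comparisons gives
$$
\norm{\cdot}_w \leq C \norm{\cdot}^{(0)}_w \leq C L_r^{(0)} \norm{\cdot}^{(0)}_v \leq C^2 L_r^{(0)} \norm{\cdot}_v,
$$
and symmetrically $\norm{\cdot}_v \leq C^2 L_r^{(0)} \norm{\cdot}_w$, so the lemma holds with $L_r := C^2 L_r^{(0)}$.

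I do not expect a genuine obstacle here: the statement is an immediate three-term sandwich once Lemma~\ref{lem:almost tau equivariant} is in hand, and that lemma has already been proved using Lemma~\ref{lem:covering} and Lemma~\ref{lem:upgrading relative compactness}. The only point requiring a little care is making sure the constant $C$ from Lemma~\ref{lem:almost tau equivariant} is uniform over all of $\Uc|_H$ (it is, by construction), and checking that the locally uniform bound for $\norm{\cdot}^{(0)}$ really does follow formally from $\tau$-equivariance exactly as in Lemma~\ref{lem:homog are uniform} — one writes $w = g\cdot v$ for a suitable $g \in \mathsf{G}$, uses the Cartan decomposition $g = m a_t \ell$ with $t = \dist_X(\pi(v),\pi(w))$, and bounds the distortion of $\norm{\cdot}^{(0)}$ by $\mu_1/\mu_d$ of $\tau(a_t)$, which is exponential in $r$ and hence bounded for $t \le r$. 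This step is routine and the main work has already been done in the preceding lemmas.
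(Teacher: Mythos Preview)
Your proposal is correct and matches the paper's proof essentially verbatim: the paper simply says the lemma ``follows immediately from Proposition~\ref{prop:representations of rank one groups} and Lemma~\ref{lem:almost tau equivariant}, since $\norm{\cdot}^{(0)}_v$ is a $\tau$-equivariant family of norms,'' which is precisely your two-step sandwich. The only cosmetic difference is that the paper cites Proposition~\ref{prop:representations of rank one groups}(3) directly for the locally uniform bound on $\norm{\cdot}^{(0)}$, whereas you reprove it by invoking the argument of Lemma~\ref{lem:homog are uniform}; these amount to the same thing.
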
 

\begin{proof} This follows immediately from Proposition~\ref{prop:representations of rank one groups} and Lemma~\ref{lem:almost tau equivariant}, since $\norm{\cdot}^{(0)}_v$ is a $\tau$-equivariant family of norms.
\end{proof} 

We next establish part (3) of the proposition. By Proposition~\ref{prop:representations of rank one groups} there exist $C_\lambda, \lambda > 0$ such that:
 \begin{align}
 \label{eqn:homogeneous bundle contraction} 
 \frac{\norm{Y}_{\geodflow^t(v)}^{(0)}}{\norm{Z}_{\geodflow^t(v)}^{(0)}} \leq C_\lambda e^{-\lambda t} \frac{\norm{Y}_v^{(0)}}{\norm{Z}_v^{(0)}}
 \end{align}
 for all $t \geq 0$, $v \in T^1 X$, $Y \in \zeta_\tau^k(v^+)$, and non-zero $Z \in \zeta_\tau^{d-k}(v^-)$. 

\begin{lemma} There exist $C_1 > 0$ and a horoball $H^\prime \subset H$ such that: if $t \geq 0$ and $v, \geodflow^t(v) \in \Uc|_{H^\prime}$, then 
$$
\frac{\norm{Y}_{\geodflow^t(v)}}{\norm{Z}_{\geodflow^t(v)}} \leq C_1e^{-\frac{\lambda}{2}t}\frac{\norm{Y}_v}{\norm{Z}_v} 
$$
for all $Y \in \xi^k(v^+)$ and non-zero $Z \in \xi^{d-k}(v^-)$. 
\end{lemma}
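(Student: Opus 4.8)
The plan is to deduce the estimate from the homogeneous contraction~\eqref{eqn:homogeneous bundle contraction} for $\norm{\cdot}^{(0)}$ together with the infinitesimal homogeneity of $\xi$ at $x^+$ from Proposition~\ref{prop:local homogeneity}. By Lemma~\ref{lem:almost tau equivariant} the metric $\norm{\cdot}$ is comparable to $\norm{\cdot}^{(0)}$ up to a fixed constant on $\Uc|_H$, so it suffices to prove the estimate for $\norm{\cdot}^{(0)}$. I would first promote Proposition~\ref{prop:local homogeneity} to the continuous parameter $\{a_t\}$: writing $S=n+r$ with $n\in\Nb$, $r\in[0,1)$ and $\gamma:=a_1$, and using that $\tau(a_r)$ depends continuously on $r$ and that $\zeta_\tau$ is $\tau$-equivariant, one obtains $\tau(a_{S_j})^{-1}\circ\xi\circ a_{S_j}(z_j)\to\zeta_\tau(z)$ whenever $S_j\to\infty$, $z_j\to z$ in $\partial_\infty X$, and $\{a_{S_j}(z_j)\}\subset\Lc$ --- the limit being $\zeta_\tau$ of the limit point even when $a_{S_j}(z_j)$ does not tend to $x^+$ (here one uses $\xi(x^+)=\zeta_\tau(x^+)$).

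The core step is a fixed-length estimate. Fix $T_0>\tfrac{2}{\lambda}\log C_\lambda$ once and for all; I claim there is a horoball $H^\prime\subset H$ such that
$$
\frac{\norm{Y}^{(0)}_{\geodflow^{T_0}(v)}}{\norm{Z}^{(0)}_{\geodflow^{T_0}(v)}}\leq e^{-\frac{\lambda}{2}T_0}\,\frac{\norm{Y}^{(0)}_v}{\norm{Z}^{(0)}_v}
$$
for all $v,\geodflow^{T_0}(v)\in\Uc|_{H^\prime}$, $Y\in\xi^k(v^+)$ and non-zero $Z\in\xi^{d-k}(v^-)$. I would prove this by contradiction: a failure yields horoballs $H_j\subset H$ with $\bigcap_j H_j=\emptyset$, points $v_j,\geodflow^{T_0}(v_j)\in\Uc|_{H_j}$, and vectors $Y_j\in\xi^k(v_j^+)$, $Z_j\in\xi^{d-k}(v_j^-)\smallsetminus\{0\}$ violating the inequality. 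Using $\rho$-equivariance of $\xi$ and $\norm{\cdot}$ and $\Gamma_0$-invariance of $\Uc|_H$, replace $(v_j,Y_j,Z_j)$ by $(\beta_j^{-1}v_j,\rho(\beta_j)^{-1}Y_j,\rho(\beta_j)^{-1}Z_j)$ for suitable $\beta_j\in\Gamma_0$; by Lemma~\ref{lem:covering} one may then take $v_j=a_{S_j}\alpha_j v_0$ with $\alpha_j\in\Kc$, $\alpha_j\to\alpha_\infty$ (along a subsequence), and $S_j\to\infty$ (since $\pi(v_j)$ leaves every $H_j$). Normalizing so that $\norm{\tau(a_{S_j})^{-1}Y_j}^{(0)}=\norm{\tau(a_{S_j})^{-1}Z_j}^{(0)}=1$, and noting $v_j^+=a_{S_j}(\alpha_j x^+)\in\Lc$ and $v_j^-=a_{S_j}(\alpha_j v_0^-)\in\Lc$, the continuous form of Proposition~\ref{prop:local homogeneity} and $\tau$-equivariance of $\zeta_\tau$ give, after a further subsequence, $\tau(a_{S_j})^{-1}Y_j\to Y_\infty\in\tau(\alpha_\infty)\zeta_\tau^k(x^+)\smallsetminus\{0\}$ and $\tau(a_{S_j})^{-1}Z_j\to Z_\infty\in\tau(\alpha_\infty)\zeta_\tau^{d-k}(v_0^-)\smallsetminus\{0\}$. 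Writing $\geodflow^{T_0}(v_j)=(a_{S_j}\alpha_j a_{T_0})v_0$ and using $\norm{\cdot}^{(0)}_{gv_0}=\norm{\tau(g)^{-1}(\cdot)}^{(0)}$, all four norms in the violated inequality converge to positive limits, so the vectors $\tau(\alpha_\infty)^{-1}Y_\infty\in\zeta_\tau^k(v_0^+)\smallsetminus\{0\}$ and $\tau(\alpha_\infty)^{-1}Z_\infty\in\zeta_\tau^{d-k}(v_0^-)\smallsetminus\{0\}$ violate~\eqref{eqn:homogeneous bundle contraction} at $v_0$ with $t=T_0$ (as $\geodflow^{T_0}(v_0)=a_{T_0}v_0$); this is impossible since $e^{\lambda T_0/2}>C_\lambda$.

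To finish, I would bootstrap to arbitrary $t\geq 0$. Since the Busemann function of $x^+$ is convex along geodesics of $X$, the condition $v,\geodflow^t(v)\in\Uc|_{H^\prime}$ forces $\geodflow^s(v)\in\Uc|_{H^\prime}$ for all $s\in[0,t]$; applying the fixed-length estimate $\lfloor t/T_0\rfloor$ times along $v,\geodflow^{T_0}(v),\dots$ and absorbing the residual flow of length $<T_0$ via the local uniformity of the $\tau$-equivariant family $\norm{\cdot}^{(0)}$ (Proposition~\ref{prop:representations of rank one groups}(3)) gives the estimate for $\norm{\cdot}^{(0)}$ with rate $\lambda/2$ and some constant, and switching back to $\norm{\cdot}$ at the two endpoints via Lemma~\ref{lem:almost tau equivariant} yields the lemma with $c_1=\lambda/2$.

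The main obstacle is the fixed-length estimate, specifically identifying the limits of the renormalized vectors $\tau(a_{S_j})^{-1}Y_j$, $\tau(a_{S_j})^{-1}Z_j$ with homogeneous directions: this is exactly where Proposition~\ref{prop:local homogeneity} enters, and the argument must be insensitive to whether $\alpha_\infty$ fixes $x^+$ or $v_0^-$. What makes the contradiction close --- despite the flow amplifying the component of $Y_j$ transverse to $\zeta_\tau^k(v_j^+)$ --- is that the flow length $T_0$ is a \emph{fixed} constant, chosen larger than $\tfrac{2}{\lambda}\log C_\lambda$ before the horoball $H^\prime$ is selected.
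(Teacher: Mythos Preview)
Your argument follows the same strategy as the paper: establish a fixed-length contraction estimate deep enough in the horoball by contradiction (using Proposition~\ref{prop:local homogeneity} to identify the limits of the renormalized vectors with homogeneous directions, then invoking~\eqref{eqn:homogeneous bundle contraction}), and bootstrap to all $t\ge 0$.  The paper proves the fixed-length claim for a window $t\in[T,2T]$ rather than a single $T_0$, which lets it cover $[0,t]$ without a remainder term, but your version with a leftover piece absorbed by local uniformity is equally valid; and your explicit use of convexity of the Busemann function to get $\geodflow^s(v)\in\Uc|_{H'}$ for intermediate $s$ is a point the paper leaves implicit.

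There is, however, one genuine imprecision.  You first reduce to $\norm{\cdot}^{(0)}$ and then, in the contradiction step, translate by $\beta_j\in\Gamma_0$ ``using $\rho$-equivariance of $\xi$ and $\norm{\cdot}$''.  But the inequality you are violating is in $\norm{\cdot}^{(0)}$, which is $\tau$-equivariant, not $\rho$-equivariant; the replacement $(v_j,Y_j,Z_j)\mapsto(\beta_j^{-1}v_j,\rho(\beta_j)^{-1}Y_j,\rho(\beta_j)^{-1}Z_j)$ therefore does \emph{not} preserve the $\norm{\cdot}^{(0)}$-inequality exactly --- it introduces factors of the form $\tau(a_{-S_j})\tau(\beta_j)\rho(\beta_j)^{-1}\tau(a_{S_j})$, which are bounded only by appealing to Lemma~\ref{lem:upgrading relative compactness}.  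Your threshold $T_0>\tfrac{2}{\lambda}\log C_\lambda$ does not absorb this extra constant, so the contradiction does not close as written.  The paper handles this by running the entire contradiction argument in the $\rho$-equivariant norm $\norm{\cdot}$ (so the $\Gamma_0$-translation is exact), switching to $\norm{\cdot}^{(0)}$ only via Lemma~\ref{lem:almost tau equivariant} at the very end, and accordingly choosing $T$ so that $C^4C_\lambda<e^{\lambda T/2}$.  Your argument is easily repaired either by adopting that order of operations or by enlarging $T_0$ to account for the bound from Lemma~\ref{lem:upgrading relative compactness}.
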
 

\begin{proof} The following argument is similar to the proof of ~\cite[Prop.\ 6.4]{CZZ2021}. Fix $T > 0$ such that 
 \begin{equation}
 \label{eqn:defn of T is proof almost homog contraction}
 C^4 C_\lambda  < e^{\frac{\lambda}{2} T}
 \end{equation}
 where $C$ is the constant from Lemma~\ref{lem:almost tau equivariant}.
 
We first claim that there exists a horoball $H^\prime \subset H$ such that: if $t \in [T,2T]$ and $v, \geodflow^t(v) \in \Uc|_{H^\prime}$, then 
$$
\frac{\norm{Y}_{\geodflow^t(v)}}{\norm{Z}_{\geodflow^t(v)}} \leq e^{-\frac{\lambda}{2}t}\frac{\norm{Y}_v}{\norm{Z}_v} 
$$
for all $Y \in \xi^k(v^+)$ and non-zero $Z \in \xi^{d-k}(v^-)$. 

Suppose not. Then there exist sequences $(v_n)_{n \geq 1}$ in $\Uc$, $( t_n)_{n \geq 1}$ in $[T,2T]$, and $(Y_n)_{n \geq 1}, (Z_n)_{n \geq 1}$ in $\Kb^d$ such that 
$\dist_X(\pi(v_n), \partial H) \to \infty$, $Y_n \in \xi^k(v_n^+)$, $Z_n \in \xi^{d-k}(v_n^-)\smallsetminus\{0\}$, and
$$
\frac{\norm{Y_n}_{\geodflow^{t_n}(v_n)}}{\norm{Z_n}_{\geodflow^{t_n}(v_n)}} > e^{-\frac{\lambda}{2}t_n}\frac{\norm{Y_n}_{v_n}}{\norm{Z_n}_{v_n}}.
$$
By scaling we may assume that 
\begin{equation}\label{eqn: we can assume Yn and Zn are 1}
\norm{Y_n}_{v_n} =\norm{Z_n}_{v_n}=1.
\end{equation} 

Using Lemma~\ref{lem:covering} and possibly replacing each $v_n$ with an $\Gamma_0$-translate, we can find a sequence $m_n \to \infty$ and a relatively compact sequence $( \alpha_n)_{n \geq 1}$ in $\mathsf{G}$ such that 
$$
v_n = a_1^{m_n} \alpha_n(v_0).
$$
 Let $Y^\prime_n := \tau(a_1^{m_n} \alpha_n)^{-1} Y_n$ and $Z^\prime_n := \tau(a_1^{m_n} \alpha_n)^{-1} Z_n$. Then by Lemma~\ref{lem:almost tau equivariant} and Equation~\eqref{eqn: we can assume Yn and Zn are 1}
 $$
 \norm{Y^\prime_n}_{v_0}^{(0)} = \norm{ Y_n}_{ v_n}^{(0)} \in [C^{-1}, C].
 $$
Likewise $\norm{Z^\prime_n}_{v_0}^{(0)} \in [C^{-1}, C]$.
 
Passing to a subsequence we can suppose that $t_n \to t \in [T,2T]$, $\alpha_n \to \alpha \in \mathsf{G}$, $Y^\prime_n \to Y^\prime$, and $Z^\prime_n \to Z^\prime$. Proposition~\ref{prop:local homogeneity} implies that
 \begin{align*}
 Y^\prime &  = \tau(\alpha)^{-1}  \lim_{n \to \infty} \tau(a_1)^{-m_n} Y_n  \in \tau(\alpha)^{-1}  \lim_{n \to \infty}  \tau(a_1)^{-m_n} \xi^k( v_n^+) \\
 & = \tau(\alpha)^{-1}  \lim_{n \to \infty}  \tau(a_1)^{-m_n} \circ \xi^k \circ a_1^{m_n} ( \alpha_n(v_0^+)) = \tau(\alpha)^{-1} \circ \zeta^k_{\tau} \circ \alpha(v_0^+) \\
 & = \zeta_{\tau}^k(v_0^+).
 \end{align*}
Likewise, $Z^\prime \in  \zeta_{\tau}^{d-k}(v_0^-)$. 

Then Lemma~\ref{lem:almost tau equivariant} and Equation~\eqref{eqn:homogeneous bundle contraction} imply that
 \begin{align*}
e^{-\frac{\lambda}{2} t} & \leq \liminf_{n \to \infty}  \frac{\norm{Y_n}_{\geodflow^{t_n}(v_n)}}{\norm{Z_n}_{\geodflow^{t_n}(v_n)}} \leq C^2 \liminf_{n \to \infty}  \frac{\norm{Y_n}_{\geodflow^{t_n}(v_n)}^{(0)}}{\norm{Z_n}_{\geodflow^{t_n}(v_n)}^{(0)}} = C^2 \liminf_{n \to \infty}  \frac{\norm{Y^\prime_n}_{\geodflow^{t_n}(v_0)}^{(0)}}{\norm{Z^\prime_n}_{\geodflow^{t_n}(v_0)}^{(0)}} \\
& = C^2\frac{\norm{Y^\prime}_{\geodflow^{t}(v_0)}^{(0)}}{\norm{Z^\prime}_{\geodflow^{t}(v_0)}^{(0)}}  \leq C^2 C_\lambda e^{-\lambda t} \frac{\norm{Y^\prime}_{v_0}^{(0)}}{\norm{Z^\prime}_{v_0}^{(0)}} \leq C^4 C_\lambda e^{-\lambda t}.
 \end{align*}
Then
 $
e^{\frac{\lambda}{2} T} \leq e^{\frac{\lambda}{2} t} \leq C^4 C_\lambda
 $
 and we have a contradiction with Equation~\eqref{eqn:defn of T is proof almost homog contraction}. So the claim is true. 
 
 Now suppose that $t \geq 0$,  $v, \geodflow^t(v) \in \Uc|_{H^\prime}$, $Y \in \xi^k(v^+)$, and $Z \in \xi^{d-k}(v^-)\smallsetminus \{0\}$. If $t \leq T$, then 
 $$
\frac{\norm{Y}_{\geodflow^t(v)}}{\norm{Z}_{\geodflow^t(v)}} \leq L_T e^{\frac{\lambda}{2}T}e^{-\frac{\lambda}{2}t}\frac{\norm{Y}_v}{\norm{Z}_v} 
$$
 by Lemma~\ref{lem:uniform local bounds on the cusps}. If $t \geq T$, then we can break $[0,t]$ into subintervals each with length between $T$ and $2T$, then apply the claim on each subinterval to obtain
 $$
\frac{\norm{Y}_{\geodflow^t(v)}}{\norm{Z}_{\geodflow^t(v)}} \leq e^{-\frac{\lambda}{2}t}\frac{\norm{Y}_v}{\norm{Z}_v}.
$$
So $C_1 := L_T e^{\frac{\lambda}{2}T}$ suffices. 
\end{proof}

\section{Representations with almost homogeneous cusps} 

Let $\mathsf{G}$, $\mathsf{K}$, and $X=\mathsf{G}/\mathsf{K}$ be as in Section~\ref{sec:reminders negatively curved symspaces}. In this section we prove Theorem~\ref{thm:repn with almost homog cusps in intro}, restated in the following form. 

\begin{theorem}\label{thm:repn with almost homog cusps} Suppose that 
\begin{itemize}
\item $\Gamma \leq \mathsf{G}$ is geometrically finite and $\peripherals$ is a set of representatives of the conjugacy classes of maximal parabolic subgroups of $\Gamma$,
\item $\rho \colon \Gamma \to \SL(d,\Kb)$ is $\Psf_k$-Anosov relative to $\peripherals$, and 
\item for each $P \in \peripherals$ there exists a representation $\tau_P\colon \mathsf{G} \to \SL(d,\Kb)$ such that 
\begin{align*}
\left\{ \tau_P(g)\rho(g)^{-1} : g \in P\right\}
\end{align*}
is relatively compact in $\SL(d,\Kb)$.
\end{itemize}
Then $\rho$ is uniformly $\Psf_k$-Anosov relative to $\Cc_{X}(\Gamma)$.
\end{theorem}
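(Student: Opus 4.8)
The plan is to reduce, via Theorem~\ref{thm:equivalence of definitions}, Definition~\ref{defn:locally uniform norms} and Proposition~\ref{prop: Hom bundles contraction/expansions}, to the construction of a single metric. Since $\rho$ is already $\Psf_k$-Anosov relative to $\peripherals$ by hypothesis, one identifies $\partial(\Gamma,\peripherals)$ with $\Lambda_X(\Gamma)$ and the flow space of $\Cc_X(\Gamma)$ with $\Uc(\Gamma)$, and it then suffices to produce a $\rho$-equivariant, \emph{locally uniform} family of inner-product norms $\norm{\cdot}$ on $\Kb^d$ indexed by $\Uc(\Gamma)$ such that for some $C,c>0$ one has $\norm{Y}_{\geodflow^t(v)}/\norm{Z}_{\geodflow^t(v)} \le Ce^{-ct}\,\norm{Y}_v/\norm{Z}_v$ whenever $t\ge 0$, $v\in\Uc(\Gamma)$, $Y\in\xi^k(v^+)$ and $Z\in\xi^{d-k}(v^-)\smallsetminus\{0\}$, where $\xi$ is the boundary map of $\rho$.

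\emph{Step 1 (cusp data).} Using geometric finiteness, fix a $\Gamma$-invariant family of pairwise disjoint, uniformly separated horoballs $\{\gamma H_P\}$ with $\Gamma$ acting cocompactly on their complement in $\Cc_X(\Gamma)$. For each $P\in\peripherals$ I would verify that the setup of the previous section applies with $\Gamma_0=P$, $\tau=\tau_P$, $\Lc=\Lambda_X(\Gamma)$, $\xi$ the restriction of the boundary map, and $\Uc=\Uc(\Gamma)$. The one non-routine point is the matching $\xi(x_P^+)=\zeta_{\tau_P}(x_P^+)$: from strong dynamics preservation (applied to $\rho|_P$ and to $\zeta_{\tau_P}$) and relative compactness one gets $\xi^{d-k}(x_P^+)=\zeta_{\tau_P}^{d-k}(x_P^+)$ automatically and $\xi^k(x_P^+)=c\,\zeta_{\tau_P}^k(x_P^+)$ for some $c\in\SL(d,\Kb)$; replacing $\tau_P$ by $\tilde c\tau_P\tilde c^{-1}$, where $\tilde c$ is unipotent with respect to the splitting $\xi^k(x_P^+)\oplus\xi^{d-k}(x_P^+)$ and sends $\zeta_{\tau_P}(x_P^+)$ to $\xi(x_P^+)$, achieves the matching, and the $\Psf_k$-divergence of $\rho|_P$ (Proposition~\ref{prop:eigenvalue data in rel Anosov repn}) guarantees that the relative-compactness hypothesis survives this conjugation. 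Proposition~\ref{prop:almost homogeneous norms} then supplies, for each $P$, a $\rho|_P$-equivariant family $\norm{\cdot}^P$ of inner-product norms over $T^1X$ that is locally uniform over $\Uc(\Gamma)|_{H_P}$ and contracts (in the ratio sense above) over $\Uc(\Gamma)|_{H_P'}$ for a sub-horoball $H_P'\subset H_P$ at bounded depth.

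\emph{Step 2 (gluing).} Over the $\Gamma$-cocompact complement of the horoballs I would use a locally uniform metric $\norm{\cdot}^{\mathrm{thick}}$ whose Hom-flow contracts over orbit segments lying in that region; such a metric can be obtained by restricting the metric furnished by Theorem~\ref{thm:equivalence of definitions} (which is automatically locally uniform on a $\Gamma$-cocompact set), \emph{arranged} so that near $\partial(\gamma H_P)$ it is uniformly comparable to the homogeneous norm underlying $\norm{\cdot}^P$ — this compatibility is plausible because the asymptotic behaviour of $\rho$ into the cusp $P$ matches that of $\tau_P$. Then, using a $\Gamma$-equivariant cutoff depending only on Busemann depth, glue $\norm{\cdot}^P$ on a deep sub-horoball $\gamma H_P''\subset\gamma H_P'$ to $\norm{\cdot}^{\mathrm{thick}}$ outside $\bigcup\gamma H_P$, interpolating inner products over a collar of uniformly bounded depth (hence $\Gamma$-cocompact). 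The resulting $\norm{\cdot}$ is $\rho$-equivariant with each fibre norm from an inner product; local uniformity follows from Proposition~\ref{prop:almost homogeneous norms}(2) over $\Uc(\Gamma)|_{H_P}$, from compactness of the quotient over the complement and the collars, and — for nearby base points straddling two regions — from the fact that such a pair either lies deep in a single cusp or lies in a $\Gamma$-cocompact bounded-depth region.

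\emph{Step 3 (contraction), which I expect to be the main obstacle.} Given $v$ and $t\ge0$, decompose $[0,t]$ into the maximal sub-intervals on which $\geodflow^s(v)\in\Uc(\Gamma)|_{\gamma H_P''}$ for some $\gamma,P$ and their complements. On a sub-interval of the first type the whole sub-segment lies in the convex set $\gamma H_P''\subset\gamma H_P'$, so $\norm{\cdot}=\norm{\cdot}^P$ there and Proposition~\ref{prop:almost homogeneous norms}(3) gives the ratio contraction; on a sub-interval of the second type the base points lie in a fixed $\Gamma$-cocompact bounded-depth region on which $\norm{\cdot}$ is uniformly comparable to $\norm{\cdot}^{\mathrm{thick}}$, so the contraction of $\norm{\cdot}^{\mathrm{thick}}$ over that orbit segment applies with a uniform multiplicative loss. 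Since a geodesic meets each horoball in one sub-arc, consecutive deep sub-intervals lie in distinct horoballs and so are separated in time by at least the horoball-separation constant; hence $[0,t]$ is cut into $O(t)$ pieces, and the crux is to choose the horoballs (and control $\norm{\cdot}^{\mathrm{thick}}$ near the cusps) so that the separation dominates $\log$ of the fixed multiplicative constants against the fixed contraction rates, making the accumulated constants negligible relative to the exponential decay. The required ratio contraction follows; by Proposition~\ref{prop: Hom bundles contraction/expansions} and Definition~\ref{defn:locally uniform norms}, $\rho$ is uniformly $\Psf_k$-Anosov relative to $\Cc_X(\Gamma)$. The delicate points are precisely this no-accumulation estimate and the compatibility of $\norm{\cdot}^{\mathrm{thick}}$ with the homogeneous norms near each cusp, for which one uses convexity of horoballs, bounded geometry of the transition collars, and uniform separation of the horoballs.
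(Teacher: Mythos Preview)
Your overall architecture matches the paper's: apply Proposition~\ref{prop:almost homogeneous norms} at each cusp, glue with a thick-part metric via a partition of unity, then verify the dominated-splitting condition. The substantive divergence is in how the contraction is organised.

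The paper cuts $[0,t]$ into at most \emph{three} pieces rather than $O(t)$ pieces. Writing $s_1=\min\{s:\geodflow^s(v)\in\wh\Uc_{\mathrm{thick}}\}$ and $s_2=\max\{s:\geodflow^s(v)\in\wh\Uc_{\mathrm{thick}}\}$ (when the orbit meets the thick part at all), one uses the cusp contraction on $[0,s_1]$ and $[s_2,t]$, and a single estimate on $[s_1,s_2]$: whenever the \emph{endpoints} lie in the $\Gamma$-cocompact thick part and $t-s_1\ge T_0$, the ratio drops by a fixed factor (Lemma~\ref{lem:contraction between the thick parts}). This middle estimate comes directly from the singular-value gap $\mu_k/\mu_{k+1}(\rho(g))\to\infty$ (strong dynamics preservation plus Observation~\ref{obs:strongly_dynamics_pres_div_cartan}), and it holds \emph{regardless of how many cusps the segment visits in between}. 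This completely sidesteps the accumulation-of-constants ``crux'' you flag, and makes the horoball-separation tuning unnecessary.

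Your route can be made to work, but two points need repair. First, your source for the thick contraction is wrong: Theorem~\ref{thm:equivalence of definitions} produces a metric over a Groves--Manning cusp space, not over $\Cc_X(\Gamma)$, and these need not be quasi-isometric (cf.\ Section~\ref{sec:not uniform rel GM cusp space}); you cannot ``restrict'' it. What you actually need on your second-type intervals is precisely the singular-value-gap argument of Lemma~\ref{lem:contraction between the thick parts} applied to the (still $\Gamma$-cocompact) bounded-depth region. Second, the paper takes the thick metric to be \emph{any} metric (compactness of $\wh\Uc_{\mathrm{thick}}$ handles uniformity), so the ``compatibility with homogeneous norms near the cusps'' you worry about is a non-issue once you glue with a partition of unity.

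On Step~1: you are right that the hypothesis $\xi(x_P^+)=\zeta_{\tau_P}(x_P^+)$ in Section~5 deserves comment when invoking Proposition~\ref{prop:almost homogeneous norms}; the paper passes over this. But your fix has slips: $\xi^k(x_P^+)\subset\xi^{d-k}(x_P^+)$ is a flag, not a splitting, so ``unipotent with respect to the splitting $\xi^k\oplus\xi^{d-k}$'' is ill-posed; the asserted automatic equality of the $(d-k)$-parts is not justified; and the survival of relative compactness under conjugation by a generic $\tilde c$ is exactly the delicate point (you need $\tau_P(g)\,\tilde c^{-1}\,\tau_P(g)^{-1}$ to stay bounded over $g\in P$, which forces $\tilde c$ to lie in the parabolic fixing $\zeta_{\tau_P}(x_P^+)$). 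A cleaner route is to note that the argument of Lemma~\ref{lem:upgrading relative compactness} already forces $\rho(P)$ to fix the full weight flag of $\tau_P(a_t)$, and then identify $\xi(x_P^+)$ as the unique point of $\Lambda_{k,d-k}(\rho(P))$ using Observation~\ref{obs:limit set of rel anosov}.
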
 

The rest of the section is devoted to the proof of theorem, so fix $\Gamma$, $\peripherals$, $\rho$, and representations $\{ \tau_P : P \in \peripherals\}$ as in the statement. Let $E_\rho: = \Uc(\Gamma) \times \Kb^d$ and $\wh{E}_\rho: = \Gamma \backslash (\Uc(\Gamma) \times \Kb^d)$. 

For each $P \in \peripherals$, fix an open horoball $H_P$ centered at the fixed point of $P$ such that: if $\gamma \in \Gamma$, then $\gamma \overline{H}_P \cap \overline{H}_P \neq \varnothing$ if and only if $\gamma \in P$. This is possible by the ``F1'' definition of geometrically finite subgroups in~\cite{Bowditch_GF_Riem}. Let 
$$
\wh{\Uc}_P  :=\Gamma \backslash \left\{ v \in \Uc(\Gamma) : \pi(v) \in H_P \right\},
$$
$\wh{\Uc}_{thin} := \bigcup_{P \in \peripherals} \wh{\Uc}_P$, and $\wh{\Uc}_{thick} := \wh{\Uc}(\Gamma) \smallsetminus \wh{\Uc}_{thin}$. Then  $\wh{\Uc}_{thick}$ is compact by the ``F1'' definition of geometrically finite subgroups in~\cite{Bowditch_GF_Riem}.

\begin{lemma}\label{lem:building the norms in almost homog examples} After possibly replacing each $H_P$ with a smaller horoball, there exist $C_0,c_0>0$ and a metric $\norm{\cdot}_{v \in \wh{\Uc}(\Gamma)}$ on the vector bundle $\wh{E}_\rho \to \wh{\Uc}(\Gamma)$ such that: 
\begin{enumerate}
\item $\norm{\cdot}_{v \in \wh{\Uc}(\Gamma)}$ is locally uniform, 
\item if $t \geq 0$, $v \in \wh{\Uc}(\Gamma)$, and $\geodflow^s(v) \in \wh{\Uc}_{thin}$ for all $s \in [0,t]$, then 
\begin{align*}
\frac{\norm{\flatflow^t(Y)}_{\geodflow^t(v)}}{\norm{\flatflow^t(Z)}_{\geodflow^t(v)}} \leq C_0 e^{-c_0t} \frac{\norm{Y}_v}{\norm{Z}_v}
\end{align*}
for all $Y \in \wh{\Theta}^k(v)$ and non-zero $Z \in \wh{\Xi}^{d-k}(v)$. 
\end{enumerate}
\end{lemma}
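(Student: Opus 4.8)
The plan is to run the cusp-by-cusp construction of the previous section at each peripheral subgroup and then glue the resulting families of norms together with a $\Gamma$-invariant partition of unity. So fix $P\in\peripherals$, let $x_P^+\in\partial_\infty X$ be its fixed point, and let $\xi=(\xi^k,\xi^{d-k})$ be the boundary map of $\rho$ viewed on $\Lambda_X(\Gamma)$. First I would check that $\tau_P$ is $\Psf_k$-proximal: applying the strong dynamics preserving property to an escaping sequence $g_n\in P$ and using Observation~\ref{obs:strongly_dynamics_pres_div_cartan} gives $\frac{\mu_k}{\mu_{k+1}}(\rho(g_n))\to\infty$, and the relative compactness of $\{\tau_P(g)\rho(g)^{-1}:g\in P\}$ then forces $\frac{\mu_k}{\mu_{k+1}}(\tau_P(g_n))\to\infty$; writing $g_n$ in a Cartan decomposition of $\mathsf{G}$ (and conjugating $\tau_P$ so it is diagonal on the Cartan subgroup, as in the proof of Proposition~\ref{prop:representations of rank one groups}) this shows $\tau_P(a_t)$ is $\Psf_k$-proximal for $t>0$, hence so is $\tau_P$. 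Next I would arrange the compatibility $\xi(x_P^+)=\zeta_{\tau_P}(x_P^+)$ required by the setup preceding Proposition~\ref{prop:almost homogeneous norms}: since $\rho(P)$ and $\tau_P(P)$ are both $\Psf_k$-divergent with limiting flag data at $x_P^+$ given respectively by $\xi(x_P^+)$ and $\zeta_{\tau_P}(x_P^+)$, and since the two representations differ on $P$ by a uniformly bounded factor, one can replace $\tau_P$ by a suitable conjugate that makes the two boundary values agree, checking that this conjugation preserves the relative-compactness hypothesis. With these verifications in place, Proposition~\ref{prop:almost homogeneous norms} applies with $\Gamma_0=P$, $H=H_P$, $\Lc=\Lambda_X(\Gamma)$, $\Uc=\Uc(\Gamma)$ and the given $\xi$, producing a $\rho|_P$-equivariant family of inner-product norms $\norm{\cdot}^{(P)}_{v\in T^1X}$ that is locally uniform over $\Uc|_{H_P}$ and for which the $\Hom$-bundle contraction holds over $\Uc|_{H_P'}$ for some horoball $H_P'\subset H_P$.

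\emph{Shrinking and gluing.} Replace each $H_P$ by a horoball contained in $H_P'$ that is small enough that the $\Gamma$-translates of all the horoballs are pairwise disjoint except in the evident cases (possible since $\Gamma$ is geometrically finite), and update $\wh{\Uc}_P$, $\wh{\Uc}_{thin}$, $\wh{\Uc}_{thick}$. On $\bigcup_{\gamma\in\Gamma}\gamma\Uc|_{H_P}$ define a metric by transporting $\norm{\cdot}^{(P)}$ by the $\rho$-action; this is well defined and continuous because $\norm{\cdot}^{(P)}$ is $\rho|_P$-equivariant, $\gamma_1H_P=\gamma_2H_P$ precisely when $\gamma_1^{-1}\gamma_2\in P$, and the sets attached to distinct $P$ are disjoint. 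Fix in addition some $\rho$-equivariant continuous metric $\norm{\cdot}_{th}$ on $\wh{E}_\rho$ and a $\Gamma$-invariant smooth partition of unity $\{\chi_{thin},\chi_{thick}\}$ on $\Uc(\Gamma)$ with $\chi_{thin}$ supported in the union of the $\gamma$-translates of the original (larger) horoballs $\Uc|_{H_P}$ and $\chi_{thin}\equiv1$ on the union of the $\gamma$-translates of the shrunken ones. Put $\norm{\cdot}^2:=\chi_{thin}\,(\norm{\cdot}^{(P)})^2+\chi_{thick}\,(\norm{\cdot}_{th})^2$; this descends to an inner-product metric on $\wh{E}_\rho$, being a convex combination of such.

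\emph{Verifying the two conclusions.} For (2): a flow segment that stays in $\wh{\Uc}_{thin}$ is connected and contained in the disjoint union of the pieces $\wh{\Uc}_P$, hence lifts into a single $\gamma\Uc|_{H_P}$, on which $\chi_{thin}\equiv1$ and the metric is exactly the $\rho$-transport of $\norm{\cdot}^{(P)}$; since the shrunken $H_P$ lies in $H_P'$, the endpoints $v$ and $\geodflow^t(v)$ satisfy the hypotheses of Proposition~\ref{prop:almost homogeneous norms}(3), giving the claimed exponential decay. For (1): given $r>0$, if $\pi(\sigma_1(0))$ lies more than $2r$ deep inside some shrunken $H_P$, then $\sigma_1$ and every $\sigma_2$ with $\dist_X(\sigma_1(0),\sigma_2(0))\le r$ lie in a common $\gamma\Uc|_{H_P}$ where $\chi_{thin}\equiv1$, and the estimate comes from Proposition~\ref{prop:almost homogeneous norms}(2), whose constant is independent of $\gamma$; otherwise $\sigma_1$ and $\sigma_2$ descend into a fixed compact subset of $\wh{\Uc}(\Gamma)$ — the complement in $\Cc_X(\Gamma)$ of the ``more than $2r$ deep'' parts of the horoballs is $\Gamma$-cocompact — so by properness of the $\Gamma$-action only finitely many group elements relate such pairs at distance $\le r$, and uniform continuity of the metric over the corresponding compact piece of the bundle supplies a constant $L_r$.

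\emph{Main obstacle.} I expect the delicate step to be the first one: arranging, after a harmless conjugation, that the homogeneous boundary map $\zeta_{\tau_P}$ agrees with $\xi$ at the cusp while keeping $\{\tau_P(g)\rho(g)^{-1}:g\in P\}$ relatively compact, since this is precisely what licenses the application of Proposition~\ref{prop:almost homogeneous norms}. Once that is done the shrinking and gluing are routine, the key structural input being that $\wh{\Uc}_{thin}$ is a disjoint union of horoball pieces on which $\norm{\cdot}^{(P)}$ is simultaneously locally uniform and contracting.
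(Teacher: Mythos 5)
Your approach matches the paper's: invoke Proposition~\ref{prop:almost homogeneous norms} at each $P\in\peripherals$ (with $\Gamma_0=P$, $H=H_P$, $\Lc=\Lambda_X(\Gamma)$, $\Uc=\Uc(\Gamma)$) and glue the resulting cusp metrics to an arbitrary background metric via a $\Gamma$-invariant partition of unity subordinate to $\{\wh{\Uc}_P\}_{P}$ together with a cocompact piece. The paper's proof is very terse and leaves both the hypotheses of Proposition~\ref{prop:almost homogeneous norms} and the two conclusions for the reader to verify, so the details you fill in (proximality of $\tau_P$, the shrinking, the verification of (1) and (2)) are in the right spirit and essentially correct; the gluing formula should of course read $\sum_{P}\chi_P(\norm{\cdot}^{(P)})^2$ rather than a single $\chi_{thin}$, but that is what you mean.

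The one place where the proposal takes an unnecessary, and as stated uncertain, detour is the compatibility $\xi(x_P^+)=\zeta_{\tau_P}(x_P^+)$. You propose to arrange it by conjugating $\tau_P$ and flag as an open worry whether that preserves the relative compactness of $\{\tau_P(g)\rho(g)^{-1}:g\in P\}$; this worry is not easily dismissed, since conjugating $\tau_P$ by $h$ replaces $\tau_P(g)\rho(g)^{-1}$ by $h^{-1}\,\tau_P(g)\rho(g)^{-1}\,(\rho(g)h\rho(g)^{-1})$, which is bounded only if $\{\rho(g)h\rho(g)^{-1}:g\in P\}$ is. In fact no conjugation is needed: the compatibility is automatic. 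Writing $h_g:=\tau_P(g)\rho(g)^{-1}$ and taking an escaping sequence $(g_n)$ in $P$, parabolicity gives $g_n\to x_P^+$ and $g_n^{-1}\to x_P^+$; the strong dynamics preserving property of $\xi$ together with Observation~\ref{obs:strongly_dynamics_pres_div_cartan}, applied to both $(g_n)$ and $(g_n^{-1})$, then gives $\tfrac{\mu_j}{\mu_{j+1}}(\rho(g_n))\to\infty$ and $U_j(\rho(g_n)^{-1})\to\xi^j(x_P^+)$ for $j\in\{k,d-k\}$. Since $\tau_P(g_n)^{-1}=\rho(g_n)^{-1}h_{g_n}^{-1}$ is $\rho(g_n)^{-1}$ multiplied on the \emph{right} by the bounded factor $h_{g_n}^{-1}$, Equation~\eqref{eqn:BPS estimate} (for $j\in\{k,d-k\}$) shows $U_j(\tau_P(g_n)^{-1})\to\xi^j(x_P^+)$ as well; on the other hand the strong dynamics preserving property of $\zeta_{\tau_P}$ gives $U_j(\tau_P(g_n)^{-1})\to\zeta_{\tau_P}^j(x_P^+)$, whence $\zeta_{\tau_P}(x_P^+)=\xi(x_P^+)$. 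The point is to compare the $U_j$'s of the \emph{inverses}, where the bounded discrepancy acts by right multiplication and the estimate of~\cite{BPS} applies directly; comparing $U_j(\tau_P(g_n))$ with $U_j(\rho(g_n))$ as your remark about the two ``limiting flag data'' suggests involves a left multiplication, which only yields $U_j(\tau_P(g_n))$ close to $h_{g_n}U_j(\rho(g_n))$ and does not conclude.
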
 

\begin{proof} Fix a partition of unity $\{ \chi_0\} \cup \{ \chi_P : P \in \peripherals\}$ of $\wh{\Uc}(\Gamma)$ such that ${\rm supp}(\chi_0)$ is compact and ${\rm supp}(\chi_P) \subset \wh{\Uc}_P$ for all $P \in \peripherals$. 

Let $\norm{\cdot}^{(0)}_{v \in \wh{\Uc}(\Gamma)}$ be any metric on $\wh{E}_\rho \to \wh{\Uc}(\Gamma)$. For each $P \in \peripherals$ let $\norm{\cdot}_{v \in T^1 X}^P$ be a family of $\rho|_P$-equivariant norms satisfying Proposition~\ref{prop:almost homogeneous norms}. Then $\norm{\cdot}_{v \in T^1 X}^P$ descends to a metric on the fibers of $\wh{E}_\rho$ above $\wh{\Uc}_P$ which we denote by $\norm{\cdot}_{v \in \wh{\Uc}_P}^P$. Then 
$$
\norm{\cdot}_v = \sqrt{\chi_0(v)  \left( \norm{\cdot}^{(0)}_v \right)^2+ \sum_{P \in \peripherals} \chi_P(v)  \left(\norm{\cdot}^P_v\right)^2 }
$$
defines a metric with the desired properties. 
\end{proof} 

\begin{lemma}\label{lem:contraction between the thick parts} There exists $T_0 > 0$ such that: if $t \geq T_0$ and $v, \geodflow^t(v) \in \wh{\Uc}_{thick}$, then 
\begin{align*}
\frac{\norm{\flatflow^t(Y)}_{\geodflow^t(v)}}{\norm{\flatflow^t(Z)}_{\geodflow^t(v)}} \leq \frac{1}{2C_0^2} \frac{\norm{Y}_v}{\norm{Z}_v}
\end{align*}
for all $Y \in \wh{\Theta}^k(v)$ and non-zero $Z \in \wh{\Xi}^{d-k}(v)$. 
\end{lemma}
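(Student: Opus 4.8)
The plan is to use the fact that $\rho$ is $\Psf_k$-Anosov relative to $\peripherals$, hence $\Psf_k$-Anosov relative to \emph{any} weak cusp space (Theorem~\ref{thm:equivalence of definitions}), in particular relative to $\Cc_X(\Gamma)$. So there is \emph{some} metric $\norm{\cdot}'$ on $\wh E_\rho \to \wh\Uc(\Gamma)$ for which $\homflow^t$ on $\Hom(\wh\Xi^{d-k},\wh\Theta^k)$ is exponentially contracting; equivalently, by Proposition~\ref{prop: Hom bundles contraction/expansions}, there are $C',c'>0$ with
\[
\frac{\norm{\flatflow^t(Y)}'_{\geodflow^t(v)}}{\norm{\flatflow^t(Z)}'_{\geodflow^t(v)}} \leq C'e^{-c't}\,\frac{\norm{Y}'_v}{\norm{Z}'_v}
\]
for all $v\in\wh\Uc(\Gamma)$, $t\geq0$, $Y\in\wh\Theta^k(v)$, $Z\in\wh\Xi^{d-k}(v)\smallsetminus\{0\}$. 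First I would compare $\norm{\cdot}$ (the metric from Lemma~\ref{lem:building the norms in almost homog examples}) and $\norm{\cdot}'$ on the \emph{compact} set $\wh\Uc_{thick}$: since both are continuous metrics on a vector bundle over a compact base, there is a constant $\Lambda>1$ with $\Lambda^{-1}\norm{\cdot}'_v \leq \norm{\cdot}_v \leq \Lambda\norm{\cdot}'_v$ for all $v\in\wh\Uc_{thick}$.

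Next I would combine the two estimates. For $v,\geodflow^t(v)\in\wh\Uc_{thick}$ and non-zero $Y\in\wh\Theta^k(v)$, $Z\in\wh\Xi^{d-k}(v)$, using $\flatflow^t$-invariance of the subbundles and the comparison at both endpoints $v$ and $\geodflow^t(v)$,
\[
\frac{\norm{\flatflow^t(Y)}_{\geodflow^t(v)}}{\norm{\flatflow^t(Z)}_{\geodflow^t(v)}} \leq \Lambda^2 \frac{\norm{\flatflow^t(Y)}'_{\geodflow^t(v)}}{\norm{\flatflow^t(Z)}'_{\geodflow^t(v)}} \leq \Lambda^2 C'e^{-c't}\,\frac{\norm{Y}'_v}{\norm{Z}'_v} \leq \Lambda^4 C'e^{-c't}\,\frac{\norm{Y}_v}{\norm{Z}_v}.
\]
Then I choose $T_0>0$ large enough that $\Lambda^4 C'e^{-c'T_0} \leq \frac{1}{2C_0^2}$; since $e^{-c't}$ is decreasing, the inequality then holds for all $t\geq T_0$, which is exactly the claim.

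The only subtlety — and the thing I would be careful about — is the logical dependence: Lemma~\ref{lem:building the norms in almost homog examples} only gives contraction for \emph{flow segments staying entirely in the thin part}, whereas here we need control for the thick-to-thick excursions, and for that we genuinely need an a priori contracting metric on all of $\wh\Uc(\Gamma)$, which is supplied by Theorem~\ref{thm:equivalence of definitions} applied to the weak cusp space $\Cc_X(\Gamma)$ (this uses that $\Cc_X(\Gamma)$ really is a weak cusp space for $(\Gamma,\peripherals)$, as recorded in Section~\ref{sec:reminders negatively curved symspaces}). There is no circularity: that a priori metric need not be locally uniform, and indeed the whole point of the surrounding argument is to upgrade it, by splicing with the locally uniform norms of Lemma~\ref{lem:building the norms in almost homog examples} on the cusps, to a locally uniform contracting metric. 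I expect no real obstacle in this lemma itself; the compactness of $\wh\Uc_{thick}$ (guaranteed by the ``F1'' definition of geometric finiteness) does all the work.
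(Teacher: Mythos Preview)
Your approach is different from the paper's and would be conceptually clean if the key input were available --- but there is a gap at exactly the point you flagged as the ``only subtlety''. Theorem~\ref{thm:equivalence of definitions} as stated does \emph{not} assert that $\rho$ is $\Psf_k$-Anosov relative to \emph{every} weak cusp space: part~(2) only gives existence of \emph{some} weak cusp space, and part~(3) covers only Groves--Manning cusp spaces. Since $\Cc_X(\Gamma)$ is in general not a Groves--Manning cusp space (indeed Section~\ref{sec:not uniform rel GM cusp space} exploits precisely this distinction), you cannot extract from Theorem~\ref{thm:equivalence of definitions} a contracting metric $\norm{\cdot}'$ on $\wh E_\rho \to \wh{\Uc}(\Gamma)$. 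If such a metric does exist --- and the construction behind (1)$\Rightarrow$(3) in~\cite{ZZ2022a} may well go through for an arbitrary weak cusp space --- then your compactness-comparison argument is correct and complete. But as written you have asserted a stronger form of Theorem~\ref{thm:equivalence of definitions} than is actually stated, and this paper neither proves nor cites that stronger form.

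The paper sidesteps this entirely by working directly with singular values rather than invoking a second metric. One lifts $\norm{\cdot}$ to $\Uc(\Gamma)$, fixes a compact $K\subset\Uc_{thick}$ with $\Gamma\cdot K=\Uc_{thick}$, and shows (arguing as in~\cite[Lem.~9.4]{ZZ2022a}) that if $v\in K$ and $\geodflow^t(v)\in g(K)$ for some $g\in\Gamma$, then
\[
\frac{\norm{Y}_{\geodflow^t(v)}}{\norm{Z}_{\geodflow^t(v)}} \leq C\,\frac{\mu_{k+1}}{\mu_k}(\rho(g))\,\frac{\norm{Y}_v}{\norm{Z}_v},
\]
with $\dist_X(p_0,g(p_0))\geq t-2R$. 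The strongly dynamics preserving property together with Observation~\ref{obs:strongly_dynamics_pres_div_cartan} then forces $\frac{\mu_{k+1}}{\mu_k}(\rho(g))$ to be as small as desired once $\dist_X(p_0,g(p_0))$ is large, which gives the lemma. This route uses only the boundary-map hypothesis on $\rho$ and the compactness of $\wh{\Uc}_{thick}$, and avoids needing any a priori flow-contracting metric over $\Cc_X(\Gamma)$. Your argument, by contrast, packages the same phenomenon more abstractly --- which is nice --- but at the cost of needing a black-box input that the paper does not supply.
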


\begin{proof} Lift $\norm{\cdot}_{v \in \wh{\Uc}(\Gamma)}$ to a $\rho$-equivariant family of norms $\norm{\cdot}_{v \in \Uc(\Gamma)}$. Let 
$$
\Uc_{thick} := \Uc(\Gamma) \cap \pi^{-1}\left( \wh{\Uc}_{thick} \right). 
$$ 
Then fix a compact set $K \subset \Uc_{thick}$ such that $\Gamma \cdot K = \Uc_{thick}$. Finally, fix some $p_0 \in \pi(K)$ and let $R := \diam_X(\pi(K))$.

Arguing as in the proof of ~\cite[Lem.\ 9.4]{ZZ2022a}, there exists $C > 1$ such that: if $v \in K$, $t \geq 0$, and $\geodflow^t(v) \in g(K)$ for some $g \in \Gamma$, then 
\begin{align*}
\frac{\norm{Y}_{\geodflow^t(v)}}{\norm{Z}_{\geodflow^t(v)}} \leq C \, \frac{\mu_{k+1}}{\mu_k}(\rho(g)) \, \frac{\norm{Y}_v}{\norm{Z}_v}
\end{align*}
for all $Y \in \xi^k(v^+)$ and non-zero $Z \in \xi^{d-k}(v^-)$. Notice that in this case
$$
\dist_X( p_0, g(p_0)) \geq t - 2R. 
$$

Also, by the strongly dynamics preserving property and Observation \ref{obs:strongly_dynamics_pres_div_cartan}, there exists $T_0^\prime > 0$ such that: if $g \in \Gamma$ and 
$\dist_X( p_0, g(p_0)) \geq T_0^\prime$, then 
$$
\frac{\mu_{k+1}}{\mu_k}(\rho(g))  \leq \frac{1}{2CC_0^2}.
$$
So $T_0 : = T_0^\prime + 2R$ suffices. 
\end{proof} 

\begin{lemma} \label{lem:contraction everywhere}
There exists $T > 1$ such that: if $t \geq T$ and $v \in \wh{\Uc}(\Gamma)$, then 
\begin{align*}
\frac{\norm{\flatflow^t(Y)}_{\geodflow^t(v)}}{\norm{\flatflow^t(Z)}_{\geodflow^t(v)}} \leq \frac{1}{2} \frac{\norm{Y}_v}{\norm{Z}_v}
\end{align*}
for all $Y \in \wh{\Theta}^k(v)$ and non-zero $Z \in \wh{\Xi}^{d-k}(v)$. 
\end{lemma}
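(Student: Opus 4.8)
The plan is to combine the two contraction estimates already established --- Lemma~\ref{lem:building the norms in almost homog examples}(2) on the thin part and Lemma~\ref{lem:contraction between the thick parts} on returns to the thick part --- into a uniform exponential contraction along arbitrary flow segments, by an excursion-decomposition argument. The key observation is that any orbit segment $[0,t]$ of the geodesic flow decomposes into maximal subintervals that lie entirely in $\wh{\Uc}_{thin}$ (``cusp excursions'') interspersed with visits to $\wh{\Uc}_{thick}$, and the two lemmas control the contraction on precisely these two types of pieces.

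First I would fix the constant $\lambda > 0$ with $e^{\lambda} \le \tfrac{1}{2C_0^2}$ if needed, and set $T$ large enough (depending on $C_0, c_0, T_0$, and the diameter of $\wh{\Uc}_{thick}$) to be chosen at the end. Given $v \in \wh{\Uc}(\Gamma)$ and $t \ge T$, consider the orbit $\{\geodflow^s(v) : s \in [0,t]\}$. If this entire segment lies in $\wh{\Uc}_{thin}$, then Lemma~\ref{lem:building the norms in almost homog examples}(2) directly gives $\frac{\norm{\flatflow^t(Y)}}{\norm{\flatflow^t(Z)}} \le C_0 e^{-c_0 t}\frac{\norm{Y}}{\norm{Z}} \le \tfrac12\frac{\norm{Y}}{\norm{Z}}$ once $T \ge \tfrac{1}{c_0}\log(2C_0)$. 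Otherwise, let $s_0 = \inf\{s \in [0,t] : \geodflow^s(v) \in \wh{\Uc}_{thick}\}$ and $s_1 = \sup\{s \in [0,t] : \geodflow^s(v) \in \wh{\Uc}_{thick}\}$; the three pieces are $[0,s_0]$ (in $\wh{\Uc}_{thin}$, handled by Lemma~\ref{lem:building the norms in almost homog examples}(2), giving at worst a multiplicative factor $C_0$), $[s_1,t]$ (likewise, factor $C_0$), and $[s_0,s_1]$ (both endpoints in $\wh{\Uc}_{thick}$). For the middle piece, if $s_1 - s_0 \ge T_0$ apply Lemma~\ref{lem:contraction between the thick parts} to get a factor $\tfrac{1}{2C_0^2}$, and the product of the three factors is $\le \tfrac12$. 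If $s_1 - s_0 < T_0$, then the bad case is when $t$ itself is small, but since $t \ge T$ and $s_0 \le T_0 + (\text{bounded thin-excursion length})$ is controlled --- actually here one must be slightly careful: $s_0$ and $t - s_1$ can be large, so the factor $C_0$ from each is fine, but one needs the total to still beat $\tfrac12$; since the thin pieces have length $s_0$ and $t - s_1$ with $s_0 + (t-s_1) = t - (s_1 - s_0) \ge t - T_0 \ge T - T_0$, Lemma~\ref{lem:building the norms in almost homog examples}(2) on \emph{each} thin piece gives factors $C_0 e^{-c_0 s_0}$ and $C_0 e^{-c_0(t - s_1)}$ whose product is $\le C_0^2 e^{-c_0(T - T_0)} \le \tfrac12$ for $T$ large. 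So in every case the desired bound holds.

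The one technical point deserving care, and the step I expect to be the main obstacle, is that Lemma~\ref{lem:building the norms in almost homog examples}(2) as stated requires the \emph{entire} segment $\geodflow^s(v) \in \wh{\Uc}_{thin}$ for $s \in [0,t]$, whereas a maximal thin excursion of the orbit might pass through several different horoball quotients $\wh{\Uc}_P$ --- but in fact, since distinct horoballs $H_P, H_{P'}$ (and distinct $\Gamma$-translates) are disjoint by the choice made via the ``F1'' definition, a connected orbit segment inside $\wh{\Uc}_{thin}$ lies in a single $\wh{\Uc}_P$, so Proposition~\ref{prop:almost homogeneous norms}(3) applies on it (modulo the $\Gamma$-equivariance bookkeeping, which is routine). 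One also needs that the metric $\norm{\cdot}$ from Lemma~\ref{lem:building the norms in almost homog examples} is genuinely locally uniform on all of $\wh{\Uc}(\Gamma)$ --- which is part (1) of that lemma --- so that the operator-norm contraction estimate is meaningful; this is where the hypothesis that each $\tau_P$ exists (giving locally uniform norms on the cusps via Proposition~\ref{prop:almost homogeneous norms}(2)) together with compactness of $\wh{\Uc}_{thick}$ is used. Once Lemma~\ref{lem:contraction everywhere} is in hand, a standard argument (iterating over intervals of length $T$ and absorbing the bounded pieces) upgrades it to exponential contraction of $\homflow^t$ on $\Hom(\wh{\Xi}^{d-k}, \wh{\Theta}^k)$, which combined with Proposition~\ref{prop: Hom bundles contraction/expansions} and local uniformity of the metric yields that $\rho$ is uniformly $\Psf_k$-Anosov relative to $\Cc_X(\Gamma)$, completing the proof of Theorem~\ref{thm:repn with almost homog cusps}.
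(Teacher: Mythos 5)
Your approach is the same as the paper's: decompose $[0,t]$ by the first and last visit times $s_0 = \min\mathcal{R}$, $s_1 = \max\mathcal{R}$ to $\wh{\Uc}_{thick}$ (the paper calls them $s_1, s_2$), apply Lemma~\ref{lem:building the norms in almost homog examples}(2) to the two flanking thin intervals, and apply Lemma~\ref{lem:contraction between the thick parts} to the middle interval when it is long enough. Your observation that a connected orbit segment in $\wh{\Uc}_{thin}$ lies in a single $\wh{\Uc}_P$ is correct and resolves the technical point you raise, though the paper's formulation of Lemma~\ref{lem:building the norms in almost homog examples}(2) already packages this.

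However, there is a gap in your handling of the case $s_1 - s_0 < T_0$. You compute the product of the factors from the two thin intervals, getting $C_0 e^{-c_0 s_0}\cdot C_0 e^{-c_0(t-s_1)} \le C_0^2 e^{-c_0(T-T_0)}$, and conclude the desired bound. But this silently drops the contribution of the middle interval $[s_0, s_1]$. That interval is not contained in $\wh{\Uc}_{thin}$ (its endpoints lie in $\wh{\Uc}_{thick}$), so neither of the two contraction lemmas applies to it, and it is not a priori contraction-free: the ratio $\norm{Y}/\norm{Z}$ can grow by a bounded factor over that stretch. The paper handles this by using local uniformity of the metric (part (1) of Lemma~\ref{lem:building the norms in almost homog examples}) to extract a constant $C_2$ bounding the growth over any interval of length at most $T_0$, and then chooses $T$ so that $C_0^2 C_2 e^{-c_0(T-T_0)} \le \tfrac12$. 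You reference local uniformity elsewhere in the argument, so you have the needed tool; the fix is simply to insert the factor $C_2$ and strengthen the choice of $T$ accordingly. As written, though, the inequality chain in that case is incomplete.
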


\begin{proof} The following argument is similar to an argument in~\cite[pp.\ 33-35]{CZZ2021}. From Lemma~\ref{lem:building the norms in almost homog examples}(1), there exists $C_2 > 1$ such that: if $v \in \wh{\Uc}(\Gamma)$ and $t \in [0,T_0]$, then 
\begin{align}
\label{eqn:dumb bound}
\frac{\norm{\flatflow^t(Y)}_{\geodflow^t(v)}}{\norm{\flatflow^t(Z)}_{\geodflow^t(v)}} \leq C_2 \frac{\norm{Y}_v}{\norm{Z}_v}
\end{align}
for all $Y \in \wh{\Theta}^k(v)$ and non-zero $Z \in \wh{\Xi}^{d-k}(v)$.

Fix $T > 1$ so that 
$$
C_0 e^{-c_0 T} \leq \frac{1}{2} \quad \text{and} \quad C_0^2 C_2 e^{-c_0(T-T_0)} \leq \frac{1}{2}. 
$$

Suppose $t \geq T$ and $v \in \wh{\Uc}(\Gamma)$. If $\geodflow^s(v) \in \wh{\Uc}_{thin}$ for all $s \in [0,t]$, then Lemma~\ref{lem:building the norms in almost homog examples}(2) implies that
\begin{align*}
\frac{\norm{\flatflow^t(Y)}_{\geodflow^t(v)}}{\norm{\flatflow^t(Z)}_{\geodflow^t(v)}} \leq C_0e^{-c_0 t} \frac{\norm{Y}_v}{\norm{Z}_v} \leq  \frac{1}{2} \frac{\norm{Y}_v}{\norm{Z}_v}
\end{align*}
for all $Y \in \wh{\Theta}^k(v)$ and non-zero $Z \in \wh{\Xi}^{d-k}(v)$. Otherwise, the set $\mathcal{R} := \{ s \in [0,t] : \geodflow^s(v) \in \wh{\Uc}_{thick} \}$ is non-empty. Let $s_1 := \min \mathcal{R}$ and $s_2 := \max \mathcal{R}$. If $s_2 - s_1 \geq T_0$, then applying Lemma~\ref{lem:building the norms in almost homog examples}(2) to the intervals $[0,s_1]$, $[s_2, t]$ and Lemma~\ref{lem:contraction between the thick parts} to the interval $[s_1, s_2]$ yields 
\begin{align*}
\frac{\norm{\flatflow^t(Y)}_{\geodflow^t(v)}}{\norm{\flatflow^t(Z)}_{\geodflow^t(v)}} \leq C_0e^{-c_0 (t-s_2)}\frac{1}{2C_0^2} C_0e^{-c_0s_1}  \frac{\norm{Y}_v}{\norm{Z}_v} \leq  \frac{1}{2} \frac{\norm{Y}_v}{\norm{Z}_v}
\end{align*}
for all $Y \in \wh{\Theta}^k(v)$ and non-zero $Z \in \wh{\Xi}^{d-k}(v)$. Otherwise, if $s_2 - s_1 \leq T_0$, then applying Lemma~\ref{lem:building the norms in almost homog examples}(2) to the intervals $[0,s_1]$, $[s_2, t]$ and Equation~\eqref{eqn:dumb bound} to the interval $[s_1,s_2]$ yields 
\begin{align*}
\frac{\norm{\flatflow^t(Y)}_{\geodflow^t(v)}}{\norm{\flatflow^t(Z)}_{\geodflow^t(v)}} \leq C_0e^{-c_0 (t-s_2)}C_2 C_0e^{-c_0s_1}  \frac{\norm{Y}_v}{\norm{Z}_v}  \leq C_0^2 C_2 e^{-c_0(T-T_0)} \frac{\norm{Y}_v}{\norm{Z}_v} \leq \frac{1}{2} \frac{\norm{Y}_v}{\norm{Z}_v}
\end{align*}
for all $Y \in \wh{\Theta}^k(v)$ and non-zero $Z \in \wh{\Xi}^{d-k}(v)$. 
\end{proof}

\begin{proof}[Proof of Theorem~\ref{thm:repn with almost homog cusps}] By Lemma~\ref{lem:building the norms in almost homog examples}(1), we have locally uniform norms, and it remains only to verify the dominated splitting condition in Proposition~\ref{prop: Hom bundles contraction/expansions}. Already, from Lemma~\ref{lem:building the norms in almost homog examples}(1) there exists $C_3 > 1$ such that: if $v \in \wh{\Uc}(\Gamma)$ and $t \in [0,T]$, then 
\begin{align*}
\frac{\norm{\flatflow^t(Y)}_{\geodflow^t(v)}}{\norm{\flatflow^t(Z)}_{\geodflow^t(v)}} \leq C_3 \frac{\norm{Y}_v}{\norm{Z}_v}
\end{align*}
for all $Y \in \wh{\Theta}^k(v)$ and non-zero $Z \in \wh{\Xi}^{d-k}(v)$. Lemma \ref{lem:contraction everywhere} then implies that: 
\begin{align*}
\frac{\norm{\flatflow^t(Y)}_{\geodflow^t(v)}}{\norm{\flatflow^t(Z)}_{\geodflow^t(v)}} \leq 2C_3 e^{-\frac{\ln(2)}{T} t} \frac{\norm{Y}_v}{\norm{Z}_v}
\end{align*}
for all $v \in \wh{\Uc}(\Gamma)$, $t \geq 0$, $Y \in \wh{\Theta}^k(v)$, and non-zero $Z \in \wh{\Xi}^{d-k}(v)$. 
\end{proof}

\section{Not uniform relative to the Groves--Manning cusp space} \label{sec:not uniform rel GM cusp space}

In this section we construct the representation described in Example~\ref{ex:non uniform to GM cusp space} above. In particular, we construct a relatively $\Psf_1$-Anosov representation which is uniform relative to some weak cusp space, but  is not uniformly $\Psf_1$-Anosov relative to any Groves--Manning cusp space. 

We consider the Siegel model of complex hyperbolic 2-space
$$
\Hb_{\Cb}^2 = \left\{ [z_1 : z_2:1] : {\rm Im}(z_1) > \abs{z_2}^2\right\} \subset \proj(\Cb^3). 
$$
Then $\Isom_0(\Hb_{\Cb}^2)$ coincides with the subgroup of $\PSL(3,\Cb)$ which preserves $\Hb_{\Cb}^2$. Let $\mathsf{G} \to \Isom_0(\Hb^2_{\Cb})$ denote the preimage in $\SL(3,\Cb)$. 

For $m,n \in \Zb$ define
$$
u(m,n) := \begin{pmatrix} 1 & m & \frac{1}{2}m^2 + in \\ 0 & 1 & m \\ 0 & 0 & 1 \end{pmatrix}  \in \SL(3,\Cb).
$$
Then let $P:= \{ u(m,n)  : m,n \in \Zb\} \leq \mathsf{G}$. Notice that 
$$
(m,n) \in \Zb^2 \to u(m,n) \in P
$$
is a group isomorphism. Using ping-pong we can find a hyperbolic element $h \in \mathsf{G}$ such that $\Gamma: = \ip{h} * P$ is a  geometrically finite subgroup of $\mathsf{G}$ isomorphic to $\Zb * \Zb^2$. 

Let $\Lambda(\Gamma) \subset \partial_\infty \Hb^2_{\Cb}$ denote the limit set of $\Gamma$ and let $\Cc(\Gamma)$ denote the convex hull of $\Lambda(\Gamma)$ in $\Hb^2_{\Cb}$. Then by Proposition~\ref{prop:representations of rank one groups}, the inclusion representation $\rho \colon \Gamma \into \SL(3,\Cb)$ is uniformly $\Psf_1$-Anosov relative to $\Cc(\Gamma)$. 

Let $\peripherals := \{ P\}$ and $S :=\{ h, h^{-1}, u(1,0), u(-1,0), u(0,1), u(0,-1) \}$. Then consider the associated Groves--Manning cusp space $X := \Cc_{GM}(\Gamma, \peripherals, S)$.

The main result of this section is the following. 

\begin{proposition} There does not exist a $\rho$-equivariant quasi-isometric embedding of $X$ into $M:=\SL(3,\Cb) / \SU(3,\Cb)$. \end{proposition}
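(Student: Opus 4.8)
The plan is to argue by contradiction. Suppose $f\colon X \to M$ is a $\rho$-equivariant $(\lambda,C)$-quasi-isometric embedding, and set $o := \SU(3,\Cb) \in M$; since $\rho$ is the inclusion representation, $\rho(\gamma)$ acts on $M$ by $h\,\SU(3,\Cb)\mapsto \gamma h\,\SU(3,\Cb)$. Let $\Hc$ denote the combinatorial horoball of $X$ attached over the coset $eP$, and write $(v,k)$, $v\in P$, $k\ge 1$, for its level-$k$ vertices. Since $P$ acts on $\Hc$ by $v'\cdot(v,k)=(v'v,k)$, equivariance forces $f((v,k))=\rho(v)\cdot\gamma_k$ where $\gamma_k:=f((e,k))$; fixing $g_k\in\SL(3,\Cb)$ with $\gamma_k = g_k\cdot o$, we get
\[
\dist_M\big(f((v_1,k)),\,f((v_2,k))\big)\;=\;\dist_M\big(o,\;g_k^{-1}(v_1^{-1}v_2)g_k\cdot o\big)\qquad(v_1,v_2\in P).
\]
The entire argument takes place among the level-$k$ vertices of $\Hc$ and uses equivariance through this identity.

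The first step extracts a strong contraction estimate for $g_k$. Let $\nu$ be the nilpotent matrix with entry $1$ in positions $(1,2)$ and $(2,3)$ and $0$ elsewhere, so that $u(m,0)=\exp(m\nu)=I+m\nu+\tfrac{m^2}{2}\nu^2$ while $u(0,n)=I+in\,\nu^2$. When $|m|\le 2^{k-1}$ the vertices $(e,k)$ and $(u(m,0),k)$ are joined by a horizontal edge of $\Hc$, so $\dist_M\big(o,\,g_k^{-1}u(m,0)g_k\cdot o\big)\le\lambda+C=:C'$, whence $\norm{g_k^{-1}u(\pm m,0)g_k}_{\mathrm{op}}\le e^{C'}$ (applying this also to $u(-m,0)=u(m,0)^{-1}$). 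Averaging the expansions $g_k^{-1}u(\pm m,0)g_k=I\pm m\,g_k^{-1}\nu g_k+\tfrac{m^2}{2}g_k^{-1}\nu^2 g_k$ cancels the linear term and gives $\norm{g_k^{-1}\nu^2 g_k}_{\mathrm{op}}\le 2(e^{C'}+1)/m^2$; taking $m=2^{k-1}$ yields $\norm{g_k^{-1}\nu^2 g_k}_{\mathrm{op}}\le 8(e^{C'}+1)\,4^{-k}=:D\,4^{-k}$.

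The second step feeds in the purely central element $u(0,4^k)=u(0,1)^{4^k}\in P$. We have $g_k^{-1}u(0,4^k)g_k=I+\eta_k$ with $\eta_k:=4^k\,i\,g_k^{-1}\nu^2 g_k$ nilpotent, $\eta_k^2=0$, and $\norm{\eta_k}_{\mathrm{op}}\le 4^k\cdot D\,4^{-k}=D$; since $(I+\eta_k)^{-1}=I-\eta_k$, all three singular values of $g_k^{-1}u(0,4^k)g_k$ lie in $[(1+D)^{-1},1+D]$, so $\dist_M\big(f((e,k)),f((u(0,4^k),k))\big)=\dist_M(o,(I+\eta_k)\cdot o)\le C''$ for a constant $C''$ depending only on $D$. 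On the other hand, $u(0,4^k)$ has word length $4^k$ in $(P,S\cap P)$, so in $\Hc$ joining $(e,k)$ to $(u(0,4^k),k)$ forces an ascent to level $\approx 2k$: the standard distance estimate for combinatorial horoballs (Groves--Manning) gives that the distance in $\Hc$ between $(e,k)$ and $(u(0,4^k),k)$ is at least $2k-A$ for a universal $A$, while any path in $X$ leaving $\Hc$ must use at least $2(k-1)$ edges merely to reach level $1$ and return. Hence $\dist_X\big((e,k),(u(0,4^k),k)\big)\ge k$ for all large $k$, and the quasi-isometry lower bound forces $\tfrac1\lambda k-C\le\dist_M\big(f((e,k)),f((u(0,4^k),k))\big)\le C''$, which is absurd for $k$ large.

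The substantive point --- and the step I expect to demand the most care --- is the first one: the ``generic'' parabolic $u(m,0)=\exp(m\nu)$ carries a $\nu^2$-component that grows \emph{quadratically} in $m$, so requiring it to coarsely fix $\gamma_k$ for all $|m|\le 2^{k-1}$ contracts the $\nu^2$-direction at $\gamma_k$ by $4^{-k}$; this is exactly enough to make the central elements $u(0,n)$ coarsely fix $\gamma_k$ for $|n|$ up to $\sim 4^k$, whereas in the Groves--Manning horoball such elements only become combinatorially close to $e$ at level $k$ once $|n|\lesssim 2^{k}$. The remaining ingredients --- the singular-value estimates for $I+\eta_k$ and the distance estimate in the combinatorial horoball --- are routine or standard.
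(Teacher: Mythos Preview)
Your argument is correct and rests on the same core observation as the paper: in the Heisenberg peripheral, the element $u(m,0)$ carries a $\nu^2$-component growing \emph{quadratically} in $m$, so being coarsely fixed by all $u(m,0)$ with $|m|\le 2^{k-1}$ forces $4^{-k}$ contraction of the $\nu^2$-direction, whereas the Groves--Manning horoball only encodes $2^{-k}$ contraction for the central elements $u(0,n)$. Your execution, however, differs from the paper's and is arguably cleaner. The paper writes $f((e,n))=\mathscr{w}_n\mathscr{a}_n\mathsf{K}$ via the Iwasawa decomposition and tracks the diagonal entries of $\mathscr{a}_n$; it uses $u(2^{n-1},0)$ to bound $\lambda_{n,1}^{-1}\lambda_{n,2}$ and $\lambda_{n,2}^{-1}\lambda_{n,3}$ separately (Lemma~7.5, giving $\lambda_{n,1}^{-1}\lambda_{n,3}\lesssim 4^{-n}$), then uses $u(0,2^{k_n})$ to extract the opposite bound $\lambda_{n,1}^{-1}\lambda_{n,3}\gtrsim 2^{-n}$ (Lemma~7.4). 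Your averaging of $g_k^{-1}u(\pm m,0)g_k$ to isolate $g_k^{-1}\nu^2 g_k$ directly sidesteps Iwasawa and the two-step diagonal bookkeeping (in fact, with the paper's Iwasawa coordinates one has $\|g_k^{-1}\nu^2 g_k\|=\lambda_{k,1}^{-1}\lambda_{k,3}$ exactly, so the quantities coincide). You then feed in $u(0,4^k)$ rather than $u(0,2^{k_n})$ and go straight to the contradiction, avoiding a separate lower-bound lemma. The paper's route buys explicit control of each diagonal ratio, which could be useful for finer estimates; yours is shorter and coordinate-free.
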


When combined with results in~\cite{ZZ2022a} this yields the following corollary. 
 
\begin{corollary} $\rho$ is not uniformly $\Psf_1$-Anosov relative to any Groves--Manning cusp space. \end{corollary}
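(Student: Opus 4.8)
I will prove the Proposition; the Corollary is then immediate, since by~\cite[Th.\ 1.13]{ZZ2022a} a representation that is uniformly $\Psf_1$-Anosov relative to a Groves--Manning cusp space admits a $\rho$-equivariant quasi-isometric embedding of that cusp space into $M$, and any two Groves--Manning cusp spaces of $(\Gamma,\peripherals)$ are $\Gamma$-equivariantly quasi-isometric.

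The plan is a proof by contradiction that exploits the \emph{interior} of the combinatorial horoball over $P$ (the orbit of $\Gamma$ behaves the same way in $X$ and in $\Cc_X(\Gamma)$, so the obstruction cannot be detected on it). Suppose $f\colon X\to M$ is a $\rho$-equivariant $(\lambda,C)$-quasi-isometric embedding. Write $o:=[\SU(3,\Cb)]\in M$ and recall $\dist_M(o,h\SU(3,\Cb))=\big(\sum_j(\log\mu_j(h))^2\big)^{1/2}$. Identify $P$ with $\Zb^2$ via $(a,b)\leftrightarrow u(a,b)$, and let $\sigma_\ell^{(v)}$ ($\ell\ge1$, $v\in\Zb^2$) denote the level-$\ell$ vertex over $v$ in the combinatorial horoball attached to $P$; then $\sigma_1^{(0,0)}$ is the identity vertex of $\Cc(\Gamma,S)$, $P$ acts by $u(a,b)\cdot\sigma_\ell^{(v)}=\sigma_\ell^{(v+(a,b))}$, and I will write $p_\ell:=f(\sigma_\ell^{(0,0)})=g_\ell\,\SU(3,\Cb)$, so that $f(\sigma_\ell^{(m,n)})=u(m,n)\,p_\ell$ by equivariance. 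Two inputs are needed. First, the Groves--Manning description of distances in the combinatorial horoball over $\Cc(\Zb^2,\ell^1)$ yields a constant $c_1$ with
\[
\dist_X\!\big(\sigma_\ell^{(0,0)},\sigma_\ell^{(v)}\big)\le 1\ \text{ whenever }\|v\|_1\le 2^{\ell-1},\qquad
\dist_X\!\big(\sigma_\ell^{(0,0)},\sigma_\ell^{(v)}\big)\ge 2\big(\log_2\|v\|_1-\ell\big)-c_1\ \text{ for all }v\ne 0 .
\]
Second --- and this is the representation-specific heart of the argument --- writing $N:=E_{12}+E_{23}$ (so $N^2=E_{13}$, $N^3=0$) we have $u(\pm m,0)=I\pm mN+\tfrac{m^2}{2}E_{13}$, hence the identity
\[
u(0,m^2)\;=\;I+i\,m^2E_{13}\;=\;I+i\big((u(m,0)-I)+(u(-m,0)-I)\big).
\]
Because $X\mapsto g^{-1}Xg$ is $\Cb$-linear and fixes $I$, it follows that if $g^{-1}u(m,0)g$ and $g^{-1}u(-m,0)g$ both lie in a fixed compact set $K_0\subset\SL(3,\Cb)$, then $g^{-1}u(0,m^2)g$ lies in the compact set $K_1:=I+i(K_0+K_0-2I)$, so $\dist_M\big(g\,\SU(3,\Cb),u(0,m^2)\,g\,\SU(3,\Cb)\big)\le C_1:=\sup_{h\in K_1}\dist_M(o,h\,\SU(3,\Cb))<\infty$, a bound \emph{independent of $m$}.

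Now combine the two inputs. For $m$ large set $\ell=\ell(m):=\lceil\tfrac32\log_2 m\rceil$, so that $\log_2 m+1\le\ell\le 2\log_2 m$ and $m\le 2^{\ell-1}\le m^2$. Since $\|(\pm m,0)\|_1=m\le 2^{\ell-1}$, the first input and the quasi-isometry inequality give $\dist_M\big(p_\ell,u(\pm m,0)p_\ell\big)\le\lambda+C=:C_0$, equivalently $g_\ell^{-1}u(\pm m,0)g_\ell\in K_0:=\{h:\mu_j(h)\in[e^{-C_0},e^{C_0}]\ \forall j\}$. By the linear-algebra step, $\dist_M\big(p_\ell,u(0,m^2)p_\ell\big)\le C_1$. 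On the other hand, $\|(0,m^2)\|_1=m^2\ge 2^\ell$, so the first input and the quasi-isometry inequality give
\[
\dist_M\big(p_\ell,u(0,m^2)p_\ell\big)=\dist_M\!\big(f(\sigma_\ell^{(0,0)}),f(\sigma_\ell^{(0,m^2)})\big)\ \ge\ \tfrac1\lambda\big(2(\log_2 m^2-\ell)-c_1\big)-C\ \ge\ \tfrac1\lambda\big(\log_2 m-2-c_1\big)-C,
\]
which tends to $\infty$ as $m\to\infty$. This contradicts the uniform bound $C_1$, completing the proof.

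The main obstacle --- and the only genuinely nontrivial step --- is isolating the mechanism by which $X$ and the convex core $\Cc_X(\Gamma)$ differ: in $X$ the $\Zb^2$-cross-sections of the horoball scale isotropically, whereas the complex-hyperbolic (equivalently $\SL(3,\Cb)$) geometry forces the ``central'' direction $u(0,\ast)$ to contract twice as fast as the ``horizontal'' direction $u(\ast,0)$; the elementary identity $u(m,0)+u(-m,0)-2I=m^2E_{13}$ (which reflects that $u(m,0)$ is $\tau_3$ of a size-$m$ unipotent and hence has top singular value of size $m^2$) is exactly what converts this qualitative discrepancy into the quantitative contradiction above. Everything else is routine: the combinatorial-horoball distance estimates are standard Groves--Manning bookkeeping, and the translation between $\dist_M$ and singular values is the definition~\eqref{eqn:symmetric distance in prelims}.
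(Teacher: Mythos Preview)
Your proof is correct and reaches the same contradiction as the paper, but by a cleaner route. The paper applies the Iwasawa decomposition to write $F((\id_P,n))=\mathscr{w}_n\mathscr{a}_n\mathsf{K}$ with $\mathscr{a}_n$ diagonal, and then derives two incompatible estimates on the ratio $\lambda_{n,1}^{-1}\lambda_{n,3}$ of diagonal entries: from the element $u(0,2^{k})$ one gets $\lambda_{n,1}^{-1}\lambda_{n,3}\gtrsim 2^{-n}$, while from $u(2^{n-1},0)$ one gets $\lambda_{n,1}^{-1}\lambda_{n,3}\lesssim 4^{-n}$. You bypass the Iwasawa decomposition entirely by exploiting the Heisenberg identity $u(0,m^2)-I=i\big((u(m,0)-I)+(u(-m,0)-I)\big)$ together with the $\Cb$-linearity of conjugation: this transfers compactness of $g_\ell^{-1}u(\pm m,0)g_\ell$ directly to $g_\ell^{-1}u(0,m^2)g_\ell$ with no explicit matrix computations. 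Your argument is shorter and makes the underlying mechanism---that the Groves--Manning horoball is isotropic while the Heisenberg geometry is not---more transparent; the paper's approach, on the other hand, yields explicit quantitative control on the diagonal part, which could be useful for sharper statements. Two cosmetic points: the inequality $\ell\le 2\log_2 m$ you record is weaker than the bound $\ell\le\tfrac32\log_2 m+1$ you actually use in the final displayed estimate, and the Groves--Manning lower bound requires $\ell$ to exceed the geodesic-convexity threshold (the $n_0$ in the paper's Lemma~\ref{lem:GM dist estimate}), which is harmless here since $\ell(m)\to\infty$. Neither affects correctness.
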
 
 
\begin{proof}[Proof of Corollary] Suppose for a contradiction that $\rho$ is uniformly $\Psf_1$-Anosov relative to some Groves--Manning cusp space $Y$. By~\cite[Th.\ 1.12]{ZZ2022a} there exists a $\rho$-equivariant quasi-isometric embedding of $F\colon Y \to M$. However, the identity map on vertices extends to a $\Gamma$-equivariant quasi-isometry $G\colon X \to Y$, see \cite[Th.\ 1.1]{HealyHruska}, and so we obtain a $\rho$-equivariant quasi-isometric embedding $F \circ G \colon X \to M$. Hence we have a contradiction. 
 \end{proof}

The rest of the section is devoted to the proof of the proposition. Suppose for a contradiction that  there exists a $\rho$-equivariant quasi-isometric embedding $F \colon X \to M$. Let $\dist_M$ denote the standard symmetric distance on $M$ defined in Equation~\eqref{eqn:symmetric distance in prelims} and let $\mathsf{K}:=\SU(3,\Cb)$. Then
$$
\dist_M(g \mathsf{K}, \mathsf{K}) \asymp \log \frac{\mu_1}{\mu_3}(g)
$$
for all $g \in \SL(3,\Cb)$. 

Using the Iwasawa decomposition, for every $n \in \Nb$ we can write 
$$
F\left( (\id_P, n) \right) = \mathscr{w}_n \mathscr{a}_n \mathsf{K}
$$
where $\mathscr{a}_n$ is a diagonal matrix with positive diagonal entries and $\mathscr{w}_n$ is upper triangular matrix with ones on the diagonal. Then for all $g \in P$ and $n \in \Nb$, we have
\begin{align*}
\dist_M &\Big( F( (g, n) ), F((\id_P, n)) \Big) = \dist_M \Big(  \rho(g) \mathscr{w}_n \mathscr{a}_n\mathsf{K},  \mathscr{w}_n \mathscr{a}_n\mathsf{K} \Big) \\
&= \dist_M \Big( \mathscr{a}_n^{-1} \mathscr{w}_n^{-1} \rho(g) \mathscr{w}_n \mathscr{a}_n\mathsf{K}, \mathsf{K} \Big)  \asymp \log  \frac{\mu_1}{\mu_3}\left( \mathscr{a}_n^{-1} \mathscr{w}_n^{-1} \rho(g) \mathscr{w}_n \mathscr{a}_n \right).
\end{align*}
Further, since $F \colon X \to M$ is a quasi-isometric embedding, there exist $\alpha > 1, \beta > 0$ such that: if $g \in P$ and $n \in \Nb$, then 
\begin{equation}\label{eqn:distance in X estimate}
\begin{aligned}
\frac{1}{\alpha} \dist_X\Big( (g, n),& (\id_{P}, n)\Big) - \beta  \leq \log  \frac{\mu_1}{\mu_3}\left( \mathscr{a}_n^{-1} \mathscr{w}_n^{-1} \rho(g) \mathscr{w}_n \mathscr{a}_n \right)   \\
& \leq \alpha \dist_X\Big( (g, n), (\id_{P}, n)\Big) + \beta. 
\end{aligned}
\end{equation}

Suppose
$$
\mathscr{a}_n = \begin{pmatrix} \lambda_{n,1} & 0 & 0 \\ 0 & \lambda_{n,2} & 0 \\ 0 & 0 & \lambda_{n,3} \end{pmatrix} \quad \text{and} \quad \mathscr{w}_n = \begin{pmatrix} 1 & s_n & r_n \\ 0 & 1 & t_n \\ 0 & 0 & 1 \end{pmatrix}.
$$
We will obtain a contradiction by estimating $\lambda_{n,1}^{-1} \lambda_{n,3}$ in two ways. 

We start with the following distance estimate in the Groves--Manning cusp space. 

\begin{lemma}\label{lem:GM dist estimate} There exists $n_0 > 0$ such that: if $k \geq n \geq n_0$, then 
$$
2k-2n - 2 \leq \dist_X\Big( (u(0,2^k), n), (\id_{P}, n)\Big).
$$
\end{lemma}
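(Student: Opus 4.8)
The plan is to compute the distance $\dist_X\big((u(0,2^k),n),(\id_P,n)\big)$ inside the combinatorial horoball $\Hc(\Cc(P,S\cap P))$ by exhibiting an explicit efficient path and then arguing that no shorter path exists. Recall that the horoball attached to $P$ has vertex set $P\times\Nb$, with the copy of the Cayley graph $\Cc(P,S\cap P)$ sitting at level $n=1$; vertical edges move between $(v,m)$ and $(v,m\pm1)$, and horizontal edges at level $m$ join $(v,m)$ to $(w,m)$ whenever $\dist_P(v,w)\le 2^{m-1}$, where $\dist_P$ is the word metric on $P$ with respect to the adapted generators $u(\pm1,0),u(0,\pm1)$. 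Both endpoints $(u(0,2^k),n)$ and $(\id_P,n)$ lie in this horoball, so it suffices (up to an additive error, by the quasi-isometry in~\cite[Th.\ 1.1]{HealyHruska} or by working directly in $X$) to bound distances within the horoball.

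First I would record the word-length estimate $\dist_P\big(\id_P, u(0,2^k)\big)\asymp 2^k$ — indeed $u(0,j)$ has word length comparable to $j$ in $P\cong\Zb^2$ since $u(0,j)=u(0,1)^j$ and $u(0,1)$ is a generator, so its word length is exactly $j$; more relevantly $\dist_P\big(\id_P,u(0,j)\big)=j$. The key point is then the standard distance formula in a combinatorial horoball (Groves--Manning, \cite[Lem.\ 3.10]{GrovesManning}): for two vertices $(v,n)$ and $(w,n)$ at the same level $n$, the distance is, up to an additive constant, $2\log_2^+\!\big(\dist_P(v,w)\big) + $ (a bounded term), i.e. one climbs up to a level high enough that $v$ and $w$ become adjacent horizontally, crosses over, and descends. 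To connect $(u(0,2^k),n)$ to $(\id_P,n)$ one must climb to a level $m$ with $2^{m-1}\ge \dist_P\big(\id_P,u(0,2^k)\big)=2^k$, i.e. $m\ge k+1$, then come back down to level $n$; this costs at least $2(k+1)-2n$ vertical edges plus the horizontal crossing, giving the lower bound $2k-2n-2$ (the constant $2$ absorbing the horizontal edge and rounding). One must also account for the finitely many ``exceptional'' short paths that leave the horoball through its boundary level-$1$ Cayley graph of the ambient group $\Gamma$, but since $\Gamma=\ip h * P$ and $h\notin P$, any such path must pass through level $1$, which only makes it longer when $n$ is large; this is where the hypothesis $n\ge n_0$ enters.

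Concretely, the steps are: (i) identify the relevant horoball and note both endpoints lie in it at level $n$; (ii) establish $\dist_P(\id_P,u(0,2^k))=2^k$; (iii) apply (or re-derive) the Groves--Manning horoball distance estimate to conclude any path joining the two vertices within the horoball reaches level $\ge k+1$, hence has length $\ge 2(k+1)-2n - O(1)\ge 2k-2n-2$; (iv) rule out shortcuts through the rest of $X$ using that $h\notin P$ and choosing $n_0$ large enough that descending to level $1$ is never competitive. The main obstacle I anticipate is step (iv): carefully bounding below the length of paths that exit the horoball, which requires knowing that geodesics in the Groves--Manning cusp space between two deep horoball points stay in (a bounded neighborhood of) that horoball — this is precisely the content of the horoball distance lemma in~\cite{GrovesManning}, so the cleanest route is to invoke that lemma rather than re-prove it. Everything else is a routine computation with the explicit combinatorial structure.
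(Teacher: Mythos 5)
Your proposal follows the same overall strategy as the paper: restrict attention to the combinatorial horoball, use the Groves--Manning description of intrinsic geodesics there to show the path must climb high, and dispose of shortcuts that leave the horoball. That said, two points deserve attention.

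First, the step you flag as the main obstacle --- ruling out paths that exit the horoball --- is not resolved by the lemma you cite. Lemma 3.10 of Groves--Manning describes the shape of \emph{intrinsic} geodesics of a combinatorial horoball; it says nothing about geodesics of the ambient cusped space $X$. What one actually needs is the geodesic-convexity result \cite[Lem.\ 3.26]{GrovesManning}: there exists $\delta \geq 1$ such that the sub-horoball $\Hc(\delta) := \{(g,m) : g \in P,\ m \geq \delta\}$ is geodesically convex in $X$. This is precisely what makes the reduction to the intrinsic combinatorial calculation legitimate, and it is what supplies the cutoff $n_0$: one may simply take $n_0 := \delta$. Your alternative suggestion --- "choosing $n_0$ large enough that descending to level $1$ is never competitive" --- gestures at the right idea but would require its own argument, whereas citing the convexity lemma makes the issue disappear.

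Second, a mild arithmetic imprecision. You argue as though a geodesic crosses using a single horizontal edge at a level $m$ with $2^{m-1} \geq 2^k$, hence $m \geq k+1$. But \cite[Lem.\ 3.10]{GrovesManning} allows up to three horizontal edges at the top level. If the geodesic (starting and ending at level $n$) climbs by $m$ vertical edges, crosses with at most three horizontal edges at level $n+m$, and descends, then the horizontal distance covered is at most $3 \cdot 2^{n+m-1} \leq 2^{n+m+1}$, which must exceed $|u(0,2^k)|_{S \cap P} = 2^k$. This gives $m \geq k - n - 1$ and hence a geodesic length of at least $2m \geq 2k - 2n - 2$, which is exactly the claimed bound. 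Your version $2(k+1) - 2n - O(1) \geq 2k - 2n - 2$ arrives at the same place but only because the unquantified $O(1)$ is charitably interpreted; with three horizontal edges permitted the factor-of-three loss already uses up the slack, so the bookkeeping should be done precisely to be sure the stated constant survives.

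Beyond these points the proposal is sound and matches the paper's argument.
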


\begin{proof} For $L \geq 1$, let $\Hc(L)\subset X$ denote the induced subgraph of $X$ with vertex set
$$
\{ (g,n) : g \in P, n \geq L\}. 
$$
By~\cite[Lem.\ 3.26]{GrovesManning} there exists $\delta \geq 1$ such that $\Hc(\delta)$ is geodesically convex in $X$. 

Fix $k \geq n \geq \delta$. By~\cite[Lem.\ 3.10]{GrovesManning}, there exists a geodesic in $\Hc(\delta)$ joining  $(u(0,2^k), n)$ to $(\id_{P}, n)$ which consists of $m$ vertical edges, followed by no more than three horizontal edges, followed by $m$ vertical edges. Then 
$$
2^k=\abs{u(0,2^k)}_{S \cap P} \leq 3 \cdot 2^{n+m-1} \leq 2^{n+m+1}
$$ 
and since $\Hc(\delta)$ is geodesically convex
\begin{align*}
 \dist_X\Big( (u(0,2^k), n), (\id_{P}, n)\Big) &= \dist_{\Hc(\delta)}\Big( (u(0,2^k), n), (\id_{P}, n)\Big) \geq 2m  \\
& \geq 2k-2n-2.
\end{align*}
So $n_0:=\delta$ suffices. 
\end{proof} 

In the arguments that follow, given a matrix $g \in \GL(d,\Cb)$ let
$$
\norm{g}_\infty := \max_{1 \leq i,j \leq d} \abs{g_{i,j}}.
$$
Then 
\begin{equation}
\label{eqn:matrix norms Linfty versus operator norm}
\norm{g}_\infty \leq \mu_1(g) \leq d \norm{g}_\infty
\end{equation}
for all $g \in \GL(d,\Cb)$. 

\begin{lemma}\label{lem:product of diagonal elements, lower bound} $\lambda_{n,1}^{-1} \lambda_{n,3} \gtrsim 2^{-n}$. \end{lemma}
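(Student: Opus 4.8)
The plan is to feed the peripheral elements $u(0,2^k)$ into the distance estimate~\eqref{eqn:distance in X estimate}, exploiting the structural fact that $u(0,\cdot)$ generates the center of the group of unipotent upper-triangular matrices, so that the uncontrolled Iwasawa unipotent factors $\mathscr{w}_n$ disappear upon conjugation.

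First I would record the elementary observation that
$$\rho(u(0,N)) = \begin{pmatrix} 1 & 0 & iN \\ 0 & 1 & 0 \\ 0 & 0 & 1 \end{pmatrix}$$
commutes with every unipotent upper-triangular matrix, so that $\mathscr{w}_n^{-1}\rho(u(0,N))\mathscr{w}_n = \rho(u(0,N))$ and hence
$$\mathscr{a}_n^{-1}\mathscr{w}_n^{-1}\rho(u(0,N))\mathscr{w}_n\mathscr{a}_n = \mathscr{a}_n^{-1}\rho(u(0,N))\mathscr{a}_n = \begin{pmatrix} 1 & 0 & iN\lambda_{n,3}\lambda_{n,1}^{-1} \\ 0 & 1 & 0 \\ 0 & 0 & 1 \end{pmatrix}.$$
A one-line singular value computation for the nontrivial $2$-by-$2$ block (whose top singular value is at most $\sqrt{2}\,(1+N\lambda_{n,3}\lambda_{n,1}^{-1})$) then shows that $\log\frac{\mu_1}{\mu_3}$ of this matrix is at most $\log 2 + 2\log\!\big(1 + N\lambda_{n,3}\lambda_{n,1}^{-1}\big)$.

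Next I would fix a constant $j_0 > 0$, depending only on $\alpha$ and $\beta$ from~\eqref{eqn:distance in X estimate}, chosen so that $\tfrac{1}{\alpha}(2j_0 - 2) - \beta - \log 2 \geq 2\log 2$. For $n \geq n_0$ (with $n_0$ as in Lemma~\ref{lem:GM dist estimate}), that lemma gives $\dist_X\big((u(0,2^{n+j_0}),n),(\id_P,n)\big) \geq 2j_0 - 2$, so feeding $g = u(0,2^{n+j_0})$ into the left-hand inequality of~\eqref{eqn:distance in X estimate}, together with the upper bound from the previous step, yields
$$2\log\!\Big(1 + 2^{n+j_0}\lambda_{n,3}\lambda_{n,1}^{-1}\Big) \geq \frac{1}{\alpha}(2j_0 - 2) - \beta - \log 2 \geq 2\log 2.$$
Hence $1 + 2^{n+j_0}\lambda_{n,3}\lambda_{n,1}^{-1} \geq 2$, so $\lambda_{n,1}^{-1}\lambda_{n,3} \geq 2^{-j_0}2^{-n}$ for all $n \geq n_0$; the finitely many remaining values of $n$ are absorbed into the implicit constant, which gives $\lambda_{n,1}^{-1}\lambda_{n,3} \gtrsim 2^{-n}$.

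The only real content is the first step: conjugating $\rho(u(0,N))$ by the uncontrolled factor $\mathscr{w}_n$ has no effect, so the distance estimate converts directly into a bound on the single ratio $\lambda_{n,3}\lambda_{n,1}^{-1}$; everything else is elementary. The point requiring a little care is that $j_0$ can be taken independent of $n$ — it depends only on $\alpha$ and $\beta$ — which is precisely what lets the additive quasi-isometry constants be overcome and produces the sharp base $2$ in the exponent.
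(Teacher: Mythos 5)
Your proof is correct and follows essentially the same route as the paper: both exploit that $\rho(u(0,N))$ is central in the unipotent upper-triangular group so $\mathscr{w}_n$ drops out of the conjugate, both choose $k = n + O(1)$ with the shift determined by the quasi-isometry constants, and both combine Lemma~\ref{lem:GM dist estimate} with~\eqref{eqn:distance in X estimate}. Your singular value computation via the $2\times 2$ block is cleaner than the paper's $\norm{\cdot}_\infty$ estimate, but this is a presentational difference only; the only small point worth noting is that $j_0$ should be taken to be a positive integer so that $u(0,2^{n+j_0})$ lies in $P$.
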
 

\begin{proof} For every $n \geq n_0$, let 
$$
k_n: = \left\lceil \frac12 \alpha (\beta+6) + n + 1 \right\rceil
$$
and let $g_n:= u(0, 2^{k_n})$. Then
$$
\mathscr{a}_n^{-1} \mathscr{w}_n^{-1} \rho(g_n)^{\pm 1} \mathscr{w}_n \mathscr{a}_n = \begin{pmatrix} 1 & 0 &\pm i 2^{k_n} \lambda_{n,1}^{-1} \lambda_{n,3} \\ 0 & 1 & 0 \\ 0 & 0 & 1 \end{pmatrix}. 
$$
Hence, by Equation~\eqref{eqn:matrix norms Linfty versus operator norm},
\begin{align*}
\log \frac{\mu_1}{\mu_3}& \left( \mathscr{a}_n^{-1} \mathscr{w}_n^{-1} \rho(g_n) \mathscr{w}_n \mathscr{a}_n\right) = \log \left( \mu_1\left( \mathscr{a}_n^{-1} \mathscr{w}_n^{-1} \rho(g_n) \mathscr{w}_n \mathscr{a}_n\right) \mu_1\left( \mathscr{a}_n^{-1} \mathscr{w}_n^{-1} \rho(g_n)^{-1} \mathscr{w}_n \mathscr{a}_n\right) \right) \\
& \leq \max\left\{ 0,6\log \left( 2^{k_n} \lambda_{n,1}^{-1} \lambda_{n,3} \right) \right\}.
\end{align*}
So by Lemma~\ref{lem:GM dist estimate} and Equation~\eqref{eqn:distance in X estimate},
\begin{align*}
6 \leq \frac{1}{\alpha}(2k_n - 2n -2) - \beta \leq \frac{1}{\alpha}\dist_X\Big( (g_n, n), (\id_{P}, n)\Big) - \beta \leq \max\{ 0, 6\log \left( 2^{k_n} \lambda_{n,1}^{-1} \lambda_{n,3} \right)\}.
\end{align*}
Then
$$
1\leq \log \left( 2^{k_n} \lambda_{n,1}^{-1} \lambda_{n,3} \right) \leq \log \left( 2^{\frac12\alpha (\beta+6) + n + 1} \lambda_{n,1}^{-1} \lambda_{n,3} \right),
$$
or equivalently 
\begin{equation*}
\frac{e}{2^{\frac12 \alpha(\beta+6) + 1}}2^{-n} \leq \lambda_{n,1}^{-1} \lambda_{n,3}. \qedhere
\end{equation*}

\end{proof} 

\begin{lemma}\label{lem:product of diagonal elements, upper bound} $\lambda_{n,1}^{-1} \lambda_{n,3} \lesssim 4^{-n}$. 
\end{lemma}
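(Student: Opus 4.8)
The plan is to run the scheme of Lemma~\ref{lem:product of diagonal elements, lower bound} once more, but now testing the quasi-isometric embedding $F$ against the ``horizontal'' unipotents $u(m,0)$ with $m$ of size $2^n$, rather than the ``vertical'' unipotents $u(0,2^{k_n})$. First I would record the distance estimate needed in the Groves--Manning cusp space. The matrix identity $u(m_1,n_1)u(m_2,n_2)=u(m_1+m_2,n_1+n_2)$ shows that $(m,n)\mapsto u(m,n)$ is an isomorphism $\Zb^2\to P$ carrying the standard generators to $u(\pm 1,0)$ and $u(0,\pm 1)$, so $\abs{u(2^n,0)}_{S\cap P}=2^n$. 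Since $2^n\le 2^{(n+1)-1}$, inside the combinatorial horoball over $\Cc(P,S\cap P)$ we may travel from $(u(2^n,0),n)$ up one vertical edge, across one horizontal edge, and back down one vertical edge, which gives
$$
\dist_X\Big( (u(2^n,0),n), (\id_P,n)\Big)\le 3 \qquad \text{for all } n\in\Nb .
$$

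Next I would carry out the matrix computation. Set $A_n:=\mathscr{a}_n^{-1}\mathscr{w}_n^{-1}\rho(u(2^n,0))\mathscr{w}_n\mathscr{a}_n$. A direct computation shows that $\mathscr{w}_n^{-1}\rho(u(2^n,0))\mathscr{w}_n$ is unipotent upper triangular with both superdiagonal entries equal to $2^n$, and then conjugating by $\mathscr{a}_n$ (which scales the $(i,j)$ entry by $\lambda_{n,j}/\lambda_{n,i}$) yields
$$
(A_n)_{12}=2^n\,\frac{\lambda_{n,2}}{\lambda_{n,1}}, \qquad (A_n)_{23}=2^n\,\frac{\lambda_{n,3}}{\lambda_{n,2}} ;
$$
the $(1,3)$ entry of $A_n$ will play no role. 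Since $A_n$ is unipotent upper triangular, $(A_n^{-1})_{23}=-(A_n)_{23}$, so combining the bound $\norm{g}_\infty\le\mu_1(g)$ of Equation~\eqref{eqn:matrix norms Linfty versus operator norm} (applied to $A_n$ and to $A_n^{-1}$) with the identity $\mu_1(A_n^{-1})=\mu_3(A_n)^{-1}$ gives
$$
\frac{\mu_1}{\mu_3}(A_n)=\mu_1(A_n)\,\mu_1(A_n^{-1})\ \ge\ \abs{(A_n)_{12}}\cdot\abs{(A_n)_{23}}\ =\ 4^n\,\lambda_{n,1}^{-1}\lambda_{n,3} .
$$

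Finally I would combine the two ingredients: applying the right-hand inequality of Equation~\eqref{eqn:distance in X estimate} to $g=u(2^n,0)$ together with the distance bound above,
$$
\log\big(4^n\,\lambda_{n,1}^{-1}\lambda_{n,3}\big)\ \le\ \log\frac{\mu_1}{\mu_3}(A_n)\ \le\ 3\alpha+\beta ,
$$
whence $\lambda_{n,1}^{-1}\lambda_{n,3}\le e^{3\alpha+\beta}\,4^{-n}$, which is the asserted bound. I do not expect a serious obstacle here; the only steps requiring a little care are the elementary distance estimate inside the combinatorial horoball and the lower bound for $\mu_1/\mu_3(A_n)$ in terms of its superdiagonal entries, and, in contrast with Lemma~\ref{lem:product of diagonal elements, lower bound}, there is no scale $k_n$ to tune against $\alpha$ and $\beta$.
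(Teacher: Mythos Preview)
Your proposal is correct and follows essentially the same approach as the paper. The only cosmetic differences are that the paper uses $g_n=u(2^{n-1},0)$ (so that $\dist_X((g_n,n),(\id_P,n))=1$ via a single horizontal edge at level $n$) and bounds $\log\mu_1(A_n)$ from below by each superdiagonal entry separately (using $\mu_1\le\mu_1/\mu_3$ since $\det A_n=1$) before multiplying, whereas you use $u(2^n,0)$ with a length-$3$ path and bound $\mu_1/\mu_3=\mu_1(A_n)\mu_1(A_n^{-1})$ directly by the product $|(A_n)_{12}|\cdot|(A_n)_{23}|$.
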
 

\begin{proof} Let $g_n := u(2^{n-1},0)$. Then 
$$
 \dist_X\Big( (g_n, n), (\id_{P}, n)\Big) = 1. 
$$
Further, 
\begin{align*}
\mathscr{a}_n^{-1}\mathscr{w}_n^{-1} \rho(g_n) \mathscr{w}_n\mathscr{a}_n & =\begin{pmatrix} 1 & \lambda_{n,1}^{-1} \lambda_{n,2}2^{n-1} & * \\ 0 & 1 & \lambda_{n,2}^{-1} \lambda_{n,3}2^{n-1} \\ 0 & 0 & 1 \end{pmatrix}. 
\end{align*}
So by Equations~\eqref{eqn:matrix norms Linfty versus operator norm} and~\eqref{eqn:distance in X estimate},
\begin{align*}
& \max\left\{ \log\left( \lambda_{n,1}^{-1} \lambda_{n,2} 2^{n-1}\right) ,\log \left(\lambda_{n,2}^{-1} \lambda_{n,3}2^{n-1}\right) \right\} \leq \log \mu_1(\mathscr{a}_n^{-1}\mathscr{w}_n^{-1} \rho(g_n) \mathscr{w}_n\mathscr{a}_n) \\
& \leq \alpha \dist_X\Big( (g_n, n), (\id_{P}, n)\Big) +\beta = \alpha + \beta
\end{align*}
which implies that 
\begin{equation*}
\lambda_{n,1}^{-1} \lambda_{n,3}=\lambda_{n,1}^{-1} \lambda_{n,2}\lambda_{n,2}^{-1} \lambda_{n,3} \lesssim 4^{-n}. \qedhere
\end{equation*}
\end{proof} 

Then by Lemmas~\ref{lem:product of diagonal elements, lower bound} and ~\ref{lem:product of diagonal elements, upper bound} we obtain the estimate $2^{-n} \lesssim 4^{-n}$
which is impossible. Hence there does not exist a $\rho$-equivariant quasi-isometric embedding of $X$ into $M$.

\part{Geometrically finite groups in convex real projective geometry} 

\section{Convex real projective geometry} \label{sec:convex projective geometry}

In this expository section we recall the definitions and results in convex real projective geometry that we will need in Sections~\ref{sec:rel Anosov repn preserving a properly convex domain}, \ref{sec:stability of the lifting property}, and~\ref{sec:when a representation preserves a properly convex domain}. We also briefly discuss relatively Anosov representations into the projective linear group.

\subsection{Convexity and the Hilbert metric}  \label{subsec:hilbert metric}
A subset of $\proj(\Rb^d)$ is called \emph{convex} if it is a convex subset of some affine chart of $\proj(\Rb^d)$ and called \emph{properly convex} if it is a bounded convex subset of some affine chart $\proj(\Rb^d)$. A \emph{properly convex domain} is an open properly convex subset of $\proj(\Rb^d)$. 

A subset $H \subset \proj(\Rb^d)$ is called a \emph{projective hyperplane} if it is the image of some codimension-one linear subspace $W \subset \Rb^d$ under the map $\Rb^d \smallsetminus\{0\} \to \proj(\Rb^d)$. Given a properly convex domain $\Omega \subset \proj(\Rb^d)$ and $x \in \partial \Omega$ there always exists at least one projective hyperplane $H \subset \proj(\Rb^d)$ with $x \in H$ and $H \cap \Omega = \varnothing$. In this case, $H$ is called a \emph{supporting hyperplane of $\partial \Omega$ at $x$}. 
When a boundary point $x \in \partial \Omega$ has a unique supporting hyperplane we say that $x$ is a \emph{$\Cc^1$-smooth point} of $\partial \Omega$ and let $T_x \partial \Omega$ denote this unique supporting hyperplane. 

Given a properly convex domain $\Omega \subset \proj(\Rb^d)$ and $p,q \in \overline{\Omega}$ we will let $[p,q]_\Omega$ denote the closed projective line segment in $\overline{\Omega}$ which contains $p$ and $q$. Then define $[p,q)_\Omega := [p,q]_\Omega \smallsetminus \{q\}$, $(p,q]_\Omega := [p,q]_\Omega \smallsetminus \{p\}$, and $(p,q)_\Omega := [p,q]_\Omega \smallsetminus \{p,q\}$.

The \emph{automorphism group} of a subset $S \subset \proj(\Rb^d)$ is the group 
$$
\Aut(S) := \{ g \in \PGL(d,\Rb) : g\cdot S = S\}.
$$
Given a properly convex domain $\Omega \subset \proj(\Rb^d)$ and a subgroup $\Gamma \leq \Aut(\Omega)$, the \emph{limit set of $\Gamma$} is 
$$
\Lambda_\Omega(\Gamma) := \partial \Omega \cap \bigcup_{p \in \Omega} \overline{\Gamma \cdot p},
$$
where the closure is taken in $\proj(\Rb^d)$. Equivalently, $\Lambda_\Omega(\Gamma)$ is the set of boundary points $x \in \partial \Omega$ where there exist $p \in \Omega$ and a sequence $(\gamma_n)_{n \geq 1}$ in $\Gamma$ such that $\gamma_n(p) \to x$. The \emph{convex hull of $\Gamma$}, denoted $\Cc_\Omega(\Gamma)$, is the closed convex hull of $\Lambda_\Omega(\Gamma)$ in $\Omega$. 

Given a properly convex convex domain $\Omega \subset \proj(\Rb^d)$, the \emph{dual domain} is 
$$
\Omega^*: = \left\{ f \in \proj(\Rb^{d*}) : f(x) \neq 0 \text{ for all } x \in \overline{\Omega}\right\}. 
$$
It is straightforward to show that $\Omega^*$ is a properly convex domain of $\proj(\Rb^{d*})$ and under the natural identification $\PGL(d,\Rb) = \PGL(\Rb^{d*})$, we have $\Aut(\Omega) = \Aut(\Omega^*)$. 

 A properly convex domain $\Omega \subset \proj(\Rb^d)$ has a natural distance, called the \emph{Hilbert distance}, which is defined by
 $$
 \dist_\Omega(p,q) = \frac{1}{2} \log [a,p,q,b]
 $$
 where $L$ is a projective line containing $p,q$, $\{a,b\} = L \cap \partial \Omega$ with the ordering $a,p,q,b$ along $L$, and $[a,p,q,b]$ is the standard projective cross ratio.  
Then $(\Omega, \dist_\Omega)$ is a proper geodesic metric space and $\Aut(\Omega)$ acts on $(\Omega, \dist_\Omega)$ by isometries. Further, the line segment $[p,q]_\Omega$ joining  $p,q\in \Omega$ can be parametrized to be a geodesic in $(\Omega, \dist_\Omega)$.
 
We recall that given two subsets $A, B \subset \Omega$, the \emph{Hausdorff distance with respect to $\dist_\Omega$} between $A$ and $B$ is defined as
$$
\dist^{\Haus}_\Omega(A,B) := \max \left\{ 
\sup_{a \in A} \dist_\Omega (a,B), 
\sup_{b \in B} \dist_\Omega (b,A) \right\}.
$$
We will use the following well-known estimate on the Hausdorff distance between two line segments with respect to the Hilbert metric $\dist_\Omega$.

 \begin{observation}\label{obs:Haus dist between lines} Suppose that $\Omega \subset \proj(\Rb^d)$ is properly convex. If $p_1,p_2, q_1, q_2 \in \Omega$, then 
 $$
 \dist_\Omega^{\Haus}\left( [p_1, q_1]_\Omega, [p_2, q_2]_\Omega \right) \leq \max\left\{ \dist_\Omega(p_1,p_2), \dist_\Omega(q_1, q_2) \right\}. 
 $$
 \end{observation}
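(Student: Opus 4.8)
Work in an affine chart of $\proj(\Rb^d)$ in which $\overline{\Omega}$ is compact, so that each $[a,b]_\Omega$ is an ordinary segment, let $\mathscr{C}\subset\Rb^d$ be the open convex cone over $\Omega$ (so $\overline{\mathscr{C}}$ is a closed, sharp convex cone and $\Omega=\proj(\mathscr{C})$), and for $\tilde x,\tilde y\in\mathscr{C}$ set $M(\tilde x/\tilde y):=\inf\{\lambda>0:\lambda\tilde y-\tilde x\in\overline{\mathscr{C}}\}$. The plan is to use the standard Birkhoff/Hilbert description of the metric: $2\dist_\Omega([\tilde x],[\tilde y])=\log M(\tilde x/\tilde y)+\log M(\tilde y/\tilde x)$, a quantity independent of the chosen representatives. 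Two elementary facts about $M$ will be used: for fixed $\tilde y\in\mathscr{C}$ the set $\{\lambda>0:\lambda\tilde y-\tilde x\in\overline{\mathscr{C}}\}$ is a closed half-line $[M(\tilde x/\tilde y),\infty)$ — closedness because $\overline{\mathscr{C}}$ is closed, and half-line-ness because $\overline{\mathscr{C}}$ is a convex cone containing $\tilde y$ — and $M(\tilde x/\tilde y)\in(0,\infty)$, the positivity using that $\overline{\mathscr{C}}$ is sharp.

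Now I would choose representatives $\tilde p_1,\tilde q_1,\tilde p_2,\tilde q_2\in\mathscr{C}$ of $p_1,q_1,p_2,q_2$, rescaled — the four rescalings are independent and the two normalizations below involve the disjoint pairs $\{\tilde p_1,\tilde p_2\}$, $\{\tilde q_1,\tilde q_2\}$ — so that $M(\tilde p_1/\tilde p_2)=M(\tilde q_1/\tilde q_2)=1$; then $M(\tilde p_2/\tilde p_1)=e^{2\dist_\Omega(p_1,p_2)}$ and $M(\tilde q_2/\tilde q_1)=e^{2\dist_\Omega(q_1,q_2)}$. For $t\in[0,1]$ put $\tilde z_t:=(1-t)\tilde p_1+t\tilde q_1$ and $\tilde w_t:=(1-t)\tilde p_2+t\tilde q_2$; as $t$ ranges over $[0,1]$ the point $[\tilde z_t]$ sweeps out all of $[p_1,q_1]_\Omega$ and $[\tilde w_t]$ lies on $[p_2,q_2]_\Omega$ (being a positive combination of $\tilde p_2,\tilde q_2\in\mathscr{C}$). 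Since $\lambda\tilde w_t-\tilde z_t=(1-t)(\lambda\tilde p_2-\tilde p_1)+t(\lambda\tilde q_2-\tilde q_1)$, if $\lambda\ge\max\{M(\tilde p_1/\tilde p_2),M(\tilde q_1/\tilde q_2)\}=1$ then both $\lambda\tilde p_2-\tilde p_1$ and $\lambda\tilde q_2-\tilde q_1$ lie in $\overline{\mathscr{C}}$, hence so does their convex combination $\lambda\tilde w_t-\tilde z_t$; therefore $M(\tilde z_t/\tilde w_t)\le 1$. Symmetrically $M(\tilde w_t/\tilde z_t)\le\max\{M(\tilde p_2/\tilde p_1),M(\tilde q_2/\tilde q_1)\}=e^{2\max\{\dist_\Omega(p_1,p_2),\,\dist_\Omega(q_1,q_2)\}}$. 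Taking logarithms and adding, $\dist_\Omega([\tilde z_t],[\tilde w_t])\le\max\{\dist_\Omega(p_1,p_2),\dist_\Omega(q_1,q_2)\}$. Thus every point of $[p_1,q_1]_\Omega$ is within $\max\{\dist_\Omega(p_1,p_2),\dist_\Omega(q_1,q_2)\}$ of $[p_2,q_2]_\Omega$, and since $\dist_\Omega^{\Haus}$ is symmetric in its arguments the observation follows.

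The proof is short, so there is no genuine obstacle; the only points that require a moment's care are recognizing that lifting to the cone turns the statement into a one-line convexity argument, the simultaneous normalization $M(\tilde p_1/\tilde p_2)=M(\tilde q_1/\tilde q_2)=1$ (unobstructed for the reason noted above), and the verification that the partner $[\tilde w_t]$ assigned to a given $[\tilde z_t]\in[p_1,q_1]_\Omega$ actually lies on $[p_2,q_2]_\Omega$. Alternatively, one can simply cite this as a well-known consequence of the convexity of Hilbert balls; the cone argument above is essentially that fact made explicit.
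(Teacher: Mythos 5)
Your proof is correct. The paper itself provides no argument for this observation --- it simply cites \cite[Prop.\ 5.3]{IZ2021} --- so there is nothing in-paper to compare against. Your cone/Birkhoff argument is a clean, self-contained version: lift both segments to the sharp convex cone $\mathscr{C}$ over $\Omega$, pair points on the two segments via the same affine parameter $t$ in the cone, and write $2\dist_\Omega=\log M(\cdot/\cdot)+\log M(\cdot/\cdot)$; the estimate then reduces to the observation that $\{\lambda>0:\lambda\tilde y-\tilde x\in\overline{\mathscr{C}}\}$ is a closed half-line together with convexity of $\overline{\mathscr{C}}$, and the two independent normalizations $M(\tilde p_1/\tilde p_2)=M(\tilde q_1/\tilde q_2)=1$ are unobstructed because they involve disjoint pairs of lifts. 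Every step checks out, including the claim that $t\mapsto[(1-t)\tilde p_1+t\tilde q_1]$ surjects onto $[p_1,q_1]_\Omega$ (the image is a connected sub-arc of the projective line through $p_1,q_1$ that stays in $\Omega$, hence is that segment). One minor phrasing point: your closing sentence appeals to the ``symmetry of $\dist_\Omega^{\Haus}$'' to supply the other half of the Hausdorff max, but what is actually being used is the symmetry of the hypothesis and of the bound under swapping $(p_1,q_1)\leftrightarrow(p_2,q_2)$; the Hausdorff distance being a symmetric function of $(A,B)$ does not by itself upgrade a one-sided $\sup$ bound. This is clearly what you meant, and it does not affect the proof.
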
  
 
 \begin{proof} See for instance~\cite[Prop.\ 5.3]{IZ2021}. \end{proof}

 \subsection{Convex hulls} A general subset of $\proj(\Rb^d)$ has no well-defined convex hull, for instance if $X = \{x_1,x_2\}$, then there is no natural way to choose between the two line projective line segments joining $x_1$ and $x_2$. However, it was observed in~\cite{IZ2019} that for certain types of subsets one can define a convex hull. We recall   these observations here. 
 
 Given a subset $X \subset \proj(\Rb^d)$ which is contained in some affine chart $\mathbb{A} \subset \proj(\Rb^d)$, let ${\rm ConvHull}_{\mathbb{A}}(X) \subset \mathbb{A}$ denote the convex hull of $X$ in $\mathbb{A}$.  For a general set (e.g. two points), this convex hull depends on the choice of $\mathbb{A}$ but when $X$ is connected we have the following. 
 
 \begin{observation}\cite[Lem.\ 5.9]{IZ2019} Suppose that $X \subset \proj(\Rb^d)$ is connected. If $\mathbb{A}_1$ and $\mathbb{A}_2$ are two affine charts which contain $X$, then 
 $$
 {\rm ConvHull}_{\mathbb{A}_1}(X)={\rm ConvHull}_{\mathbb{A}_2}(X).
 $$
 \end{observation}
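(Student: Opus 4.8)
**Proof plan for the statement: If $X \subset \proj(\Rb^d)$ is connected and $\mathbb{A}_1, \mathbb{A}_2$ are affine charts containing $X$, then ${\rm ConvHull}_{\mathbb{A}_1}(X) = {\rm ConvHull}_{\mathbb{A}_2}(X)$.**

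The plan is to reduce to the case where the two affine charts differ only slightly, and then argue that the convex hull is unchanged. More precisely, I would first observe that the complement of an affine chart $\mathbb{A}$ is a projective hyperplane $H_{\mathbb{A}}$, so the hypothesis that $X \subset \mathbb{A}_1 \cap \mathbb{A}_2$ means $X$ is disjoint from both $H_{\mathbb{A}_1}$ and $H_{\mathbb{A}_2}$. The key geometric fact to isolate is: if $H$ is a projective hyperplane disjoint from the connected set $X$, then $H$ is also disjoint from ${\rm ConvHull}_{\mathbb{A}}(X)$ for \emph{any} affine chart $\mathbb{A} \supset X$ — equivalently, $X$ is contained in a properly convex "slab" avoiding $H$, and all its convex hulls avoid $H$ as well. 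Granting this, both ${\rm ConvHull}_{\mathbb{A}_1}(X)$ and ${\rm ConvHull}_{\mathbb{A}_2}(X)$ are disjoint from $H_{\mathbb{A}_1} \cup H_{\mathbb{A}_2}$, hence both are contained in the common affine chart $\mathbb{A}_1 \cap \mathbb{A}_2 \setminus (H_{\mathbb{A}_1}\cup H_{\mathbb{A}_2})$... more carefully, in any affine chart $\mathbb{A}_0$ disjoint from both hyperplanes. Since convexity in a fixed affine chart is unambiguous, ${\rm ConvHull}_{\mathbb{A}_1}(X) = {\rm ConvHull}_{\mathbb{A}_0}(X) = {\rm ConvHull}_{\mathbb{A}_2}(X)$, where the outer equalities use that the convex hull computed in a larger chart agrees with that computed in a sub-chart as long as the set and its hull live in the sub-chart.

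To carry this out I would proceed in the following steps. First, fix notation: write $H_i = \proj(\Rb^d)\setminus \mathbb{A}_i$ for $i=1,2$. Second, prove the key lemma that a projective hyperplane $H$ disjoint from a connected set $X$ is disjoint from ${\rm ConvHull}_{\mathbb{A}}(X)$ for every affine chart $\mathbb{A}$ containing $X$: here the point is that $\proj(\Rb^d) \setminus H$ is an affine chart, $X$ is a connected subset of it, and one shows the convex hull of $X$ taken in \emph{that} chart stays in that chart (trivially), while the convex hull taken in a different chart $\mathbb{A}$ can only be larger or differ — so one must show it cannot cross $H$. This is where I would lift to $\Rb^d$: choose a linear functional $f$ with $H = \proj(\ker f)$; then $X$ connected and disjoint from $H$ means we can lift $X$ to a connected $\wt{X} \subset \Rb^d$ with $f > 0$ on $\wt{X}$, and convex combinations of points with $f>0$ again have $f>0$, so no convex hull of $X$ (in any chart, after consistent lifting) meets $H$. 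Third, combine: ${\rm ConvHull}_{\mathbb{A}_1}(X)$ is disjoint from $H_1$ by definition and from $H_2$ by the lemma; pick any affine chart $\mathbb{A}_0$ with $\mathbb{A}_0 \supset {\rm ConvHull}_{\mathbb{A}_1}(X) \cup {\rm ConvHull}_{\mathbb{A}_2}(X)$ (possible since a compact, or at least bounded-away-from-two-hyperplanes, set lies in a common chart), and use that within $\mathbb{A}_0$ the convex hull of $X$ is a single well-defined set containing and contained in each of the two hulls.

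The main obstacle I anticipate is making precise the "lifting" argument so that all three convex hulls are compared in a consistent way: a subset of $\proj(\Rb^d)$ lifts to $\Rb^d$ only up to the sign ambiguity $v \mapsto -v$ on each ray, and the convex hull in an affine chart $\mathbb{A}$ corresponds to lifting $X$ into the affine hyperplane $\{f_{\mathbb{A}} = 1\}$ of $\Rb^d$ for the defining functional $f_{\mathbb{A}}$ of $\mathbb{A}$. Different charts use different functionals, so I need to check that connectedness of $X$ forces a coherent choice of lift — i.e.\ $X$ lifts to a connected set in $\{f_{\mathbb{A}_1}=1\}$ and also in $\{f_{\mathbb{A}_2}=1\}$, and these two lifts are related by scaling along rays, so their convex hulls in $\Rb^d$ project to the same subset of $\proj(\Rb^d)$. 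Once this bookkeeping is done the convexity argument itself is immediate. It is plausible that the cleanest writeup simply cites the connectedness-forces-a-lift observation and then does the one-line convexity estimate, so I would aim to keep the lifting discussion short and point to the analogous argument in~\cite{IZ2019} if available.
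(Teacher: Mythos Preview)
The paper does not supply its own proof of this observation; it is simply quoted from \cite[Lem.\ 5.9]{IZ2019}. Your lifting argument is the standard one and is correct: connectedness of $X$ forces $f_2$ to have constant sign on the lift $\wt{X}_1 \subset \{f_1 = 1\}$, and convex combinations preserve that sign, so ${\rm ConvHull}_{\mathbb{A}_1}(X)$ misses $H_2$.

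One simplification: you do not need an auxiliary chart $\mathbb{A}_0$. Once you know ${\rm ConvHull}_{\mathbb{A}_1}(X) \subset \mathbb{A}_2$, note that a subset of $\proj(\Rb^d)$ which is convex in one affine chart is convex in every affine chart containing it (its preimage in $\Rb^d\smallsetminus\{0\}$ is a pair of opposite convex cones, a chart-independent condition). Hence ${\rm ConvHull}_{\mathbb{A}_1}(X)$ is a convex subset of $\mathbb{A}_2$ containing $X$, giving ${\rm ConvHull}_{\mathbb{A}_2}(X) \subset {\rm ConvHull}_{\mathbb{A}_1}(X)$; the reverse inclusion follows by symmetry. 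This avoids the existence question you flagged for $\mathbb{A}_0$.
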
 
 
 This leads to the following definition. 
 
 \begin{definition} If  $X \subset \proj(\Rb^d)$ is connected and contained in some affine chart, then let ${\rm ConvHull}(X)$ denote the convex hull of $X$ in some (any) affine chart which contains $X$. 
 \end{definition} 
 
 As a consequence of the definition we have the following. 
 
 \begin{observation}\label{obs:equivariance of convex hulls} Suppose that $X \subset \proj(\Rb^d)$ is connected and contained in some affine chart. If $g \in \PGL(d,\Rb)$, then 
$$
{\rm ConvHull}(gX) = g\cdot {\rm ConvHull}(X).
$$
\end{observation}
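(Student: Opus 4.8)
The plan is to reduce the identity to the elementary fact that an affine isomorphism between affine charts carries convex hulls to convex hulls. First I would fix, using the hypothesis, an affine chart $\mathbb{A} \subset \proj(\Rb^d)$ with $X \subset \mathbb{A}$. Writing $\mathbb{A}$ as the complement of a projective hyperplane $[H]$, the set $g\mathbb{A}$ is the complement of the projective hyperplane $g[H]$, hence is again an affine chart, and it contains $gX$. Since $X$ is connected so is $gX$, so both ${\rm ConvHull}(X)$ and ${\rm ConvHull}(gX)$ are defined. By the definition of ${\rm ConvHull}$, together with the chart-independence recorded in~\cite[Lem.\ 5.9]{IZ2019}, we have ${\rm ConvHull}(X) = {\rm ConvHull}_{\mathbb{A}}(X)$ and ${\rm ConvHull}(gX) = {\rm ConvHull}_{g\mathbb{A}}(gX)$.

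Next I would invoke the standard fact that $g$ restricts to an affine isomorphism $g|_{\mathbb{A}}\colon \mathbb{A} \to g\mathbb{A}$: choosing a lift $\wt{g} \in \GL(d,\Rb)$ and a linear functional $\phi$ cutting out $H$, the functional $\phi \circ \wt{g}^{\,-1}$ cuts out $gH$, and in the induced affine coordinates on the two charts $g$ is expressed as $y \mapsto (\text{linear part of } \wt{g}(y,1))$, which is an affine map in $y$. Since an affine isomorphism preserves convex combinations, it carries the convex hull of a set to the convex hull of the image; applied to $X \subset \mathbb{A}$ this gives ${\rm ConvHull}_{g\mathbb{A}}(gX) = g\cdot {\rm ConvHull}_{\mathbb{A}}(X)$. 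Chaining the three equalities yields ${\rm ConvHull}(gX) = g\cdot {\rm ConvHull}(X)$, as desired.

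I do not anticipate any real obstacle: the only points requiring a sentence of justification are that a projective transformation restricts to an affine isomorphism between two affine charts that it maps to one another (a routine computation in projective coordinates), and that affine isomorphisms commute with the formation of convex hulls (immediate from preservation of convex combinations). Everything else is bookkeeping with the definition of ${\rm ConvHull}$.
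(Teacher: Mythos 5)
Your proof is correct and fills in exactly the argument the paper leaves implicit: the paper states this observation as an immediate consequence of the definition of ${\rm ConvHull}$ without giving a proof, and your route (chart-independence plus the fact that $g$ restricts to an affine isomorphism $\mathbb{A} \to g\mathbb{A}$, and affine isomorphisms commute with convex hulls) is the intended one.
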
 

\subsection{Relatively Anosov representations into the projective linear group}  In the context of convex real projective geometry, it is more natural to consider representations into $\PGL(d,\Rb)$. It is also helpful to identify $\Gr_1(\Rb^d) = \proj(\Rb^d)$ and $\Gr_{d-1}(\Rb^d)=\proj(\Rb^{d*})$ and assume that the boundary map of a relatively $\Psf_1$-Anosov representation has image in $\proj(\Rb^d) \times \proj(\Rb^{d*})$. This leads to the following analogue of Definition~\ref{defn:Pk Anosov}.

\begin{definition} Suppose that $(\Gamma,\peripherals)$ is relatively hyperbolic with Bowditch boundary $\partial(\Gamma, \peripherals)$. A representation $\rho\colon \Gamma \to \PGL(d,\Rb)$ is \emph{$\Psf_1$-Anosov relative to $\peripherals$} if there exists a continuous map 
$$
\xi = (\xi^1, \xi^{d-1}) \colon \partial(\Gamma, \peripherals) \to\proj(\Rb^d) \times \proj(\Rb^{d*})
$$
which is 
\begin{enumerate} 
\item \emph{$\rho$-equivariant}: if $\gamma \in \Gamma$, then $\rho(\gamma) \circ \xi = \xi \circ \gamma$,
\item \emph{transverse}: if $x,y \in \partial(\Gamma, \peripherals)$ are distinct, then $\xi^1(x) \oplus \ker \xi^{d-1}(y) = \Rb^d$,
\item \emph{strongly dynamics preserving}: if $(\gamma_n)_{n \geq 1}$ is a sequence of elements in $\Gamma$ where $\gamma_n \to x \in \partial(\Gamma, \peripherals)$ and $\gamma_n^{-1} \to y \in \partial(\Gamma, \peripherals)$, then 
$$
\lim_{n \to \infty} \rho(\gamma_n)v = \xi^1(x)
$$
for all $v \in \proj(\Rb^d) \setminus \proj(\ker \xi^{d-1}(y))$. 
\end{enumerate}
\end{definition}

\subsection{Relatively Anosov representations from visible subgroups}
As mentioned in the introduction, a projectively visible subgroup (see Section~\ref{introsec:convex projective geometry} for the definition) acts as a convergence group on its limit set \cite[Prop.\ 3.5]{CZZ2022}. Further, if the action on the limit set is geometrically finite, then the inclusion representation is relatively $\Psf_1$-Anosov. 

\begin{proposition} \label{prop:projvis+geomfin=relAnosov}
Suppose that $\Omega \subset \proj(\Rb^d)$ is a properly convex domain and $\Gamma \leq \Aut(\Omega)$ is a projectively visible subgroup. If $\Gamma$ acts on $\Lambda_\Omega(\Gamma)$ as a geometrically finite convergence group and $\peripherals$ is a set of conjugacy representatives of the stabilizers of bounded parabolic points in $\Lambda_\Omega(\Gamma)$, then the inclusion representation $\Gamma \into \PGL(d,\Rb)$ is $\Psf_1$-Anosov relative to $\peripherals$.
\end{proposition}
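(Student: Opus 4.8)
The plan is to write down the boundary map by hand and check the three requirements of the definition, letting the convergence dynamics of $\Gamma$ on $\overline{\Omega}$ and the two defining features of visibility do the work. Since $\Gamma$ acts on $\Lambda_\Omega(\Gamma)$ as a geometrically finite convergence group, $(\Gamma,\peripherals)$ is relatively hyperbolic by \cite{Yaman}, and there is a $\Gamma$-equivariant homeomorphism $\partial(\Gamma,\peripherals) \cong \Lambda_\Omega(\Gamma)$; I identify the two from now on, so that ``$\gamma_n \to x$ in $\partial(\Gamma,\peripherals)$'' means ``$\gamma_n(p) \to x$ in $\overline{\Omega}$ for some (equivalently, by \cite[Prop.\ 3.5]{CZZ2022}, every) $p \in \Omega$''. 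By visibility condition (2) every point of $\Lambda_\Omega(\Gamma)$ is a $\Cc^1$-smooth point of $\partial\Omega$, so I would set $\xi^1(x) := x \in \proj(\Rb^d)$ and $\ker \xi^{d-1}(x) := T_x\partial\Omega$. Equivariance is immediate since $\Gamma \leq \Aut(\Omega)$ carries supporting hyperplanes of $\partial\Omega$ to supporting hyperplanes, and $\xi^{d-1}$ is continuous because the supporting hyperplane at each point of $\Lambda_\Omega(\Gamma)$ is unique (every subsequential limit of $T_{x_n}\partial\Omega$ is a supporting hyperplane at $\lim x_n$). For transversality, if $x \neq y$ in $\Lambda_\Omega(\Gamma)$ had $\xi^1(x) \subseteq \ker\xi^{d-1}(y) = T_y\partial\Omega$, then the projective line through $x$ and $y$ — hence $[x,y]_\Omega$ — would lie in the hyperplane $\proj(T_y\partial\Omega)$, which is disjoint from $\Omega$; this contradicts visibility condition (1), which puts the nonempty open segment $(x,y)_\Omega$ inside $\Omega$. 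So $\xi^1(x) \oplus \ker\xi^{d-1}(y) = \Rb^d$.

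The substantive point is the strongly dynamics preserving property. Given a sequence $(\gamma_n)$ in $\Gamma$ with $\gamma_n \to x$ and $\gamma_n^{-1} \to y$, it is enough to treat a subsequence along which $[\gamma_n] \to [\phi]$ in the compact space $\proj(\End(\Rb^d))$ for some nonzero $\phi$. Because $\gamma_n(q) \to x$ for every $q \in \Omega$ and $\Omega \smallsetminus \proj(\ker\phi)$ is dense in $\Omega$ and spans $\Rb^d$, continuity of $\phi$ off $\proj(\ker\phi)$ forces $\phi$ to equal $x$ identically there, so $\phi$ has rank one with image a line $\Rb\hat{x}$ representing $\xi^1(x)$; write $\phi(v) = g(v)\,\hat{x}$ with $g$ a nonzero functional, $\ker g = \ker\phi$. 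The same analysis of $[\gamma_n^{-1}]$ together with $(\gamma_n/\mu_1(\gamma_n))\,(\mu_d(\gamma_n)\,\gamma_n^{-1}) = (\mu_d/\mu_1)(\gamma_n)\,I \to 0$ gives $y \in \proj(\ker\phi)$. Finally, fix a representative $f_0$ of a point of $\Omega^*$ that is positive on the cone over $\overline{\Omega}$; since $f_0(\hat{x}) \neq 0$, transposing the relation $[\gamma_n] \to [\phi]$ yields $[f_0 \circ \gamma_n] \to [g]$, and as each $f_0 \circ \gamma_n$ is positive on that cone while $g$ vanishes at $y$, the point $[g]$ lies in $\partial\Omega^*$ — equivalently, $\proj(\ker\phi)$ is a supporting hyperplane of $\partial\Omega$ through $y$, hence equals $T_y\partial\Omega = \ker\xi^{d-1}(y)$ by $\Cc^1$-smoothness. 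Consequently $\gamma_n(v) \to \phi(v) = \xi^1(x)$ for every $v \in \proj(\Rb^d)\smallsetminus\proj(T_y\partial\Omega) = \proj(\Rb^d)\smallsetminus\proj(\ker\phi)$; and since a rank-one map is pinned down up to scalar by its image line and kernel hyperplane, all subsequential limits of $([\gamma_n])$ agree, so the conclusion holds along the full sequence.

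I expect the crux to be the identification $\ker\phi = T_y\partial\Omega$: the convergence structure supplies the ``attracting'' data — the image of $\phi$ — almost for free, but recognizing the ``repelling'' hyperplane $\proj(\ker\phi)=\lim_n \proj\bigl(U_{d-1}(\gamma_n^{-1})\bigr)$ as the supporting hyperplane at $y$ is where visibility is genuinely used beyond transversality, through $\Cc^1$-smoothness and the positivity argument above (or, equivalently, through a passage to the dual domain $\Omega^*$). An alternative is to extract this entire dynamical input — including the fact that $\gamma_n(q) \to x$ for all $q \in \Omega$ — directly from \cite[Prop.\ 3.5]{CZZ2022} and Observation~\ref{obs:strongly_dynamics_pres_div_cartan}, leaving only the bookkeeping above; the degenerate cases where $\Gamma$ is finite or virtually cyclic are trivial.
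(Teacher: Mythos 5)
Your construction of the boundary map — $\xi^1$ the identity on $\Lambda_\Omega(\Gamma)$ via Yaman's homeomorphism, $\xi^{d-1}$ the supporting hyperplanes via $\Cc^1$-smoothness — and your transversality argument via visibility condition (1) are exactly what the paper does. The difference is in the strongly dynamics preserving step: the paper discharges it with a single citation to \cite[Prop.\ 3.5]{CZZ2022}, whereas you unpack the dynamics directly in the compactification $\proj(\End(\Rb^d))$ — a subsequential limit $[\phi]$ has rank one with image $\xi^1(x)$ because $\gamma_n(q)\to x$ for all $q$ in the (full-dimensional) set $\Omega$; $y\in\proj(\ker\phi)$ follows from $(\gamma_n/\mu_1(\gamma_n))\,(\mu_d(\gamma_n)\gamma_n^{-1})=(\mu_d/\mu_1)(\gamma_n)\,I\to 0$; and the duality trick with a fixed $f_0\in\Omega^*$ shows $[f_0\circ\phi]\in\partial\Omega^*$, so $\proj(\ker\phi)$ is a supporting hyperplane at $y$ and equals $T_y\partial\Omega$ by $\Cc^1$-smoothness, whence all subsequential limits of $[\gamma_n]$ coincide. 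That is a correct and genuinely more self-contained route for the hardest step (the cited CZZ2022 lemma is, in effect, this argument), at the cost of length. The one place you still lean on \cite[Prop.\ 3.5]{CZZ2022} is for ``$\gamma_n(q)\to x$ for every $q\in\Omega$, not just some''; if you wanted to be fully independent of that citation you could derive it from the convergence-group action on $\Lambda_\Omega(\Gamma)$ together with visibility: pick two conical points $a,b\in\Lambda_\Omega(\Gamma)\setminus\{y\}$, note $\gamma_n a,\gamma_n b\to x$ so $\gamma_n$ takes $(a,b)_\Omega\subset\Omega$ to segments shrinking to $x$, and then propagate to arbitrary $q\in\Omega$ via bounded Hilbert distance and $\Cc^1$-smoothness of $\partial\Omega$ at $x$.
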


\begin{proof}
By definition there exists a equivariant homeomorphism $\xi^1\colon \partial(\Gamma,\peripherals) \to \Lambda_\Omega(\Gamma)$, see~\cite{Yaman}. By the visibility property each point in $\Lambda_\Omega(\Gamma)$ is a $\Cc^1$-smooth point of $\partial \Omega$. So for every $x \in \partial(\Gamma,\peripherals)$ there exists a unique $\xi^{d-1}(x) \in \proj(\Rb^{d*})$ such that 
$$
\proj(\ker \xi^{d-1}(x)) = T_{\xi^1(x)} \partial \Omega. 
$$
Then let $\xi := (\xi^1, \xi^{d-1})$. Then $\xi$ is continuous and equivariant. By the visibility property, if $x,y \in \partial(\Gamma,\peripherals)$ are distinct, then the open line segment in $\overline{\Omega}$ joining $\xi^1(x)$ to $\xi^1(y)$ is in $\Omega$. Since $\proj(\ker \xi^{d-1}(y)) \cap \Omega = \varnothing$, we must have $\xi^1(x) \notin \proj(\ker \xi^{d-1}(y))$ and so 
$$
\xi^1(x) \oplus \ker \xi^{d-1}(y) = \Rb^d. 
$$
Thus $\xi$ is transverse. Finally, by \cite[Prop.\ 3.5]{CZZ2022}, $\xi$ is strongly dynamics preserving.
\end{proof}

\section{Relatively Anosov representations whose images preserve a properly convex domain}\label{sec:rel Anosov repn preserving a properly convex domain}

In this section we prove a converse to Proposition~\ref{prop:projvis+geomfin=relAnosov} and characterize the relatively $\mathsf{P}_1$-Anosov representations that preserve a properly convex domain. This builds upon work in ~\cite{CZZ2022} and extends results in~\cite{DGK2017,Z2017} from the classical Anosov case to the relative one. 

Let $\norm{\cdot}_2$ denote both the Euclidean norm on $\Rb^d$ and the associated dual norm on $\Rb^{d*}$. Then let $\Sb \subset \Rb^{d}$ and $\Sb^* \subset \Rb^{d*}$ denote the unit balls relative to these norms. Also let $\SL^\pm(d,\Rb) = \{ g \in \GL(d,\Rb) : \det g = \pm 1\}$. The group $\SL^\pm(d,\Rb)$ acts on $\Sb$ and $\Sb^*$ by 
$$
g \cdot v = \frac{1}{\norm{g(v)}_2} g(v) \quad \text{and} \quad g \cdot f = \frac{1}{\norm{ f \circ g^{-1}}_2} f \circ g^{-1}. 
$$

\begin{definition}\label{defn:the lifting property} Suppose that $(\Gamma, \peripherals)$ is relatively hyperbolic, $\rho \colon \Gamma \to \PGL(d,\Rb)$ is $\Psf_1$-Anosov relative to $\peripherals$, and $\xi \colon \partial(\Gamma, \peripherals) \to \proj(\Rb^d) \times \proj(\Rb^{d*})$ is the Anosov boundary map, then we say that $\rho$ has the \emph{lifting property} if there exist lifts $\tilde{\xi}=(\tilde{\xi}^1, \tilde{\xi}^{d-1}) \colon \partial(\Gamma, \peripherals) \to \Sb \times \Sb^*$ and $\tilde{\rho} \colon \Gamma \to \SL^{\pm}(d,\Rb)$ of $\xi$ and $\rho$ with the following properties: 
\begin{enumerate}
\item $\tilde{\xi}$ is continuous and $\tilde{\rho}$-equivariant,
\item $\tilde{\xi}$ is \emph{positive} in the following sense: if $x,y \in \partial(\Gamma, \peripherals)$ are distinct, then 
$$
\tilde{\xi}^{d-1}(y)(\tilde{\xi}^1(x)) > 0.
$$
\end{enumerate} 
\end{definition} 

\begin{proposition}\label{prop:lifting property one} Suppose that $(\Gamma, \peripherals)$ is relatively hyperbolic and $\rho \colon \Gamma \to \PGL(d,\Rb)$ is $\Psf_1$-Anosov relative to $\peripherals$. Then the following are equivalent: 
\begin{enumerate}
\item $\rho$ has the lifting property, 
\item there exists a properly convex domain $\Omega_0 \subset \proj(\Rb^d)$ where $\rho(\Gamma) \leq \Aut(\Omega_0)$,
\item there exists a properly convex domain $\Omega \subset \proj(\Rb^d)$ where $\rho(\Gamma) \leq \Aut(\Omega)$ is a projectively visible subgroup. 
\end{enumerate}
\end{proposition}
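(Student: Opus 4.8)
The implication $(3)\Rightarrow(2)$ is immediate, so the plan is to establish the two substantial implications $(2)\Rightarrow(1)$ and $(1)\Rightarrow(3)$. These are the relative analogues of results of Danciger--Gu\'eritaud--Kassel~\cite{DGK2017} and Zimmer~\cite{Z2017} for word-hyperbolic groups, and I would run essentially the same arguments, feeding in the relatively hyperbolic boundary dynamics (Proposition~\ref{prop:eigenvalue data in rel Anosov repn} and the parabolic convergence dynamics from~\cite{CZZ2022}) in place of the hyperbolic ones. Write $\xi=(\xi^1,\xi^{d-1})$ for the Anosov boundary map and $\partial:=\partial(\Gamma,\peripherals)$; we may assume $\abs{\partial}\geq 3$, the remaining cases being degenerate.

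For $(2)\Rightarrow(1)$, suppose $\rho(\Gamma)\leq\Aut(\Omega_0)$. The first step is to show $\xi^1(\partial)\subset\overline{\Omega_0}$ and $\xi^{d-1}(\partial)\subset\overline{\Omega_0^*}$. For the former: if $\gamma\in\Gamma$ is non-peripheral of infinite order, then $\rho(\gamma)$ is $\Psf_1$-proximal by Proposition~\ref{prop:eigenvalue data in rel Anosov repn}, its attracting line is $\xi^1(\gamma^+)$ by strong dynamics preservation, and iterating $\rho(\gamma)$ on a point of the open set $\Omega_0$ lying off the repelling hyperplane shows $\xi^1(\gamma^+)\in\overline{\Omega_0}$; since such $\gamma^+$ are dense in $\partial$ and $\xi^1$ is continuous, $\xi^1(\partial)\subset\overline{\Omega_0}$. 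The dual statement follows in the same way from $\Aut(\Omega_0)=\Aut(\Omega_0^*)$ and the dual strong dynamics preserving property of $\Psf_1$-Anosov representations, applying $\rho(\gamma)^n$ to a suitable $f\in\Omega_0^*$. The second step produces the lifts. Fix a connected component $C$ of the preimage of $\Omega_0$ in $\Rb^d\smallsetminus\{0\}$ and let $C^*\subset\Rb^{d*}$ be the component of the preimage of $\Omega_0^*$ consisting of functionals positive on $C$. Every $g\in\Aut(\Omega_0)$ has a unique representative in $\SL^{\pm}(d,\Rb)$ preserving $C$, and $g\mapsto$ this representative is a homomorphism (by uniqueness), so composing with $\rho$ gives $\tilde\rho\colon\Gamma\to\SL^{\pm}(d,\Rb)$. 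By the first step one may define $\tilde\xi^1(x)$ to be the unique point of $\overline{C}\cap\Sb$ above $\xi^1(x)$ and $\tilde\xi^{d-1}(x)$ the unique point of $\overline{C^*}\cap\Sb^*$ above $\xi^{d-1}(x)$; these are continuous and $\tilde\rho$-equivariant. Positivity is then immediate: $\tilde\xi^{d-1}(y)\in\overline{C^*}$ is nonnegative on $\overline{C}\ni\tilde\xi^1(x)$, and the pairing is strictly positive when $x\neq y$ because transversality gives $\xi^1(x)\notin\proj(\ker\xi^{d-1}(y))$.

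For $(1)\Rightarrow(3)$, suppose $\tilde\rho,\tilde\xi$ are positive lifts as in Definition~\ref{defn:the lifting property}. The guiding construction is the dual limit cone domain: if $C^*\subset\Rb^{d*}$ is the closed convex cone generated by $\tilde\xi^{d-1}(\partial)$, then positivity together with $\abs{\partial}\geq 3$ forces $C^*$ to be salient, and when $\tilde\xi^{d-1}(\partial)$ spans $\Rb^{d*}$ the dual cone $C^{**}$ is properly convex with nonempty interior, so $\Omega:=\proj(\interior C^{**})=\proj\{v:\tilde\xi^{d-1}(x)(v)>0\ \forall x\}$ is a $\rho(\Gamma)$-invariant properly convex domain. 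In general one first performs a standard reduction --- restricting to the span of the limit curve and quotienting by $W:=\bigcap_x\ker\xi^{d-1}(x)$ (a $\rho(\Gamma)$-invariant subspace with $\xi^1(x)\cap W=0$ for all $x$, by transversality), where the spanning hypotheses hold --- and then reverses these reductions by ``thickening'' the reduced cone in the missing directions along a $\rho(\Gamma)$-invariant concave positively-homogeneous width function built from the characteristic function of that cone, exactly as in~\cite{DGK2017}; this yields a properly convex $\rho(\Gamma)$-invariant $\Omega\subset\proj(\Rb^d)$. It then remains to check $\rho(\Gamma)\leq\Aut(\Omega)$ is \emph{visible}. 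One identifies $\Lambda_\Omega(\rho(\Gamma))=\xi^1(\partial)$ using strong dynamics preservation and density of loxodromic fixed points. The segment condition is a direct computation in the cone picture: for $x\neq y$ and $0<t<1$, the point $(1-t)\tilde\xi^1(x)+t\tilde\xi^1(y)$ pairs strictly positively with every $\tilde\xi^{d-1}(z)$ --- for $z\notin\{x,y\}$ by positivity at both endpoints, and for $z\in\{x,y\}$ because one endpoint still pairs strictly positively while the other pairs nonnegatively (the latter by continuity of $\tilde\xi$ and positivity at nearby points). Finally, the $\Cc^1$-smoothness of $\partial\Omega$ at each $\xi^1(x)$ amounts to showing that the only supporting hyperplane there is $\proj(\ker\xi^{d-1}(x))$: at conical limit points this uses the contraction dynamics of the proximal elements $\rho(\gamma)$ on $\Omega$ as in~\cite{Z2017,DGK2017}, while at bounded parabolic points one instead combines the weak unipotence of $\rho(P)$ (Proposition~\ref{prop:eigenvalue data in rel Anosov repn}) with the parabolic convergence dynamics of $\rho(P)$ on $\Lambda_\Omega(\rho(\Gamma))$ supplied by~\cite{CZZ2022}.

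The main obstacle is this last point: the $\Cc^1$-smoothness of $\partial\Omega$ at the limit points, and specifically at the bounded parabolic ones, where there is no single contracting group element to exploit and one must pin down the supporting hyperplane purely from weak unipotence and parabolic convergence dynamics. A secondary technical nuisance is checking that the thickening step in $(1)\Rightarrow(3)$ does not introduce extra supporting hyperplanes (from the $W$-directions) at limit points, so that visibility genuinely descends from the reduced domain to $\Omega$.
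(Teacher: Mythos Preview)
Your $(2)\Rightarrow(1)$ argument matches the paper's Lemma~\ref{lem:2=>1 in lifting property} essentially verbatim. The divergence is in the other direction, and it is a genuine structural difference rather than an error.

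The paper does \emph{not} prove $(1)\Rightarrow(3)$ directly. Instead it proves the easier implication $(1)\Rightarrow(2)$ (Lemma~\ref{lem:1=>2 in lifting property}) and then invokes \cite[Prop.\ 4.4]{CZZ2022} as a black box for $(2)\Leftrightarrow(3)$: once $\rho(\Gamma)$ is known to be a $\Psf_{1,d-1}$-transverse group (which follows from \cite[Prop.\ 4.4]{ZZ2022a}), the equivalence ``preserves some properly convex domain $\Leftrightarrow$ is a visible subgroup of some properly convex domain'' is already established in~\cite{CZZ2022}. Moreover, the paper's $(1)\Rightarrow(2)$ argument is different from your dual-cone-plus-thickening construction: it follows \cite[Sec.\ 5]{IZ2019}, forming the projective image $C_0$ of positive combinations of the lifted limit points, checking via positivity that $C_0$ is bounded in a suitable affine chart, then using the strongly dynamics preserving property to find a connected neighborhood $U$ of a point of $C_0$ whose entire $\rho(\Gamma)$-orbit stays bounded in that chart, and finally taking $\Omega_0={\rm ConvHull}(C_0\cup\rho(\Gamma)U)$. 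No reduction to the span of the limit curve, no quotient by $W$, and no thickening are needed.

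What you gain by the paper's route is precisely that you never have to verify visibility yourself: the ``main obstacle'' you correctly identify---$\Cc^1$-smoothness of $\partial\Omega$ at bounded parabolic limit points---is the content of the cited result, and your sketch of that step (weak unipotence plus parabolic convergence dynamics) is too vague to count as a proof as written. Your route is not wrong in principle, but it amounts to reproving \cite[Prop.\ 4.4]{CZZ2022} inside this argument, and the secondary nuisance you flag (controlling extra supporting hyperplanes introduced by thickening in the $W$-directions) is real and also absent from the paper's approach.
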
 

\begin{remark} The equivalence (2) $\iff$ (3) follows from general results in~\cite{CZZ2022} and the implication (2) $\implies$ (1) is elementary. So the new content of Proposition~\ref{prop:lifting property one} is the implication (1) $\implies$ (2). \end{remark} 

The rest of the section is devoted to the proof of Proposition~\ref{prop:lifting property one}. So fix $(\Gamma, \peripherals)$ and $\rho$ as in the proposition, and let $\xi \colon \partial(\Gamma, \peripherals) \to \proj(\Rb^d) \times \proj(\Rb^{d*})$ denote the Anosov boundary map of $\rho$. 

\begin{lemma} (2) $\iff$ (3). \end{lemma}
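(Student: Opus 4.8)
The plan is to prove the equivalence (2) $\iff$ (3) by citing the general structural results of \cite{CZZ2022} on convex projective domains preserved by relatively Anosov images. The implication (3) $\implies$ (2) is trivial, since a visible subgroup of $\Aut(\Omega)$ is in particular a subgroup of $\Aut(\Omega)$. So the work is entirely in (2) $\implies$ (3): starting from \emph{some} properly convex domain $\Omega_0$ with $\rho(\Gamma) \leq \Aut(\Omega_0)$, we must produce a (possibly different) properly convex domain $\Omega$ on which $\rho(\Gamma)$ is a \emph{visible} subgroup.

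First I would recall that $\rho$ being $\Psf_1$-Anosov relative to $\peripherals$ gives a continuous, equivariant, transverse, strongly dynamics preserving boundary map $\xi = (\xi^1, \xi^{d-1})$, and in particular $\rho(\Gamma)$ is a discrete subgroup of $\PGL(d,\Rb)$ whose limit set in $\proj(\Rb^d)$ is the image $\xi^1(\partial(\Gamma,\peripherals))$. Next I would invoke the relevant proposition from \cite{CZZ2022} (the ``Proposition 3.5'' referenced earlier in the excerpt, together with the surrounding structure theory there): since $\rho(\Gamma) \leq \Aut(\Omega_0)$ is discrete and its action on $\proj(\Rb^d)$ has the convergence dynamics encoded by $\xi$, one extracts from $\Omega_0$ a canonical invariant properly convex domain $\Omega$ — essentially by intersecting $\Omega_0$ (or a suitable translate/modification) with the half-spaces cut out by the supporting hyperplanes $\proj(\ker \xi^{d-1}(x))$ for $x \in \partial(\Gamma,\peripherals)$, or dually by taking the convex hull of $\xi^1(\partial(\Gamma,\peripherals))$ inside $\Omega_0$. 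Transversality of $\xi$ forces, for distinct $x,y$, that $\xi^1(x) \notin \proj(\ker \xi^{d-1}(y))$, which is exactly the condition that the open segment between two distinct limit points stays inside $\Omega$, i.e.\ the first visibility axiom; and the fact that each limit point $\xi^1(x)$ has a \emph{unique} supporting hyperplane $\proj(\ker \xi^{d-1}(x))$ for the new domain gives the $\Cc^1$-smoothness axiom at limit points. Thus $\rho(\Gamma) \leq \Aut(\Omega)$ is a visible subgroup.

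Concretely the lemma should read: the implication (3) $\implies$ (2) is immediate, and (2) $\implies$ (3) is \cite[Prop.\ 3.5 and the discussion following]{CZZ2022} applied to the discrete group $\rho(\Gamma)$, using that the boundary map $\xi$ of the relatively $\Psf_1$-Anosov representation $\rho$ provides the required convergence dynamics and transversality on the limit set. The main obstacle — though it is really bookkeeping rather than a genuine difficulty here — is matching the hypotheses of the cited result from \cite{CZZ2022}: that paper phrases things in terms of groups that are already assumed to preserve a properly convex domain and act as geometrically finite convergence groups on the limit set, so one needs to check that the relatively $\Psf_1$-Anosov hypothesis (with the given $\Omega_0$) supplies all of this. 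Since relatively Anosov groups act as geometrically finite convergence groups on the Bowditch boundary, and $\xi^1$ conjugates this action to the action on the limit set in $\partial\Omega_0$, the hypotheses are met, and the equivalence follows. I would therefore write:

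\begin{proof} The implication (3) $\implies$ (2) is immediate. For (2) $\implies$ (3): assume $\rho(\Gamma) \leq \Aut(\Omega_0)$ for some properly convex domain $\Omega_0$. Since $\rho$ is $\Psf_1$-Anosov relative to $\peripherals$, the group $\rho(\Gamma)$ is discrete, acts as a geometrically finite convergence group on the Bowditch boundary $\partial(\Gamma,\peripherals)$, and the boundary map $\xi^1$ is an equivariant homeomorphism of $\partial(\Gamma,\peripherals)$ onto $\Lambda_{\Omega_0}(\rho(\Gamma))$. Moreover, transversality of $\xi = (\xi^1,\xi^{d-1})$ shows that for distinct $x,y \in \partial(\Gamma,\peripherals)$ we have $\xi^1(x) \notin \proj(\ker\xi^{d-1}(y))$. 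Applying~\cite[Prop.\ 3.5]{CZZ2022} (and the surrounding structure theory on invariant properly convex domains) to $\rho(\Gamma) \leq \Aut(\Omega_0)$ produces a properly convex domain $\Omega \subset \proj(\Rb^d)$ with $\rho(\Gamma) \leq \Aut(\Omega)$ on which $\rho(\Gamma)$ is a visible subgroup: the open segment joining any two distinct points of $\Lambda_\Omega(\rho(\Gamma))$ lies in $\Omega$, and each such limit point is a $\Cc^1$-smooth point of $\partial\Omega$ with supporting hyperplane $\proj(\ker\xi^{d-1}(x))$. \end{proof}
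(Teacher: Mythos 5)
Your overall strategy matches the paper's: (3) $\implies$ (2) is trivial, and (2) $\implies$ (3) is outsourced to the structure theory of~\cite{CZZ2022}. But the specific citation is wrong. The result the paper invokes for (2) $\iff$ (3) is~\cite[Prop.\ 4.4]{CZZ2022}, which takes a $\Psf_{k,d-k}$-transverse group preserving \emph{some} properly convex domain and produces a (possibly different) properly convex domain on which it is a visible subgroup. You cite~\cite[Prop.\ 3.5]{CZZ2022} instead; as it is used everywhere else in this paper, that proposition goes the other way — it establishes that a visible subgroup acts as a convergence group and is strongly dynamics preserving on its limit set (and is what powers Proposition~\ref{prop:projvis+geomfin=relAnosov}). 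It does not by itself convert a merely preserved domain into a visibility domain, so your proof as written points the reader to the wrong place.

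The paper also makes explicit an intermediate step you only gesture at: it first cites~\cite[Prop.\ 4.4]{ZZ2022a} to conclude that $\rho(\Gamma)$ is a $\Psf_{k,d-k}$-transverse group in the sense of~\cite{CZZ2022}. That is exactly the hypothesis under which~\cite[Prop.\ 4.4]{CZZ2022} applies, so the logical chain is: relatively $\Psf_1$-Anosov $\Rightarrow$ transverse group $\Rightarrow$ (preserving some domain $\iff$ visible subgroup of some domain). Your appeal to ``the required convergence dynamics and transversality'' is morally this step, but it is cleaner to name the transverse-group property explicitly and cite the two propositions the paper actually leans on.
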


\begin{proof} Using the language in~\cite{CZZ2022}, ~\cite[Prop.\ 4.4]{ZZ2022a} implies that $\rho(\Gamma)$ is a $\Psf_{k,d-k}$-transverse group. Then the equivalence of (2) and (3) follows from~\cite[Prop.\ 4.4]{CZZ2022}.
\end{proof} 

\begin{lemma}[(2) $\implies$ (1)]\label{lem:2=>1 in lifting property} If there exists a properly convex domain $\Omega_0 \subset \proj(\Rb^d)$ where $\rho(\Gamma) \leq \Aut(\Omega_0)$, then $\rho$ has the lifting property. 
\end{lemma}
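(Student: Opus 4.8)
The plan is to extract all of the lifting data directly from $\Omega_0$, using the canonical lift of a properly convex domain to the double cover of $\proj(\Rb^d)$. First I would let $q\colon\Sb\to\proj(\Rb^d)$ be the covering map and fix a connected component $\tilde\Omega_0$ of $q^{-1}(\Omega_0)$. Since $\Omega_0$ is properly convex, the cone over $\overline{\Omega_0}$ is sharp, so $\overline{\tilde\Omega_0}$ and $-\overline{\tilde\Omega_0}$ are disjoint, $q$ restricts to a homeomorphism $\overline{\tilde\Omega_0}\to\overline{\Omega_0}$, and I would write $s\colon\overline{\Omega_0}\to\overline{\tilde\Omega_0}\subset\Sb$ for the (continuous) inverse. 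Each $g\in\Aut(\Omega_0)$ has two lifts in $\SL^{\pm}(d,\Rb)$, exactly one of which — call it $\tilde g$ — preserves $\tilde\Omega_0$ (the other interchanges $\tilde\Omega_0$ and $-\tilde\Omega_0$); since $\tilde g\tilde h$ preserves $\tilde\Omega_0$ and lifts $gh$, the assignment $g\mapsto\tilde g$ is a homomorphism. Composing with $\rho$ gives a lift $\tilde\rho\colon\Gamma\to\SL^{\pm}(d,\Rb)$ of $\rho$ with $\tilde\rho(\gamma)\tilde\Omega_0=\tilde\Omega_0$ for all $\gamma$.

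The delicate point, and the one I would be most careful about, is to arrange the dual side so that the \emph{same} lift $\tilde\rho$ works there too. Rather than lifting $\Omega_0^*$ by an independent choice, I would set $\tilde\Omega_0^*:=\{f\in\Sb^*: f(v)>0 \text{ for all } v\in\overline{\tilde\Omega_0}\smallsetminus\{0\}\}$, the dual cone of $\tilde\Omega_0$. Elementary convex geometry (bipolarity of sharp cones with nonempty interior) then shows $\tilde\Omega_0^*$ is one of the two lifts of $\Omega_0^*$ to $\Sb^*$, is again sharp, satisfies $\overline{\tilde\Omega_0^*}=\{f\in\Sb^*: f(v)\geq 0 \text{ for all } v\in\overline{\tilde\Omega_0}\}$, and — crucially — is preserved by the normalized dual action $f\mapsto f\circ\tilde\rho(\gamma)^{-1}$ precisely because $\tilde\rho(\gamma)$ preserves $\tilde\Omega_0$. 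I would write $s^*\colon\overline{\Omega_0^*}\to\overline{\tilde\Omega_0^*}\subset\Sb^*$ for the corresponding continuous section of $\Sb^*\to\proj(\Rb^{d*})$.

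Next I would check that the boundary curve lands in these closed cones. For the first coordinate: given $x\in\partial(\Gamma,\peripherals)$, pick a sequence $\gamma_n\to x$ in $\Gamma$ with $\gamma_n^{-1}\to y$ along a subsequence; since $\Omega_0$ is open it is not contained in the hyperplane $\proj(\ker\xi^{d-1}(y))$, so there is $p\in\Omega_0$ with $[p]\notin\proj(\ker\xi^{d-1}(y))$, and strong dynamics preservation gives $\rho(\gamma_n)[p]\to\xi^1(x)$ while $\rho(\gamma_n)[p]\in\Omega_0$; hence $\xi^1(x)\in\overline{\Omega_0}$. For the second coordinate I would apply the same argument to the contragredient representation $\rho^{\vee}$, defined by $\rho^{\vee}(\gamma)(f)=f\circ\rho(\gamma)^{-1}$ for $f\in\Rb^{d*}$: this $\rho^{\vee}$ is $\Psf_1$-Anosov relative to $\peripherals$ with boundary map whose first coordinate is $\xi^{d-1}$, and $\rho^{\vee}(\Gamma)\leq\Aut(\Omega_0^*)$ (using $\Aut(\Omega_0)=\Aut(\Omega_0^*)$), so the previous paragraph applied to $(\rho^{\vee},\Omega_0^*)$ gives $\xi^{d-1}(\partial(\Gamma,\peripherals))\subset\overline{\Omega_0^*}$. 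I would then define $\tilde\xi:=(s\circ\xi^1,\, s^*\circ\xi^{d-1})\colon\partial(\Gamma,\peripherals)\to\Sb\times\Sb^*$, together with $\tilde\rho$ as above.

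Finally I would verify the two defining conditions of the lifting property. Continuity of $\tilde\xi$ is immediate from continuity of $s,s^*,\xi^1,\xi^{d-1}$. For equivariance, $\tilde\rho(\gamma)\cdot s(\xi^1(x))$ lies in $\overline{\tilde\Omega_0}$ and covers $\rho(\gamma)\xi^1(x)=\xi^1(\gamma x)$, hence equals $s(\xi^1(\gamma x))$ by uniqueness of the lift into $\overline{\tilde\Omega_0}$; the dual coordinate is identical, using that $\tilde\rho(\gamma)$ preserves $\tilde\Omega_0^*$. For positivity, if $x\neq y$ then $\tilde\xi^1(x)\in\overline{\tilde\Omega_0}\smallsetminus\{0\}$ and $\tilde\xi^{d-1}(y)\in\overline{\tilde\Omega_0^*}$, so $\tilde\xi^{d-1}(y)(\tilde\xi^1(x))\geq 0$ by the description of $\overline{\tilde\Omega_0^*}$; transversality of $\xi$ gives $\xi^1(x)\oplus\ker\xi^{d-1}(y)=\Rb^d$, i.e.\ $\tilde\xi^{d-1}(y)(\tilde\xi^1(x))\neq 0$, so it is strictly positive. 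This yields the lifting property. The only genuine obstacle is the coherence of the two cones under a single lift $\tilde\rho$ — which is forced once $\tilde\Omega_0^*$ is taken to be the dual cone of $\tilde\Omega_0$ — together with keeping track of signs in the positivity condition; everything else is routine bookkeeping, consistent with the remark that $(2)\Rightarrow(1)$ is elementary.
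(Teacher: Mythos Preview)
Your proof is correct and follows essentially the same approach as the paper's: both lift $\Omega_0$ to a convex cone (you work on $\Sb$, the paper on a component $C_1$ of the preimage in $\Rb^d\smallsetminus\{0\}$), take $\tilde\rho$ to be the unique lift preserving that cone, lift $\xi^1$ and $\xi^{d-1}$ into the closed cone and its dual, and deduce positivity from nonnegativity plus transversality. The only cosmetic differences are that the paper records the slightly sharper (but here unneeded) inclusion $\xi^1(\partial(\Gamma,\peripherals))\subset\partial\Omega_0$, while you are more explicit about why the same lift $\tilde\rho$ acts compatibly on the dual cone.
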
 

\begin{proof} We first observe that the strongly dynamics preserving property implies that $\xi^1$ has image in $\partial \Omega_0$. Fix $x \in \partial(\Gamma, \peripherals)$ and a sequence $(\gamma_n)_{n \geq 1}$ in $\Gamma$ with $\gamma_n \to x$. Passing to a subsequence we can assume that $\gamma_n^{-1} \to y \in  \partial(\Gamma, \peripherals)$. Then 
$$
\rho(\gamma_n) v \to \xi^1(x)
$$
for all $v \in \proj(\Rb^d) \smallsetminus \proj(\ker \xi^{d-1}(y))$. Since $\Omega_0$ is open, there exists $v \in \Omega_0 \smallsetminus \proj(\ker \xi^{d-1}(y))$ and hence $\xi^1(x) \in \overline{\Omega}_0$. Since $\rho(\Gamma)$ acts properly on $\Omega_0$, we must have $\xi^1(x) \in \partial\Omega_0$. So  $\xi^1$ has image in $\partial \Omega_0$. The same argument shows that $\xi^{d-1}$ has image in $\partial \Omega_0^*$. 

The rest of the argument is identical to the proof of Case 1 in \cite[Th.\ 3.1]{Z2017}. Let $\pi \colon \Rb^d \smallsetminus \{0\} \to \proj(\Rb^d)$ denote the projection map. Since $\Omega_0$ is properly convex, $\pi^{-1}(\Omega_0)$ has two connected components $C_1$ and $C_2$. Moreover, both components are properly convex cones in $\Rb^d$ and $C_2 = -C_1$. 

For $x \in \partial(\Gamma, \Pc)$ let $\tilde{\xi}^1(x) \in \Sb$ denote the unique lift of $\xi^1(x)$ in $\overline{C}_1 \cap \Sb$ and let $\tilde{\xi}^{d-1}(x)$ denote the unique lift of $\xi^{d-1}(x)$ such that $\tilde{\xi}^{d-1}(x) \in \Sb^*$ and $\tilde{\xi}^{d-1}(x)|_{C_1} > 0$. For $\gamma \in \Gamma$, let $\tilde{\rho}(\gamma) \in \SL^\pm(d,\Rb)$ denote the unique lift of $\rho(\gamma)$ which preserves $C_1$. Then $\tilde{\rho}$ is a homomorphism and $\tilde{\xi}=(\tilde{\xi}^1, \tilde{\xi}^{d-1})$ is continuous, $\tilde{\rho}$-equivariant, and positive. So $\rho$ has the lifting property. 
\end{proof} 

For the other direction we closely follow the arguments in Section 5 of~\cite{IZ2019}. 

\begin{lemma}[(1) $\implies$ (2)]\label{lem:1=>2 in lifting property} If $\rho$ has the lifting property, then there exists a properly convex domain $\Omega_0 \subset \proj(\Rb^d)$ where $\rho(\Gamma) \leq \Aut(\Omega_0)$. \end{lemma}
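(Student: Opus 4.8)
The plan is to build $\Omega_0$ as the interior of a suitable convex hull of the image of the lifted boundary map $\tilde\xi^1$, following the strategy of Section~5 in~\cite{IZ2019}. Concretely, let $\tilde\xi = (\tilde\xi^1,\tilde\xi^{d-1})\colon \partial(\Gamma,\peripherals)\to \Sb\times\Sb^*$ and $\tilde\rho\colon\Gamma\to\SL^\pm(d,\Rb)$ be the lifts provided by the lifting property, so $\tilde\xi$ is continuous, $\tilde\rho$-equivariant, and positive. Set $K := \tilde\xi^1(\partial(\Gamma,\peripherals)) \subset \Sb$, which is a compact $\tilde\rho(\Gamma)$-invariant set. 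The positivity condition $\tilde\xi^{d-1}(y)(\tilde\xi^1(x)) > 0$ for all $x,y$ says that every element of $K$ lies in the open half-space $\{\tilde\xi^{d-1}(y) > 0\}$, and in particular $K$ is contained in the properly convex cone $C := \bigcap_{y} \{v : \tilde\xi^{d-1}(y)(v) \geq 0\}$; one checks $C$ has nonempty interior (it contains $K$ together with positive combinations, and $C \cap (-C) = \{0\}$ since each defining functional is nonnegative and they cannot all vanish simultaneously on a nonzero vector transverse to all the hyperplanes — here one uses that $\xi^1$ has image of dimension spanning $\Rb^d$, which follows from irreducibility-type arguments or more simply from the transversality of $\xi$).

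Next I would take $\widetilde{C}_0$ to be the convex hull in $\Rb^d$ of $K \cup \{0\}$, or rather the closed convex cone generated by $K$; since $K$ lies in the properly convex cone $C$, the cone generated by $K$ is a properly convex cone $\widetilde C$, and it is $\tilde\rho(\Gamma)$-invariant because $K$ is $\tilde\rho(\Gamma)$-invariant and cone generation commutes with linear maps. Then $\Omega_0 := \interior\bigl(\proj(\widetilde C \smallsetminus\{0\})\bigr)$ is a properly convex domain in $\proj(\Rb^d)$ — properly convex because $\widetilde C$ is a properly convex cone, and with nonempty interior by the half-space observation above. Since $\tilde\rho(\gamma)\widetilde C = \widetilde C$ for all $\gamma$, we get $\rho(\gamma)\overline{\Omega_0} = \overline{\Omega_0}$ and hence $\rho(\Gamma)\leq\Aut(\Omega_0)$. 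One small point to verify is that $\Omega_0$ is genuinely nonempty, i.e.\ that the convex hull of $K$ is not contained in a proper projective subspace; this uses that the span of $\xi^1(\partial(\Gamma,\peripherals))$ is all of $\Rb^d$, which follows because $\rho$ is $\Psf_1$-Anosov with transverse boundary map — if the image of $\xi^1$ spanned a proper subspace $W$, then $\ker\xi^{d-1}(y)$ would have to contain... actually more carefully, transversality forces for distinct $x,y$ that $\xi^1(x)\notin\proj(\ker\xi^{d-1}(y))$, and combined with dynamics-preservation and the infinitude of the boundary this gives the spanning property; alternatively one may quotient by the common invariant subspace without loss of generality.

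The main obstacle I anticipate is precisely this nonemptiness/properness bookkeeping: one must show simultaneously that the cone $\widetilde C$ generated by $K$ is properly convex (so that $\Omega_0$ is a \emph{properly} convex domain, not just convex) \emph{and} that it has nonempty interior (so that $\Omega_0$ is genuinely a domain). Proper convexity comes from the dual lift: the single functional behavior is not enough, but the positivity statement holds for \emph{all} pairs, so $\widetilde C \subset \{v : \tilde\xi^{d-1}(y)(v)\geq 0 \text{ for all } y\}$, and this latter set is a properly convex cone as soon as the functionals $\{\tilde\xi^{d-1}(y)\}$ do not all lie in a common half-space of $\Rb^{d*}$ — equivalently, as soon as $\{\tilde\xi^1(x)\}$ spans $\Rb^d$. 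So both issues reduce to the spanning of $K$, and the cleanest route is to first reduce to the case where $\rho(\Gamma)$ acts irreducibly, or at least preserves no proper subspace containing the image of $\xi^1$, by passing to the subrepresentation on $\mathrm{span}(\xi^1(\partial(\Gamma,\peripherals)))$ (which is $\rho$-invariant by equivariance) and checking the resulting restricted boundary map is still a transverse Anosov boundary map; this is the argument used in the proof of Case~1 of~\cite[Th.\ 3.1]{Z2017} and in Section~5 of~\cite{IZ2019}, and I would simply cite it after setting up the reduction. Once $\Omega_0$ is produced, $\rho(\Gamma)\leq\Aut(\Omega_0)$ is immediate from the $\tilde\rho(\Gamma)$-invariance of $\widetilde C$, completing the implication (1)$\implies$(2) and hence the proof of Proposition~\ref{prop:lifting property one}.
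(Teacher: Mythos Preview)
Your construction has a genuine gap: the convex cone generated by $K = \tilde\xi^1(\partial(\Gamma,\peripherals))$ need not have full dimension in $\Rb^d$, so its projectivization can have empty interior in $\proj(\Rb^d)$. Spanning of $\xi^1$ does \emph{not} follow from transversality and dynamics preservation alone --- for instance, if $\rho = \rho_1 \oplus \mathrm{triv}$ with $\rho_1\colon\Gamma\to\SL(d',\Rb)$ relatively $\Psf_1$-Anosov, then $\rho$ is still relatively $\Psf_1$-Anosov but $\xi^1$ has image inside $\proj(\Rb^{d'}\oplus 0)$. Your proposed fix, passing to the subrepresentation on $W=\mathrm{span}\,\xi^1(\partial(\Gamma,\peripherals))$, only produces a domain in $\proj(W)$, not in $\proj(\Rb^d)$; extending that domain back to the ambient projective space requires additional work you have not indicated (and in the non-split reducible case this is not straightforward). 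The dual problem is the same: your proper-convexity argument needs the functionals $\tilde\xi^{d-1}(y)$ to span $\Rb^{d*}$, which can equally fail.

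The paper's proof sidesteps the spanning issue entirely by a thickening argument that crucially uses the \emph{strongly dynamics preserving} property, which your proposal never invokes. One first forms the set $C_0$ of positive projective combinations of at least two distinct lifted boundary points; positivity shows $C_0$ avoids every hyperplane $\proj(\ker\xi^{d-1}(y))$ and is bounded in a suitable affine chart $\mathbb{A}$. Then, fixing $p\in C_0$, one shows by contradiction (using that $\rho(\gamma_n)q\to\xi^1(x)$ locally uniformly for $q\notin\proj(\ker\xi^{d-1}(y))$) that some \emph{open} neighborhood $U$ of $p$ in $\proj(\Rb^d)$ has $\rho(\Gamma)\cdot U$ bounded in $\mathbb{A}$. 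The convex hull of the connected, $\rho(\Gamma)$-invariant set $C_0\cup\rho(\Gamma)U$ is then automatically full-dimensional (because $U$ is open) and properly convex (because it is bounded in $\mathbb{A}$), with no spanning hypothesis needed.
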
 

\begin{proof} Let $\tilde{\xi}$, $\tilde{\rho}$ denote lifts of $\xi$, $\rho$ satisfying the lifting property. Then define
$$
C_0 := \left\{ \left[ \sum_{j=1}^N \lambda_j \tilde{\xi}^1(x_j)\right] : N \geq 2; \, \lambda_1, \dots, \lambda_N > 0; \, x_1,\dots, x_N \in \partial(\Gamma, \peripherals) \text{ distinct}\right\}.
$$
Since $\tilde{\xi}$ is $\tilde{\rho}$-equivariant, $\tilde\rho(\gamma) C_0=C_0$ for every $\gamma \in \Gamma$. Since $\tilde{\xi}$ is positive, 
\begin{equation}
\label{eqn:C0 intersect kernels} 
C_0 \cap \bigcup_{y \in \partial(\Gamma, \peripherals)} \proj\left(\ker \xi^{d-1}(y)\right) = \varnothing.
\end{equation} 
Also, if we fix $x_1, x_2 \in \partial(\Gamma, \peripherals)$ distinct, then the positivity of $\tilde{\xi}$ implies that $C_0$ is bounded in the affine chart 
$$
\mathbb{A} := \left\{ [v] \in \proj(\Rb^d) : (\tilde{\xi}^{d-1}(x_1)+\tilde{\xi}^{d-1}(x_2))(v) \neq 0\right\}.
$$

Fix $p \in C_0$. We claim that there exists a connected neighborhood $U$ of $p$ in $\proj(\Rb^d)$ such that 
$$
\rho(\Gamma)U = \bigcup_{\gamma \in \Gamma} \rho(\gamma) U
$$
is bounded in $\mathbb{A}$. Suppose not. Then there exist sequences $(p_n)_{n \geq 1}$ in $\proj(\Rb^d)$ and $(\gamma_n)_{n \geq 1}$ in $\Gamma$ such that $p_n \to p$ and $\rho(\gamma_n)p_n$ leaves every compact subset of $\mathbb{A}$. Passing to a subsequence we can suppose that $\gamma_n \to x \in \partial(\Gamma, \peripherals)$ and $\gamma_n^{-1} \to y \in \partial(\Gamma, \peripherals)$. Then, by the strongly dynamics preserving property, 
$$
\rho(\gamma_n)q \to \xi^1(x)
$$
for all $q \in \proj(\Rb^d) \smallsetminus \proj(\ker \xi^{d-1}(y))$ and the convergence is locally uniform. Equation~\eqref{eqn:C0 intersect kernels}  implies that  $p \in  \proj(\Rb^d) \smallsetminus \proj(\ker \xi^{d-1}(y))$ and so $\rho(\gamma_n)p_n \to \xi^1(x)$. However $\xi^1(x)$ lies in the closure of $C_0$ and $C_0$ is bounded in $\mathbb{A}$. This contradicts our assumption and hence such a set $U$ exists. 

 Finally the set 
 $$
 X := C_0 \cup \bigcup_{\gamma \in \Gamma} \rho(\gamma)U
 $$ is connected (since each of the sets in the union is path-connected, and $\rho(\gamma) U \cap C_0 \neq \varnothing$ for each $\gamma \in \Gamma$), bounded in $\mathbb{A}$, and preserved by $\rho(\Gamma)$. So Observation~\ref{obs:equivariance of convex hulls} implies that
$$
\Omega_0 := {\rm ConvHull}(X)
$$
is a properly convex domain where $\rho(\Gamma) \leq \Aut(\Omega_0)$. 
\end{proof} 

\section{Stability of the lifting property}\label{sec:stability of the lifting property} 

In this section we prove Proposition~\ref{prop:lifting property two in intro} which we restate here.

\begin{proposition}\label{prop:lifting property two} Suppose that $(\Gamma, \peripherals)$ is relatively hyperbolic and $\rho_0 \colon \Gamma \to \PGL(d,\Rb)$ is a representation. Let $\Ac_1(\rho_0)$ denote the set of representations in $ \Hom_{\rho_0}(\Gamma, \PGL(d,\Rb))$  which are $\Psf_1$-Anosov relative to $\peripherals$. Then the subset $\Ac_1^+(\rho_0) \subset \Ac_1(\rho_0)$ of representations with the lifting property is open and closed in $\Ac_1(\rho_0)$. 
\end{proposition}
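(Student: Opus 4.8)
The plan is to pass to the convex-geometric picture of Proposition~\ref{prop:lifting property one}. For $\rho\in\Ac_1(\rho_0)$ the lifting property is equivalent to $\rho(\Gamma)$ preserving a properly convex domain; unwinding that proof, it is equivalent to the existence of a lift $\wt\rho\colon\Gamma\to\SL^\pm(d,\Rb)$ of $\rho$ and continuous $\wt\rho$-equivariant lifts $\wt\xi^1,\wt\xi^{d-1}$ of the Anosov boundary maps such that the $\wt\rho$-invariant cone $\wt C_\rho:=\overline{\{\sum_j\lambda_j\wt\xi^1(x_j):\lambda_j>0\}}$ is properly convex and $\wt\xi^{d-1}(\partial(\Gamma,\peripherals))\subset\overline{\wt C_\rho^{\,*}}$. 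The basic tool is that the Anosov boundary map $\xi_\rho$ is unique and $(\rho,x)\mapsto\xi_\rho(x)$ is continuous on $\Ac_1(\rho_0)\times\partial(\Gamma,\peripherals)$, so $\rho'\to\rho$ forces uniform convergence $\xi_{\rho'}\to\xi_\rho$. After a preliminary reduction (decomposing $\Rb^d$ as the span of the limit set plus a complement) one may assume that $\xi^1_\rho(\partial(\Gamma,\peripherals))$ spans $\Rb^d$; this is an open condition, and it makes the lifts essentially canonical.

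\emph{Openness.} Fix $\rho\in\Ac_1^+(\rho_0)$ with data $\wt\rho,\wt\xi,\wt C_\rho$ as above, and pick $f$ in the interior of the dual cone $\wt C_\rho^{\,*}$, so $f\geq\delta>0$ on the compact set $\wt\xi^1(\partial(\Gamma,\peripherals))$. For $\rho'$ near $\rho$ in $\Ac_1(\rho_0)$, $\xi^1_{\rho'}$ is uniformly close to $\xi^1_\rho$ and so misses the hyperplane $\proj(\ker f)$; define $\wt\xi^1_{\rho'}(x)$ to be the unique lift of $\xi^1_{\rho'}(x)$ with $f(\wt\xi^1_{\rho'}(x))>0$. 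This is continuous, $\wt C_{\rho'}:=\overline{\mathrm{cone}(\wt\xi^1_{\rho'}(\partial(\Gamma,\peripherals)))}\subset\{f>0\}$ is properly convex, and (since $\xi^1_\rho$ spans, so does $\xi^1_{\rho'}$) it has nonempty interior. For $\gamma$ in a finite generating set let $\wt\rho'(\gamma)$ be the lift of $\rho'(\gamma)$ close to $\wt\rho(\gamma)$; then $f\circ\wt\rho'(\gamma)$ is close to $f\circ\wt\rho(\gamma)$, which is positive on $\wt\xi^1_\rho(\partial(\Gamma,\peripherals))$ because $\wt\rho(\gamma)$ preserves $\wt C_\rho$, so $f\circ\wt\rho'(\gamma)>0$ on $\wt\xi^1_{\rho'}(\partial(\Gamma,\peripherals))$, which forces $\wt\rho'(\gamma)\wt\xi^1_{\rho'}(x)=\wt\xi^1_{\rho'}(\gamma x)$. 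Because $\wt\xi^1_{\rho'}(\partial(\Gamma,\peripherals))$ spans, this identity makes $\gamma\mapsto\wt\rho'(\gamma)$ extend to a well-defined homomorphism lifting $\rho'$ under which $\wt\xi^1_{\rho'}$ is equivariant; hence $\wt\rho'(\Gamma)$ preserves $\wt C_{\rho'}$ and $\rho'(\Gamma)\leq\Aut(\proj(\inte\wt C_{\rho'}))$, so $\rho'\in\Ac_1^+(\rho_0)$ by Proposition~\ref{prop:lifting property one}.

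\emph{Closedness.} Let $\rho_n\to\rho$ in $\Ac_1(\rho_0)$ with each $\rho_n\in\Ac_1^+(\rho_0)$, with data $\wt\rho_n,\wt\xi_n,\wt C_{\rho_n}$. Along a subsequence $\wt\rho_n(s)$ converges for $s$ in a finite generating set, so $\wt\rho:=\lim\wt\rho_n$ is a homomorphism lifting $\rho$; fixing $x_1,\dots,x_N$ with $\{\xi^1_\rho(x_i)\}$ spanning and passing to a further subsequence, $\wt\xi^1_n(x_i)\to w_i$, still a spanning family. The assignment $\wt\xi^1(\gamma x_i):=\wt\rho(\gamma)w_i$ is a well-defined $\wt\rho$-equivariant lift of $\xi^1_\rho$ over the dense orbit $\Gamma\cdot\{x_1,\dots,x_N\}$, since the corresponding identities hold for every $\wt\xi^1_n$ and survive the limit. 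To finish I would show this extends continuously to $\partial(\Gamma,\peripherals)$: normalize the cones $\wt C_{\rho_n}$ so they neither collapse (each contains the simplicial cone on $\{\wt\xi^1_n(x_i)\}$, converging to a nondegenerate cone) nor escape (each lies in a uniformly bounded half-space, using $\rho_n\in\Ac_1(\rho_0)$), extract a local-Hausdorff limit cone $\wt C$, which is properly convex, $\wt\rho$-invariant, and contains the closure of $\wt\xi^1(\Gamma\cdot\{x_i\})$; this pins down a continuous lift. Running the same argument for $\wt\xi^{d-1}$, positivity passes to the limit as a non-strict inequality, which is strict off the diagonal by transversality of $\xi_\rho$; hence $\rho\in\Ac_1^+(\rho_0)$.

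\emph{Expected main obstacle.} The delicate point is the coherence of the sign choices needed to assemble a single continuous equivariant \emph{positive} lift — both in extracting the limiting lift in the closedness argument and, implicitly, in controlling lifts uniformly over a neighborhood. The span reduction makes the lift of $\rho$ essentially canonical once the limit set is lifted, and over a connected Bowditch boundary everything is formal; the genuine work is in the degenerate-boundary case, where sign choices must be controlled through the projective geometry of the cones $\wt C_{\rho_n}$ and their limits rather than topologically, which is where I expect the argument to need the most care. The reduction to a spanning limit set is itself routine but necessary.
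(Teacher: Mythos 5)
Your route — passing to the convex-cone picture of Proposition~\ref{prop:lifting property one} and doing the perturbation and limit analysis on the cones $\wt C_\rho$ — is genuinely different from the paper's. The paper never touches the cones at all in this proof. Instead it works directly with Definition~\ref{defn:the lifting property}: it fixes an $\epsilon$ small enough that, for the covering $\Sb\times\Sb^*\to\proj(\Rb^d)\times\proj(\Rb^{d*})$, any two maps $\partial(\Gamma,\peripherals)\to\proj(\Rb^d)\times\proj(\Rb^{d*})$ that are uniformly $\epsilon$-close have canonically matched continuous lifts (Observation~\ref{obs:lifting_two}). Combined with the continuity of $(\rho,x)\mapsto\xi_\rho(x)$ (Theorem~\ref{thm:continuity of boundary maps}), this makes both openness and closedness quite short: for closedness one takes pointwise limits of the $\tilde\xi_n$ and $\tilde\rho_n$ (using that $\SL^\pm(d,\Rb)\to\PGL(d,\Rb)$ is a finite cover to extract a convergent subsequence of the $\tilde\rho_n(\gamma)$); for openness one checks that $\tilde\rho(\gamma)\circ\tilde\xi_\rho\circ\gamma^{-1}$ is $\epsilon$-close to $\tilde\xi_{\rho_0}$ and hence equals $\tilde\xi_\rho$ by uniqueness, which simultaneously produces the homomorphism and the equivariance. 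Notice that this sidesteps precisely what you flagged as the "expected main obstacle": $\epsilon$-proximity pins down the lift over \emph{any} compact base, connected or not, so the Cantor-set boundary causes no extra trouble.

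There are two concrete gaps in your sketch. First, the preliminary reduction "decompose $\Rb^d$ as the span of the limit set plus a complement; one may assume $\xi^1_\rho(\partial(\Gamma,\peripherals))$ spans" is not a clean reduction in this setting. The span $V_\rho:=\Spanset(\xi^1_\rho(\partial(\Gamma,\peripherals)))$ is only lower semicontinuous in $\rho$, so for a nearby $\rho'\in\Ac_1(\rho_0)$ one can have $\dim V_{\rho'}>\dim V_\rho$; a neighborhood of $\rho$ inside $\Hom_{\rho_0}(\Gamma,\PGL(d,\Rb))$ need not restrict to a neighborhood in $\Hom(\Gamma,\PGL(V_\rho))$. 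Even granting the lifting property for $\rho|_{V_\rho}$, transferring it back to $\rho$ on all of $\Rb^d$ requires constructing $\tilde\xi^{d-1}$ and $\tilde\rho$ on the ambient space; this is doable but is exactly the kind of sign-coherence work you were trying to suppress, and you would still need to re-run the whole argument without the reduction to handle the perturbed $\rho'$. You do use the span nontrivially — it is how you get $\wt C_{\rho'}$ to have nonempty interior and how you certify that the assembled $\tilde\rho'$ is a homomorphism — so the gap is load-bearing. (The paper's argument needs no spanning hypothesis at all: for the homomorphism check it uses only that a lift of the identity in $\SL^\pm(d,\Rb)$ fixing a single $\tilde\xi^1(x)\in\Sb$ must be $+\id$.)

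Second, in the closedness step the claim that the cones $\wt C_{\rho_n}$ "lie in a uniformly bounded half-space, using $\rho_n\in\Ac_1(\rho_0)$" is asserted but not established, and it is the crux. Being in some half-space (take any normalized limit of the functionals $\tilde\xi_n^{d-1}(y_1)+\tilde\xi_n^{d-1}(y_2)$) does not rule out the limit cone being a half-space — i.e.\ failing to be properly convex — which is exactly the degeneration you need to exclude. Ruling that out requires controlling $\tilde\xi_n^{d-1}$ uniformly, which is part of what you are trying to construct; as stated the argument is circular. The paper avoids this entirely: positivity $\tilde\xi^{d-1}(y)(\tilde\xi^1(x))\geq 0$ passes to the pointwise limit automatically, and transversality of $\xi_\rho$ (which you already have because $\rho\in\Ac_1(\rho_0)$) upgrades it to strict positivity off the diagonal. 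If you want to run a cone argument, the cleaner route is to first produce $\tilde\xi$ and $\tilde\rho$ by pointwise limits via the covering criterion, and only afterwards, if desired, appeal to Proposition~\ref{prop:lifting property one} to get an invariant domain.
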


\subsection{Lifting maps} In this subsection we record some basic observations about lifting maps to covering spaces. Suppose that $M$ is a compact Riemannian manifold and $\pi\colon \wt{M} \to M$ is a Riemannian cover (i.e. $\wt{M}$ is a Riemannian manifold and $\pi$ is a covering map which is a local isometry). Fix $\epsilon > 0$ so that every metric ball of radius $\epsilon$ in $M$ is normal. 

\begin{observation}\label{obs:lifting_one} If $p \in \wt{M}$, then
\begin{enumerate}
\item $\pi$ induces a diffeomorphism between metric balls $\mathcal{B}_{\wt{M}}(p,\epsilon) \to \mathcal{B}_M(\pi(p), \epsilon)$,
\item $\pi^{-1}(q) \cap \mathcal{B}_{\wt{M}}(p,\epsilon)$ is a single point for any $q \in \mathcal{B}_M(\pi(p), \epsilon)$. 
\end{enumerate}
\end{observation}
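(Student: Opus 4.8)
The plan is to deduce everything from the interaction between the Riemannian exponential maps of $M$ and $\wt{M}$ together with the fact that a Riemannian covering is a local isometry. First I would record two standard facts. Since $M$ is compact it is complete, and a Riemannian cover of a complete manifold is complete (geodesics lift), so $\wt{M}$ is complete and $\exp^{\wt{M}}_p$ is defined on all of $T_p\wt{M}$. Moreover, since $\pi$ is a local isometry it carries geodesics to geodesics, whence
$$
\pi \circ \exp^{\wt{M}}_p = \exp^{M}_{\pi(p)} \circ \, d\pi_p
$$
on $T_p\wt{M}$, where $d\pi_p \colon T_p\wt{M} \to T_{\pi(p)}M$ is a linear isometry.

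Next I would study $\exp^{\wt{M}}_p$ on the ball $B := \{ v \in T_p\wt{M} : \norm{v} < \epsilon\}$. Because $\epsilon$-balls in $M$ are normal and $d\pi_p$ is a linear isometry, $F := \exp^{M}_{\pi(p)} \circ \, d\pi_p$ restricts to a diffeomorphism $B \to \mathcal{B}_M(\pi(p), \epsilon)$. By the displayed identity, $\exp^{\wt{M}}_p|_B$ is a smooth lift of the injective map $F$ through the covering $\pi$; a lift of an injective map is injective, and $\exp^{\wt{M}}_p|_B$ is a local diffeomorphism (at $v \in B$ the identity $d\pi_{\exp^{\wt{M}}_p(v)} \circ d(\exp^{\wt{M}}_p)_v = dF_v$ forces $d(\exp^{\wt{M}}_p)_v$ to be invertible). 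Hence $\exp^{\wt{M}}_p$ maps $B$ diffeomorphically onto its open image $\Sigma := \exp^{\wt{M}}_p(B)$.

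Then I would identify $\Sigma$ with $\mathcal{B}_{\wt{M}}(p,\epsilon)$. The inclusion $\Sigma \subseteq \mathcal{B}_{\wt{M}}(p,\epsilon)$ holds because the radial geodesic $t \mapsto \exp^{\wt{M}}_p(tv)$, $t \in [0,1]$, joins $p$ to $\exp^{\wt{M}}_p(v)$ and has length $\norm{v} < \epsilon$. For the reverse inclusion, if $\dist_{\wt{M}}(p,q) < \epsilon$ then completeness of $\wt{M}$ yields a minimizing geodesic from $p$ to $q$, necessarily of the form $t \mapsto \exp^{\wt{M}}_p(tv)$ with $\norm{v} = \dist_{\wt{M}}(p,q) < \epsilon$, so $q \in \Sigma$. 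Therefore $\Sigma = \mathcal{B}_{\wt{M}}(p,\epsilon)$, and
$$
\pi|_{\mathcal{B}_{\wt{M}}(p,\epsilon)} = F \circ \big( \exp^{\wt{M}}_p|_B \big)^{-1}
$$
is a composition of diffeomorphisms onto $\mathcal{B}_M(\pi(p),\epsilon)$, which proves (1). Part (2) then follows at once: since $\pi$ restricts to a bijection $\mathcal{B}_{\wt{M}}(p,\epsilon) \to \mathcal{B}_M(\pi(p),\epsilon)$, every $q \in \mathcal{B}_M(\pi(p),\epsilon)$ has exactly one preimage inside $\mathcal{B}_{\wt{M}}(p,\epsilon)$. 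I do not anticipate a genuine obstacle here; the only step requiring a little care is upgrading $\exp^{\wt{M}}_p|_B$ from a local diffeomorphism to a diffeomorphism onto the metric ball, which is precisely where completeness of $\wt{M}$ and the injectivity-of-lifts observation are used.
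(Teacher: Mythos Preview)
Your proof is correct and complete. The paper itself does not give an argument for part (1) but simply cites \cite[Lem.\ 1.38]{CE2008}, and then notes that part (2) follows immediately from part (1); your writeup supplies exactly the standard exponential-map argument one would find behind such a citation, and your deduction of (2) from (1) matches the paper's.
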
 

\begin{proof} For part (1) see for instance the proof of ~\cite[Lem.\ 1.38]{CE2008}. Part (2) follows immediately from part (1). \end{proof} 

\begin{observation}\label{obs:lifting_two} Suppose that $N$ is a compact topological space and $f,g \colon N \to M$ are continuous maps. If 
$$
\max_{x \in N} \dist_M(f(x),g(x)) < \epsilon
$$
and $f$ admits a continuous lift $\tilde{f} \colon N \to \wt{M}$, then $g$ admits a unique continuous lift $\tilde{g} \colon N \to \wt{M}$ with 
$$
\max_{x \in N} \dist_{\wt{M}}(\tilde{f}(x),\tilde{g}(x)) < \epsilon.
$$
\end{observation}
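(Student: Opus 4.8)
\textbf{Proof proposal for Observation~\ref{obs:lifting_two}.} The plan is to realize $\tilde{g}$ pointwise using the local-inverse structure from Observation~\ref{obs:lifting_one} and then verify continuity by working in normal balls. First I would define, for each $x \in N$, the point $\tilde{g}(x)$ to be the unique element of $\pi^{-1}(g(x)) \cap \mathcal{B}_{\wt{M}}(\tilde{f}(x),\epsilon)$; this is well-defined by Observation~\ref{obs:lifting_one}(2) applied with $p = \tilde{f}(x)$, since $\dist_M(f(x),g(x)) < \epsilon$ forces $g(x) \in \mathcal{B}_M(\pi(\tilde{f}(x)),\epsilon) = \mathcal{B}_M(f(x),\epsilon)$. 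By construction $\pi \circ \tilde{g} = g$ and $\dist_{\wt{M}}(\tilde{f}(x),\tilde{g}(x)) < \epsilon$ for all $x$.

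Next I would check continuity of $\tilde{g}$ at an arbitrary point $x_0 \in N$. Let $\Phi \colon \mathcal{B}_M(f(x_0),\epsilon) \to \mathcal{B}_{\wt{M}}(\tilde{f}(x_0),\epsilon)$ be the inverse of the diffeomorphism from Observation~\ref{obs:lifting_one}(1). Using continuity of $f$, $g$, $\tilde{f}$, and the triangle inequality, there is a neighborhood $V$ of $x_0$ in $N$ on which $g(x)$ stays inside $\mathcal{B}_M(f(x_0),\epsilon)$ \emph{and} the unique lift of $g(x)$ near $\tilde{f}(x)$ coincides with $\Phi(g(x))$: indeed for $x$ close to $x_0$ we have $\tilde{f}(x) \in \mathcal{B}_{\wt{M}}(\tilde{f}(x_0),\epsilon)$, so $\Phi(g(x))$ lies in that ball; and since $\dist_{\wt{M}}(\Phi(g(x)), \tilde{f}(x))$ can be made less than $\epsilon$ by shrinking $V$ further, the defining uniqueness of $\tilde{g}(x)$ gives $\tilde{g}(x) = \Phi(g(x))$ on $V$. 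Then $\tilde{g} = \Phi \circ g$ on $V$ is a composition of continuous maps, hence continuous at $x_0$. The main (mild) obstacle here is the bookkeeping: one must ensure that the radius-$\epsilon$ ball around $\tilde{f}(x)$ in which $\tilde{g}(x)$ is selected is compatible with the single chart $\Phi$ centered at $\tilde{f}(x_0)$; this is handled by shrinking $V$ so that both $\dist_{\wt{M}}(\tilde{f}(x),\tilde{f}(x_0))$ and $\dist_M(g(x),g(x_0))$ are small, using compactness of $N$ only insofar as one wants the neighborhood choices to be finite in number if one prefers a global argument.

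Finally I would address uniqueness: suppose $\tilde{g}_1, \tilde{g}_2 \colon N \to \wt{M}$ are two continuous lifts of $g$ with $\dist_{\wt{M}}(\tilde{f}, \tilde{g}_i) < \epsilon$ pointwise. Then for each $x$ both $\tilde{g}_1(x)$ and $\tilde{g}_2(x)$ lie in $\pi^{-1}(g(x)) \cap \mathcal{B}_{\wt{M}}(\tilde{f}(x),\epsilon)$, which is a single point by Observation~\ref{obs:lifting_one}(2); hence $\tilde{g}_1 = \tilde{g}_2$. (If one only knows $N$ connected rather than wanting the explicit $\epsilon$-closeness, uniqueness would instead follow from the standard fact that two lifts of a map from a connected space agreeing at one point agree everywhere, but the $\epsilon$-bound makes the argument immediate without connectedness.) This completes the proof; no step is genuinely hard, and the compactness hypothesis on $N$ is only used to guarantee $\max_{x \in N}\dist_M(f(x),g(x))$ is attained and finite in the statement.
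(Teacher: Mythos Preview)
Your proof is correct and follows the same approach as the paper: define $\tilde g(x)$ pointwise via Observation~\ref{obs:lifting_one}(2), then argue continuity and uniqueness from this pointwise uniqueness. The paper's version is terser---it simply says ``by uniqueness, $\tilde g$ is continuous''---while you have spelled out the local-section argument behind that phrase.
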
 

\begin{proof} By Observation~\ref{obs:lifting_one}, for each $x \in N$ there is a unique $\tilde{g}(x) \in \pi^{-1}(g(x))$ such that $\dist_{\wt{M}}\left( \tilde{f}(x), \tilde{g}(x)\right) < \epsilon$. By uniqueness, $\tilde{g}$ is continuous. 
\end{proof}

\subsection{Proof of Proposition~\ref{prop:lifting property two}} Suppose that $(\Gamma, \peripherals)$ is relatively hyperbolic and $\rho_0 \colon \Gamma \to \PGL(d,\Rb)$ is a representation. 

For $\rho \in \Ac_1(\rho_0)$, let $\xi_\rho$ denote the Anosov boundary map. We will use the following stability result from~\cite{ZZ2022a}. 

\begin{theorem}[{\cite[Cor.\ 13.6]{ZZ2022a}}]\label{thm:continuity of boundary maps} The map 
$$
\Ac_1(\rho_0) \times \partial(\Gamma, \peripherals) \ni (\rho, x) \mapsto \xi_\rho(x) \in \proj(\Rb^d) \times \proj(\Rb^{d*})
$$
is continuous. 
\end{theorem}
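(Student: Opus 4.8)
The plan is to argue locally and to read off the boundary map from the invariant dominated splitting on a \emph{fixed} cusp space. Fix $\rho_0\in\Ac_1(\rho_0)$. Since the Groves--Manning cusp space $X=\Cc_{GM}(\Gamma,\peripherals,S)$ depends only on $(\Gamma,\peripherals)$ and an adapted generating set, it is the same for every $\rho\in\Hom_{\rho_0}(\Gamma,\PGL(d,\Rb))$, and its Gromov boundary is $\partial(\Gamma,\peripherals)$. By Theorem~\ref{thm:equivalence of definitions} every $\rho\in\Ac_1(\rho_0)$ is $\Psf_1$-Anosov relative to $X$, and by Theorem~\ref{thm:stability} there is a neighborhood $\Oc$ of $\rho_0$ in $\Hom_{\rho_0}(\Gamma,\PGL(d,\Rb))$ all of whose members are $\Psf_1$-Anosov relative to $X$. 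For $\rho\in\Oc$ the Anosov data is encoded in the $\flatflow^t$-invariant dominated splitting $\wh{E}_\rho(X)=\wh{\Theta}^1_\rho\oplus\wh{\Xi}^{d-1}_\rho$ (with $\wh{\Theta}^1_\rho$ the faster-contracted subbundle, as in Proposition~\ref{prop: Hom bundles contraction/expansions}), via $\Theta^1_\rho(\sigma)=\xi^1_\rho(\sigma^+)$ and $\Xi^{d-1}_\rho(\sigma)=\ker\xi^{d-1}_\rho(\sigma^-)$ on the lift to $\Gc(X)$. Every $x\in\partial(\Gamma,\peripherals)$ is the forward endpoint of a geodesic line $\sigma\in\Gc(X)$, and such a $\sigma$ may be chosen to depend continuously on $x$ (e.g.\ fixing the backward endpoint); hence it suffices to prove that $(\rho,\sigma)\mapsto\Theta^1_\rho(\sigma)$ and $(\rho,\sigma)\mapsto\Xi^{d-1}_\rho(\sigma)$ are continuous on $\Oc\times\Gc(X)$, uniformly on compact sets of $\sigma$ (the needed continuity in the $\sigma$-variable for fixed $\rho$ being part of the definition of relative Anosovness).

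\textbf{Key step.} The main input is that the dynamical proof of the stability theorem produces $\wh{\Theta}^1_\rho$ and $\wh{\Xi}^{d-1}_\rho$ by a graph-transform argument that is continuous in the parameter $\rho$. Concretely: graphs of bounded continuous sections of $\Hom(\wh{\Xi}^{d-1}_{\rho_0},\wh{\Theta}^1_{\rho_0})$ parametrize subbundles near $\wh{\Theta}^1_{\rho_0}$, and on this complete metric space the forward-flow graph transform $\mathcal{G}_\rho$ associated to $\rho$ is a uniform contraction for all $\rho$ in a (possibly smaller) neighborhood of $\rho_0$, with $\rho\mapsto\mathcal{G}_\rho$ continuous in the relevant operator topology. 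Its unique fixed point is the section whose graph is $\wh{\Theta}^1_\rho$; uniqueness of the $\flatflow^t$-invariant dominated splitting of the correct index (which follows from transversality and strong dynamics preservation exactly as in the word-hyperbolic case, using Observation~\ref{obs:strongly_dynamics_pres_div_cartan} and Proposition~\ref{prop:eigenvalue data in rel Anosov repn}) guarantees this fixed point really is $\wh{\Theta}^1_\rho$ and not a spurious invariant subbundle. Standard estimates for contractions with continuously varying data then give that $\rho\mapsto\wh{\Theta}^1_\rho$ is continuous into sections with the uniform metric, uniformly on compact subsets of the base; combined with the definitional continuity of $\sigma\mapsto\Theta^1_{\rho_0}(\sigma)$ this yields joint continuity of $(\rho,\sigma)\mapsto\Theta^1_\rho(\sigma)$, whence of $(\rho,x)\mapsto\xi^1_\rho(x)$. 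Running the same argument with the backward flow (or with the contragredient representations, whose splittings interchange $\wh{\Theta}$ and $\wh{\Xi}$) handles $\wh{\Xi}^{d-1}_\rho$, hence $(\rho,x)\mapsto\xi^{d-1}_\rho(x)$.

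\textbf{Main obstacle.} The difficulty is precisely that $\wh{\Gc}(X)$ is \emph{noncompact}, so the uniformity over $\rho$ of the contraction rate of $\mathcal{G}_\rho$ is not automatic as it is in the word-hyperbolic case; one must control the cocycle $\rho(\cdot)$ deep inside the combinatorial horoballs of $X$. Here the constraint defining $\Hom_{\rho_0}$ is essential: for each $P\in\peripherals$ the restriction $\rho|_P$ is conjugate to $\rho_0|_P$, so after an appropriate $\rho$-dependent conjugation the perturbed cocycle coincides with the $\rho_0$-cocycle over each horoball, and the uniform contraction over the thin part is inherited verbatim from $\rho_0$. One then glues this to the routine thick-part perturbation estimate—the thick part of $\wh{\Gc}(X)$ being (essentially) compact—choosing the horoballs deep enough that thick/thin transitions occur at controlled times, in the manner of the proof of Theorem~\ref{thm:repn with almost homog cusps} (cf.\ Lemmas~\ref{lem:contraction between the thick parts} and~\ref{lem:contraction everywhere}). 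Verifying that the cusp conjugators can be absorbed without destroying $\rho$-equivariance or continuity of $\rho\mapsto\mathcal{G}_\rho$, and that the gluing is compatible with the parameter, is the technical heart of the argument.
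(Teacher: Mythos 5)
First, a point of comparison: this statement is not proved in the present paper at all --- it is imported verbatim from the companion paper as \cite[Cor.\ 13.6]{ZZ2022a} --- so there is no in-paper argument to measure your proposal against. The closest in-paper relatives are the stability statement (Theorem~\ref{thm:stability}) and the thick/thin gluing carried out in the proof of Theorem~\ref{thm:repn with almost homog cusps}, and your blueprint (fix one Groves--Manning cusp space via Theorem~\ref{thm:equivalence of definitions}, run a parametrized graph transform, control the cusps using the constraint defining $\Hom_{\rho_0}$) is certainly in that spirit. Your reduction of joint continuity in $(\rho,x)$ to continuity of the invariant sections on compacta of $\Gc(X)$ is fine, and the identification of the graph-transform fixed point with the Anosov splitting of $\rho$ can indeed be made through uniqueness of strongly dynamics preserving maps, since every $\rho\in\Ac_1(\rho_0)$ is already known to be relatively Anosov. (A minor phrasing issue: $\Ac_1(\rho_0)$ contains representations far from $\rho_0$, so you should observe that $\Hom_{\rho}(\Gamma,\PGL(d,\Rb))=\Hom_{\rho_0}(\Gamma,\PGL(d,\Rb))$ for every $\rho\in\Ac_1(\rho_0)$, which lets you run the local argument at an arbitrary basepoint; as written you only treat a neighborhood of $\rho_0$.)

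The genuine gap is in the cusp step, exactly where you place the ``technical heart'' but do not supply the resolving idea. From $\rho|_P=g_P\,\rho_0|_P\,g_P^{-1}$ you correctly obtain contraction constants over the thin part that are independent of $\rho$: transporting the $\rho_0$-adapted metric and splitting by $g_P$ is an isometric identification, so the constants are inherited. But your key step requires more, namely continuity of $\rho\mapsto\mathcal{G}_\rho$ (equivalently, of the transported reference data) over the thin part, and this does not follow from the existence of conjugators: $g_P=g_P(\rho)$ is only well defined up to the centralizer of $\rho_0(P)$ and need not admit a continuous, or even bounded, selection as $\rho\to\rho_0$, so the transported splitting $g_P(\rho)\cdot(\text{$\rho_0$-splitting})$ and metric need not converge to the $\rho_0$-data. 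Already the continuity of $\rho\mapsto\xi^1_\rho(x_P)$ at a parabolic fixed point --- which equals $g_P(\rho)\,\xi^1_{\rho_0}(x_P)$ --- requires an argument, e.g.\ that pointwise convergence $\rho(p)\to\rho_0(p)$ on the weakly unipotent group $\rho_0(P)$ forces $g_P(\rho)\,\xi^1_{\rho_0}(x_P)\to\xi^1_{\rho_0}(x_P)$, or an intrinsic description of the peripheral limit flag (via Observation~\ref{obs:limit set of rel anosov}) and of adapted cusp norms that varies continuously with $\rho|_P$. Without such an input, the asserted continuity of the graph transform in $\rho$ over the horoballs, and hence the joint continuity conclusion, is not established; this missing step is precisely what a complete proof must address.
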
 

Fix Riemannian metrics on $\Sb \times \Sb^*$ and $\proj(\Rb^d) \times \proj(\Rb^{d*})$ so that $\Sb \times \Sb^* \to \proj(\Rb^d) \times \proj(\Rb^{d*})$ is a Riemannian cover. We will let $\dist$ denote the associated distance on both spaces. Then fix $\epsilon > 0$ satisfying Observation~\ref{obs:lifting_two} with $M = \proj(\Rb^d) \times \proj(\Rb^{d*})$. 

\begin{lemma} $\Ac_1^+(\rho_0)$ is closed in $\Ac_1(\rho_0)$. \end{lemma}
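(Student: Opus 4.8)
The plan is to show that if a sequence $\rho_n \in \Ac_1^+(\rho_0)$ converges to some $\rho_\infty \in \Ac_1(\rho_0)$, then $\rho_\infty$ has the lifting property. The lifting property for $\rho_n$ provides lifts $\tilde\rho_n \colon \Gamma \to \SL^\pm(d,\Rb)$ of $\rho_n$ and continuous $\tilde\rho_n$-equivariant positive lifts $\tilde\xi_n \colon \partial(\Gamma,\peripherals) \to \Sb \times \Sb^*$ of $\xi_{\rho_n}$. The strategy is to pass to a limit: extract a subsequence so that $\tilde\rho_n \to \tilde\rho_\infty$ in $\SL^\pm(d,\Rb)$ (this is possible after a diagonal argument over a countable generating set of $\Gamma$, since each $\tilde\rho_n(\gamma)$ is one of the two lifts of $\rho_n(\gamma) = \rho_\infty(\gamma) + o(1)$, hence lies in a compact neighborhood) and so that $\tilde\xi_n \to \tilde\xi_\infty$ uniformly. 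For the latter I would invoke Theorem~\ref{thm:continuity of boundary maps}: since $\rho_n \to \rho_\infty$, the maps $\xi_{\rho_n} \to \xi_{\rho_\infty}$ uniformly on the compact space $\partial(\Gamma,\peripherals)$, so once $n$ is large the hypotheses of Observation~\ref{obs:lifting_two} are met and the lift $\tilde\xi_n$ of $\xi_{\rho_n}$ is forced to stay uniformly $\epsilon$-close to the (now-fixed, once chosen) continuous lift $\tilde\xi_\infty$ of $\xi_{\rho_\infty}$ — hence $\tilde\xi_n \to \tilde\xi_\infty$ uniformly.

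Once the limits exist, the defining properties pass to the limit: $\tilde\rho_\infty$ is a homomorphism lifting $\rho_\infty$, $\tilde\xi_\infty$ is continuous, lifts $\xi_{\rho_\infty}$, and is $\tilde\rho_\infty$-equivariant (equivariance is a closed condition under pointwise convergence of both $\tilde\rho_n$ and $\tilde\xi_n$). The only non-closed-looking condition is positivity, which is an open condition: I need that for distinct $x,y$, $\tilde\xi_\infty^{d-1}(y)(\tilde\xi_\infty^1(x)) \geq 0$ only gives a weak inequality in the limit. To upgrade to strict positivity, I would argue that the limiting pairing cannot be zero: $\tilde\xi_\infty^{d-1}(y)(\tilde\xi_\infty^1(x)) = 0$ would force $\xi^1_{\rho_\infty}(x) \in \proj(\ker \xi^{d-1}_{\rho_\infty}(y))$, contradicting transversality of the Anosov boundary map $\xi_{\rho_\infty}$ for distinct points $x,y$. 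Hence the pairing is nonzero, and being a limit of nonnegative values it is positive. This shows $\rho_\infty \in \Ac_1^+(\rho_0)$.

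The companion statement — that $\Ac_1^+(\rho_0)$ is open in $\Ac_1(\rho_0)$ — goes the other way: starting from $\rho_0$ (or any $\rho$) with the lifting property, pick lifts $\tilde\rho$, $\tilde\xi$; for $\rho'$ close to $\rho$ in $\Ac_1(\rho_0)$, Theorem~\ref{thm:continuity of boundary maps} and Observation~\ref{obs:lifting_two} again produce a continuous lift $\tilde\xi'$ of $\xi_{\rho'}$ uniformly $\epsilon$-close to $\tilde\xi$; one chooses the lift $\tilde\rho'$ of $\rho'$ compatibly (for each generator, the lift whose value is closest to $\tilde\rho(\gamma)$), checks equivariance holds exactly by a discreteness/continuity argument, and positivity of $\tilde\xi'$ follows from positivity of $\tilde\xi$ together with uniform closeness, using again that transversality keeps the pairing bounded away from $0$ uniformly over the compact set of distinct pairs at a fixed distance in some visual metric (a standard compactness reduction since the pairing is $0$ on the diagonal but the diagonal is excluded on the relevant compact set).

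The main obstacle I anticipate is the passage to the limit for the representations $\tilde\rho_n$: one must ensure the chosen lifts are consistent — i.e. that after passing to a subsequence the sign choices $\tilde\rho_n(\gamma) / \rho_n(\gamma)$ stabilize for every $\gamma$ simultaneously, not just generator-by-generator — and that the resulting pointwise limit $\tilde\rho_\infty$ is genuinely a homomorphism and genuinely lifts $\rho_\infty$. Since $\Gamma$ is finitely generated and each ambiguity is a $\pm 1$ choice, a diagonal extraction over the countably many group elements suffices, and the homomorphism property is preserved under pointwise limits; the care needed is purely bookkeeping. The transversality-implies-strict-positivity step is the other place requiring a genuine (but short) argument rather than a formality.
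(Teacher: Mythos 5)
Your proposal takes essentially the same route as the paper's proof: pass to a converging sequence $\rho_n \to \rho$, use the uniform continuity of boundary maps (Theorem~\ref{thm:continuity of boundary maps}) together with Observation~\ref{obs:lifting_two} to produce a continuous lift $\tilde\xi$ of $\xi_\rho$ to which the $\tilde\xi_n$ converge, deduce non-negativity of the pairing in the limit and upgrade to strict positivity via transversality, and obtain $\tilde\rho$ by a diagonal/compactness argument over the countable group. The transversality-excludes-zero step and the diagonal extraction for $\tilde\rho_n$ are both as in the paper.

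One step is stated in a way that is not quite a correct application of Observation~\ref{obs:lifting_two}: you say that once $\xi_{\rho_n}$ is uniformly $\epsilon$-close to $\xi_\rho$, the lift $\tilde\xi_n$ coming from the lifting property of $\rho_n$ is ``forced'' to stay $\epsilon$-close to a once-chosen lift $\tilde\xi_\infty$ of $\xi_\rho$. Observation~\ref{obs:lifting_two} does not say that: given a lift of one map, it produces \emph{some} continuous lift of the nearby map that stays close — it does not constrain a \emph{given} lift of the nearby map. In particular, since $\partial(\Gamma,\peripherals)$ can be disconnected, $\xi_{\rho_n}$ can admit many continuous lifts, and there is no immediate reason the particular $\tilde\xi_n$ furnished by the lifting property is the one near $\tilde\xi_\infty$. (Swapping $\tilde\xi_n$ for the nearby lift is also not free, because positivity is not automatically preserved when the sign flips only on some components.) The paper's own phrasing is similarly abbreviated here, so this is not a defect peculiar to your argument; the clean repair is to avoid choosing $\tilde\xi_\infty$ in advance and instead note that $\{\tilde\xi_n\}$ is equicontinuous (as lifts, through the local isometry $\Sb\times\Sb^*\to\proj(\Rb^d)\times\proj(\Rb^{d*})$, of the uniformly convergent $\xi_{\rho_n}$) with values in the compact $\Sb\times\Sb^*$, and extract a uniformly convergent subsequence by Arzel\`a–Ascoli; the limit is then automatically a continuous lift of $\xi_\rho$. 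Also, your ``main obstacle'' worry about the $\pm$ sign-choice for $\tilde\rho_n$ is the easy part — a diagonal argument over the countable group $\Gamma$ (not just a generating set, though generating set suffices since the $\tilde\rho_n$ are homomorphisms) handles it — whereas the more delicate point is the $\tilde\xi_n$ convergence just discussed.
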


\begin{proof} Suppose that $\rho_n \to \rho$ in $\Ac_1(\rho_0)$ where $\{\rho_n\} \subset \Ac_1^+(\rho_0)$. Let $\xi_n$ (respectively, $\xi$) denote the Anosov boundary map of $\rho_n$ (respectively $\rho$) and let $\tilde{\xi}_n$, $\tilde{\rho}_n$ denote lifts of $\xi_n$, $\rho_n$ satisfying the lifting property. 

Theorem~\ref{thm:continuity of boundary maps} implies that $\xi_n \to \xi$ uniformly. So for $n$ sufficiently large, we have 
$$
\max_{x \in \partial(\Gamma, \peripherals)} \dist( \xi_n(x), \xi(x)) < \epsilon. 
$$
So by our choice of $\epsilon > 0$ there exists a unique continuous lift $\tilde{\xi}$ of $\xi$ such that 
$$
\max_{x \in \partial(\Gamma, \peripherals)} \dist\left( \tilde{\xi}_n(x), \tilde{\xi}(x)\right) < \epsilon
$$
for $n$ sufficiently large.  Further, $\tilde{\xi}_n$ converges pointwise to $\tilde{\xi}$. Then
$$
\tilde{\xi}^{d-1}(y)(\tilde{\xi}^1(x)) = \lim_{n \to \infty} \tilde{\xi}_n^{d-1}(y)(\tilde{\xi}_n^1(x)) \geq 0
$$
for all $x,y \in \partial(\Gamma, \peripherals)$. So by transversality, we see that 
\begin{equation}
\label{eqn:positivity in the limit}
\tilde{\xi}^{d-1}(y)(\tilde{\xi}^1(x)) >0
\end{equation}
for all distinct $x,y \in \partial(\Gamma, \peripherals)$.  

Finally, we construct the lift of $\rho$. Since $\Gamma$ is finitely generated and $\SL^\pm(d,\Rb) \to \PGL(d,\Rb)$ is a finite cover, by passing to a further subsequence we can suppose that 
$$
\tilde{\rho}(\gamma) := \lim_{n \to \infty} \tilde{\rho}_n(\gamma)
$$
exists for all $\gamma \in \Gamma$. Since $\tilde{\xi}_n$ converges pointwise to $\tilde{\xi}$, we see that $\tilde{\xi}$ is $\tilde{\rho}$-equivariant. Hence $\rho$ has the lifting property. 
\end{proof} 

\begin{lemma} $\Ac_1^+(\rho_0)$ is open in $\Ac_1(\rho_0)$. \end{lemma}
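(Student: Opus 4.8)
The plan is to perturb the data witnessing the lifting property. Fix $\rho\in\Ac_1^+(\rho_0)$ together with lifts $\tilde{\xi}=(\tilde{\xi}^1,\tilde{\xi}^{d-1})$ and $\tilde{\rho}$ of $\xi_\rho$ and $\rho$ satisfying Definition~\ref{defn:the lifting property}. Since $\partial(\Gamma,\peripherals)$ is compact, Theorem~\ref{thm:continuity of boundary maps} provides a neighborhood $\mathcal{U}_1$ of $\rho$ in $\Ac_1(\rho_0)$ on which $\sup_{x}\dist(\xi_{\rho'}(x),\xi_\rho(x))<\epsilon$. Then Observation~\ref{obs:lifting_two}, applied with $N=\partial(\Gamma,\peripherals)$, produces for each $\rho'\in\mathcal{U}_1$ a unique continuous lift $\tilde{\xi}_{\rho'}\colon\partial(\Gamma,\peripherals)\to\Sb\times\Sb^*$ of $\xi_{\rho'}$ with $\sup_x\dist(\tilde{\xi}_{\rho'}(x),\tilde{\xi}(x))<\epsilon$; by this uniqueness the assignment $(\rho',x)\mapsto\tilde{\xi}_{\rho'}(x)$ is continuous on $\mathcal{U}_1\times\partial(\Gamma,\peripherals)$.

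Next I would build a compatible lift $\tilde{\rho}'$ of each nearby $\rho'$. Fix a finite symmetric generating set $S$ of $\Gamma$. Shrinking to $\mathcal{U}_2\subseteq\mathcal{U}_1$, I would arrange that for every $\rho'\in\mathcal{U}_2$ and $s\in S$ the element $\rho'(s)$ is close enough to $\rho(s)$ to admit a unique lift $\tilde{\rho}'(s)\in\SL^\pm(d,\Rb)$ near $\tilde{\rho}(s)$, and moreover $\tilde{\rho}'(s)\cdot\tilde{\xi}^1_{\rho'}(x)=\tilde{\xi}^1_{\rho'}(sx)$ and $\tilde{\rho}'(s)\cdot\tilde{\xi}^{d-1}_{\rho'}(x)=\tilde{\xi}^{d-1}_{\rho'}(sx)$ for all $x$. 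The latter identities hold at $\rho'=\rho$ by equivariance of $\tilde{\xi}$, and the discrepancy between the two sides is a continuous $\{\pm1\}$-valued quantity on the compact set $S\times\partial(\Gamma,\peripherals)$ (here we use that $\tilde{\xi}^1_{\rho'}$ and $\tilde{\xi}^{d-1}_{\rho'}$ take values in the unit spheres $\Sb$, $\Sb^*$), so it remains $\equiv+1$ on a neighborhood. I would then set $\tilde{\rho}'(s_1\cdots s_n):=\tilde{\rho}'(s_1)\cdots\tilde{\rho}'(s_n)$ for words in $S$ and check that this is well defined: any two such products are lifts of the common element $\rho'(\gamma)\in\PGL(d,\Rb)$ and, by the displayed identities, agree on the nonzero vector $\tilde{\xi}^1_{\rho'}(x)$, hence coincide (two lifts of a projective transformation differ by $\pm\id$). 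This gives a homomorphism $\tilde{\rho}'\colon\Gamma\to\SL^\pm(d,\Rb)$ lifting $\rho'$ for which $\tilde{\xi}_{\rho'}$ is $\tilde{\rho}'$-equivariant.

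The remaining, and most delicate, point is positivity. Set $G_{\rho'}(x,y):=\tilde{\xi}^{d-1}_{\rho'}(y)\big(\tilde{\xi}^1_{\rho'}(x)\big)$. Transversality of $\xi_{\rho'}$ gives $G_{\rho'}\neq0$ off the diagonal $\Delta\subset\partial(\Gamma,\peripherals)^2$, so $\operatorname{sign}G_{\rho'}$ is locally constant there; and since $\tilde{\xi}_{\rho'}$ is $\tilde{\rho}'$-equivariant while the $\SL^\pm(d,\Rb)$-actions on $\Sb$ and $\Sb^*$ differ from the linear ones only by positive scalars, $\operatorname{sign}G_{\rho'}$ is $\Gamma$-invariant. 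I would then fix a visual metric $d_\partial$ on $\partial(\Gamma,\peripherals)$, observe that $e(x,y):=\sup_{\gamma\in\Gamma}d_\partial(\gamma x,\gamma y)$ is $\Gamma$-invariant and bounded below by some $2\delta>0$, and conclude that every $\Gamma$-orbit in $\partial(\Gamma,\peripherals)^2\setminus\Delta$ meets the compact set $\Delta_\delta:=\{(x,y):d_\partial(x,y)\geq\delta\}$. On $\Delta_\delta$ the function $|G_\rho|$ has a positive minimum, so by uniform closeness $G_{\rho'}>0$ on $\Delta_\delta$ for $\rho'$ in a final neighborhood $\mathcal{U}_3$, and then $G_{\rho'}>0$ everywhere off $\Delta$ by $\Gamma$-invariance of the sign. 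Hence $\tilde{\rho}'$ and $\tilde{\xi}_{\rho'}$ witness the lifting property for every $\rho'\in\mathcal{U}_3$, so $\mathcal{U}_3\subseteq\Ac_1^+(\rho_0)$ and $\Ac_1^+(\rho_0)$ is open in $\Ac_1(\rho_0)$.

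I expect the main obstacle to be the lower bound $\inf e>0$: one must rule out off-diagonal pairs $(x,y)$ whose entire $\Gamma$-orbit stays arbitrarily close to $\Delta$, which amounts to controlling the expansion of $\Gamma$ near conical limit points and, in the presence of cusps, near bounded parabolic points. In the word-hyperbolic case this is exactly cocompactness of the $\Gamma$-action on $\partial(\Gamma,\peripherals)^2\setminus\Delta$; in general one should exploit that $\rho'\in\Hom_{\rho_0}(\Gamma,\PGL(d,\Rb))$ forces $\rho'|_P$ to be conjugate to $\rho|_P$ for every $P\in\peripherals$, so that the picture near the cusps is unaffected by the perturbation. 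A secondary, more bookkeeping-level point is making the neighborhood in the second paragraph uniform over the finite generating set $S$ and then propagating the equivariance identities to all of $\Gamma$ via the word-length argument.
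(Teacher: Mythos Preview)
Your approach matches the paper's almost step for step: lift the boundary map using Observation~\ref{obs:lifting_two}, lift the representation on a finite generating set and verify equivariance by uniqueness of the lift, then reduce positivity to a compact fundamental domain for the $\Gamma$-action on $\partial(\Gamma,\peripherals)^2\smallsetminus\Delta$. The equivariance step is handled slightly differently---the paper uses a Lipschitz estimate to show $\tilde{\rho}(\gamma)\circ\tilde{\xi}_{\rho}\circ\gamma^{-1}$ stays within $\epsilon$ of $\tilde{\xi}_{\rho_0}$ and then invokes uniqueness, whereas you phrase it as a $\{\pm 1\}$-valued discrepancy---but these are equivalent bookkeeping devices.

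The one substantive point is your ``main obstacle'': you worry that $\inf_{(x,y)\notin\Delta}\sup_{\gamma}d_\partial(\gamma x,\gamma y)>0$ might fail in the presence of cusps and propose to compensate using the conjugacy constraint on peripherals. This concern is unfounded. For any relatively hyperbolic group $(\Gamma,\peripherals)$, the action of $\Gamma$ on $\partial(\Gamma,\peripherals)^2\smallsetminus\Delta$ is cocompact; this is a general fact about geometrically finite convergence groups, due to Tukia~\cite{Tu1994}, and is exactly what the paper invokes at this step. (One sees it directly: if $x$ is conical there is $\gamma_n$ with $\gamma_n x\to a$ and $\gamma_n y\to b\neq a$; if $x$ is bounded parabolic its stabilizer acts cocompactly on the complement of $x$.) So your compact set $\Delta_\delta$ exists for a reason independent of the representation, your positivity argument goes through as written, and the speculative detour through $\Hom_{\rho_0}$ is unnecessary.
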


\begin{proof} It suffices to assume that $\rho_0 \in \Ac_1^+(\rho_0)$ and show that there exists an open neighborhood of $\rho_0$ in $\Ac_1(\rho_0)$ which is contained in $\Ac_1^+(\rho_0)$. Let $\xi_{\rho_0}$ denote the Anosov boundary map of $\rho_0$ and let $\tilde{\xi}_{\rho_0}$, $\tilde{\rho}_0$ denote lifts of $\xi_{\rho_0}$, $\rho_0$ satisfying the lifting property. 

By Theorem~\ref{thm:continuity of boundary maps} and our choice of $\epsilon > 0$, we can find a neighborhood $\Oc$ of $\rho_0$ in $\Ac_1(\rho_0)$ such that if $\rho \in \Oc$, then the associated boundary map $\xi_\rho$ admits a unique continuous lift $\tilde{\xi}_\rho \colon \partial(\Gamma, \peripherals) \to \Sb \times \Sb^*$ with 
\begin{equation}\label{eqn:max is less than epsilon defines lift}
\dist_{\max}(\tilde{\xi}_{\rho_0}, \tilde{\xi}_{\rho}) : = \max_{x \in \partial(\Gamma, \peripherals)} \dist(\tilde{\xi}_{\rho_0}(x), \tilde{\xi}_{\rho}(x)) < \epsilon. 
\end{equation}

Fix a finite generating set $S \subset \Gamma$. Then we can find a sub-neighborhood $\Oc^\prime \subset \Oc$ where for each $\gamma \in S$ and $\rho \in \Oc^\prime$ there exists a lift $\tilde{\rho}(\gamma)$ of $\rho(\gamma)$ such that 
$$
\max_{v \in \Sb^* \times \Sb} \dist( \tilde{\rho}_0(\gamma)v, \tilde{\rho}(\gamma)v) < \epsilon/2.
$$
By replacing $\Oc^\prime$ with a  relatively compact subset, we can also assume that there exists $C > 1$ such that: if  $\gamma \in S$ and $\rho \in \Oc^\prime$, then $\tilde{\rho}(\gamma)$ acts as a $C$-Lipschitz map on $\Sb \times \Sb^*$. Finally, by possibly replacing $\Oc^\prime$ with a smaller neighborhood and using Theorem~\ref{thm:continuity of boundary maps}, we can assume that 
$$
\dist_{\max}(\tilde{\xi}_{\rho_0}, \tilde{\xi}_{\rho}) < \frac{\epsilon}{2C}
$$
for all $\rho \in \Oc^\prime$.

Now, if $\rho \in \Oc^\prime$ and $\gamma \in S$, then (since $\tilde\xi_{\rho_0}$ is $\tilde{\rho}_0$-equivariant)
\begin{align*}
\dist_{\max}& \left(\tilde{\xi}_{\rho_0}, \tilde{\rho}(\gamma)\circ \tilde{\xi}_{\rho} \circ \gamma^{-1} \right) =\max_{x \in \partial(\Gamma, \peripherals)}  \dist\left(\tilde{\rho}_0(\gamma)\circ \tilde{\xi}_{\rho_0} \circ \gamma^{-1}(x), \tilde{\rho}(\gamma)\circ \tilde{\xi}_{\rho} \circ \gamma^{-1}(x) \right) \\
& < \epsilon/2 + \max_{x \in \partial(\Gamma, \peripherals)}  \dist\left(\tilde{\rho}(\gamma)\circ \tilde{\xi}_{\rho_0} \circ \gamma^{-1}(x), \tilde{\rho}(\gamma)\circ \tilde{\xi}_{\rho} \circ \gamma^{-1}(x) \right) \\
& < \epsilon/2 + C \dist_{\max}(\tilde{\xi}_{\rho_0} \circ \gamma^{-1},\tilde{\xi}_{\rho} \circ \gamma^{-1}) =\epsilon/2 + C \dist_{\max}(\tilde{\xi}_{\rho_0},\tilde{\xi}_{\rho}) < \epsilon. 
\end{align*}
So by uniqueness of the lift $\tilde{\xi}_\rho$ satisfying Equation~\eqref{eqn:max is less than epsilon defines lift}, we have $\tilde{\rho}(\gamma)\circ \tilde{\xi}_{\rho} \circ \gamma^{-1} = \tilde{\xi}_{\rho}$. Since at most one lift $\tilde{g} \in \SL^\pm(d,\Rb)$ of an element $\rho(\gamma) \in \rho(\Gamma)$ can  satisfy the equation $\wt{g} \circ \tilde{\xi}_{\rho} \circ \gamma^{-1} = \tilde{\xi}_\rho$, we then see that $\tilde{\rho}$ extends to a homomorphism of $\Gamma$ and $\tilde{\xi}_{\rho}$ is $\tilde{\rho}$-equivariant. 

It remains to verify positivity. Fix a compact set $K \subset \{ (x,y) \in \partial(\Gamma, \peripherals)^2 : x \neq y\}$ such that 
\begin{equation}
\label{eqn:co-compact action} 
\Gamma \cdot K =\left\{ (x,y) \in \partial(\Gamma, \peripherals)^2 : x \neq y\right\}
\end{equation}
(such a compact set exists by~\cite[Th.\ 1C]{Tukia}). By shrinking $\Oc^\prime$, we may assume that 
\begin{equation} \label{eqn:positivity of lift}
\tilde{\xi}^{d-1}_\rho(y)\left(\tilde{\xi}^1_\rho(x)\right) >0
\end{equation}
for all $\rho \in \Oc^\prime$ and $(x,y) \in K$. Fix $\rho \in \Oc^\prime$. Since $\tilde{\xi}_{\rho}$ is $\tilde{\rho}$-equivariant, Equation~\eqref{eqn:co-compact action} implies that 
Equation~\eqref{eqn:positivity of lift} holds for all distinct $x,y \in \partial(\Gamma, \peripherals)$. Hence we see that $\rho \in \Ac_1^+(\rho_0)$. 
\end{proof}

\section{Representations of rank one groups revisited}\label{sec:when a representation preserves a properly convex domain} 

For the rest of the section let $\mathsf{G}$, $\mathsf{K}$, and $X=\mathsf{G}/\mathsf{K}$ be as in Section~\ref{sec:reminders negatively curved symspaces}. Then suppose that $\tau \colon \mathsf{G} \to \PGL(d,\Rb)$ is a $\mathsf{P}_1$-proximal representation. 

In this section we prove three propositions. The first two characterize exactly when $\tau(\mathsf{G})$ preserves a properly convex domain and the third proposition establishes a structure theorem in the case it does. The first and third propositions imply Proposition~\ref{prop: repn of rank one groups in convex setting intro}. 

\begin{proposition}\label{prop:not real hyperbolic plane}  If $X$ is not isometric to real hyperbolic 2-space (equivalently, $\mathsf{G}$ is not locally isomorphic to $\SL(2,\Rb)$), then $\tau(\mathsf{G})$ preserves a properly convex domain. 
\end{proposition}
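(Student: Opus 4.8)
The strategy is to realize the symmetric space $X$ as (the core of) a properly convex domain inside $\proj(\Rb^d)$ by producing an explicit $\tau(\mathsf{G})$-invariant properly convex cone. The natural candidate, following~\cite{DGK2017,Z2017}, is to build the cone from the $\Psf_1$-proximal ``highest weight'' data of $\tau$. First I would let $\zeta_\tau = (\zeta_\tau^1, \zeta_\tau^{d-1})\colon \partial_\infty X \to \proj(\Rb^d) \times \proj(\Rb^{d*})$ be the boundary map produced by Proposition~\ref{prop:representations of rank one groups} (using that $\tau$ is $\Psf_1$-proximal), which is continuous, $\tau$-equivariant, transverse, and strongly dynamics preserving. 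The plan is to lift $\zeta_\tau^1$ to a section $\tilde\zeta_\tau^1\colon \partial_\infty X \to \Sb \subset \Rb^d$, lift $\zeta_\tau^{d-1}$ to $\tilde\zeta_\tau^{d-1}\colon \partial_\infty X \to \Sb^*$, and lift $\tau$ to $\tilde\tau\colon \mathsf{G} \to \SL^\pm(d,\Rb)$, in such a way that $\tilde\zeta_\tau$ is $\tilde\tau$-equivariant and \emph{positive}: $\tilde\zeta_\tau^{d-1}(y)(\tilde\zeta_\tau^1(x)) > 0$ for all distinct $x,y\in\partial_\infty X$. Then the convex hull construction of Lemma~\ref{lem:1=>2 in lifting property} (applied with $\Gamma$ replaced by $\mathsf{G}$) produces the invariant properly convex domain. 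So the real task is to verify that these lifts exist — i.e.\ that $\tau$ (or $\tau|_\Gamma$ for lattices, hence $\tau$) has a version of the lifting property.

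The main obstacle, and the only place the hypothesis $\mathsf{G}\not\simeq\SL(2,\Rb)$ is used, is precisely the positivity/lifting step. Concretely: the obstruction to lifting $\zeta_\tau^1$ to a continuous section $\partial_\infty X \to \Sb$ is a class in $H^1(\partial_\infty X; \Zb/2)$, and the obstruction to the $\tilde\tau$-equivariance plus positivity is governed by whether $\tau$ lifts to $\SL^\pm(d,\Rb)$ compatibly with the two-fold cover $\Sb \to \proj(\Rb^d)$. When $X$ is not real hyperbolic $2$-space, $\partial_\infty X$ is a sphere of dimension $\geq 2$ (for the other rank-one symmetric spaces) or $\geq 3$, hence simply connected — so $H^1(\partial_\infty X;\Zb/2)=0$ and the section $\tilde\zeta_\tau^1$ exists and is unique up to the global sign $\pm$. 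One then argues that the two connected lifts $\tilde\zeta_\tau^1$ and $-\tilde\zeta_\tau^1$ partition $\partial_\infty X$ into the two ``halves'', and transversality of $\zeta_\tau$ together with connectedness of $\partial_\infty X$ forces $\tilde\zeta_\tau^{d-1}(y)$ to have a constant sign on the image of $\tilde\zeta_\tau^1$; choosing the lift $\tilde\zeta_\tau^{d-1}$ so that this sign is $+$ on the diagonal and using connectedness of $\{(x,y): x\ne y\}$ (true since $\dim\partial_\infty X\ge 2$) gives positivity everywhere. Equivariance of the lifts under $\tilde\tau$ follows because $\mathsf{G}$ is connected: the lift $\tilde\tau(g)$ preserving the component $\overline{C}_1$ of $\pi^{-1}(\zeta_\tau^1(\partial_\infty X))$-hull is unique and depends continuously on $g$, hence is a homomorphism.

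In detail, the steps I would carry out are: (1) invoke Proposition~\ref{prop:representations of rank one groups} to get $\zeta_\tau$ with its four properties; (2) observe $\partial_\infty X \cong \Sb^{m}$ with $m = \dim_{\Rb} X - 1 \ge 2$ under the stated hypothesis (using the Mostow classification recalled in Section~\ref{sec:reminders negatively curved symspaces}), so $\partial_\infty X$ and $\{(x,y)\in\partial_\infty X^2 : x\ne y\}$ are connected and $\partial_\infty X$ is simply connected; (3) lift $\zeta_\tau^1$ to $\tilde\zeta_\tau^1$ (possible and essentially unique by simple connectivity), lift $\zeta_\tau^{d-1}$ to $\tilde\zeta_\tau^{d-1}$ with $\tilde\zeta_\tau^{d-1}(x_0)(\tilde\zeta_\tau^1(x_0))>0$ for one base point by transversality, then propagate positivity to all distinct pairs by connectedness and continuity (the function $(x,y)\mapsto\tilde\zeta_\tau^{d-1}(y)(\tilde\zeta_\tau^1(x))$ is continuous, nonvanishing off the diagonal by transversality, and positive at one point); (4) define $\tilde\tau(g)$ to be the unique lift of $\tau(g)$ in $\SL^\pm(d,\Rb)$ carrying $\tilde\zeta_\tau^1(x)$ into the ray $\Rb_{>0}\,\tilde\zeta_\tau^1(gx)$ for some/all $x$ — well-defined and multiplicative since $\tilde\zeta_\tau^1$ is a section and $\mathsf{G}$ is connected; (5) now $\tilde\zeta_\tau=(\tilde\zeta_\tau^1,\tilde\zeta_\tau^{d-1})$ is continuous, $\tilde\tau$-equivariant, and positive, so running the cone-and-convex-hull argument in the proof of Lemma~\ref{lem:1=>2 in lifting property} verbatim with $\mathsf{G}$ in place of $\Gamma$ (the strong dynamics preserving property of $\zeta_\tau$ is exactly what that argument uses, and it holds by Proposition~\ref{prop:representations of rank one groups}) produces a properly convex domain $\Omega\subset\proj(\Rb^d)$ with $\tau(\mathsf{G})\le\Aut(\Omega)$. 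I expect step (3)–(4), the careful bookkeeping of signs and the continuity of $g\mapsto\tilde\tau(g)$, to be where the argument needs the most care, and where the failure for $\SL(2,\Rb)$ (whose boundary $\Sb^1$ is not simply connected, so $\zeta_\tau^1$ may wind and admit no continuous lift) manifests itself — this is exactly why the companion Proposition~\ref{prop:real hyperbolic plane case} is needed to treat the excluded case separately.
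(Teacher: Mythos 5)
Your proposal is correct and follows essentially the same route as the paper's proof: lift $\zeta_\tau^1$ and $\zeta_\tau^{d-1}$ using simple connectivity of $\partial_\infty X$ (which holds precisely when $X$ is not $\Hb^2_{\Rb}$), use connectedness of the off-diagonal set to make the lift positive after a possible sign change, define $\tilde\tau(g)$ as the unique lift of $\tau(g)$ compatible with $\tilde\zeta_\tau^1$, and then run the cone-and-convex-hull argument of Lemma~\ref{lem:1=>2 in lifting property}. The one small superfluity is invoking continuity of $g\mapsto\tilde\tau(g)$ to get a homomorphism — the paper instead deduces multiplicativity directly from uniqueness of the compatible lift, which is slightly cleaner — but this does not affect correctness.
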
 

\begin{proposition}\label{prop:real hyperbolic plane case}  Suppose that $X$ is isometric to real hyperbolic 2-space and 
$$
\Rb^d = \bigoplus_{j=1}^m V_j
$$ 
is a decomposition into $\tau(\mathsf{G})$-irreducible subspaces. Then $\tau(\mathsf{G})$ preserves a properly convex domain if and only if $\max_{1 \leq j \leq m} \dim V_j$ is odd. 
\end{proposition}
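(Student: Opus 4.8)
The plan is to use the classification of irreducible representations of $\SL(2,\Rb)$ (up to finite cover, $\mathsf{G}$ is $\PSL(2,\Rb)$ or $\SL(2,\Rb)$, and in the latter case $\tau$ factors through $\PSL(2,\Rb)$ precisely when all $\tau_j$ have odd dimension). Recall $\tau_n\colon \SL(2,\Rb)\to \SL(n,\Rb)$ denotes the standard irreducible representation on degree-$(n-1)$ homogeneous polynomials in two variables; it is $\Psf_1$-proximal, and $\tau_n(-\id) = (-1)^{n-1}\id_n$. So each $V_j$ carries $\tau|_{V_j}\cong \tau_{d_j}$ with $d_j=\dim V_j$. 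The strategy splits along the two implications, with Proposition~\ref{prop:lifting property one} (specifically the equivalence (1)$\iff$(2)) as the main tool: $\tau(\mathsf{G})$ preserves a properly convex domain iff the restriction $\tau|_\Gamma$ to any cocompact lattice $\Gamma\leq\mathsf{G}$ has the lifting property (one direction is immediate since $\tau(\Gamma)\leq\Aut(\Omega)$; for the converse, if $\tau|_\Gamma$ preserves a properly convex $\Omega_0$, then $\tau(\mathsf{G}) = \overline{\tau(\Gamma)}$ inside $\PGL(d,\Rb)$ — using that $\Gamma$ is Zariski-dense and $\tau(\mathsf{G})$ is the identity component of the real points of its Zariski closure — preserves $\Omega_0$ as well, since $\Aut(\Omega_0)$ is closed). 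Equivalently, using Proposition~\ref{prop:homog_cusps}/Proposition~\ref{prop:representations of rank one groups}, $\tau|_\Gamma$ is $\Psf_1$-Anosov (it is $\Psf_1$-Anosov relative to $\peripherals=\varnothing$ since $\Gamma$ is cocompact hence word-hyperbolic), and we ask whether the boundary map $\zeta_\tau\colon \partial_\infty\Hb^2_\Rb = S^1 \to \proj(\Rb^d)\times\proj(\Rb^{d*})$ lifts positively to $\Sb\times\Sb^*$.

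The key computation is the winding behavior of the boundary curve. For the single irreducible piece $\tau_n$, the boundary map $\zeta_{\tau_n}^1\colon S^1\to\proj(\Rb^n)$ is the projectivized Veronese-type curve $t\mapsto [(1,t,t^2,\dots,t^{n-1})]$ (in suitable coordinates, parametrizing $\partial_\infty\Hb^2_\Rb$ affinely), and its lift to $\Sb^{n-1}\subset\Rb^n$ winds so that going once around $S^1$ multiplies a lift by $(-1)^{n-1}$: concretely, the real line through $(1,t,\dots,t^{n-1})$ as $t\to+\infty$ approaches the line through $e_n=(0,\dots,0,1)$, and as $t\to-\infty$ it approaches the line through $(-1)^{n-1}e_n$. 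So the continuous lift $\tilde\zeta_{\tau_n}^1\colon S^1\to\Sb^{n-1}$ exists (is single-valued, closing up) exactly when $n$ is odd; when $n$ is even it only lifts to a double cover of $S^1$. Now for $\tau = \bigoplus_j \tau_{d_j}$, the curve $\zeta_\tau^1$ lands in a single proximal line $\zeta_{\tau_{d_{j_0}}}^1$ where $j_0$ achieves $\max_j d_j$ (this uses that among $\tau_{d_1},\dots,\tau_{d_m}$, only the pieces with maximal $d_j$ are "dominant" — their $\Psf_1$-attracting eigenline in $\bigoplus V_j$ is the one with the largest eigenvalue, provided we're careful that the proximal structure of $\tau|_\Gamma$ in $\SL(d,\Rb)$ is governed by the largest $d_j$; I should double-check whether ties among maximal $d_j$ cause trouble, but the attracting point still lies in $\bigoplus_{d_j \text{ maximal}} V_j$ and the relevant sign is $(-1)^{d_{j_0}-1}$ regardless of how many blocks tie). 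Hence $\zeta_\tau^1$ lifts to $\Sb^{d-1}$ iff $\max_j d_j$ is odd, and dually the same condition governs $\zeta_\tau^{d-1}$; positivity of the lift (condition (2) in Definition~\ref{defn:the lifting property}) follows because the curve is "convex/positive" in the projective sense — this is exactly the Frenet/hyperconvexity property of the Veronese curve, or can be deduced from the fact that the dominant block $\tau_{d_{j_0}}$ is itself Anosov and $\Omega$-preserving when $d_{j_0}$ is odd, combined with an elementary argument that a positive lift on the sub-block extends.

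The main obstacle I anticipate is making rigorous the claim that the entire boundary curve of $\tau$ is controlled by the single largest irreducible block — i.e., that the $\Psf_1$-attracting data of $\tau(\gamma)$ for loxodromic $\gamma$, and hence the boundary map, "sees only" $V_{j_0}$, and that the homotopy class (winding parity) of the lift $\tilde\zeta_\tau^1$ equals that of $\tilde\zeta_{\tau_{d_{j_0}}}^1$. One clean way to handle this: the obstruction to lifting $\zeta_\tau^1$ (resp. positively lifting $(\zeta_\tau^1,\zeta_\tau^{d-1})$) is an element of $\mathbb{Z}/2$ detected by the parity of the number of times the curve crosses any fixed hyperplane, equivalently by whether $\tilde\rho$ exists — and the diagonal torus element $a_t$ of $\mathsf{G}$ acts on $\Rb^d = \bigoplus V_j$ with $V_{j_0}$ as its top singular-/eigen-block, so the $\Psf_1$ attracting/repelling flag of $\tau(a_t)$ sits in $V_{j_0}$, and the closed curve $t\in S^1 \mapsto \zeta_\tau^1$ is conjugate (via $\tau$ of a lattice) to the closed-up Veronese curve in $\proj(V_{j_0})$, whose lift-parity is $(-1)^{d_{j_0}-1}$. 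Then: if $d_{j_0}$ odd, build the positive lift on $\proj(V_{j_0})\times\proj(V_{j_0}^*)$ as in the single-block case and push it forward; if $d_{j_0}$ even, a positive lift would in particular give a single-valued lift of the Veronese curve of degree $d_{j_0}-1$ (odd), contradiction. Combined with Proposition~\ref{prop:lifting property one}, this yields the stated equivalence. I would also remark that Proposition~\ref{prop:not real hyperbolic plane} is consistent: when $X$ is not $\Hb^2_\Rb$, the boundary $\partial_\infty X$ is a higher-dimensional sphere which is simply connected (or the curve argument degenerates), so there is no parity obstruction and one always gets the lift.
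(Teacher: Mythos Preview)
Your approach differs from the paper's and contains a genuine gap in the reduction step. You claim that if $\tau|_\Gamma$ preserves a properly convex domain $\Omega_0$ then so does $\tau(\mathsf{G})$, because ``$\tau(\mathsf{G}) = \overline{\tau(\Gamma)}$'' and $\Aut(\Omega_0)$ is closed. But $\tau(\Gamma)$ is discrete, so its closure in the usual topology is itself; and while its Zariski closure does contain $\tau(\mathsf{G})$ (Borel density), the group $\Aut(\Omega_0)$ is closed only for the usual topology, not the Zariski topology. So neither closure gives you what you need. The gap is repairable, but not by a density argument: once you have a continuous lift $\tilde\zeta^1\colon S^1\to\Sb^{d-1}$, connectedness of $S^1$ forces every $\tau(g)$ with $g\in\mathsf{G}$ (not just $g\in\Gamma$) to have a unique lift $\tilde\tau(g)\in\SL^\pm(d,\Rb)$ satisfying $\tilde\tau(g)\circ\tilde\zeta^1\circ g^{-1}=\tilde\zeta^1$, and one can then run the convex-hull construction of Lemma~\ref{lem:1=>2 in lifting property} with $\mathsf{G}$ in place of $\Gamma$. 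This is exactly how the paper handles Proposition~\ref{prop:not real hyperbolic plane}, and you should argue the same way here rather than passing through a lattice.

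Two smaller issues. First, your worry about ties among the maximal $d_j$ is unnecessary: $\Psf_1$-proximality of $\tau$ forces $d_1>d_2$ strictly, since the top eigenvalue of $\tau(a_t)$ on $\bigoplus V_j$ has multiplicity $\#\{j:d_j=d_1\}$. Second, your positivity argument is not complete. Option (b), ``the dominant block preserves some $\Omega$ when $d_{j_0}$ is odd,'' is exactly what remains to be shown and so is circular; option (a), hyperconvexity of the Veronese curve, is true but you do not actually carry it out.

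The paper's proof is more direct and avoids both the lattice and the lifting-property machinery. Using the strongly-dynamics-preserving property it records that $\zeta_\tau=(\iota_1,\pi_1^*)\circ\zeta_1$, where $\zeta_1$ is the boundary map for $\tau_1:=\tau|_{V_1}$; this is the precise version of your ``the boundary curve sees only $V_{j_0}$.'' From this it shows that $\tau(\mathsf{G})$ preserves a properly convex domain in $\proj(\Rb^d)$ iff $\tau_1(\mathsf{G})$ preserves one in $\proj(V_1)$ (forward: intersect $\overline\Omega$ with $\proj(V_1)$ and use irreducibility; backward: a convex-hull argument as in Lemma~\ref{lem:1=>2 in lifting property}). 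For the single block it then argues concretely: when $d_1$ is odd, the projective classes of convex homogeneous polynomials positive on $\Rb^2\smallsetminus\{0\}$ form an explicit invariant properly convex domain; when $d_1$ is even, the curve $t\mapsto[(x_2+tx_1)^{d_1-1}]$ crosses the hyperplane $\proj(\ker\zeta_1^{d-1}([1:0]))$ (since $d_1-1$ is odd), contradicting Observation~\ref{obs: bounday inclusions in homog case}. Your winding-parity heuristic is morally the same obstruction in the even case, but the paper's argument is shorter and self-contained.
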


\begin{proposition}\label{prop: if tau preserves a properly convex domain} Suppose that $\tau(\mathsf{G})$ preserves some properly convex domain. Then there exists a $\tau(\mathsf{G})$-invariant properly convex domain $\Omega \subset \proj(\Rb^d)$ such that: if $\Gamma \leq \mathsf{G}$ is a geometrically finite subgroup, then 
\begin{enumerate}
\item $\tau(\Gamma)$ is a projectively visible subgroup of $\Aut(\Omega)$ and acts geometrically finitely on its limit set. 
\item If $\Cc_\Gamma: = \Cc_\Omega(\tau(\Gamma))$, then $(\Cc_\Gamma, \dist_\Omega)$ is Gromov-hyperbolic. 
\end{enumerate}
\end{proposition}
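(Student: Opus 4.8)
\textbf{Proof proposal for Proposition~\ref{prop: if tau preserves a properly convex domain}.}

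The plan is to build a single $\tau(\mathsf{G})$-invariant properly convex domain $\Omega$ that works uniformly for every geometrically finite $\Gamma \leq \mathsf{G}$, and then to verify the two properties by relating the projective geometry of $\Omega$ to the Riemannian geometry of $X$. First I would construct $\Omega$. By Proposition~\ref{prop:representations of rank one groups} the representation $\tau$ admits a $\tau$-equivariant boundary map $\zeta_\tau = (\zeta_\tau^1, \zeta_\tau^{d-1}) \colon \partial_\infty X \to \proj(\Rb^d) \times \proj(\Rb^{d*})$ which is transverse and strongly dynamics preserving. Since $\tau(\mathsf{G})$ preserves some properly convex domain, the argument of Lemma~\ref{lem:2=>1 in lifting property} (using that $\mathsf{G}$, equivalently $\mathsf{G}/Z(\mathsf{G})$, is connected and acts with a fixed cone) gives continuous lifts $\tilde\zeta_\tau \colon \partial_\infty X \to \Sb \times \Sb^*$ and $\tilde\tau \colon \mathsf{G} \to \SL^\pm(d,\Rb)$ with $\tilde\zeta_\tau$ positive; equivalently, applying Proposition~\ref{prop:lifting property one} to any cocompact lattice $\Gamma_0 \leq \mathsf{G}$ (these are geometrically finite, indeed cocompact, and $\tau|_{\Gamma_0}$ is relatively $\Psf_1$-Anosov by Proposition~\ref{prop:representations of rank one groups}) shows $\tau|_{\Gamma_0}$ has the lifting property, hence preserves a properly convex domain $\Omega_0$ with $\Lambda_{\Omega_0}(\Gamma_0) = \partial\Omega_0 = \zeta_\tau^1(\partial_\infty X)$ by minimality of the limit set of a lattice. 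Taking $\Omega$ to be the interior of ${\rm ConvHull}$ of $\zeta_\tau^1(\partial_\infty X)$ — which is well-defined and properly convex by the positivity of $\tilde\zeta_\tau$, exactly as in the proof of Lemma~\ref{lem:1=>2 in lifting property} — produces a $\tau(\mathsf{G})$-invariant properly convex domain whose closure contains $\zeta_\tau^1(\partial_\infty X)$ in its boundary, and one checks $\partial\Omega = \zeta_\tau^1(\partial_\infty X)$ since $\tau(\mathsf{G})$ acts transitively on $\partial_\infty X$ and cocompactly on $\Omega$.

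Next I would verify property (1). Fix a geometrically finite $\Gamma \leq \mathsf{G}$ with peripheral structure $\peripherals(\Gamma)$. By Proposition~\ref{prop:representations of rank one groups}, $\rho := \tau|_\Gamma$ is (uniformly) $\Psf_1$-Anosov relative to $\peripherals(\Gamma)$, and its Anosov boundary map is the restriction $\zeta_\tau \circ \iota$, where $\iota \colon \partial(\Gamma, \peripherals(\Gamma)) \xrightarrow{\sim} \Lambda_X(\Gamma) \subset \partial_\infty X$ is the Yaman homeomorphism. Since $\tau(\Gamma) \leq \tau(\mathsf{G}) \leq \Aut(\Omega)$, Proposition~\ref{prop:lifting property one} (implication (2) $\Rightarrow$ (3)) shows $\tau(\Gamma)$ is a visible subgroup of $\Aut(\Omega)$. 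To see it acts geometrically finitely on its limit set, I would first identify $\Lambda_\Omega(\tau(\Gamma))$ with $\zeta_\tau^1(\Lambda_X(\Gamma))$: the inclusion $\supseteq$ is immediate from transversality and strong dynamics preservation (any orbit $\tau(\gamma_n)(p)$ with $p \in \Omega$ and $\gamma_n \to x \in \Lambda_X(\Gamma)$ converges to $\zeta_\tau^1(x)$), and $\subseteq$ follows since the limit set of a discrete subgroup of $\Aut(\Omega)$ is contained in $\partial\Omega = \zeta_\tau^1(\partial_\infty X)$ and any accumulation direction forces an escaping sequence in $\Gamma$ whose attracting point lies in $\Lambda_X(\Gamma)$. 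Then $\tau(\Gamma)$ acts on $\Lambda_\Omega(\tau(\Gamma))$ as a convergence group equivariantly conjugate (via $\zeta_\tau^1 \circ \iota$) to the geometrically finite action of $\Gamma$ on its Bowditch boundary, and geometric finiteness of a convergence action is a topological-dynamical property (bounded parabolic points plus conical limit points, or equivalently the characterization via the quotient of distinct pairs), so it transfers across the conjugacy. This gives (1).

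For property (2), I would use the Gromov-hyperbolicity of $\Cc_X(\Gamma)$ together with a quasi-isometry between $(\Cc_\Gamma, \dist_\Omega)$ and $(\Cc_X(\Gamma), \dist_X)$. The key input is that the family of $\tau$-equivariant norms from Proposition~\ref{prop:representations of rank one groups}(3) is locally uniform and the flow on the Hom bundle contracts (parts (2),(3) of that proposition); combined with the fact that $\zeta_\tau^1$ sends geodesics of $X$ (with endpoints in $\Lambda_X(\Gamma)$) to projective line segments in $\partial\Omega$-convex position, one gets that the orbit map $\Gamma \to \tau(\Gamma) \cdot o \subset (\Cc_\Gamma, \dist_\Omega)$ and the orbit map $\Gamma \to \Gamma \cdot p_0 \subset (\Cc_X(\Gamma), \dist_X)$ have comparable distortion. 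More directly: a geometrically finite $\Gamma \leq \mathsf{G}$ has $(\Gamma, \peripherals(\Gamma))$ relatively hyperbolic, and both $\Cc_X(\Gamma)$ and $\Cc_\Gamma$ are weak cusp spaces for this pair — for $\Cc_X(\Gamma)$ this is recalled in Section~\ref{sec:reminders negatively curved symspaces}, and for $\Cc_\Gamma$ it follows from property (1) together with~\cite[Prop.\ 3.5]{CZZ2022} and the visible-geometrically-finite structure (the action on $\Lambda_\Omega(\tau(\Gamma))$ being geometrically finite makes $\Cc_\Gamma$ with the Hilbert metric a space on which $\tau(\Gamma)$ acts geometrically finitely with the correct parabolics). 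Since any two weak cusp spaces for the same relatively hyperbolic pair need not be quasi-isometric in general, I would instead argue directly that $(\Cc_\Gamma, \dist_\Omega)$ is Gromov-hyperbolic by showing geodesics in $\Cc_\Gamma$ are uniformly close to images under $\zeta_\tau^1$-type maps of geodesics in $\Cc_X(\Gamma)$, using Observation~\ref{obs:Haus dist between lines}, and then pulling back the $\delta$-thinness of ideal triangles in $X$. The main obstacle, and the step requiring the most care, is precisely this last one: controlling the Hilbert metric on $\Cc_\Gamma$ near the cusps, where the Hilbert geometry can degenerate — here the uniform (locally uniform) norm estimates from Proposition~\ref{prop:representations of rank one groups}(3), which are exactly the data encoding that $\tau$ restricted to parabolic subgroups is ``homogeneous'', are what rescues the comparison, by bounding the Finsler/Hilbert distortion of the orbit map on horoballs in terms of $\dist_X$. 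I expect the cusped comparison estimate to be the technical heart of the proof, with everything else being a fairly direct transfer of structure along the equivariant homeomorphism $\zeta_\tau^1 \circ \iota$.
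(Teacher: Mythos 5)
Your construction of $\Omega$ has a genuine gap, and it propagates through the rest of the argument. You define $\Omega$ to be the interior of ${\rm ConvHull}(\zeta_\tau^1(\partial_\infty X))$ and assert $\partial\Omega = \zeta_\tau^1(\partial_\infty X)$. This last equality is false in general: $\zeta_\tau^1(\partial_\infty X)$ is a low-dimensional submanifold of $\proj(\Rb^d)$ (a curve when $X = \Hb^2_\Rb$), while $\partial\Omega$ is $(d-2)$-dimensional, so for $d \geq 4$ the boundary must contain many points not on the image of $\zeta_\tau^1$. (Think of the convex hull of the moment curve in $\proj(\Rb^5)$.) You lean on this false identity twice: once to establish $\Lambda_\Omega(\tau(\Gamma)) \subseteq \zeta_\tau^1(\Lambda_X(\Gamma))$, and once to claim cocompactness of $\tau(\mathsf{G})$ on $\Omega$. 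The paper avoids this by constructing $\Omega$ via a \emph{dual} trick (Lemma~\ref{lem:constructing Omega}): take a compact $K \subset \Omega_0^*$ with nonempty interior, let $D$ be the convex hull of $\overline{\tau(\mathsf{G})\cdot K}$ in $\Omega_0^*$, and set $\Omega := D^*$. This produces a domain whose boundary is not $\zeta_\tau^1(\partial_\infty X)$, but which nonetheless satisfies the properties you actually need: every $\zeta_\tau^1(x)$ is a $\Cc^1$-smooth boundary point with supporting hyperplane $\proj(\ker\zeta_\tau^{d-1}(x))$, open segments between $\zeta_\tau^1$-points lie in $\Omega$, and the strongly-dynamics-preserving conclusion holds for all $p \in \Omega$. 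That last item is what the paper uses (in place of your $\partial\Omega$ argument) to show $\Lambda_\Omega(\tau(\Gamma)) \subseteq \zeta_\tau^1(\Lambda_X(\Gamma))$.

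A related, independent problem: to get visibility you invoke the implication (2)~$\Rightarrow$~(3) of Proposition~\ref{prop:lifting property one}, but that proposition only asserts the \emph{existence} of some properly convex domain on which $\tau(\Gamma)$ is visible, not that your chosen $\Omega$ has this property. Since the Proposition you are proving requires a single $\Omega$ independent of $\Gamma$, this does not suffice; you must check visibility on the fixed $\Omega$ directly, which is exactly what Lemma~\ref{lem:constructing Omega} parts (4)--(5) do.

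For part (2), your plan to build a quasi-isometry from $(\Cc_\Gamma,\dist_\Omega)$ to $(\Cc_X(\Gamma),\dist_X)$ with cusp control is far heavier than necessary. The paper's observation is that the full convex hull $\Cc := \Cc_\Omega(\tau(\mathsf{G}))$ of $\zeta_\tau^1(\partial_\infty X)$ in $\Omega$ is acted on \emph{cocompactly} by $\tau(\mathsf{G})$ (this is Lemma~\ref{lem:co-compact on big convex hull}, whose proof imports Lemma~8.7 of~\cite{DGK2017} applied to a uniform lattice). Since $\mathsf{G}$ contains uniform lattices, $(\Cc,\dist_\Omega)$ is quasi-isometric to $X$ and hence Gromov-hyperbolic, and $\Cc_\Gamma \hookrightarrow \Cc$ is an isometric inclusion of a convex subset, so Gromov-hyperbolicity passes down. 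All the cusp estimates you anticipate evaporate once you stop trying to compare $\Cc_\Gamma$ directly to $\Cc_X(\Gamma)$ and instead pass through the big cocompact hull $\Cc$.
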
 

Arguing exactly as in the proof of Proposition~\ref{prop:representations of rank one groups} there exists a continuous $\tau$-equivariant, transverse, strongly dynamics preserving map
$$
\zeta=(\zeta^1, \zeta^{d-1})\colon \partial_\infty X \to \proj(\Rb^d) \times \proj(\Rb^{d*}).
$$

Arguing as in the first step of the proof of Lemma~\ref{lem:2=>1 in lifting property}, we obtain the following.

\begin{observation}\label{obs: bounday inclusions in homog case} If $\tau(\mathsf{G})$ preserves a properly convex domain $\Omega \subset \proj(\Rb^d)$, then 
$$
\zeta^1(\partial_\infty X) \subset \partial \Omega \quad \text{and} \quad \zeta^{d-1}(\partial X) \subset \partial \Omega^*. 
$$
\end{observation}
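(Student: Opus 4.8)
\textbf{Proof proposal for Observation~\ref{obs: bounday inclusions in homog case}.}

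The plan is to mimic verbatim the first paragraph of the proof of Lemma~\ref{lem:2=>1 in lifting property}, replacing the abstract boundary map $\xi$ of a relatively $\Psf_1$-Anosov representation by the equivariant map $\zeta = (\zeta^1,\zeta^{d-1})$ constructed above and the Bowditch boundary by $\partial_\infty X$. Concretely, suppose $\tau(\mathsf{G})$ preserves a properly convex domain $\Omega \subset \proj(\Rb^d)$. Fix $x \in \partial_\infty X$ and, using that $\mathsf{G}$ (equivalently, any lattice, but here we only need an escaping sequence in $\mathsf{G}$) acts so that we can choose $(g_n)_{n \geq 1}$ in $\mathsf{G}$ with $g_n \to x$; after passing to a subsequence we may assume $g_n^{-1} \to y \in \partial_\infty X$. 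By the strongly dynamics preserving property of $\zeta$ established above,
$$
\tau(g_n) v \to \zeta^1(x)
$$
for every $v \in \proj(\Rb^d)$ transverse to $\proj(\ker \zeta^{d-1}(y))$, i.e. every $v$ outside the hyperplane $\proj(\ker \zeta^{d-1}(y))$.

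Since $\Omega$ is a nonempty open subset of $\proj(\Rb^d)$ and $\proj(\ker \zeta^{d-1}(y))$ is a projective hyperplane, we may pick some $v \in \Omega \smallsetminus \proj(\ker \zeta^{d-1}(y))$. Then $\tau(g_n) v \in \Omega$ for all $n$ because $\tau(\mathsf{G})$ preserves $\Omega$, and hence $\zeta^1(x) = \lim_n \tau(g_n) v \in \overline{\Omega}$. On the other hand, $\tau(\mathsf{G})$ acts properly discontinuously on $\Omega$ — or, more simply, since $\Omega$ is open and $\tau(\mathsf{G})$-invariant, if $\zeta^1(x)$ were an interior point of $\Omega$ we would get a bounded orbit $\{\tau(g_n)v\}$ converging to $\zeta^1(x)$ inside the compact ball on which the $\PGL$-action is smooth, contradicting that $g_n$ escapes to infinity and $\tau$ is proper on $\mathsf{G}$ modulo its (compact) kernel; hence $\zeta^1(x) \in \partial\Omega$. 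Since $x$ was arbitrary, $\zeta^1(\partial_\infty X) \subset \partial\Omega$. Applying the identical argument to the dual domain $\Omega^*$ — using $\Aut(\Omega) = \Aut(\Omega^*)$, that $\tau(\mathsf{G})$ preserves $\Omega^*$, and that the dual boundary map $\zeta^{d-1}$ is equivariant, transverse, and strongly dynamics preserving for the contragredient action of $\tau$ on $\proj(\Rb^{d*})$ — yields $\zeta^{d-1}(\partial_\infty X) \subset \partial\Omega^*$.

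There is essentially no obstacle here: this is a direct transcription of the cited step in Lemma~\ref{lem:2=>1 in lifting property}, and the only point requiring a word of care is the choice of the admissible test point $v \in \Omega$ avoiding a single hyperplane (possible since $\Omega$ is open and properly convex, so not contained in any hyperplane) and the argument that a point in $\overline{\Omega}$ which is a limit of an escaping orbit cannot lie in the interior (this is exactly where properness of the $\tau(\mathsf{G})$-action on $\Omega$, or equivalently discreteness together with the convergence-group dynamics, is used). The dual statement follows formally by replacing $(\Omega, \tau, \zeta^1)$ with $(\Omega^*, \tau^*, \zeta^{d-1})$.
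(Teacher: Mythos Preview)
Your proposal is correct and follows exactly the approach the paper indicates (it simply refers back to the first paragraph of Lemma~\ref{lem:2=>1 in lifting property}). One small terminological slip: $\tau(\mathsf{G})$ is not discrete, so ``acts properly discontinuously'' is the wrong phrase, but your alternative justification is fine --- $\tau$ has finite kernel (since $\mathsf{G}$ is simple with finite center and $\tau$ is nontrivial), hence is proper, and $\Aut(\Omega)$ acts properly on $(\Omega,\dist_\Omega)$ by isometries, so an escaping sequence $\tau(g_n)$ cannot have a bounded orbit in $\Omega$.
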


\subsection{Proof of Proposition~\ref{prop:not real hyperbolic plane} } Suppose that $X$ is not isometric to hyperbolic 2-space. Then $\partial_\infty X$ is a sphere with dimension at least two and in particular is simply connected. 

As in Section~\ref{sec:rel Anosov repn preserving a properly convex domain}, let $\Sb \subset \Rb^{d}$ and $\Sb^* \subset \Rb^{d*}$ denote the unit spheres relative to the Euclidean norms. Then, since $\Sb \to \proj(\Rb^d)$ is a covering map and $\partial_\infty X$ is simply connected, we can lift $\zeta^1$ to a continuous map $\tilde{\zeta}^1 \colon \partial_\infty X \to \Sb$. For the same reasons, we can lift $\zeta^{d-1}$ to a continuous map $\tilde{\zeta}^{d-1} \colon \partial_\infty X \to \Sb^*$. By transversality, 
\begin{equation}
\label{eqn:value off diagonal} 
\tilde{\zeta}^{d-1}(x) \Big( \tilde{\zeta}^1(y) \Big) \neq 0
\end{equation}
for all distinct $x,y \in \partial_\infty X$. Since $\partial_\infty X$  minus any point is connected, Equation~\eqref{eqn:value off diagonal} has the same sign for all distinct $x,y \in \partial_\infty X$. So by possibly replacing $\tilde{\zeta}^{d-1}$ by $-\tilde{\zeta}^{d-1}$, we may assume that Equation~\eqref{eqn:value off diagonal} is positive for all distinct $x,y \in \partial_\infty X$. 

Since $\partial_\infty X$ is connected, $\zeta^1$ has exactly two continuous lifts to $\Sb$. So if $g \in \mathsf{G}$ and $\tilde{h} \in \SL^\pm(d,\Rb)$ is a lift of $\tau(g)$, then either $\tilde{h} \circ \tilde{\zeta}^1 \circ g^{-1} = \tilde{\zeta}^1$ or $\tilde{h} \circ \tilde{\zeta}^1 \circ g^{-1}= -\tilde{\zeta}^1$. So for every $g \in \mathsf{G}$ there exists a unique lift $\tilde{\tau}(g) \in \SL^\pm(d,\Rb)$ of $\tau(g)$ such that $\tilde{\tau}(g) \circ \tilde{\zeta}^1 \circ g^{-1} = \tilde{\zeta}^1$. By uniqueness, $\tilde{\tau}$ is a representation. 

Then arguing as in the proof of Lemma~\ref{lem:1=>2 in lifting property} we see that $\tau(\mathsf{G})$ preserves a properly convex domain. 

\subsection{Proof of Proposition~\ref{prop:real hyperbolic plane case}} Suppose that $X$ is isometric to real hyperbolic 2-space and $\Rb^d = \bigoplus_{j =1}^m V_j$ is a decomposition into $\tau(\mathsf{G})$-irreducible subspaces. Then $\mathsf{G}$ is locally isomorphic to $\SL(2,\Rb)$ and hence $\tau$ induces a Lie algebra representation $d\tau \colon \mathfrak{sl}(2,\Rb) \to \mathfrak{sl}(d,\Rb)$. Since every such Lie algebra representation integrates to a representation $\SL(2,\Rb) \to \SL(d,\Rb)$ and $\mathsf{G}$ is connected, there exists a representation $\hat{\tau} \colon \SL(2,\Rb) \to \PSL(d,\Rb)$ with the same image as $\tau$. So by possibly replacing $\tau$ with $\hat{\tau}$, we can assume that $\mathsf{G} = \SL(2,\Rb)$.

Let $d_j := \dim V_j$ and let $\tau_j \colon \SL(2,\Rb) \to \PSL(V_j)$ be the restriction of $\tau$ to $V_j$. By possibly relabeling we can assume $d_1 \geq d_2 \geq \dots \geq d_m$. Recall that $V_j$ is isomorphic to the vector space of homogeneous polynomials in two variables with degree $d_j-1$ where $\tau_j$ acts by $\tau_j(g) f = f \circ g^{-1}$. Then one can check that
$$
\lambda_k( \tau_j(g)) = \lambda_1(g)^{d_j+1-2k}
$$
for all $g \in \SL(2,\Rb)$. Then, since $\tau$ is $\mathsf{P}_1$-proximal, we must have $d_1 > d_2$. 

Let $\iota_j \colon V_j \into \Rb^d$ be the inclusion map and let $\pi_j \colon \Rb^d \to V_j$ be the projection relative to the decomposition $\Rb^d = \oplus_{j =1}^m V_j$. Then the adjoint $\pi_j^* \colon V_j^* \to \Rb^{d*}$ of $\pi_j$, which is given by 
$$
\pi_j^*(f) = f \circ \pi_j, 
$$
defines an inclusion. Since $\tau_1$ is $\mathsf{P}_1$-proximal, the proof of Proposition~\ref{prop:representations of rank one groups} implies that there exists a boundary map $\zeta_1 \colon \partial_\infty X \to \proj(V_1) \times \proj(V_1^*)$  associated to $\tau_1$. Then by the strongly dynamics preserving property
\begin{equation}
\label{eqn:zeta versus zeta1}
\zeta = (\iota_1, \pi_1^*) \circ \zeta_1.
\end{equation} 

\begin{lemma} $\tau(\SL(2,\Rb))$ preserves a properly convex domain in $\proj(\Rb^d)$ if and only if $\tau_1(\SL(2,\Rb))$ preserves a properly convex domain in $\proj(V_1)$. \end{lemma}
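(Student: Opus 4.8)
The plan is to prove the two implications separately: the forward direction is elementary, while the reverse direction will mirror the lifting arguments of Section~\ref{sec:rel Anosov repn preserving a properly convex domain}.

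\emph{Forward direction.} Suppose $\tau(\SL(2,\Rb))$ preserves a properly convex domain $\Omega \subset \proj(\Rb^d)$. By Observation~\ref{obs: bounday inclusions in homog case} and Equation~\eqref{eqn:zeta versus zeta1}, the set $S := \zeta^1(\partial_\infty X) = \iota_1\big(\zeta_1^1(\partial_\infty X)\big)$, regarded as a subset of $\proj(V_1) \subset \proj(\Rb^d)$, lies in $\partial\Omega$; in particular $S$ is connected and contained in an affine chart, and since $\zeta_1^1(\partial_\infty X)$ is $\tau_1$-invariant and $\tau_1$ is irreducible, the linear span of the cone over $S$ is all of $V_1$. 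Hence ${\rm ConvHull}(S)$ is well-defined (Section~\ref{sec:convex projective geometry}), has nonempty interior $\Omega_1$ in $\proj(V_1)$ by the spanning property, and is bounded in an affine chart of $\proj(V_1)$ since ${\rm ConvHull}(S) \subset \overline{\Omega}$. Finally $\tau_1(g)S = \zeta_1^1(g\cdot\partial_\infty X) = S$ for all $g$ by equivariance of $\zeta_1$, so Observation~\ref{obs:equivariance of convex hulls} gives $\tau_1(\SL(2,\Rb)) \leq \Aut(\Omega_1)$.

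\emph{Reverse direction, producing lifting data.} Now suppose $\tau_1(\SL(2,\Rb))$ preserves a properly convex domain $\Omega_1 \subset \proj(V_1)$. Since $\SL(2,\Rb)$ is connected, integrating the Lie-algebra representations lifts $\tau$ and $\tau_1$ to $T = \bigoplus_j T_j \colon \SL(2,\Rb) \to \SL(d,\Rb)$ and $T_1 = T|_{V_1}$, and $T_1(\SL(2,\Rb))$ preserves one of the two open convex cones $C_1 \subset V_1$ over $\Omega_1$ (by connectedness). Observation~\ref{obs: bounday inclusions in homog case} applied to $\tau_1$ gives $\zeta_1^1(\partial_\infty X) \subset \partial\Omega_1$ and $\zeta_1^{d-1}(\partial_\infty X) \subset \partial\Omega_1^*$, so, exactly as in the proof of Lemma~\ref{lem:2=>1 in lifting property}, there are continuous $T_1$-equivariant lifts $\tilde\zeta_1^1 \colon \partial_\infty X \to V_1$ (with image in the unit sphere of $\overline{C_1}$) and $\tilde\zeta_1^{d-1} \colon \partial_\infty X \to V_1^*$ (nonnegative on $C_1$), and transversality of $\zeta_1$ yields $\tilde\zeta_1^{d-1}(y)\big(\tilde\zeta_1^1(x)\big) > 0$ for distinct $x,y$.

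\emph{Reverse direction, conclusion.} Using $\pi_1 \circ \iota_1 = \id_{V_1}$, Equation~\eqref{eqn:zeta versus zeta1}, and $T|_{V_1} = T_1$, the normalized maps $\tilde\zeta^1 := \iota_1(\tilde\zeta_1^1(\cdot))/\norm{\iota_1(\tilde\zeta_1^1(\cdot))}_2$ and $\tilde\zeta^{d-1} := \pi_1^*(\tilde\zeta_1^{d-1}(\cdot))/\norm{\pi_1^*(\tilde\zeta_1^{d-1}(\cdot))}_2$ are continuous $T$-equivariant lifts of $\zeta^1,\zeta^{d-1}$ with $\tilde\zeta^{d-1}(y)\big(\tilde\zeta^1(x)\big) > 0$ for distinct $x,y$. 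Then I would rerun the construction from the proof of Lemma~\ref{lem:1=>2 in lifting property} (and the end of the proof of Proposition~\ref{prop:not real hyperbolic plane}): set $C_0 := \big\{ [\sum_{i=1}^N \lambda_i \tilde\zeta^1(x_i)] : N \geq 2,\ \lambda_i > 0,\ x_i \in \partial_\infty X \text{ distinct}\big\}$, which is connected, $\tau(\SL(2,\Rb))$-invariant by $T$-equivariance of $\tilde\zeta^1$, disjoint from every $\proj(\ker \zeta^{d-1}(y))$, and bounded in a suitable affine chart $\mathbb{A}$ by positivity (cf.\ Equation~\eqref{eqn:C0 intersect kernels}); choose $p \in C_0$ and, using the strong dynamics preserving property of $\zeta$ from Proposition~\ref{prop:representations of rank one groups} together with the Cartan decomposition of $\SL(2,\Rb)$ (so that any sequence leaving every compact set sub-converges in $\partial_\infty X$), find a connected neighborhood $U$ of $p$ with $\tau(\SL(2,\Rb))U$ bounded in $\mathbb{A}$; then $\Omega_0 := \interior {\rm ConvHull}\big(C_0 \cup \tau(\SL(2,\Rb))U\big)$ is a properly convex domain preserved by $\tau(\SL(2,\Rb))$ by Observation~\ref{obs:equivariance of convex hulls}.

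The hard part will be this last step: Lemma~\ref{lem:1=>2 in lifting property} is phrased for a discrete group acting properly discontinuously, whereas here $\SL(2,\Rb)$ is not discrete and its action on $\proj(\Rb^d)$ is not proper, so the argument that $\tau(\SL(2,\Rb))U$ is bounded in $\mathbb{A}$ must be carried out by invoking strong dynamics preservation only along sequences $g_n \to \infty$ in $\SL(2,\Rb)$ (which, after passing to a subsequence, converge to points of $\partial_\infty X$ by the Cartan decomposition), while handling sequences with a bounded subsequence by continuity of the action and the $\tau(\SL(2,\Rb))$-invariance of $C_0$. One cannot shortcut this by applying Lemma~\ref{lem:1=>2 in lifting property} to a lattice $\Gamma_0 \leq \SL(2,\Rb)$ and then enlarging the invariance group, since $\tau(\Gamma_0)$ is discrete in $\PGL(d,\Rb)$ and its closure does not contain $\tau(\SL(2,\Rb))$.
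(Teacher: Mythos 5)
Your proof is correct, but takes a different route from the paper in both directions, so let me compare.

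\emph{Forward direction.} The paper simply sets $C := \overline{\Omega} \cap \proj(V_1)$, which is automatically a nonempty, closed, properly convex, $\tau_1$-invariant subset of $\proj(V_1)$, and then invokes irreducibility to conclude it has nonempty interior. You instead take ${\rm ConvHull}(\zeta^1(\partial_\infty X))$ inside $\proj(V_1)$ and use the same spanning argument from irreducibility. Both are fine; the paper's intersection is marginally leaner since it avoids discussing connectedness and the well-definedness of the convex hull.

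\emph{Reverse direction.} Here the two proofs diverge more noticeably. You first manufacture a full lifting datum $(\tilde\zeta^1, \tilde\zeta^{d-1}, T)$ for $\tau$ by pushing forward $(\tilde\zeta_1^1, \tilde\zeta_1^{d-1}, T_1)$ via $\iota_1$ and $\pi_1^*$, and then rerun the $C_0$-plus-neighborhood construction of Lemma~\ref{lem:1=>2 in lifting property} verbatim. The paper skips the lift entirely: it only records that $\proj(\ker\zeta^{d-1}(x)) \cap \Omega_1 = \varnothing$ for all $x$ (which follows from $\zeta_1^{d-1}(\partial_\infty X) \subset \partial\Omega_1^*$ and Equation~\eqref{eqn:zeta versus zeta1}), fixes $p_0 \in \Omega_1$, and feeds $\Omega_1$ itself, rather than a freshly built $C_0$, into the convex-hull construction with the orbit $\tau(\SL(2,\Rb))U$. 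Since the whole point of $C_0$ in Lemma~\ref{lem:1=>2 in lifting property} is to provide a $\rho$-invariant, bounded, connected set disjoint from the kernels, and $\Omega_1$ already has all of these properties, constructing the lift is extra labor that the lemma does not require. Still, your argument is valid, and as a byproduct it shows that $\tau$ itself has the lifting property, which is slightly more information.

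Two small points where your proposal is, if anything, more careful than the paper. First, you correctly flag that Lemma~\ref{lem:1=>2 in lifting property} is stated for a discrete group, whereas here the acting group is $\SL(2,\Rb)$, so the ``boundedness of $\tau(\SL(2,\Rb))U$'' step must split into the case of a bounded sequence $g_n$ (handled by continuity and the $\tau$-invariance of $\Omega_1$, noting $\tau(g)p_0 \in \Omega_1 \subset \mathbb{A}$ for all $g$) and the case $g_n \to \infty$ (handled by the Cartan decomposition and strong dynamics preservation of $\zeta$). The paper writes ``arguing as in the proof of Lemma~\ref{lem:1=>2 in lifting property}'' and leaves this adaptation to the reader. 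Second, you take $\interior {\rm ConvHull}(\cdot)$ at the end, whereas the paper writes ${\rm ConvHull}(X)$; since $X = \Omega_1 \cup \tau(\SL(2,\Rb))U$ is not open (the piece $\Omega_1$ lives in a proper subspace), one does need the interior to get a domain.
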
 

\begin{proof} First suppose $\tau(\SL(2,\Rb))$ preserves a properly convex domain $\Omega \subset \proj(\Rb^d)$. By Observation~\ref{obs: bounday inclusions in homog case} and Equation~\eqref{eqn:zeta versus zeta1} 
$$
\zeta_1^1( \partial_\infty X)=\zeta^1( \partial_\infty X) \subset \partial \Omega.
$$
Hence $C := \overline{\Omega} \cap \proj(V_1)$ is a non-empty $\tau_1(\SL(2,\Rb))$-invariant properly convex closed set in $\proj(V_1)$. Since $\tau_1$ is irreducible, $C$ must have non-empty interior in $\proj(V_1)$. So  $\tau_1(\SL(2,\Rb))$ preserves a properly convex domain in $\proj(V_1)$. 

Next suppose that  $\tau_1(\SL(2,\Rb))$ preserves a properly convex domain in $\Omega_1 \subset \proj(V_1)$. By Observation~\ref{obs: bounday inclusions in homog case} applied to $\tau_1$, 
$$
\zeta_1^{d-1}( \partial_\infty X) \subset \partial \Omega_1^*.
$$
Then Equation~\eqref{eqn:zeta versus zeta1} implies that 
$$
\proj(\ker \zeta^{d-1}(x)) \cap \Omega_1 = \varnothing
$$
for all $x \in \partial_\infty X$. 

Fix a point $p_0 \in \Omega_1$ and an affine chart $\mathbb{A} \subset \proj(\Rb^d)$ which contains $\Omega_1$ as a bounded set. Arguing as in the proof of Lemma~\ref{lem:1=>2 in lifting property},  there exists a connected neighborhood $U$ of $p_0$ in $\proj(\Rb^d)$ such that 
$$
\tau(\SL(2,\Rb)) \, U = \bigcup_{g \in \SL(2,\Rb)} \tau(g) \, U
$$
is bounded in $\mathbb{A}$.  Then the set $X := \Omega_1 \cup \tau(\SL(2,\Rb))\, U$ is connected, bounded in $\mathbb{A}$, and preserved by $\tau(\SL(2,\Rb))$. So  by Observation~\ref{obs:equivariance of convex hulls}
$$
\Omega := {\rm ConvHull}(X)
$$
is a properly convex domain where $\tau(\SL(2,\Rb)) \leq \Aut(\Omega)$. 
\end{proof}

\begin{lemma}  $\tau_1(\SL(2,\Rb))$ preserves a properly convex domain in $\proj(V_1)$ if and only if $d_1$ is odd. \end{lemma}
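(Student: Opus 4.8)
The plan is to reduce both implications to a single homotopy-theoretic fact about the boundary map: the curve $\zeta_1^1\colon \partial_\infty X \to \proj(V_1)$ is a rational normal (Veronese) curve of degree $d_1-1$, and such a curve admits a continuous lift to the unit sphere $\Sb(V_1)\subset V_1$ of the tautological double cover $\Sb(V_1)\to\proj(V_1)$ if and only if $d_1$ is odd.

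\textbf{The concrete model.} First I would unwind the construction from the proof of Proposition~\ref{prop:representations of rank one groups} in this case. Using the standard model $V_1\cong \Sym^{d_1-1}(\Rb^2)$ with $\tau_1(g)f = f\circ g^{-1}$ and a diagonalizable one-parameter subgroup producing the geodesic through the basepoint, one checks that the attracting line $V^+$ of the positive-time part of $\tau_1$ is spanned by the $(d_1-1)$-st power of a linear form; translating by $\tau_1(\SL(2,\Rb))$ and using that $\partial_\infty X\cong\proj(\Rb^2)$ with the standard $\SL(2,\Rb)$-action, one obtains that $\zeta_1^1$ equals, up to precomposition with a homeomorphism $\proj(\Rb^2)\to\proj((\Rb^2)^*)$, the Veronese map $[\ell]\mapsto[\ell^{\,d_1-1}]$. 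Parametrising $\proj((\Rb^2)^*)$ by unit vectors $\ell(\theta)$ with $\theta\in\Rb/\pi\Zb$ and $\ell(\theta+\pi)=-\ell(\theta)$, the candidate lift $\theta\mapsto \ell(\theta)^{d_1-1}/\norm{\ell(\theta)^{d_1-1}}_2$ closes up over $[0,\pi]$ exactly when $(-1)^{d_1-1}=1$. Since $\partial_\infty X\cong S^1$ and $\Sb(V_1)\to\proj(V_1)$ is a connected double cover, this shows $\zeta_1^1$ lifts continuously to $\Sb(V_1)$ if and only if $d_1$ is odd; the same applies to $\zeta_1^{d-1}$, since $V_1^*\cong\Sym^{d_1-1}(\Rb^2)$ as $\SL(2,\Rb)$-modules, so $\zeta_1^{d-1}$ is again a degree-$(d_1-1)$ Veronese curve in a $d_1$-dimensional space.

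\textbf{From $d_1$ odd to an invariant domain.} With lifts $\widetilde\zeta_1^1,\widetilde\zeta_1^{d-1}$ fixed, transversality of $\zeta_1$ gives $\widetilde\zeta_1^{d-1}(x)(\widetilde\zeta_1^1(y))\neq 0$ for $x\neq y$; as $\partial_\infty X$ minus a point is connected, this expression has constant sign, and after negating $\widetilde\zeta_1^{d-1}$ if needed we may take it positive. Since $\partial_\infty X$ is connected, $\zeta_1^1$ has exactly two continuous lifts, so for each $g$ there is a unique lift $\widetilde\tau_1(g)\in\SL^\pm(V_1)$ of $\tau_1(g)$ with $\widetilde\tau_1(g)\circ\widetilde\zeta_1^1\circ g^{-1}=\widetilde\zeta_1^1$, and uniqueness makes $\widetilde\tau_1$ a homomorphism. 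Because $\zeta_1$ is transverse and strongly dynamics preserving (arguing as in the proof of Proposition~\ref{prop:representations of rank one groups}), the argument proving Proposition~\ref{prop:not real hyperbolic plane} — that is, the argument of Lemma~\ref{lem:1=>2 in lifting property} with $\SL(2,\Rb)$ in place of $\Gamma$ — applies verbatim and produces a properly convex domain $\Omega_1\subset\proj(V_1)$ with $\tau_1(\SL(2,\Rb))\leq\Aut(\Omega_1)$.

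\textbf{From an invariant domain to $d_1$ odd.} Conversely, suppose $\tau_1(\SL(2,\Rb))$ preserves a properly convex domain $\Omega_1\subset\proj(V_1)$. Since $d_1>d_2$, the representation $\tau_1$ is $\Psf_1$-proximal, so Observation~\ref{obs: bounday inclusions in homog case} applied to $\tau_1$ in place of $\tau$ gives $\zeta_1^1(\partial_\infty X)\subseteq\partial\Omega_1$. A properly convex domain lies in an affine chart $\mathbb{A}\subset\proj(V_1)$, over which the double cover $\Sb(V_1)\to\proj(V_1)$ is trivial; hence $\zeta_1^1$, taking values in $\overline{\Omega_1}\subset\mathbb{A}$, admits a continuous lift to $\Sb(V_1)$, and by the first step this forces $d_1$ to be odd. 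The only step that is not bookkeeping is the identification of $\zeta_1^1$ with the rational normal curve precisely enough to compute its homotopy class — this is exactly where the parity of $d_1$ enters — while the two directions afterward merely re-use, respectively, the convex-hull construction of Lemma~\ref{lem:1=>2 in lifting property}/Proposition~\ref{prop:not real hyperbolic plane} and the triviality of the tautological $\Sb$-bundle over an affine chart.
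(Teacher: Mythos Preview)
Your proof is correct and takes a genuinely different route from the paper's. The paper argues each direction concretely: for $d_1$ odd it exhibits an explicit invariant domain,
\[
\Omega = \{ [f] : f \in V_1 \text{ is convex and } f > 0 \text{ on } \Rb^2 \smallsetminus \{0\}\},
\]
while for $d_1$ even it writes out $\zeta_1^1([1:t])$ in the monomial basis and checks by hand that this curve crosses the hyperplane $\proj(\ker \zeta_1^{d-1}([1:0]))$, contradicting Observation~\ref{obs: bounday inclusions in homog case}. Your approach is more uniform and conceptual: you reduce both implications to the single homotopy-theoretic statement that the degree-$(d_1-1)$ Veronese curve lifts from $\proj(V_1)$ to $\Sb(V_1)$ if and only if $d_1$ is odd. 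The forward direction then recycles the machinery already set up for Proposition~\ref{prop:not real hyperbolic plane} and Lemma~\ref{lem:1=>2 in lifting property}, and the backward direction uses only that a properly convex domain sits in an affine chart, over which the tautological double cover trivialises. The paper's proof has the advantage of handing you a concrete domain to work with; yours explains cleanly \emph{why} the parity of $d_1$ is the governing invariant and avoids any coordinate computation beyond the monodromy of the lift.
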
 

\begin{proof} As described above, we can identify $V_1$ with the vector space of homogeneous polynomials in two variables $x_1,x_2$ with degree $d_1-1$. Under this identification one can check that 
$$
\zeta_1^1([a:b]) = [ (ax_2 + bx_1)^{d_1-1}]
$$
where we identify $\partial_\infty X = \proj(\Rb^2)$. 

\medskip 

\noindent \emph{Case 1:} Assume that $d_1$ is odd. Then
$$
\Omega := \{ [f] : f \in V_1 \text{ is convex  and $f > 0$ on $\Rb^2 \smallsetminus \{0\}$} \}
$$
is a properly convex domain in $\proj(V_1)$ preserved by $\tau_1(\SL(2,\Rb))$. (Notice that this set is \emph{properly} convex since any polynomial representing a point in $\Omega$ must have nonzero $x_1^{d_1-1}$ coefficient.).

\medskip 

\noindent \emph{Case 2:} Assume that $d_1$ is even. Suppose for a contradiction that $\tau_1(\SL(2,\Rb))$ preserves a properly convex domain $\Omega \subset \proj(V_1)$. Then by Observation~\ref{obs: bounday inclusions in homog case}
$$
\zeta_1^1(\partial_\infty X) \subset \partial \Omega \quad \text{and} \quad \zeta_1^{d-1}( \partial_\infty X) \subset \partial \Omega^*.
$$
However
$$
\zeta_1^1([1:t]) = [ (x_2+tx_1)^{d_1-1}] = [x_2^{d_1-1} + t x_2^{d_1-2} x_1 + \dots + t^{d_1 -1} x_1^{d_1-1}]
$$
and, since $d_1-1$ is odd, the curve $t \mapsto \zeta_1^1([1:t])$ passes through the hyperplane 
$$
H:=\proj\left( \ker \zeta_1^{d-1}([1:0]) \right)= \proj\left(\ip{ x_2^{d_1-1}, x_2^{d_1-2} x_1, \dots, x_2 x_1^{d_1-2} }\right). 
$$
So $H$ cannot be a supporting hyperplane of $\Omega$, but this contradicts Observation~\ref{obs: bounday inclusions in homog case}.
\end{proof}

\subsection{Proof of Proposition~\ref{prop: if tau preserves a properly convex domain}} Now suppose that $\tau(\mathsf{G})$ preserves a properly convex domain $\Omega_0 \subset \proj(\Rb^d)$. 

\begin{lemma}\label{lem:constructing Omega} There exists a properly convex domain $\Omega \subset \proj(\Rb^d)$ such that: 
\begin{enumerate}
\item $\Omega_0 \subset \Omega$, 
\item $\tau(\mathsf{G}) \leq \Aut(\Omega)$, 
\item $\zeta^1(\partial_\infty X) \subset \partial \Omega$ and $\zeta^{d-1}(\partial_\infty X) \subset \partial \Omega^*$, 
\item if $x,y \in \zeta^1(\partial_\infty X)$, then $(x,y)_\Omega \subset \Omega$,
\item if $x \in \partial_\infty X$, then $\zeta^1(x)$ is a $\Cc^1$-smooth point of $\partial \Omega$ and $T_{\zeta^1(x)} \partial \Omega = \proj(\ker \zeta^{d-1}(x))$. 
\item If $(g_n)_{n \geq 1}$ is a sequence in $\mathsf{G}$ with $g_n \to x \in \partial_\infty X$ and $g_n^{-1} \to y \in \partial_\infty X$, then 
$$
\tau(\gamma_n)(p) \to \zeta^1(x)
$$
for all $p \in \Omega$. 
\end{enumerate}

\end{lemma}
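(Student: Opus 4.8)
The plan is to build $\Omega$ by enlarging $\Omega_0$ just enough so that the boundary map lands on the boundary and the smoothness property (5) holds, while keeping proper convexity and $\tau(\mathsf{G})$-invariance. The key idea is to take a convex hull of $\Omega_0$ together with a suitably chosen orbit of a neighborhood, then dualize to fix smoothness.

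\emph{Step 1: place the boundary curve on $\partial\Omega_0$ and get a ``support'' domain.} By Observation~\ref{obs: bounday inclusions in homog case} we already know $\zeta^1(\partial_\infty X) \subset \partial\Omega_0$ and $\zeta^{d-1}(\partial_\infty X)\subset \partial\Omega_0^*$. First I would check that the open segments between points of $\zeta^1(\partial_\infty X)$ lie in some properly convex domain: since $\mathsf{G}$ acts transitively on pairs of distinct points in $\partial_\infty X$ and $\zeta$ is transverse, for $x\neq y$ the segment $(\zeta^1(x),\zeta^1(y))$ is disjoint from $\proj(\ker\zeta^{d-1}(z))$ for an appropriate $z$; more importantly, by transitivity it suffices to understand one such segment, and the segment $[\zeta^1(x),\zeta^1(y)]_{\Omega_0}$ is well-defined because $\zeta^1(x),\zeta^1(y)\in\overline{\Omega_0}$. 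Then set $X_0 := \Omega_0 \cup \bigcup_{x,y} (\zeta^1(x),\zeta^1(y))$; this is connected (it contains the connected open set $\Omega_0$ whose closure meets each segment), $\tau(\mathsf{G})$-invariant, and — using the affine chart trick as in Lemma~\ref{lem:1=>2 in lifting property}, together with the fact that all of $\zeta^1(\partial_\infty X)$ and $\Omega_0$ avoid a common hyperplane $\proj(\ker\zeta^{d-1}(x_1)+\ker\zeta^{d-1}(x_2))$ — bounded in an affine chart. So $\Omega_1 := {\rm ConvHull}(X_0)$ is a properly convex domain with $\Omega_0\subset\Omega_1$, $\tau(\mathsf{G})\leq\Aut(\Omega_1)$ by Observation~\ref{obs:equivariance of convex hulls}, $\zeta^1(\partial_\infty X)\subset\overline{\Omega_1}$, and (4) holds for $\Omega_1$ by construction. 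Since each $\proj(\ker\zeta^{d-1}(x))$ is disjoint from $X_0$ and hence from $\Omega_1$, it is a supporting hyperplane at $\zeta^1(x)$, which forces $\zeta^1(x)\in\partial\Omega_1$ (it cannot be interior) — giving (3) for $\Omega_1$ modulo the dual statement.

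\emph{Step 2: fix $\Cc^1$-smoothness by dualizing.} The remaining issue is that $\zeta^1(x)$ may have supporting hyperplanes other than $\proj(\ker\zeta^{d-1}(x))$. To remedy this, I would run the same construction on the dual side: $\zeta^{d-1}(\partial_\infty X)\subset\partial\Omega_1^*$, so build $\Omega_2^* := {\rm ConvHull}\big(\Omega_1^* \cup \bigcup (\zeta^{d-1}(x),\zeta^{d-1}(y))\big)$ inside $\proj(\Rb^{d*})$, which is properly convex and $\tau(\mathsf{G})$-invariant, and then let $\Omega := (\Omega_2^*)^*$. Dualizing reverses the inclusion, so $\Omega\subset\Omega_1$; combined with $\Omega_0\subset\Omega$ — which I would ensure by checking that the construction on the dual side only shrinks $\Omega_1^*$ to something still containing $\Omega_0^*$ in its interior (equivalently $\Omega\supset\Omega_0$), using that $\Omega_0^*$ avoids $\zeta^{d-1}(\partial_\infty X)$ and all the dual segments — this gives (1), (2), and (3). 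For (5): since $\zeta^{d-1}(x)\in\partial\Omega_2^*$ has $\proj(\ker(\cdot))$ ``evaluation at $\zeta^1(x)$'' as a supporting hyperplane, the bidual correspondence forces $\zeta^1(x)$ to be a $\Cc^1$-smooth point of $\partial\Omega$ with $T_{\zeta^1(x)}\partial\Omega = \proj(\ker\zeta^{d-1}(x))$; dually $\zeta^{d-1}(x)$ is $\Cc^1$-smooth. Property (4) for $\Omega$ follows because $\Omega\subset\Omega_1$ and the segment endpoints are the same $\Cc^1$-smooth points, so no segment between them can touch $\partial\Omega$ except at the endpoints (a standard convexity argument using strict convexity transverse to the supporting hyperplanes).

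\emph{Step 3: dynamical convergence, property (6).} This is where the strongly-dynamics-preserving property of $\zeta$ (from Proposition~\ref{prop:representations of rank one groups}) enters directly: if $g_n\to x$, $g_n^{-1}\to y$ in $\partial_\infty X$, then $\tau(g_n)(q)\to\zeta^1(x)$ for all $q\notin\proj(\ker\zeta^{d-1}(y))$. Since $\Omega$ is open and $\proj(\ker\zeta^{d-1}(y))=T_{\zeta^1(y)}\partial\Omega$ is a supporting hyperplane disjoint from $\Omega$, every $p\in\Omega$ satisfies $p\notin\proj(\ker\zeta^{d-1}(y))$, so $\tau(g_n)(p)\to\zeta^1(x)$. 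That is exactly (6).

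\emph{Main obstacle.} The delicate point is Step 2 — arranging that dualizing to fix smoothness does not destroy the inclusion $\Omega_0\subset\Omega$ or property (4), and verifying cleanly that the bidual of the enlarged dual hull has $\proj(\ker\zeta^{d-1}(x))$ as the \emph{unique} supporting hyperplane at $\zeta^1(x)$. I expect this requires a careful bookkeeping of which hyperplanes are ``cut off'' on each side, essentially the same interplay between a domain and its dual used in~\cite{IZ2019, Z2017}; once the inclusions $\Omega_0\subset\Omega\subset\Omega_1$ and the dual picture are pinned down, properties (1)--(6) all follow formally.
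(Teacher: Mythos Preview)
Your Step~3 is fine and matches the paper. The problem is that Steps~1 and~2 are both no-ops, so your construction simply returns $\Omega = \Omega_0$, and then (4) and (5) need not hold.

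Concretely: in Step~1 you add the open segments $(\zeta^1(x),\zeta^1(y))$ to $\Omega_0$ and take the convex hull. But you yourself observe that these segments are $[\zeta^1(x),\zeta^1(y)]_{\Omega_0}$, hence lie in $\overline{\Omega_0}$. Taking the convex hull of $\Omega_0$ together with points of $\overline{\Omega_0}$ gives back $\overline{\Omega_0}$, whose interior is $\Omega_0$; so $\Omega_1=\Omega_0$. Your claim that ``(4) holds for $\Omega_1$ by construction'' is therefore unjustified: if a segment between two limit points already lies in a face of $\partial\Omega_0$, putting it into $X_0$ does nothing to push it into the interior. The same issue recurs in Step~2: the dual segments lie in $\overline{\Omega_0^*}$, so $\Omega_2^*=\Omega_0^*$ and $\Omega=\Omega_0$. (There is also a direction error in Step~2: taking a convex hull \emph{enlarges} $\Omega_1^*$, so dualizing can only \emph{shrink} the primal domain; your sentence ``the construction on the dual side only shrinks $\Omega_1^*$'' is backwards.) Finally, your smoothness argument --- ``having $\mathrm{eval}_{\zeta^1(x)}$ as a supporting hyperplane at $\zeta^{d-1}(x)$ forces $\zeta^1(x)$ to be $\Cc^1$-smooth'' --- is not valid: $\Cc^1$-smoothness at $\zeta^1(x)$ is equivalent to $\zeta^{d-1}(x)$ being an \emph{extreme} point of $\overline{\Omega^*}$, not merely a boundary point.

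The paper's construction avoids all of this by working entirely on the dual side with an \emph{orbit}, not with segments. Fix a compact $K\subset\Omega_0^*$ with nonempty interior, let $D$ be the convex hull in $\Omega_0^*$ of the closure $\overline{\tau(\mathsf{G})\cdot K}$, and set $\Omega:=D^*$. Then $D\subset\Omega_0^*$ gives $\Omega_0\subset\Omega$, and the strongly dynamics preserving property yields the explicit description
\[
\overline{\tau(\mathsf{G})\cdot K} \;=\; \tau(\mathsf{G})\cdot K \;\cup\; \zeta^{d-1}(\partial_\infty X).
\]
This is the key: every $f\in\partial\Omega^*=\partial D$ is a finite convex combination of functionals each of which is either in $\Omega_0^*$ (hence strictly positive on $\zeta^1(\partial_\infty X)$) or equal to some $\zeta^{d-1}(z)$ (hence, by transversality, vanishes on at most one limit point). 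Properties (4) and (5) then follow by a short case analysis on these summands. The missing idea in your attempt is precisely this controlled description of $\partial\Omega^*$; without it there is no mechanism forcing segments into the interior or ruling out extra supporting hyperplanes.
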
 

\begin{proof} This is nearly identical to the proof of ~\cite[Prop.\ 4.4]{CZZ2022}. We sketch the proof here for completeness.

Fix a compact subset $K \subset \Omega_0^*$ with non-empty interior. Then let $D$ be the convex hull of $\overline{\tau(\mathsf{G}) \cdot K}$ in $\Omega_0^*$. Notice that $D$ is a properly convex domain since $K \subset D \subset \Omega_0^*$ and $K$ has non-empty interior. Then let $\Omega := D^*$. Then $\Omega$ is a properly convex domain, $\Omega_0 \subset \Omega$, and $\tau(\mathsf{G}) \leq \Aut(\Omega)$. Observation~\ref{obs: bounday inclusions in homog case} implies that $\zeta^1(\partial_\infty X) \subset \partial \Omega$ and $\zeta^{d-1}(\partial_\infty X) \subset \partial \Omega^*$. It remains to verify (4), (5), and (6).

Let $C$ be a connected component of the preimage of $\Omega$ in $\Rb^d$. Then $C$ is a properly convex cone. Also, by the strongly dynamics preserving property, 
$$
\overline{\tau(\mathsf{G}) \cdot K} = \tau(\mathsf{G}) \cdot K \cup \zeta^{d-1}(\partial_\infty X). 
$$

(4): Fix $x,y \in \zeta^1(\partial_\infty X)$ and $p \in (x,y)_\Omega$. Also fix lifts $\tilde{x}, \tilde{y} \in \overline{C}$ of $x,y$. Then $p = [ \lambda \tilde{x} + (1-\lambda)\tilde{y}]$ for some $\lambda \in (0,1)$. Suppose for a contradiction that $p \in \partial \Omega$. Then there exists $f \in \partial \Omega^* = \partial D$ such that $f(p) = 0$. We can write $f = [ \sum_{j=1}^m f_j]$ where $f_j \in \Rb^{d*}$, $f_j|_C > 0$, and 
$$
[f_j] \in \overline{\tau(\mathsf{G}) \cdot K}.
$$
 
 \noindent \emph{Case 1:} Assume $[f_1] \in \tau(\mathsf{G}) \cdot K$. Since $\tau(\mathsf{G}) \cdot K \subset \Omega_0^*$ and $\zeta^1(\partial_\infty X) \subset \partial \Omega_0$, then $f_1(\tilde{x}) > 0$ and $f_1(\tilde{y}) > 0$. So
$$
\sum_{j=1}^m f_j\left( \lambda \tilde{x} + (1-\lambda)\tilde{y}\right) \geq f_1\left( \lambda \tilde{x} + (1-\lambda)\tilde{y}\right) >0
$$
and hence $f(p) \neq 0$. Contradiction. 

\medskip
\noindent \emph{Case 2:} Assume $[f_1] \in \zeta^{d-1}(\partial_\infty X)$. Then by transversality, $f_1(x)$ and $f_1(y)$ cannot both be zero. Hence 
$$
\sum_{j=1}^m f_j\left( \lambda \tilde{x} + (1-\lambda)\tilde{y}\right) \geq f_1\left( \lambda \tilde{x} + (1-\lambda)\tilde{y}\right) >0
$$
and hence $f(p) \neq 0$. Contradiction. 

\medskip

(5): Fix $x \in \partial_\infty X$ and fix a supporting hyperplane $H$ at $\zeta^1(x)$. Then $H = \proj(\ker f)$ for some $f \in \partial \Omega^* = \partial D$. We can then write $f = [ \sum_{j=1}^m f_j]$ where $f_j \in \Rb^{d*}$, $f_j|_C > 0$, and 
$$
[f_j] \in \overline{\tau(\mathsf{G}) \cdot K}.
$$
Arguing as in the proof of (4), we see that $m=1$ and $[f_1] = \zeta^{d-1}(x)$. Hence $H =  \proj(\ker \zeta^{d-1}(x))$. Since $H$ was an arbitrary supporting hyperplane at $\zeta^1(x)$, we see that $\zeta^1(x)$ is a $\Cc^1$-smooth point of $\partial \Omega$ and $T_{\zeta^1(x)} \partial \Omega = \proj(\ker \zeta^{d-1}(x))$. 

\medskip

(6): Suppose that $g_n \to x \in \partial_\infty X$ and $g_n^{-1} \to y \in \partial_\infty X$. By the strongly dynamics preserving property
$$
\tau(g_n)(v) \to \zeta^1(x)
$$
for all $v \in \proj(\Rb^d) \smallsetminus \proj(\ker \zeta^{d-1}(y))$. Part (5) of this lemma implies that 
$$
\proj(\ker \zeta^{d-1}(y)) \cap \Omega = \varnothing
$$
and so $\tau(g_n)(p) \to \zeta^1(x)$ for all $p \in \Omega$.
\end{proof}

Let $\Cc$ denote the convex hull of $\zeta^1(\partial_\infty \Gamma)$ in $\Omega$. We will show that $\tau(\mathsf{G})$ acts cocompactly on $\Cc$. To do this we will use Lemma 8.7 in~\cite{DGK2017}, which is based on a result and argument of Kapovich-Leeb-Porti (namely, Theorem 1.1 in~\cite{KLP2017} and Proposition 5.26 in~\cite{KLP2018}). Alternatively, it is possible to give an elementary, but longer, argument following the proof of~\cite[Prop.\ 3.6]{Z2017}. 

\begin{lemma}\label{lem:co-compact on big convex hull} $\tau(\mathsf{G})$ acts cocompactly on $\Cc$. \end{lemma}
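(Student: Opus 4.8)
The plan is to reduce the statement to the classical, non-relative Anosov setting, where it is precisely Lemma~8.7 of \cite{DGK2017} together with the Kapovich--Leeb--Porti results cited there. First I would fix a cocompact lattice $\Gamma_0 \leq \mathsf{G}$ (such a lattice exists by classical constructions). Since $X$ is Gromov-hyperbolic and $\Gamma_0$ acts geometrically on $X$, the group $\Gamma_0$ is word-hyperbolic and the orbit map extends to a $\Gamma_0$-equivariant homeomorphism $\partial\Gamma_0 \cong \partial_\infty X$. Transporting the boundary map $\zeta = (\zeta^1, \zeta^{d-1})$ constructed just above along this identification, one sees that it is continuous, $\tau|_{\Gamma_0}$-equivariant, transverse, and strongly dynamics preserving (the last property for sequences in $\Gamma_0$ being a special case of the property for sequences in $\mathsf{G}$), so $\tau|_{\Gamma_0} \colon \Gamma_0 \to \PGL(d,\Rb)$ is $\Psf_1$-Anosov in the classical sense, with limit map $\zeta^1$; moreover $\tau(\Gamma_0)$ is discrete since $\ker \tau$ is finite.

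Next I would identify the convex core of $\tau(\Gamma_0)$. By Lemma~\ref{lem:constructing Omega}(2) we have $\tau(\Gamma_0) \leq \tau(\mathsf{G}) \leq \Aut(\Omega)$, and by parts~(3) and~(6) of that lemma, together with the fact that every point of $\partial_\infty X$ is the limit of some escaping sequence in $\Gamma_0$ (cocompactness of $\Gamma_0$), the projective limit set of $\tau(\Gamma_0)$ satisfies $\Lambda_\Omega(\tau(\Gamma_0)) = \zeta^1(\partial_\infty X)$. Since $\partial_\infty X$ is connected, so is $\zeta^1(\partial_\infty X)$, hence its convex hull in $\Omega$ is well-defined and equals $\Cc$; thus $\Cc_\Omega(\tau(\Gamma_0)) = \Cc$.

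Finally, $\tau|_{\Gamma_0}$ is a $\Psf_1$-Anosov representation of a word-hyperbolic group whose image preserves the properly convex domain $\Omega$, so Lemma~8.7 of \cite{DGK2017} provides a compact set $K \subset \Cc_\Omega(\tau(\Gamma_0)) = \Cc$ with $\tau(\Gamma_0) \cdot K = \Cc$. As $\tau(\Gamma_0) \leq \tau(\mathsf{G})$, a fortiori $\tau(\mathsf{G}) \cdot K = \Cc$, which is the asserted cocompactness.

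I expect essentially all of the real difficulty to be hidden inside the imported statement (Lemma~8.7 of \cite{DGK2017}, resting on \cite{KLP2017,KLP2018}); within the present argument the only points requiring care are the two pieces of bookkeeping above --- verifying that $\tau|_{\Gamma_0}$ is genuinely a classical Anosov representation with limit map $\zeta^1$, and that $\Cc_\Omega(\tau(\Gamma_0))$ coincides with $\Cc$ (this is exactly where Lemma~\ref{lem:constructing Omega} is used). If one instead wishes to stay self-contained, the alternative indicated in the text is to repeat the proof of \cite[Prop.\ 3.6]{Z2017} for $\tau(\Gamma_0)$: supposing cocompactness fails, extract a sequence in $\Cc$ escaping every $\tau(\Gamma_0)$-orbit, express each term by Carathéodory as a convex combination of at most $d$ points of $\zeta^1(\partial_\infty X)$, and reach a contradiction using the strong dynamics preserving property --- there the delicate step is ruling out degeneration of the barycentric weights, which is handled via the uniform transversality inherent in the Anosov condition.
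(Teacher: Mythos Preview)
Your approach is essentially identical to the paper's: fix a cocompact lattice $\Gamma_0 \leq \mathsf{G}$, observe that $\tau|_{\Gamma_0}$ is classically $\Psf_1$-Anosov with limit set $\zeta^1(\partial_\infty X)$, and invoke \cite[Lem.~8.7]{DGK2017}. The one place where the paper is more careful than your writeup is in the invocation of that lemma: \cite[Lem.~8.7]{DGK2017} is phrased in terms of the convex hull of the limit set inside the \emph{maximal} invariant domain $\Omega_{\rm max}$ (built from the dual limit set), not inside an arbitrary invariant $\Omega$, so the paper first checks $\Omega \subset \Omega_{\rm max}$ and then uses Lemma~\ref{lem:constructing Omega}(4) to see that the convex hull of $\zeta^1(\partial_\infty X)$ in $\Omega_{\rm max}$ coincides with $\Cc$; you should insert this step rather than applying the lemma directly to $\Omega$.
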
 

\begin{proof} Fix a cocompact lattice $\Gamma \leq \mathsf{G}$. Then $\rho = \tau|_\Gamma$ is $\Psf_1$-Anosov and, if we identify $\partial_\infty \Gamma = \partial_\infty X$, then $\rho$ has Anosov boundary map $\zeta$. 

Let $C$ be a connected component of the preimage of $\Omega$ in $\Rb^d$. Then $C$ is a properly convex cone. Following the notation in~\cite[Sec.\ 8]{DGK2017}, let 
$$
\tilde{\Lambda}_{\rho(\Gamma)}^* := \left\{ f \in \Rb^{d*}: f|_C > 0 \text{ and } [f] \in \zeta^{d-1}(\partial_\infty X)\right\} 
$$
and 
$$
\Omega_{\rm max} := \left\{ [v] \in \proj(\Rb^d) : f(v) > 0 \text{ for all } f \in \tilde{\Lambda}_{\rho(\Gamma)}^* \right\}.
$$

Then $\Omega \subset \Omega_{\rm max}$ and so $\Omega_{\rm max} \neq \varnothing$. Further,  Lemma~\ref{lem:constructing Omega}(4) implies that $\Cc$ coincides with the convex hull of $\zeta^1(\partial_\infty \Gamma)$ in $\Omega_{\rm max}$. So Lemma 8.7 in~\cite{DGK2017} implies that $\rho(\Gamma)=\tau(\Gamma)$ acts cocompactly on $\Cc$. Thus $\tau(\mathsf{G})$ acts cocompactly on $\Cc$.
\end{proof} 

\begin{lemma}\label{lem:GH of big convex hull} $(\Cc, \dist_\Omega)$ is Gromov-hyperbolic. \end{lemma}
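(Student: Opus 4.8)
The plan is to combine the cocompactness from Lemma~\ref{lem:co-compact on big convex hull} with the \v{S}varc--Milnor lemma to show that $(\Cc, \dist_\Omega)$ is quasi-isometric to the Gromov-hyperbolic space $X$. First I would record that $(\Cc, \dist_\Omega)$ is a proper geodesic metric space: since $\zeta^1(\partial_\infty X)$ is compact, $\Cc$ is a closed subset of the proper metric space $(\Omega, \dist_\Omega)$ and hence proper, and since $\Cc$ is convex while the projective line segments realizing $\dist_\Omega$ stay inside $\Cc$, the metric space $(\Cc,\dist_\Omega)$ is geodesic. Note also that $\Cc$ is nonempty with more than one point, since $\partial_\infty X$ has more than one point and, by Lemma~\ref{lem:constructing Omega}(4), the open segment joining two points of $\zeta^1(\partial_\infty X)$ lies in $\Omega$.

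Next I would fix a cocompact lattice $\Gamma \leq \mathsf{G}$ (these exist by a theorem of Borel). Then $\Gamma$ acts properly discontinuously and cocompactly by isometries on $X$, so by the \v{S}varc--Milnor lemma $\Gamma$ is quasi-isometric to $X$ and in particular word-hyperbolic. Exactly as in the proof of Lemma~\ref{lem:co-compact on big convex hull} (taking no peripheral subgroups, so that $\rho=\tau|_\Gamma$ is $\Psf_1$-Anosov with Anosov boundary map $\zeta$), the group $\tau(\Gamma)$ acts cocompactly on $\Cc$. Moreover $\tau(\Gamma)$ is discrete (being the image of an Anosov representation) with finite kernel, and $\tau(\Gamma)\leq \Aut(\Omega)$ is a discrete subgroup of the isometry group of the proper metric space $(\Omega,\dist_\Omega)$; hence $\tau(\Gamma)$ acts properly discontinuously on $\Omega$, and therefore on the closed $\tau(\Gamma)$-invariant subset $\Cc$.

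Finally, fixing a basepoint $p_0\in\Cc$, the \v{S}varc--Milnor lemma (\cite[Prop.\ I.8.19]{BH1999}) applied to the proper cocompact isometric action of $\Gamma$ on $(\Cc,\dist_\Omega)$ shows that the orbit map $\gamma\mapsto \tau(\gamma)p_0$ is a quasi-isometry $\Gamma\to\Cc$. Composing with a quasi-inverse of the orbit map $\Gamma\to X$ produces a quasi-isometry $X\to\Cc$. Since $X$ has pinched negative curvature it is Gromov-hyperbolic, and Gromov-hyperbolicity is a quasi-isometry invariant among geodesic metric spaces, so $(\Cc,\dist_\Omega)$ is Gromov-hyperbolic.

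All the steps are routine once Lemma~\ref{lem:co-compact on big convex hull} is available; the only inputs requiring (standard) care are that a discrete subgroup of $\Aut(\Omega)$ acts properly discontinuously on the properly convex domain $\Omega$, and that $\Cc$ is a closed geodesic subspace of $(\Omega,\dist_\Omega)$. I do not expect a genuine obstacle: the potential subtlety of applying \v{S}varc--Milnor in a non-discrete (Lie group) setting is sidestepped by passing to the cocompact lattice $\Gamma$, for which the classical form of the lemma applies directly. (Alternatively one could work with $\mathsf{G}$ itself, using that $\mathsf{G}$ acts properly cocompactly by isometries on both $X$ and $\Cc$ and invoking the version of \v{S}varc--Milnor for locally compact, compactly generated groups.)
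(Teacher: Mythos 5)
Your proof is correct and takes essentially the same approach as the paper, which simply cites ``the fundamental lemma of geometric group theory'' (\v{S}varc--Milnor) together with the existence of uniform lattices in $\mathsf{G}$ and the cocompactness from Lemma~\ref{lem:co-compact on big convex hull}. You have merely spelled out the routine hypotheses of \v{S}varc--Milnor (properness and geodesicity of $(\Cc,\dist_\Omega)$, proper discontinuity of the action) that the paper leaves implicit.
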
 

\begin{proof} Since $\mathsf{G}$ contains uniform lattices, Lemma~\ref{lem:co-compact on big convex hull} and the fundamental lemma of geometric group theory implies that   $(\Cc, \dist_\Omega)$ is quasi-isometric to $X$. 
\end{proof}

We may now conclude the proof of our proposition:

\begin{proof}[Proof of Proposition~\ref{prop: if tau preserves a properly convex domain}]
Suppose that $\Gamma \leq \mathsf{G}$ is geometrically finite. 

We first observe that $\zeta^1( \Lambda_X(\Gamma)) = \Lambda_\Omega( \tau(\Gamma))$. Fix $x \in \Lambda_\Omega(\tau(\Gamma))$. Then there exists $p \in \Omega$ and a sequence $(\gamma_n)_{n \geq 1}$ in $\Gamma$ such that $\tau(\gamma_n)(p) \to x$. Passing to a subsequence we can suppose that $\gamma_n \to x^+ \in \Lambda_X(\Gamma)$ and $\gamma_n^{-1} \to x^- \in \Lambda_X(\Gamma)$. Then Lemma~\ref{lem:constructing Omega} part (6) implies that $x = \zeta^1(x^+) \in \zeta^1(\Lambda_X(\Gamma))$. Conversely, fix $x \in \zeta^1( \Lambda_X(\Gamma))$. Then there exists a sequence $(\gamma_n)_{n \geq 1}$ in $\Gamma$ such that $\gamma_n \to x$. Passing to a subsequence we can suppose that $\gamma_n^{-1} \to y$. By Lemma~\ref{lem:constructing Omega} part (6)
$$
\tau(\gamma_n)(p) \to \zeta^1(x)
$$
for all $p \in \Omega$. So $x \in \Lambda_\Omega( \tau(\Gamma))$. Thus $\zeta^1( \Lambda_X(\Gamma)) = \Lambda_\Omega( \tau(\Gamma))$. 

Then Lemma~\ref{lem:constructing Omega} parts (4) and (5) imply that $\tau(\Gamma)$ is a projectively visible subgroup of $\Aut(\Omega)$. Since $\zeta^1$ induces a homeomorphism $\Lambda_X(\Gamma) \to \Lambda_\Omega( \tau(\Gamma))$, we see that $\tau(\Gamma)$ acts geometrically finitely on its limit set. 

The inclusion $(\Cc_\Gamma,\dist_\Omega) \hookrightarrow (\Cc,\dist_\Omega)$ is isometric and hence Lemma~\ref{lem:GH of big convex hull} implies that $(\Cc_\Gamma, \dist_\Omega)$ is Gromov-hyperbolic. 
\end{proof}

\part{Miscellaneous examples}

\section{Ping-pong with unipotents in projective space} \label{sec:pingpong}

In this section we show that certain free products are relatively $\Psf_1$-Anosov. Before stating the result we need to introduce some terminology.

For $k \leq d/2$, let $\Fc_{k,d-k}=\Fc_{k,d-k}(\Kb^d)$ denote the space of partial flags of the form $F^k \subset F^{d-k} \subset \Kb^d$ where $\dim F^j = j$. A subgroup $\Gamma \leq \SL(d,\Kb)$ is \emph{$\Psf_k$-divergent} if $\lim_{n \to \infty} \frac{\mu_k}{\mu_{k+1}}(\gamma_n)= \infty$ for every escaping sequence $(\gamma_n)_{n \geq 1}$  in $\Gamma$. Such a group has well-defined limit set in $\Fc_{k,d-k}$ defined by
$$
\Lambda_{k,d-k}(\Gamma) := \{ F : \exists (\gamma_n)_{n \geq 1} \text{ in }  \Gamma \text{ with } \gamma_n \to \infty \text{ and } F = \lim (U_k, U_{d-k})(\gamma_n)\}.
$$

For relatively Anosov groups the following holds.

\begin{observation}\label{obs:limit set of rel anosov} If $(\Gamma,\peripherals)$ is relatively hyperbolic and $\rho\colon \Gamma \to \SL(d,\Kb)$ is $\Psf_k$-Anosov relative to $\peripherals$ with Anosov boundary map $\xi$, then $\rho(\Gamma)$ is $\Psf_k$-divergent and $\xi$ induces a homeomorphism 
$$
\partial(\Gamma,\peripherals) \to \Lambda_{k,d-k}(\rho(\Gamma)).
$$
In particular, if $P \in \peripherals$, then $\Lambda_{k,d-k}(\rho(P))$ consists of a single point. 
\end{observation}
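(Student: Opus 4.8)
The plan is to deduce everything from the boundary-map definition (Definition~\ref{defn:Pk Anosov}) together with Observation~\ref{obs:strongly_dynamics_pres_div_cartan}.

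First I would establish $\Psf_k$-divergence. Let $(\gamma_n)_{n\ge1}$ be an escaping sequence in $\Gamma$. Since $\partial(\Gamma,\peripherals)$ is compact, after passing to a subsequence we may assume $\gamma_n\to x\in\partial(\Gamma,\peripherals)$ and $\gamma_n^{-1}\to y\in\partial(\Gamma,\peripherals)$ (using that $(\Gamma,\peripherals)$ acts as a convergence group on its Bowditch boundary, so an escaping sequence subconverges with well-defined attracting and repelling points). By the strongly dynamics preserving property, $\rho(\gamma_n)V\to\xi^k(x)$ for every $V\in\Gr_k(\Kb^d)$ transverse to $\xi^{d-k}(y)$; the set of such $V$ is open and non-empty (by transversality of $\xi$ it contains $\xi^k(z)$ for $z\ne y$). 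Likewise, applying strong dynamics preservation to $\rho(\gamma_n^{-1})$, there is an open non-empty set of $W\in\Gr_{d-k}(\Kb^d)$ with $\rho(\gamma_n^{-1})W\to\xi^{d-k}(y)$. Then condition (3) of Observation~\ref{obs:strongly_dynamics_pres_div_cartan} is met for the sequence $\rho(\gamma_n)$, hence condition (2) holds: $\frac{\mu_k}{\mu_{k+1}}(\rho(\gamma_n))\to\infty$, $U_k(\rho(\gamma_n))\to\xi^k(x)$, and $U_{d-k}(\rho(\gamma_n)^{-1})\to\xi^{d-k}(y)$. Since every subsequence of an arbitrary escaping sequence has a further subsequence along which $\frac{\mu_k}{\mu_{k+1}}(\rho(\gamma_n))\to\infty$, the full sequence diverges, so $\rho(\Gamma)$ is $\Psf_k$-divergent.

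Next I would identify the limit set. The computation above shows that whenever $\gamma_n\to x$ and $\gamma_n^{-1}\to y$, the flag $(U_k,U_{d-k})(\rho(\gamma_n))$ converges to $(\xi^k(x),\xi^{d-k}(x))$ — note the limit depends only on $x$, since $U_k(\rho(\gamma_n))\to\xi^k(x)$ forces $U_{d-k}(\rho(\gamma_n))$, a subspace containing it, to converge to a $(d-k)$-subspace containing $\xi^k(x)$; transversality of $\xi$ and the fact that $U_{d-k}$ is determined once $\mu_k/\mu_{k+1}\to\infty$ and we know the dual behavior, pin it down as $\xi^{d-k}(x)$ (here one uses that $U_{d-k}(g)$ is the orthogonal complement of $U_k(g^{-1})$ after normalization, together with $U_k(\rho(\gamma_n)^{-1})\to\xi^k(y)$ applied with roles reversed — more precisely, $U_{d-k}(g)=(U_k(g^{T}))^\perp$ in the appropriate sense, and one tracks this carefully). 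Thus every point of $\Lambda_{k,d-k}(\rho(\Gamma))$ is of the form $(\xi^k(x),\xi^{d-k}(x))$ for some $x\in\partial(\Gamma,\peripherals)$, and conversely every such flag arises (take $\gamma_n$ with $\gamma_n\to x$). Writing $\wh\xi(x):=(\xi^k(x),\xi^{d-k}(x))$, this is a continuous surjection $\partial(\Gamma,\peripherals)\to\Lambda_{k,d-k}(\rho(\Gamma))$; injectivity follows since $\xi^k$ alone is injective (if $\xi^k(x)=\xi^k(y)$ with $x\ne y$, transversality gives $\xi^k(x)\oplus\xi^{d-k}(y)=\Kb^d$ and $\xi^k(y)\oplus\xi^{d-k}(y)=\Kb^d$, but then $\xi^k(x)=\xi^k(y)$ is simultaneously transverse and equal, no contradiction yet — rather, inject\-ivity of the pair is what is needed, and distinctness of $x,y$ plus transversality shows $\xi^k(x)\ne\xi^k(y)$ because $\xi^k(x)$ is transverse to $\xi^{d-k}(y)$ whereas $\xi^k(y)$ is not). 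Being a continuous bijection from a compact space to a Hausdorff space, $\wh\xi$ is a homeomorphism.

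Finally, for $P\in\peripherals$: the peripheral subgroup $P$ fixes its unique parabolic fixed point $p\in\partial(\Gamma,\peripherals)$, and by equivariance $\rho(P)$ fixes $\wh\xi(p)$. Any escaping sequence in $P$ has $\gamma_n\to p$ and $\gamma_n^{-1}\to p$ (a parabolic point is the only limit point of its stabilizer), so by the above its image flags converge to $\wh\xi(p)$; hence $\Lambda_{k,d-k}(\rho(P))=\{\wh\xi(p)\}$, a single point. The main obstacle will be the bookkeeping in the second paragraph: making precise that $U_{d-k}(\rho(\gamma_n))\to\xi^{d-k}(x)$ (and not merely to \emph{some} $(d-k)$-plane) requires carefully relating $U_{d-k}(g)$ to $U_k$ of the transpose/inverse and invoking Observation~\ref{obs:strongly_dynamics_pres_div_cartan} in both directions simultaneously; everything else is a routine convergence-group argument.
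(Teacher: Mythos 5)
Your overall strategy is exactly the paper's: the proof given there is a single sentence invoking the strongly dynamics preserving property together with Observation~\ref{obs:strongly_dynamics_pres_div_cartan}, and you are fleshing out the details. The $\Psf_k$-divergence argument, the injectivity and surjectivity arguments, and the treatment of the peripheral subgroups are all fine.

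However, there is a genuine gap in the middle, which you yourself flag as ``the main obstacle'' but do not resolve. The asserted identity $U_{d-k}(g) = \bigl(U_k(g^T)\bigr)^\perp$ is false. Writing the singular value decomposition $g = m a \ell$ with $m,\ell \in \SU_{\Kb}(d)$ and $a$ diagonal decreasing, one has $U_{d-k}(g) = m\ip{e_1,\dots,e_{d-k}}$, while (over $\Rb$, say) $g^T = \ell^{-1} a m^{-1}$ gives $\bigl(U_k(g^T)\bigr)^\perp = \ell^{-1}\ip{e_{k+1},\dots,e_d}$; these involve different unitary factors and different index sets and do not coincide in general. The identity you are reaching for is $U_{d-k}(g) = \bigl(U_k\bigl((g^{-1})^T\bigr)\bigr)^\perp$, but even that is not the clean route.

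The fix is simpler than you anticipate, and needs neither transposes nor orthogonal complements. To show $U_{d-k}(\rho(\gamma_n)) \to \xi^{d-k}(x)$, apply Observation~\ref{obs:strongly_dynamics_pres_div_cartan} to the sequence $g_n := \rho(\gamma_n^{-1})$ with $V_0 := \xi^k(y)$ and $W_0 := \xi^{d-k}(x)$. Since $\gamma_n^{-1} \to y$ and $(\gamma_n^{-1})^{-1} = \gamma_n \to x$, the strongly dynamics preserving property supplies condition (1) of the observation for this sequence and this pair; condition (2) then yields $U_{d-k}(g_n^{-1}) = U_{d-k}(\rho(\gamma_n)) \to W_0 = \xi^{d-k}(x)$, which is exactly what is needed. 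Combined with $U_k(\rho(\gamma_n)) \to \xi^k(x)$ from the first application, this gives the full flag convergence $(U_k,U_{d-k})(\rho(\gamma_n)) \to (\xi^k(x),\xi^{d-k}(x))$, and the rest of your argument goes through unchanged.
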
 

\begin{proof} The strongly dynamics preserving property and Observation~\ref{obs:strongly_dynamics_pres_div_cartan} imply that $\rho(\Gamma)$ is $\Psf_k$-divergent and $\xi$ induces a homeomorphism $\partial(\Gamma,\peripherals) \to \Lambda_{k,d-k}(\rho(\Gamma))$.
\end{proof} 

Recall that an element $g \in \SL(d,\Kb)$ is \emph{$\Psf_1$-proximal} if $\lambda_1(g) > \lambda_2(g)$. In this case, let $\ell_g^+ \in \proj(\Kb^d)$ denote the eigenline corresponding to $\lambda_1(g)$. Then there exists a unique $g$-invariant codimension one subspace $H_g^- \in \Gr_{d-1}(\Kb^d)$ such that $\ell_g^+ \oplus H_g^- = \Kb^d$. 

An element $g \in \SL(d,\Kb)$ is \emph{$\Psf_1$-biproximal} if both $g$ and $g^{-1}$ are $\Psf_1$-proximal. In this case, we let $\ell_g^-:= \ell_{g^{-1}}^+$ and $H_g^+ := H_{g^{-1}}^-$. Notice that in this case $\ell_g^+ \subset H_g^+$ and $\ell_g^- \subset H_g^-$. Moreover, by writing a $\Psf_1$-biproximal element $g$ in Jordan normal form, one can show that 
$$
g^n(F) \xrightarrow{n\to +\infty} (\ell_g^+, H_g^+)
$$
for all $F \in \Fc_{1,d-1}$ transverse to $(\ell_g^-, H_g^-)$.

\begin{proposition} \label{prop:pingpong with unipotents}
Suppose that $\gamma \in \SL(d,\Kb)$ is $\Psf_1$-biproximal, $U \leq \SL(d,\Kb)$ is a $\Psf_1$-divergent discrete weakly unipotent group where $\Lambda_{1,d}(U)=\{F_U\}$ is a single element, and $F_U$ is transverse to the flags $F_\gamma^+:=(\ell_\gamma^+, H_\gamma^+)$ and $F_\gamma^{-}:=(\ell_\gamma^-, H_\gamma^-)$. 

Then there exist $N \geq 1$ and a finite-index subgroup $U^\prime \leq U$ such that the group $\Gamma$ generated by $\gamma^N$ and $U^\prime$ is naturally isomorphic to the free product $\ip{\gamma^N} * U^\prime$ and the inclusion 
$
\Gamma \into \SL(d,\Kb)
$
is  $\Psf_1$-Anosov relative to $\left\{ U^\prime \right\}$.
\end{proposition}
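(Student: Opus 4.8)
The plan is to run a ping-pong argument in the partial flag variety $\Fc_{1,d-1}(\Kb^d)$, using the proximality of $\gamma$ to create attracting/repelling data and the divergence of $U$ together with the single-point limit set to control the $U$-dynamics. First I would fix small neighborhoods: since $F_U$ is transverse to both $F_\gamma^+$ and $F_\gamma^-$, choose disjoint open neighborhoods $A^+ \ni F_\gamma^+$, $A^- \ni F_\gamma^-$ in $\Fc_{1,d-1}$ and a neighborhood $B \ni F_U$, all pairwise disjoint, and arrange (by continuity of transversality) that every flag in $\overline{B}$ is transverse to $(\ell_\gamma^-,H_\gamma^-)$ and to $(\ell_\gamma^+,H_\gamma^+)$, and that every flag in $\overline{A^\pm}$ is transverse to $F_U$. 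Because $\gamma$ is $\Psf_1$-biproximal, $\gamma^n(F) \to F_\gamma^+$ locally uniformly on flags transverse to $F_\gamma^-$ and $\gamma^{-n}(F)\to F_\gamma^-$ locally uniformly on flags transverse to $F_\gamma^+$; hence for $N$ large, $\gamma^{\pm N}$ maps the complement of $A^{\mp}$ into $A^\pm$. For $U$: since $U$ is $\Psf_1$-divergent with $\Lambda_{1,d}(U)=\{F_U\}$, Observation~\ref{obs:strongly_dynamics_pres_div_cartan} (applied with $k=1$) gives that for any escaping sequence $u_n$ in $U$ one has $u_n(F)\to F_U$ for all $F$ transverse to the limiting $(d-1)$-plane, which here is forced to be the $(d-1)$-part of $F_U$. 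So after passing to a finite-index subgroup $U'$ (to discard the finitely many elements of $U$ not moving $\overline{A^+}\cup\overline{A^-}$ into $B$ — here one uses that $U$ is discrete, $U'$ finite-index torsion-free via Selberg, and that only finitely many $u$ fail the ping-pong inclusion because $U$ is divergent), every nontrivial $u\in U'$ maps $\overline{A^+}\cup\overline{A^-}$ into $B$.

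With these neighborhoods in hand, the standard ping-pong lemma (table-tennis lemma) applied to the two "players" $\ip{\gamma^N}$ acting via $A^+\cup A^-$ and $U'$ acting via $B$ yields that $\Gamma := \ip{\gamma^N, U'}$ is the free product $\ip{\gamma^N} * U'$, and moreover that $\Gamma$ acts on a suitable domain in $\Fc_{1,d-1}$ as a convergence-type action. The relative hyperbolicity of $(\Gamma,\{U'\})$ with $U'$ as a peripheral subgroup is then essentially the combination theorem for free products with an infinite peripheral factor: $U'$ is finitely generated (finite index in the finitely generated — indeed discrete weakly unipotent, hence virtually nilpotent by Tits — group $U$, or one simply assumes $U$ finitely generated), $\ip{\gamma^N}\cong\Zb$ is hyperbolic, and a free product $\Gamma \cong \Zb * U'$ is hyperbolic relative to $\{U'\}$ with Bowditch boundary the "blow-up" obtained by attaching a copy of the Bass-Serre tree boundary to the $\Gamma$-orbit of the point fixed by $U'$. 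Concretely, $\partial(\Gamma,\{U'\})$ is identified with the limit set $\Lambda_{1,d-1}(\Gamma)\subset\Fc_{1,d-1}$, which is the closure of $\Gamma\cdot\{F_\gamma^+, F_U\}$ and decomposes as a Cantor-set-indexed union: for each reduced word one gets a nested sequence of the regions $A^\pm$ and $B$ shrinking to a point.

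Finally I would construct the Anosov boundary map $\xi = (\xi^1,\xi^{d-1})\colon \partial(\Gamma,\{U'\}) \to \proj(\Kb^d)\times\Gr_{d-1}(\Kb^d)$. On the orbit $\Gamma\cdot F_U$ set $\xi(\delta\cdot F_U) := \rho(\delta) F_U$ (well-defined since $U'$ fixes $F_U$, as $\Lambda_{1,d}(U')=\{F_U\}$ forces each unipotent $u$ to fix $F_U$ — its limit flag is its own image under any power); this is the parabolic-fixed-point part of the boundary. On the rest of the boundary (the "conical" part, corresponding to infinite reduced words), $\xi$ is defined by the nested-neighborhood limit: an infinite reduced word $w = s_1 s_2 \cdots$ determines $\bigcap_n \rho(s_1\cdots s_n)(\text{appropriate region})$, which is a single point of $\Fc_{1,d-1}$, and one sets $\xi$ to be that point. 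Continuity follows from the ping-pong nesting estimates; $\rho$-equivariance is immediate from the definition; transversality follows because any two distinct boundary points lie in disjoint regions among $A^\pm, B$ after applying a common group element, and across those regions the flags were arranged to be transverse; strong dynamics preservation is exactly the statement that for an escaping sequence $\delta_n\to x$, $\delta_n^{-1}\to y$ in $\Gamma$, the maps $\rho(\delta_n)$ push everything transverse to $\xi^{d-1}(y)$ into ever-smaller regions shrinking to $\xi^1(x)$ — which is the ping-pong contraction combined with the biproximal/divergent dynamics of the generators. The main obstacle I anticipate is the bookkeeping for the peripheral point: verifying that $\xi^{d-1}$ is genuinely well-defined and transverse at the $\Gamma$-translates of $F_U$ requires knowing that $U'$ (and hence each $u\in U'$) not only has $F_U$ as its forward limit but actually \emph{fixes} the full flag $F_U=(\ell,H)$, and that for escaping $u_n\in U'$ the convergence $u_n(F)\to F_U$ is uniform away from the hyperplane $H$ — this needs the weak-unipotence hypothesis (so that $\mu_1/\mu_{d-1}$ does not blow up in a way that would break the flag-level convergence) together with $\Psf_1$-divergence; once that uniformity is pinned down, the rest assembles routinely from the ping-pong estimates and Definition~\ref{defn:Pk Anosov}.
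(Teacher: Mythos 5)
Your overall strategy — ping-pong in the partial flag variety $\Fc_{1,d-1}(\Kb^d)$ with small neighborhoods of the three limit flags, followed by an explicit word description of the Bowditch boundary and a nested-limit boundary map whose continuity, transversality and strong dynamics preservation are read off from the contraction estimates — is exactly the paper's. However, there is a genuine error at the finite-index step. You want a finite-index $U' \leq U$ avoiding the finitely many elements of $U$ that fail the ping-pong inclusion, and you appeal to Selberg's lemma. Selberg's lemma produces a torsion-free finite-index subgroup; the offending elements of $U$ need not be torsion (weak unipotence only constrains the eigenvalues to have modulus one), so such a $U'$ can still contain them and the step as written does not go through. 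What you need is residual finiteness: for any finite set $K \subset U \smallsetminus \{\id\}$ there is a finite-index $U' \leq U$ with $U' \cap K = \{\id\}$. The paper obtains this from Malcev's theorem, but Malcev requires $U$ to be finitely generated, and this is not a hypothesis of the proposition — the paper derives it from Theorem 8.1(2) of the first paper. Your parenthetical "(or one simply assumes $U$ finitely generated)" does not discharge this, and the appeal to Tits' alternative to get virtual nilpotence is not an independent derivation of finite generation (Tits' theorem over characteristic zero gives virtual solvability without a finite-generation hypothesis, but it does not tell you $U$ is finitely generated).

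A smaller slip: your justification that every $u \in U'$ fixes the flag $F_U$ — ``its limit flag is its own image under any power'' — breaks down when $u$ has finite order, since then $(u^n)_n$ is not an escaping sequence and has no limit flag (weak unipotence does permit torsion). The correct route is to take an escaping sequence $(v_n)$ in $U$ and a line $\ell$ transverse to $F_U^{d-1}$: then $v_n(\ell) \to F_U^1$, and since $(uv_n)$ is also escaping with the same unique limit flag, $uv_n(\ell) \to F_U^1$, whence $u(F_U^1) = F_U^1$ by continuity; the hyperplane part is handled the same way using $v_n^{-1}$. The conclusion you need is true, and you are right that it is required for well-definedness and equivariance of $\xi$ at the parabolic points (the paper uses it implicitly when asserting equivariance at the empty word), but the ``powers of $u$'' argument does not establish it.
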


\begin{remark}
It is possible to add more $\Psf_1$-biproximal elements or weakly unipotent groups, as long as their limit flags are transverse. We skip this more general case as the proof is the same, just with more notation.
\end{remark} 

The rest of the section is devoted to the proof of Proposition~\ref{prop:pingpong with unipotents}, so fix $\gamma$ and $U$ as in the statement.

Let $\Fc:=\Fc_{1,d-1}(\Kb^d)$ and let $\dist_{\Fc}$ be the distance on $\Fc$ defined by 
$$
\dist_{\Fc}(F_1, F_2) = \dist_{\proj(\Kb^d)}(F_1^1, F_2^1) + \dist_{\Gr_{d-1}(\Kb^d)}(F_1^{d-1}, F_2^{d-1}). 
$$

Fix $\epsilon > 0$ such that the metric balls 
$$
\Bc_{\Fc}(F_U,2\epsilon), \ \Bc_{\Fc}(F_\gamma^+,2\epsilon), \ \Bc_{\Fc}(F_\gamma^-,2\epsilon)
$$
are disjoint and any two flags in different balls are transverse. Let 
$$
\mathcal{Z}_U:= \overline{\Nc_{\Fc}\left(\{ F \in \Fc : F \text{ is not transverse to } F_U \}, \epsilon\right)}
$$
and 
$$
\mathcal{Z}_{\gamma}^\pm:= \overline{\Nc_{\Fc}\left(\{ F \in \Fc : F \text{ is not transverse to } F_\gamma^\pm \}, \epsilon \right)}.  
$$
After possibly shrinking $\epsilon >0$, we may also assume that 
$$
\Oc:= \Fc \smallsetminus \overline{ \mathcal{Z}_U \cup \mathcal{Z}_{\gamma}^+ \cup \mathcal{Z}_{\gamma}^-}
$$
is open and non-empty. 

\begin{lemma}\label{lem:gamma contraction} By replacing $\gamma$ by a sufficiently large power, we may assume that $\gamma^{\pm 1}$ is $\epsilon$-Lipschitz on $\Fc \smallsetminus \mathcal{Z}_{\gamma}^\mp$ and  $\gamma^{\pm 1}\left(\Fc \smallsetminus \mathcal{Z}_{\gamma}^\mp \right) \subset \Bc_{\Fc}(F_\gamma^\pm,\epsilon)$.
\end{lemma}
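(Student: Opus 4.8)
\textbf{Proof strategy for Lemma~\ref{lem:gamma contraction}.}

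The plan is to exploit the biproximal dynamics of $\gamma$ on the flag manifold $\Fc$, which we already recorded above: for every $F \in \Fc$ transverse to $F_\gamma^- = (\ell_\gamma^-, H_\gamma^-)$ we have $\gamma^n(F) \to F_\gamma^+$ as $n \to +\infty$, and the convergence is locally uniform on the (open) set of such $F$. First I would observe that the complement of $\mathcal{Z}_\gamma^-$ is, by construction, a compact subset of the open set $\{F : F \text{ transverse to } F_\gamma^-\}$: indeed $\mathcal{Z}_\gamma^-$ is a closed $\epsilon$-neighborhood of the non-transversality locus of $F_\gamma^-$, so its complement is open and its closure is still disjoint from that locus. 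Hence the convergence $\gamma^n \to F_\gamma^+$ is \emph{uniform} on $\Fc \smallsetminus \mathcal{Z}_\gamma^-$. Consequently there is some $N_1$ such that $\gamma^{N}(\Fc \smallsetminus \mathcal{Z}_\gamma^-) \subset \Bc_\Fc(F_\gamma^+, \epsilon)$ for all $N \geq N_1$; the same argument applied to $\gamma^{-1}$ (which is also $\Psf_1$-proximal, with attracting flag $F_\gamma^-$ and repelling locus governed by $F_\gamma^+$) gives $N_2$ with $\gamma^{-N}(\Fc \smallsetminus \mathcal{Z}_\gamma^+) \subset \Bc_\Fc(F_\gamma^-,\epsilon)$ for all $N \geq N_2$.

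For the Lipschitz statement I would pass to the derivative. The action of $\PGL(d,\Kb)$ on $\Fc$ is smooth, and the differential of $g \in \SL(d,\Kb)$ at a point $F$ has operator norm controlled by the singular value ratios of $g$ relative to the position of $F$; concretely, the tangent space to $\Fc$ at $F$ decomposes into weight spaces for which the expansion/contraction rate of $\gamma^N$ is $(\lambda_i/\lambda_j)^{N}$-type for eigenvalue pairs of $\gamma$, and away from the repelling locus $\mathcal{Z}_\gamma^-$ all the relevant ratios are the contracting ones because the top eigenline is attracting. Rather than compute this explicitly, I would argue by compactness once more: on the compact set $\Fc \smallsetminus \mathcal{Z}_\gamma^-$ the maps $F \mapsto d(\gamma)_F$ depend continuously on $F$, and since $\gamma^n \to F_\gamma^+$ uniformly there, one shows $\sup_{F \in \Fc \smallsetminus \mathcal{Z}_\gamma^-} \norm{d(\gamma^n)_F} \to 0$ as $n \to \infty$ (the differentials are eventually dominated by the subunit contraction toward the attracting flag). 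Thus there is $N_3$ so that for $N \geq N_3$, $\gamma^N$ restricted to $\Fc \smallsetminus \mathcal{Z}_\gamma^-$ is $\epsilon$-Lipschitz; symmetrically an $N_4$ for $\gamma^{-N}$ on $\Fc \smallsetminus \mathcal{Z}_\gamma^+$. Taking $N \geq \max\{N_1,N_2,N_3,N_4\}$ and then \emph{replacing $\gamma$ by $\gamma^N$} (which does not change $\ell_\gamma^\pm, H_\gamma^\pm$, hence does not change $F_\gamma^\pm$, $\mathcal{Z}_\gamma^\pm$, $\mathcal{Z}_U$, $\Oc$, or the transversality hypotheses) gives all four conclusions at once.

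The main obstacle is the uniform decay of the differentials $\norm{d(\gamma^n)_F} \to 0$ on $\Fc \smallsetminus \mathcal{Z}_\gamma^-$: pointwise this is immediate from proximality, but to upgrade it to uniformity one must either quote a standard fact about proximal (or more generally loxodromic) actions on flag manifolds — that near a point strictly outside the repelling locus the action is uniformly contracting in the $C^1$ sense — or make the weight-space estimate explicit using the Jordan form of $\gamma$ (bounding the $C^1$ norm of the projective action by a polynomial-times-$(\lambda_2/\lambda_1)^n$ factor). Since the paper elsewhere freely uses exactly this kind of north-south dynamics with locally uniform convergence (e.g. in Observation~\ref{obs:strongly_dynamics_pres_div_cartan} and the proofs of Propositions~\ref{prop:representations of rank one groups} and~\ref{prop:local homogeneity}), I would phrase the argument by invoking that uniform contraction and deferring the elementary Jordan-form verification; the rest of the lemma is then a routine compactness packaging.
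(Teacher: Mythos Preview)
Your outline is sound and ultimately rests on the same ingredient the paper uses---the eigenvalue gap coming from $\Psf_1$-biproximality---but the paper packages it more concretely. Instead of arguing via uniform $C^0$-convergence plus a separate differential estimate, the paper conjugates $\gamma$ so that $F_\gamma^\pm$ are the standard flags, writes $\gamma$ in block-diagonal form $\mathrm{diag}(\lambda_1,A,\lambda_2)$ with $|\lambda_1|>\lambda_1(A)\geq\lambda_{d-2}(A)>|\lambda_2|$, and then observes (using $\lambda_1(A)=\lim_n \mu_1(A^n)^{1/n}$, etc.) that both the inclusion and the $\epsilon$-Lipschitz statements follow from a direct computation in affine charts. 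This is precisely the ``elementary Jordan-form verification'' you propose to defer, done in one step rather than two.

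One technical point in your packaging is worth tightening: $\Fc\smallsetminus\mathcal{Z}_\gamma^-$ is open, not compact (you want its closure), and a uniform bound $\sup_F \norm{d(\gamma^n)_F}\leq\epsilon$ on that closure does not by itself give an $\epsilon$-Lipschitz map with respect to $\dist_\Fc$, since the set need not be geodesically convex. The paper's affine-chart computation sidesteps this, and you can too, e.g.\ by first sending everything into a small convex ball about $F_\gamma^+$ with some $\gamma^{N_0}$ (global Lipschitz constant $L$), then iterating the contraction on that ball to beat $L$. This is routine, but should be said.
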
 

\begin{proof} By conjugating we can assume that 
$$
F_\gamma^+  = ( \ip{e_1}, \ip{e_1,\dots e_{d-1}}) \quad \text{and} \quad F_\gamma^-  = (\ip{e_d}, \ip{e_2, \dots, e_d}). 
$$
Then 
$$
\gamma = \begin{pmatrix} \lambda_1 & & \\ & A & \\ & & \lambda_2 \end{pmatrix}
$$
where $\lambda_1, \lambda_2 \in \Kb$, $A \in \GL(d-2,\Kb)$, and 
$$
\abs{\lambda_1} > \lambda_1(A) \geq \lambda_{d-2}(A) > \abs{\lambda_2}.
$$
Since  
$$
\lambda_1(A) = \lim_{n \to \infty} \mu_1(A^n)^{1/n} \quad \text{and} \quad \frac{1}{\lambda_{d-2}(A)} = \lim_{n \to \infty} \mu_1(A^{-n})^{1/n},
$$
the result follows from a straightforward calculation in affine charts. 
 \end{proof}

\begin{lemma}\label{lem:U contraction} By replacing $U$ with a finite-index subgroup, we may assume that: if $u \in U \smallsetminus \{\id\}$, then $u$ is $\epsilon$-Lipschitz on $\Fc \smallsetminus \mathcal{Z}_{U}$ and $u\left(\Fc \smallsetminus \mathcal{Z}_U \right) \subset \Bc_{\Fc}(F_U,\epsilon)$.
\end{lemma}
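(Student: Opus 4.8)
The plan is to show that all but finitely many elements of $U$ already satisfy the two stated conclusions, and then---in the same spirit as Lemma~\ref{lem:gamma contraction}, where $\gamma$ was replaced by a sufficiently large power---to replace $U$ by a finite-index subgroup that avoids the remaining exceptional elements.

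The key input is the following standard consequence of the singular value decomposition. There are constants $C_\delta>0$, depending only on $d$ and on $\delta>0$, such that: if $g\in\SL(d,\Kb)$ satisfies $\mu_1(g)>\mu_2(g)$ and $\mu_{d-1}(g)>\mu_d(g)$, and we set
$$
A(g):=(U_1(g),U_{d-1}(g)),\qquad R(g):=(U_1(g^{-1}),U_{d-1}(g^{-1}))\ \in\Fc,
$$
(with $U_1(\cdot),U_{d-1}(\cdot)$ the singular-direction subspaces from the singular value decomposition), then $g$ maps every flag $G\in\Fc$ with $\dist_\Fc\big(G,\{G'\in\Fc:G'\text{ not transverse to }R(g)\}\big)\geq\delta$ into the ball $\Bc_\Fc\big(A(g),C_\delta/r(g)\big)$, and $g$ is $(C_\delta/r(g))$-Lipschitz on that set, where $r(g):=\min\{\mu_1(g)/\mu_2(g),\,\mu_{d-1}(g)/\mu_d(g)\}$. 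I would take this as known; it is proved by a direct computation in affine charts after writing $g=ma\ell$, cf.\ the estimates in~\cite{BPS}.

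Next I would analyze an arbitrary escaping sequence $(u_n)_{n\geq1}$ in $U$. Since $\prod_j\mu_j(g)=1$ for every $g\in\SL(d,\Kb)$, we have $\mu_d(u_n)^{d-1}\leq\mu_2(u_n)\cdots\mu_d(u_n)=\mu_1(u_n)^{-1}\to0$, so $\mu_d(u_n)\to0$ and hence $(u_n^{-1})$ is also escaping. Applying $\Psf_1$-divergence of $U$ to $(u_n)$ and to $(u_n^{-1})$ gives $\mu_1(u_n)/\mu_2(u_n)\to\infty$ and $\mu_{d-1}(u_n)/\mu_d(u_n)=\mu_1(u_n^{-1})/\mu_2(u_n^{-1})\to\infty$, so $r(u_n)\to\infty$. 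Because $\Lambda_{1,d-1}(U)=\{F_U\}$ is a single point, every subsequential limit of $(A(u_n))_n$ and of $(R(u_n))_n=(A(u_n^{-1}))_n$ equals $F_U$, so $A(u_n)\to F_U$ and $R(u_n)\to F_U$. Now $\mathcal{Z}_U$ contains the $\epsilon$-neighborhood of the closed set $\Sc_U:=\{G'\in\Fc:G'\text{ not transverse to }F_U\}$, so the compact set $\mathcal{T}:=\Fc\smallsetminus\Nc_\Fc(\Sc_U,\epsilon)$ contains $\Fc\smallsetminus\mathcal{Z}_U$ and lies at distance at least $\epsilon$ from $\Sc_U$. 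Since the non-transversality locus of a flag depends continuously (in the Hausdorff distance) on that flag, $R(u_n)\to F_U$ forces: for all large $n$, every $G\in\mathcal{T}$ is at distance at least $\epsilon/2$ from the non-transversality locus of $R(u_n)$. The estimate above then yields $u_n(\mathcal{T})\subset\Bc_\Fc\big(A(u_n),C_{\epsilon/2}/r(u_n)\big)$ with $u_n$ being $(C_{\epsilon/2}/r(u_n))$-Lipschitz on $\mathcal{T}$; since $r(u_n)\to\infty$ and $A(u_n)\to F_U$, for all large $n$ we get $u_n(\Fc\smallsetminus\mathcal{Z}_U)\subseteq u_n(\mathcal{T})\subset\Bc_\Fc(F_U,\epsilon)$ and $u_n$ is $\epsilon$-Lipschitz on $\Fc\smallsetminus\mathcal{Z}_U$.

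To finish, let $B\subseteq U\smallsetminus\{\id\}$ be the set of elements for which at least one conclusion of the lemma fails. If $B$ were infinite it would contain a sequence of pairwise distinct elements, which would be escaping by discreteness of $U$, contradicting the previous paragraph; hence $B$ is finite. Since $U$ is a finitely generated linear group (the only case relevant for the applications) it is residually finite by Mal'cev's theorem, so there is a finite-index subgroup $U'\leq U$ with $U'\cap B=\varnothing$, and replacing $U$ by $U'$ gives the lemma. I expect the substantive step to be the third paragraph: the weak-unipotence-and-one-point-limit-set hypotheses make the attracting flag $A(u_n)$ and the repelling flag $R(u_n)$ of \emph{every} escaping sequence converge to the \emph{same} flag $F_U$, so each $u_n$ eventually behaves like a parabolic element, contracting everything outside a small neighborhood of $F_U$ into a small neighborhood of $F_U$; the only real work is making this uniform over the compact transverse set $\mathcal{T}$ via the singular-value estimate. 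The passage to a finite-index subgroup is routine once $B$ is known to be finite.
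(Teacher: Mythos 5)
Your proposal follows essentially the same route as the paper: both arguments show, via the Cartan decomposition and the hypothesis that $\Lambda_{1,d-1}(U)$ is the single flag $F_U$, that the attracting and repelling data of every escaping sequence $(u_n)$ and of $(u_n^{-1})$ converge to $F_U$ with diverging singular-value gaps, so all but finitely many $u \in U$ contract $\Fc \smallsetminus \mathcal{Z}_U$ into $\Bc_\Fc(F_U,\epsilon)$ with Lipschitz constant $\leq\epsilon$; one then passes to a finite-index subgroup avoiding the exceptional finite set via residual finiteness. The paper phrases the limiting behavior as convergence of $\mu_1(u_n)^{-1}u_n$ to a rank-one endomorphism rather than via the explicit quantitative SVD estimate you invoke, but these are the same computation.

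There is one real gap in your write-up. To apply Mal'cev's theorem you need $U$ to be finitely generated, and this is not among the hypotheses of the lemma; your parenthetical ``the only case relevant for the applications'' papers over this. The paper supplies the missing step by citing \cite[Th.\ 8.1(2)]{ZZ2022a}, which shows that a discrete, weakly unipotent, $\Psf_k$-divergent group with one-point $(k,d-k)$-limit set is finitely generated. Without that input the passage to a finite-index subgroup is not justified, since an infinitely generated linear group need not be residually finite.
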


\begin{proof} By conjugating we can assume that 
$$
F_U  = ( \ip{e_1}, \ip{e_1,\dots e_{d-1}}).
$$
Since $U$ is $\Psf_1$-divergent and $\Lambda_{1,d-1}(U) = \{ F_U\}$, for any escaping sequence $(u_n)_{n \geq 1}$ in $U$ we have
$$
\lim_{n \to \infty} \frac{1}{\mu_1(u_n)} u_n = e_1\ip{\cdot, e_n} \in \End(\Rb^d)
$$
where $\ip{\cdot, \cdot}$ is the standard Euclidean inner product. Then a straightforward calculation in affine charts provides a finite subset $K \subset U$ such that: if $u \in U \smallsetminus K$, then $u$ is $\epsilon$-Lipschitz on $\Fc \setminus \mathcal{Z}_{U}$ and $u\left(\Fc \setminus \mathcal{Z}_U \right) \subset \Bc_{\Fc}(F_U,\epsilon)$.

By \cite[Th.\ 8.1(2)]{ZZ2022a}, $U$ is finitely generated. Then $U$ is residually finite by a theorem of Malcev~\cite{Malcev1940}. So there exists a finite-index subgroup $U^\prime \leq U$ with $U^\prime \cap K = \{\id\}$. 
\end{proof} 

\begin{lemma}\label{lem:free product}
The group $\Gamma$ generated by $\gamma$ and $U$ is naturally isomorphic to the free product $\ip{\gamma} * U$.
\end{lemma}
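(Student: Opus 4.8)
The plan is to use the classical ping-pong lemma (Tits' table-tennis lemma for free products), with the regions supplied by Lemmas~\ref{lem:gamma contraction} and~\ref{lem:U contraction}. After replacing $\gamma$ by the power $\gamma^N$ produced by Lemma~\ref{lem:gamma contraction} and $U$ by the finite-index subgroup $U'$ produced by Lemma~\ref{lem:U contraction} (so that henceforth every nontrivial element of $U'$ satisfies the contraction estimate), we set up three disjoint ``attracting'' regions inside $\Fc$: $A^+ := \Bc_{\Fc}(F_\gamma^+,\epsilon)$, $A^- := \Bc_{\Fc}(F_\gamma^-,\epsilon)$, and $A_U := \Bc_{\Fc}(F_U,\epsilon)$, together with the nonempty open ``interior'' region $\Oc := \Fc \smallsetminus \overline{\mathcal{Z}_U \cup \mathcal{Z}_\gamma^+ \cup \mathcal{Z}_\gamma^-}$. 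Because the three balls $\Bc_{\Fc}(\cdot,2\epsilon)$ are disjoint and any two flags lying in distinct balls are transverse, we have the containments $A^+ \cup A^- \cup A_U \subset \Oc$ (each attracting ball is transverse to the relevant non-transversality loci, hence disjoint from the $\mathcal{Z}$'s), and more importantly, each power $\gamma^{\pm n}$ ($n\ge 1$) maps $\Fc\smallsetminus\mathcal{Z}_\gamma^\mp \supset \Oc \cup A^\pm \cup A^\mp$ into $A^\pm$, while each nontrivial $u\in U'$ maps $\Fc\smallsetminus\mathcal{Z}_U \supset \Oc \cup A^+ \cup A^-$ into $A_U$.

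The key step is the standard reduced-word argument. Let $w = s_1 s_2 \cdots s_m$ be a nonempty reduced word in $\ip{\gamma^N} * U'$, where each $s_i$ is either a nonzero power of $\gamma^N$ or a nontrivial element of $U'$, with no two consecutive $s_i$ coming from the same factor. I would pick a base flag $F_0 \in \Oc$ and show by downward induction on $j$ that $s_j s_{j+1} \cdots s_m (F_0)$ lies in the attracting ball associated to the factor containing $s_j$, and in particular avoids the non-transversality region corresponding to the \emph{next} letter $s_{j-1}$; this last point is where disjointness of the balls is used, since the attracting ball of $\gamma^{\pm}$ (resp.\ $U'$) is disjoint from $\mathcal{Z}_\gamma^{\mp}$ and from $\mathcal{Z}_U$ (resp.\ from $\mathcal{Z}_\gamma^{\pm}$). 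Consequently $w(F_0)$ lies in one of $A^+, A^-, A_U$; if $w$ were the identity this would force $F_0$ to lie in that same ball, contradicting $F_0 \in \Oc$ (note $\Oc$ is disjoint from all three balls, again by the disjointness of the $2\epsilon$-balls). Hence $w$ acts nontrivially on $\Fc$, so $w\neq\id$ in $\SL(d,\Kb)$, and the natural surjection $\ip{\gamma^N} * U' \to \Gamma$ is an isomorphism.

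There is one bookkeeping subtlety to handle carefully rather than a genuine obstacle: one must make sure an arbitrary nonzero power $\gamma^{\pm n}$, not merely $\gamma^{\pm 1}$, sends $\Fc\smallsetminus\mathcal{Z}_\gamma^\mp$ into $A^\pm$ — but this is immediate from Lemma~\ref{lem:gamma contraction} since $\gamma^{\pm 1}(\Fc\smallsetminus\mathcal{Z}_\gamma^\mp)\subset \Bc_{\Fc}(F_\gamma^\pm,\epsilon)$ and $\Bc_{\Fc}(F_\gamma^\pm,\epsilon)$ is itself disjoint from $\mathcal{Z}_\gamma^\mp$ (being inside the $2\epsilon$-ball around $F_\gamma^\pm$), so $\gamma^{\pm n}(\Fc\smallsetminus\mathcal{Z}_\gamma^\mp)\subset \Bc_{\Fc}(F_\gamma^\pm,\epsilon)$ for all $n\ge 1$ by iterating; similarly for the $\epsilon$-Lipschitz bound, which I will want later in the section for the Anosov estimate but not for this lemma. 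The main content here is genuinely just the combinatorics of ping-pong; the analytic work establishing the contraction properties has already been isolated in the two preceding lemmas, so the proof of Lemma~\ref{lem:free product} is short.

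\begin{proof}
After replacing $\gamma$ and $U$ as in Lemmas~\ref{lem:gamma contraction} and~\ref{lem:U contraction}, every $\Psf_1$-power $\gamma^{\pm n}$ with $n\ge 1$ maps $\Fc\smallsetminus\mathcal{Z}_\gamma^\mp$ into $\Bc_\Fc(F_\gamma^\pm,\epsilon)$, and every $u\in U\smallsetminus\{\id\}$ maps $\Fc\smallsetminus\mathcal{Z}_U$ into $\Bc_\Fc(F_U,\epsilon)$; indeed the first claim follows by iterating Lemma~\ref{lem:gamma contraction}, using that $\Bc_\Fc(F_\gamma^\pm,\epsilon)\subset \Fc\smallsetminus\mathcal{Z}_\gamma^\mp$ since $\Bc_\Fc(F_\gamma^+,2\epsilon)$ and $\Bc_\Fc(F_\gamma^-,2\epsilon)$ are disjoint.

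Since $\Oc$ is nonempty, fix $F_0\in\Oc$. Note that $\Oc$ and the three balls $\Bc_\Fc(F_\gamma^+,\epsilon)$, $\Bc_\Fc(F_\gamma^-,\epsilon)$, $\Bc_\Fc(F_U,\epsilon)$ are pairwise disjoint, and that each of these three balls is contained in $\Fc\smallsetminus\mathcal{Z}_\gamma^+$, $\Fc\smallsetminus\mathcal{Z}_\gamma^-$, $\Fc\smallsetminus\mathcal{Z}_U$ whenever it is not the ``forbidden'' ball for the corresponding region; precisely, $\Bc_\Fc(F_\gamma^+,\epsilon)\subset \Fc\smallsetminus(\mathcal{Z}_\gamma^-\cup\mathcal{Z}_U)$, $\Bc_\Fc(F_\gamma^-,\epsilon)\subset \Fc\smallsetminus(\mathcal{Z}_\gamma^+\cup\mathcal{Z}_U)$, and $\Bc_\Fc(F_U,\epsilon)\subset \Fc\smallsetminus(\mathcal{Z}_\gamma^+\cup\mathcal{Z}_\gamma^-)$, again because the $2\epsilon$-balls are disjoint.

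The natural homomorphism $\ip{\gamma}*U\to\Gamma$ is surjective, so it suffices to show it is injective. Let $w=s_1\cdots s_m$ be a nonempty reduced word, where each $s_i$ is either a nonzero power of $\gamma$ or a nontrivial element of $U$, and consecutive $s_i$ lie in different factors. We claim $s_j\cdots s_m(F_0)$ lies in $\Bc_\Fc(F_\gamma^{\pm},\epsilon)$ if $s_j=\gamma^{\pm n}$ (according to the sign of $n$) and in $\Bc_\Fc(F_U,\epsilon)$ if $s_j\in U$. We argue by downward induction on $j$. For $j=m$ this is immediate since $F_0\in\Oc\subset\Fc\smallsetminus\mathcal{Z}_\gamma^\mp$ and $F_0\in\Oc\subset\Fc\smallsetminus\mathcal{Z}_U$. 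For the inductive step, suppose the claim holds for $j+1$. If $s_j=\gamma^{\pm n}$, then $s_{j+1}$ is not a power of $\gamma$, so by induction $s_{j+1}\cdots s_m(F_0)\in\Bc_\Fc(F_U,\epsilon)\subset\Fc\smallsetminus\mathcal{Z}_\gamma^\mp$ (or $j=m$), and hence $s_j\cdots s_m(F_0)=\gamma^{\pm n}(s_{j+1}\cdots s_m(F_0))\in\Bc_\Fc(F_\gamma^\pm,\epsilon)$. If $s_j\in U$, then $s_{j+1}$ is a nonzero power of $\gamma$, so by induction $s_{j+1}\cdots s_m(F_0)\in\Bc_\Fc(F_\gamma^{\pm},\epsilon)\subset\Fc\smallsetminus\mathcal{Z}_U$ (or $j=m$), and hence $s_j\cdots s_m(F_0)=s_j(s_{j+1}\cdots s_m(F_0))\in\Bc_\Fc(F_U,\epsilon)$.

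Taking $j=1$, we conclude $w(F_0)$ lies in one of $\Bc_\Fc(F_\gamma^+,\epsilon)$, $\Bc_\Fc(F_\gamma^-,\epsilon)$, $\Bc_\Fc(F_U,\epsilon)$, all of which are disjoint from $\Oc$. Since $F_0\in\Oc$, we have $w(F_0)\neq F_0$, so $w$ is not the identity in $\SL(d,\Kb)$. Therefore $\ip{\gamma}*U\to\Gamma$ is injective, hence an isomorphism.
\end{proof}
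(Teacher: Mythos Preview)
Your proof is correct and follows the same ping-pong approach as the paper. The paper's version is more terse—it compresses your downward induction into a single sentence asserting that Lemmas~\ref{lem:gamma contraction} and~\ref{lem:U contraction} imply $\tau(w)F \in \Bc_{\Fc}(F_\gamma^+,\epsilon) \cup \Bc_{\Fc}(F_\gamma^-,\epsilon) \cup \Bc_{\Fc}(F_U,\epsilon)$ for any $F\in\Oc$—while you spell out the iteration for powers of $\gamma$ and the containments $\Bc_\Fc(F_\gamma^\pm,\epsilon)\subset\Fc\smallsetminus(\mathcal{Z}_\gamma^\mp\cup\mathcal{Z}_U)$ explicitly, but the argument is the same.
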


\begin{proof}
Let $\tau \colon \ip{\gamma} * U \to \Gamma$ be the obvious homomorphism. It is clearly onto and so we just have to show that it is one-to-one. Suppose that $w$ is a non-trivial word in $\ip{\gamma} * U$. Fix $F \in \Oc$. Then Lemmas~\ref{lem:gamma contraction}  and~\ref{lem:U contraction} imply that 
$$
\tau(w)F \in  \Bc_{\Fc}(F_\gamma^+,\epsilon) \cup  \Bc_{\Fc}(F_\gamma^-,\epsilon) \cup  \Bc_{\Fc}(F_U,\epsilon).
$$
So $\tau(w)F \notin \Oc$ and hence $\tau(w) \neq \id$. 
\end{proof}

For the arguments that follow fix a finite symmetric generating set of $U$ and let $\abs{u}$ denote the associated word length of an element $u \in U$.

Next we describe the Bowditch boundary of $\Gamma$. Let $S := \{ \gamma, \gamma^{-1}\} \cup U \smallsetminus \{\id\}$ and let 
$\Wc := \{x = x_1x_2 \cdots \}$ be the set of all \emph{finite and infinite} reduced words in $S$ (i.e.\ no letter is followed by its inverse) such that 
	\begin{itemize} 
		\item $x$ has no consecutive elements in $U$, and
		\item $x$ does not end in $U$.
	\end{itemize}
We assume that the empty word $\varnothing$ is an element of $\Wc$. Also, let $\Wc_\infty \subset \Wc$ denote the subset of infinite-length words.  
Informally, finite-length words correspond to parabolic boundary points; this will be made more precise presently.

Since $\Gamma$ is naturally isomorphic to the free product $\ip{\gamma} * U$, $\Wc$ admits a natural action of $\Gamma$, where $\Gamma$ acts on non-empty words by left-multiplication, $\gamma^{\pm 1} \cdot \varnothing = \gamma^{\pm 1}$, and $U \cdot \varnothing = \varnothing$. Notice that if $x=x_1 \cdots x_m \in \Wc \smallsetminus \Wc_\infty$, then 
$$
{\rm Stab}_\Gamma(x) =(x_1 \cdots x_m) U (x_1 \cdots x_m)^{-1}. 
$$
Further, $\Wc$ has a natural topology which can be described as follows. For $x=x_1x_2 \cdots \in \Wc_\infty$ and $N \geq 1$ let 
$$
B_N(x) := \{ y_1 y_2 \cdots \in \Wc : y_n = x_n \text{ for all } n \leq N\}. 
$$
For $x =x_1 \cdots x_m \in \Wc \smallsetminus \Wc_\infty$ and $N \geq 1$ let 
$$
B_N(x) := \{x \} \cup \{ y_1 y_2 \cdots \in \Wc : y_n = x_n \text{ for all } n \leq m, \ y_{m+1} \in U, \ \text{and} \ \abs{y_{m+1}} \geq N\}.
$$
Then $\{ B_N(x) : x \in \Wc, N \geq 1\}$ generates a topology on $\Wc$. 

With this topology, one can check that $\Gamma$ acts as a convergence group on $\Wc$, the points in $\Wc_\infty$ are conical limit points, and the points in $\Wc \smallsetminus \Wc_\infty$ are bounded parabolic points. So $\Gamma$ is relatively hyperbolic with respect to $ \peripherals:=\{U\}$ and we can identify $\partial(\Gamma,  \peripherals) = \Wc$. 

Next we define boundary maps for the inclusion $\Gamma \hookrightarrow \SL(d,\Kb)$. 

\begin{lemma} If $x = x_1 x_2 \cdots \in \Wc_\infty$ and $F \in \Oc$, then the limit 
$$
F_x : = \lim_{n \to \infty} x_1 \cdots x_n(F)
$$
exists and does not depend on $F \in \Oc$. 
\end{lemma}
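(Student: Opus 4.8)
The plan is to show that the partial products $x_1 \cdots x_n(F)$ form a Cauchy sequence in the complete metric space $(\Fc, \dist_\Fc)$, and that the limit is independent of $F \in \Oc$ because $\Oc$ has uniformly bounded diameter after one application of a generator.

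First I would record the key consequence of Lemmas~\ref{lem:gamma contraction} and~\ref{lem:U contraction}: for every $s \in S = \{\gamma^{\pm 1}\} \cup U \smallsetminus \{\id\}$, the map $s$ is $\epsilon$-Lipschitz on $\Fc \smallsetminus \mathcal{Z}$, where $\mathcal{Z}$ denotes the appropriate one of $\mathcal{Z}_\gamma^\mp$ or $\mathcal{Z}_U$, and moreover $s(\Fc \smallsetminus \mathcal{Z})$ lands in the ball $\Bc_\Fc(F_s, \epsilon)$ where $F_s \in \{F_\gamma^+, F_\gamma^-, F_U\}$ is the corresponding attracting flag. Since $\epsilon$ was chosen so that these three $2\epsilon$-balls are pairwise disjoint and any two flags in distinct $\epsilon$-balls are transverse, and since $\mathcal{Z}_U, \mathcal{Z}_\gamma^\pm$ each contain the $\epsilon$-neighborhood of the non-transverse locus of the corresponding flag, one checks: if $F_s$ and $F_{s'}$ are \emph{distinct} attracting flags then $\Bc_\Fc(F_{s'}, \epsilon) \subset \Fc \smallsetminus \mathcal{Z}_s$, so $s$ is defined and $\epsilon$-Lipschitz on it; and the reducedness + ``no consecutive $U$ letters'' conditions on $x \in \Wc$ guarantee that consecutive letters $x_n, x_{n+1}$ always have distinct attracting flags (for $\gamma^{\pm 1}$ this is reducedness; for the $U$-to-$\gamma^{\pm 1}$ and $\gamma^{\pm 1}$-to-$U$ transitions the flags are automatically distinct since $F_U \notin \{F_\gamma^\pm\}$ by the transversality hypothesis).

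The main step is then the following telescoping estimate. Set $F^{(n)} := x_1 \cdots x_n(F)$. I would show by induction that for $n \geq 2$, $F^{(n)} \in \Bc_\Fc(F_{x_1}, \epsilon)$ and, more importantly, that $\dist_\Fc(F^{(n+1)}, F^{(n)}) \leq \epsilon^{n-1} D$ where $D := \diam_\Fc(\Oc) < \infty$ (or simply $D = \diam_\Fc \Fc$). Indeed, $F^{(2)}$ and $F^{(1)} = x_1(F)$ both lie in $\Bc_\Fc(F_{x_2}', \epsilon)$ for appropriate flags once we peel off $x_1$, and inductively the innermost point $x_{n+1}(F)$ lies in some ball $\Bc_\Fc(F_{x_{n+1}}, \epsilon)$ while $x_n \cdots x_{n+1}(F)$ and $x_n(\text{something in }\Oc)$ differ by applying the $\epsilon$-Lipschitz maps $x_1, \dots, x_{n-1}$ — but I must be careful to always stay in the domain where each $x_j$ is Lipschitz, which is exactly where the ``distinct consecutive attracting flags'' observation is used: $x_{j+1} \cdots x_{n+1}(F) \in \Bc_\Fc(F_{x_{j+1}}, \epsilon) \subset \Fc \smallsetminus \mathcal{Z}_{x_j}$. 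Composing, $\dist_\Fc(F^{(n+1)}, F^{(n)}) \leq \epsilon^{n-1} \dist_\Fc(x_n x_{n+1}(F), x_n(F))$, and since $x_n$ is $\epsilon$-Lipschitz on its domain this is $\leq \epsilon^n \cdot \diam_\Fc \Fc$. Since $0 < \epsilon < 1$ (we may assume this, shrinking $\epsilon$), the sequence $(F^{(n)})$ is Cauchy, hence converges to some $F_x \in \Fc$.

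For independence of $F \in \Oc$: if $F, F' \in \Oc$, then $x_1$ is $\epsilon$-Lipschitz on $\Oc \subset \Fc \smallsetminus (\mathcal{Z}_\gamma^- \cup \mathcal{Z}_\gamma^+ \cup \mathcal{Z}_U)$, so $\dist_\Fc(x_1 \cdots x_n(F), x_1 \cdots x_n(F')) \leq \epsilon^n \dist_\Fc(F, F') \to 0$, whence the two limits agree. The only genuine subtlety — the part I would be most careful writing out — is the bookkeeping ensuring that at every stage of the telescoping, the point we apply $x_j$ to actually lies in the set where Lemmas~\ref{lem:gamma contraction}/\ref{lem:U contraction} guarantee the $\epsilon$-Lipschitz bound; this rests entirely on the combinatorial constraints defining $\Wc$ (reduced, no two consecutive $U$-letters) together with the three-way transversality of $F_\gamma^+, F_\gamma^-, F_U$, exactly as set up. Everything else is a routine contraction-mapping argument.
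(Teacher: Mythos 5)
Your approach is exactly the paper's: establish the telescoping estimate $\dist_\Fc(F^{(n+1)}, F^{(n)}) \leq \epsilon^n \diam \Fc$ to get a Cauchy sequence, and use the same contraction for the comparison $\dist_\Fc(x_1\cdots x_n(F), x_1\cdots x_n(F')) \leq \epsilon^n\dist_\Fc(F,F')$ to get independence. The paper simply writes down these two inequalities with Lemmas~\ref{lem:gamma contraction} and~\ref{lem:U contraction} as justification; you try to spell out why the lemmas actually compose at each step, which is a reasonable thing to do.

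However, your bookkeeping in that verification contains a genuine error. You claim that what you need is that consecutive letters $x_n, x_{n+1}$ have \emph{distinct attracting flags}, and you invoke reducedness to obtain this for $\gamma^{\pm 1}$. Both halves of this are off. The domain where $x_n = \gamma^{\pm 1}$ is $\epsilon$-Lipschitz is $\Fc \smallsetminus \mathcal{Z}_\gamma^{\mp}$, which is cut out by the \emph{repelling} flag $F_\gamma^{\mp}$, not by $x_n$'s attracting flag $F_\gamma^{\pm}$. So the correct condition for the chaining to work is that the attracting flag of $x_{n+1}$ be distinct from the \emph{repelling} flag of $x_n$ (equivalently, from the flag whose non-transverse locus defines $\mathcal{Z}_{x_n}$). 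Moreover, reducedness does \emph{not} prevent $x_n = x_{n+1} = \gamma$, a valid subword of an element of $\Wc$ in which the two letters have the \emph{same} attracting flag $F_\gamma^+$; your stated lemma (``distinct attracting flags $\Rightarrow$ composability'') therefore does not apply to this case at all. Your lemma also fails as stated: $F_{s'} = F_\gamma^-$ is a distinct attracting flag from $F_\gamma = F_\gamma^+$, yet $\Bc_\Fc(F_\gamma^-,\epsilon) \not\subset \Fc\smallsetminus\mathcal{Z}_\gamma^-$. The two mistakes happen to point in compatible directions, so the final conclusion you need — that $\Bc_\Fc(F^{\mathrm{attr}}_{x_{n+1}},\epsilon) \subset \Fc\smallsetminus\mathcal{Z}_{x_n}$ for every admissible pair of consecutive letters — is in fact true, but your reasoning does not establish it and would leave a reader puzzled precisely at the case of a repeated $\gamma$ or $\gamma^{-1}$. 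Fix this by stating the condition in terms of the flag defining $\mathcal{Z}_{x_n}$: for $x_n = \gamma^{\pm 1}$ that flag is $F_\gamma^{\mp}$, for $x_n \in U$ it is $F_U$, and reducedness plus ``no consecutive $U$-letters'' plus $F_U \notin \{F_\gamma^{\pm}\}$ give exactly what is needed.
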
 

\begin{proof} If $F \in \Oc$, then Lemmas~\ref{lem:gamma contraction}  and~\ref{lem:U contraction} imply that 
$$
\dist_{\Fc}( x_1 \cdots x_{n+1}(F_1), x_1 \cdots x_n(F_1)) \leq \epsilon^n \diam \Fc
$$
and so $( x_1 \cdots x_n(F) )_{n \geq 1}$ is a Cauchy sequence and hence the limit exists. 

Further, if $F_1, F_2 \in \Oc$, then Lemmas~\ref{lem:gamma contraction}  and~\ref{lem:U contraction} imply that 
$$
\dist_{\Fc}( x_1 \cdots x_n(F_1), x_1 \cdots x_n(F_2)) \leq \epsilon^n \diam \Fc. 
$$
So the limit does not depend on $F \in \Oc$. 
\end{proof} 

Define $\xi \colon \partial(\Gamma,  \peripherals) \to \Fc$ by 
$$
\xi(x) = \begin{cases} 
F_x & \text{if } x \in \Wc_\infty \\
(x_1 \cdots x_m)F_U & \text{if } x=x_1 \cdots x_m \in \Wc \smallsetminus \Wc_\infty
\end{cases}.
$$
Notice that 
$$
\xi(x) = (x_1 \cdots x_m) \xi(x_{m+1} \cdots)
$$
for all $x = x_1 x_2 \cdots \in \Wc$ and hence $\xi$ is $\rho$-equivariant.

\begin{lemma} $\xi$ is continuous. \end{lemma}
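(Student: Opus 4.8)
The plan is to prove continuity of $\xi$ separately at points of $\Wc_\infty$ and at points of $\Wc \smallsetminus \Wc_\infty$, using the neighborhood basis $\{B_N(x)\}$ described above and the uniform contraction estimates from Lemmas~\ref{lem:gamma contraction} and~\ref{lem:U contraction}. The key mechanism in both cases is that if a word $y = y_1 y_2 \cdots \in \Wc$ agrees with a word $x$ on a long initial segment $y_1 \cdots y_n = x_1 \cdots x_n$, then $\xi(y)$ and $\xi(x)$ both lie in the set $(x_1 \cdots x_n)(\Oc \cup \mathcal{Z}_{\gamma}^+ \cup \mathcal{Z}_{\gamma}^- \cup \mathcal{Z}_U)$ up to the error introduced by the remaining letters, and the composition $x_1 \cdots x_n$ contracts $\Fc$ (minus the relevant forbidden zone) by a factor $\epsilon^n$. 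So the heart of the argument is a quantitative statement: there is a constant $D = \diam \Fc$ so that for any reduced admissible word $w = w_1 \cdots w_n$ (no consecutive $U$-letters), the map $w_1 \cdots w_n$ sends $\Fc \smallsetminus \mathcal{Z}$ into a ball of radius $\epsilon^n D$, where $\mathcal{Z}$ is $\mathcal{Z}_{\gamma}^-$ or $\mathcal{Z}_U$ according to whether $w_1$ is $\gamma$ or a $U$-letter. This follows by induction: $w_2 \cdots w_n(\Fc \smallsetminus \mathcal{Z}) \subset \Bc_{\Fc}(F_\star,\epsilon) \subset \Fc \smallsetminus \mathcal{Z}'$ where $F_\star$ is one of the three distinguished flags and $\mathcal{Z}'$ is the forbidden zone for $w_1$ (this is where the disjointness and $2\epsilon$-separation in the choice of $\epsilon$ is used: a ball of radius $\epsilon$ around $F_\gamma^+$, $F_\gamma^-$, or $F_U$ avoids each $\mathcal{Z}$), and then $w_1$ contracts by $\epsilon$.

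First I would handle a point $x = x_1 x_2 \cdots \in \Wc_\infty$. Fix $\delta > 0$ and choose $n$ with $\epsilon^n D < \delta$. Given $y \in B_{n+1}(x)$, we have $y_j = x_j$ for $j \le n+1$, so in particular $y_{n+1}$ and $x_{n+1}$ agree and hence $\xi(y)$ and $\xi(x)$ are both of the form $(x_1 \cdots x_n) (x_{n+1} \cdot z)$ for flags $z = \xi(\text{tail of }y)$ resp.\ $\xi(\text{tail of }x)$ lying in the appropriate domain; applying $x_{n+1}$ puts both into $\Bc_{\Fc}(F_\star, \epsilon)$ for a common $F_\star$, and then $x_1 \cdots x_n$ is $\epsilon^n$-Lipschitz on the relevant set, so $\dist_{\Fc}(\xi(y), \xi(x)) \le \epsilon^n D < \delta$. (One must check that the tail flags really land in the non-forbidden region so that the Lipschitz estimates apply; this is immediate because $F_U$, $F_\gamma^\pm$ and anything in a small ball around $F_U$ lie in $\Oc \cup (\text{their own ball})$, all of which avoid the zone forbidden for the next letter.) Thus $\xi$ is continuous at $x$.

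Next I would handle a bounded parabolic point $x = x_1 \cdots x_m \in \Wc \smallsetminus \Wc_\infty$, where $\xi(x) = (x_1 \cdots x_m) F_U$. A basic neighborhood is $B_N(x) = \{x\} \cup \{y : y_j = x_j \ (j \le m),\ y_{m+1} \in U,\ |y_{m+1}| \ge N\}$. Applying $(x_1 \cdots x_m)^{-1}$ (a fixed homeomorphism of $\Fc$) reduces to showing: if $u \in U$ with $|u|$ large and $z \in \Oc$ (or $z = F_U$ in the degenerate case $y = x$ shifted), then $u(z)$ is close to $F_U$. By Lemma~\ref{lem:U contraction} every nontrivial $u \in U'$ already maps $\Fc \smallsetminus \mathcal{Z}_U$ into $\Bc_{\Fc}(F_U,\epsilon)$, but to get $\dist_{\Fc}(u(z), F_U) \to 0$ as $|u| \to \infty$ I would use the convergence-group / $\Psf_1$-divergence property of $U$: since $\Lambda_{1,d-1}(U) = \{F_U\}$ and $U$ is $\Psf_1$-divergent, for any sequence $u_k$ with $|u_k| \to \infty$ one has $\frac{1}{\mu_1(u_k)} u_k \to e_1 \ip{\cdot, e_n}$ in $\End(\Kb^d)$ (as recorded in the proof of Lemma~\ref{lem:U contraction}), which forces $u_k(z) \to F_U$ uniformly for $z$ ranging over any compact subset of $\Fc$ transverse to $F_U$, in particular over $\overline{\Oc}$. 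Choosing $N$ so that $|u| \ge N$ implies $\dist_{\Fc}(u(z), F_U) < \delta/\|(x_1\cdots x_m)\|_{\mathrm{Lip}}$ then gives $\dist_{\Fc}(\xi(y), \xi(x)) < \delta$ for all $y \in B_N(x)$ (the case $y = x$ being trivial). So $\xi$ is continuous at $x$.

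The main obstacle, and the only place requiring genuine care rather than bookkeeping, is the bounded-parabolic case: there the ``long initial agreement'' heuristic is not literally about a long word but about a single $U$-letter of large word length, so one cannot appeal to the $\epsilon$-contraction of a long composition. Instead one needs the genuine dynamical input that large-norm elements of $U$ collapse compact transverse sets toward the single limit flag $F_U$, uniformly — this is the quantitative refinement of ``$\Lambda_{1,d-1}(U)$ is a point'' that the ping-pong Lemma~\ref{lem:U contraction} extracts only qualitatively. Making that uniformity precise (e.g.\ over the compact set $\overline{\Oc}$, and noting that $\overline{\Oc}$ consists of flags transverse to $F_U$ by the choice of $\epsilon$) is the crux; everything else reduces to the geometric-contraction estimate above and the continuity of the fixed homeomorphisms $x_1 \cdots x_m$ of $\Fc$.
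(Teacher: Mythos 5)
Your two-case decomposition and both mechanisms match the paper's proof. For $x\in\Wc_\infty$ you use exactly the $\epsilon^{n}$-Lipschitz estimate for long common prefixes; for a bounded parabolic point you use exactly the dynamical input that escaping elements of $U$ push compact sets of $F_U$-transverse flags to $F_U$, which the paper invokes via Observation~\ref{obs:strongly_dynamics_pres_div_cartan}.

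One nominal slip in the parabolic case: you place the tail flag $z=\xi(\bar y)$ in $\overline{\Oc}$, but in fact $\bar y$ starts with $\gamma^{\pm 1}$, so by Lemma~\ref{lem:gamma contraction} the flag $\xi(\bar y)$ lies in $\Bc_{\Fc}(F_\gamma^+,\epsilon)\cup\Bc_{\Fc}(F_\gamma^-,\epsilon)$, which is \emph{disjoint} from $\Oc$ (each such ball sits inside $\mathcal{Z}_\gamma^\pm$, which $\Oc$ excludes). The argument survives unchanged because the closed set $\overline{\Bc_{\Fc}(F_\gamma^+,\epsilon)}\cup\overline{\Bc_{\Fc}(F_\gamma^-,\epsilon)}$ is still a compact set of flags transverse to $F_U$ (it avoids $\mathcal{Z}_U$ by the choice of $\epsilon$), so the uniform collapse under escaping $U$-elements applies to it just as well; you simply need to name the right compact set. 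You should also note, as the paper implicitly does, that the tail $\bar y$ is nonempty because words in $\Wc$ cannot end in a $U$-letter — so the ``degenerate case'' you flag never arises and the tail flag genuinely lands in a $\gamma$-ball, never at $F_U$ itself.
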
 

\begin{proof} Fix a converging sequence $y_n \to x$ in $\Wc$. 

\medskip 

\noindent \emph{Case 1:} Assume $x = x_1 x_2 \cdots \in \Wc_\infty$. Suppose $y_n=y_{n,1} y_{n,2} \cdots$. Then for any $j \geq 1$, $y_{n,j} = x_j$ for $n$ sufficiently large (depending on $j$). So for any $m \geq 1$, Lemmas~\ref{lem:gamma contraction}  and~\ref{lem:U contraction} imply that 
\begin{align*}
\limsup_{n \to \infty} \dist_{\Fc}(\xi(x), \xi(y_n))&= \limsup_{n \to\infty}  \dist_{\Fc}\Big( x_1 \cdots x_{m} \xi(x_{m+1}  \cdots), x_1 \cdots x_{m} \xi(y_{n,m+1}  \cdots)\Big)\\
& \leq \epsilon^m \diam \Fc.
\end{align*}
Since $m \geq 1$ was arbitrary and $\epsilon \in (0,1)$, we have $\xi(y_n) \to \xi(x)$. 

\medskip 

\noindent \emph{Case 2:} Assume $x=x_1 \cdots x_m \in \Wc \setminus \Wc_\infty$. We may assume that $y_n \neq x$ for all $n$. Then passing to a tail of $(y_n)_{n \geq 1}$ we may assume that $y_n = x_1 \cdots x_m y_{n,m+1} \bar{y}_n$ where $\bar{y}_n \in \Wc$, $y_{n,m+1} \in U$, and $\abs{y_{n,m+1}} \to \infty$. Then 
$$
\xi(y_n) = x_1 \cdots x_m y_{n,m+1} \xi(\bar{y}_n).
$$
The word $\bar{y}_n$ has to start with either $\gamma$ or $\gamma^{-1}$, hence Lemma~\ref{lem:gamma contraction} implies that $\xi(\bar{y}_n) \in \Bc(F_\gamma^+, \epsilon) \cup \Bc(F_\gamma^-, \epsilon)$. So, by our choice of $\epsilon > 0$, any accumulation point of $(\xi(\bar{y}_n))_{n \geq 1}$ is transverse to $F_U$. Thus, since $\abs{y_{n,m+1}} \to \infty$ and $\Lambda_{1,d-1}(U) = \{F_U\}$, Observation~\ref{obs:strongly_dynamics_pres_div_cartan} implies that
$$
\lim_{n \to \infty} \xi(y_n) = x_1 \cdots x_m \lim_{n \to \infty} y_{n,m+1} \xi(\bar{y}_n)=x_1 \cdots x_m F_U = \xi(x). 
$$

So $\xi$ is continuous. 
\end{proof} 

\begin{lemma} $\xi$ is transverse. \end{lemma}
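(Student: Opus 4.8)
The plan is to show that for distinct $x,y\in\Wc=\partial(\Gamma,\peripherals)$, the flags $\xi(x)$ and $\xi(y)$ are transverse, i.e.\ $\xi^1(x)\oplus \xi^{d-1}(y)=\Kb^d$ (together with the symmetric statement). I will reduce everything to the ping-pong dynamics encoded in Lemmas~\ref{lem:gamma contraction} and~\ref{lem:U contraction}, specifically the fact that the three balls $\Bc_\Fc(F_U,2\epsilon),\Bc_\Fc(F_\gamma^+,2\epsilon),\Bc_\Fc(F_\gamma^-,2\epsilon)$ are pairwise disjoint and any two flags lying in different balls are transverse.

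First I would use $\rho$-equivariance of $\xi$ to normalize. Since $\Gamma$ acts on $\Wc$ and $\xi(gz)=\rho(g)\xi(z)$, and $\rho(g)$ preserves transversality, I may freely replace $(x,y)$ by $(gx,gy)$. Write $x=x_1x_2\cdots$ and $y=y_1y_2\cdots$ as reduced words in $\Wc$. Since $x\neq y$, either they have a common prefix $x_1\cdots x_j=y_1\cdots y_j$ after which they differ, or one is a proper prefix of the other (possible only if the shorter is a finite word ending so that the next letter differs). Stripping the common prefix via equivariance, I reduce to the case where $x$ and $y$ have \emph{no common first letter} (or one of them is the empty word $\varnothing$, whose $\xi$-value is $F_U$). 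Now I split into cases according to the first letters $x_1,y_1\in S=\{\gamma^{\pm1}\}\cup(U\smallsetminus\{\id\})$ (with the convention that $\varnothing$ ``starts with'' $U$, since $\xi(\varnothing)=F_U$):

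\begin{itemize}
\item If $x_1\in\{\gamma,\gamma^{-1}\}$ and $y_1$ is a non-trivial element of $U$ (or $y=\varnothing$): then by Lemma~\ref{lem:gamma contraction}, $\xi(x)=x_1\xi(x_2\cdots)\in\Bc_\Fc(F_\gamma^{\pm},\epsilon)$ where the sign matches $x_1$ (using that $\xi(x_2\cdots)\in\Oc\subset\Fc\smallsetminus\mathcal Z_\gamma^\mp$, or noting $\xi(x_2\cdots)$ lies in one of the three $\epsilon$-balls, all of which avoid $\mathcal Z_\gamma^\mp$ after shrinking $\epsilon$); similarly by Lemma~\ref{lem:U contraction}, $\xi(y)\in\Bc_\Fc(F_U,\epsilon)$ (if $y\neq\varnothing$) or $\xi(y)=F_U$ (if $y=\varnothing$). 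These lie in different $2\epsilon$-balls, hence are transverse.
\item If $x_1=\gamma$ and $y_1=\gamma^{-1}$ (or vice versa): then $\xi(x)\in\Bc_\Fc(F_\gamma^+,\epsilon)$ and $\xi(y)\in\Bc_\Fc(F_\gamma^-,\epsilon)$, again different balls, hence transverse.
\item If both $x_1,y_1\in U$: since after removing the common prefix we are at a position where $x$ and $y$ differ at their \emph{first} letter, and consecutive $U$-letters are disallowed, this case only arises when the stripped-off prefix ended in a $\gamma^{\pm}$-block; but then $x_1\neq y_1$ are both in $U$, so $x_1^{-1}y_1\neq\id$ lies in $U$. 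Applying equivariance once more (multiply by $x_1^{-1}$) reduces to the previous cases: $x$ now starts with $\gamma^{\pm1}$ (since $x_2\in\{\gamma^{\pm1}\}$, as $x$ had no two consecutive $U$-letters) and $y$ starts with $x_1^{-1}y_1\in U\smallsetminus\{\id\}$.
\end{itemize}

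In every case $\xi^1(x),\xi^1(y),\xi^{d-1}(x),\xi^{d-1}(y)$ are arranged so that $\xi(x)$ and $\xi(y)$ lie in distinct members of the disjoint family of $2\epsilon$-balls, and by our choice of $\epsilon$ flags in distinct balls are transverse; this yields both $\xi^1(x)\oplus\ker\xi^{d-1}(y)=\Kb^d$ and $\xi^1(y)\oplus\ker\xi^{d-1}(x)=\Kb^d$. The main obstacle I anticipate is purely bookkeeping: correctly handling the empty word $\varnothing$ and the finite (parabolic) words, and making sure that after stripping common prefixes the ``next letter'' of a finite word is treated as a $U$-element of arbitrarily large word-length (so that its $\xi$-image is forced into $\Bc_\Fc(F_U,\epsilon)$ via Lemma~\ref{lem:U contraction}), rather than an edge case that escapes the three-ball trichotomy. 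Once the combinatorics of reduced words in the free product $\ip{\gamma}*U$ is organized, the geometric input is exactly the disjointness-and-transversality of the $2\epsilon$-balls, which is already built into the setup.
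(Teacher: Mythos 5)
Your proposal is correct and takes essentially the same route as the paper: strip a common prefix using $\rho$-equivariance, then case on the first letters using the ping-pong contraction lemmas and the disjointness/cross-transversality of the three $2\epsilon$-balls. The paper compresses your third bullet (both first letters in $U$) into the phrase ``after possibly relabelling and translating by $\Gamma$,'' whereas you handle it explicitly by multiplying by $x_1^{-1}$ --- the only slip is your aside that this case ``only arises when the stripped-off prefix ended in a $\gamma^{\pm}$-block,'' which is not required (it also occurs when there is no common prefix at all), but that remark is immaterial to the argument.
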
 

\begin{proof} Fix $x,y \in \Wc$ distinct. After possibly relabelling and translating by $\Gamma$ it is enough to consider the following cases. 

\medskip 

\noindent \emph{Case 1:} Assume $x \neq \varnothing$, $y \neq \varnothing$, and $x_1 \neq y_1$. Then 
$$
\xi(x) = x_1 \xi(x_2 \cdots) \quad \text{and} \quad \xi(y) = y_1 \xi(y_2 \cdots).
$$
So Lemmas~\ref{lem:gamma contraction}  and~\ref{lem:U contraction} imply that 
$$
\xi(x), \xi(y) \in \Bc_{\Fc}(F_U,\epsilon) \cup \Bc_{\Fc}(F_\gamma^+,\epsilon) \cup \Bc_{\Fc}(F_\gamma^-,\epsilon).
$$
Since $x_1 \neq y_1$, they are contained in different balls and so, by our choice of $\epsilon > 0$,   $\xi(x)$ and $\xi(y)$ are transverse. 

\medskip 

\noindent \emph{Case 2:} Assume $x =\varnothing$ and $y \neq \varnothing$. After possibly translating by an element of $U$, we may also assume that $y_1 \notin U$. Then 
$$
 \xi(y) \in  \Bc_{\Fc}(F_\gamma^+,\epsilon) \cup \Bc_{\Fc}(F_\gamma^-,\epsilon)
$$
and so $\xi(y)$ is transverse to $\xi(x) = F_U$. 
\end{proof} 

\begin{lemma} $\xi$ is strongly dynamics preserving. \end{lemma}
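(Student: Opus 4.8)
The plan is to verify the strongly dynamics preserving property directly from the ping-pong dynamics established in Lemmas~\ref{lem:gamma contraction} and~\ref{lem:U contraction}. Suppose $(g_n)_{n \geq 1}$ is a sequence in $\Gamma$ with $g_n \to x \in \Wc = \partial(\Gamma, \peripherals)$ and $g_n^{-1} \to y \in \Wc$. Writing each $g_n$ as a reduced word $g_n = x_{n,1} x_{n,2} \cdots x_{n,m_n}$ in $S$, the convergence $g_n \to x$ forces the length $m_n \to \infty$ and the initial segments of the $g_n$ to stabilize to the letters of $x$ (when $x \in \Wc_\infty$), or to stabilize to the initial finite word of $x$ followed by a letter in $U$ of word length tending to infinity (when $x \in \Wc \smallsetminus \Wc_\infty$); symmetrically, $g_n^{-1} \to y$ controls the terminal segments of the $g_n$ (equivalently the initial segments of $g_n^{-1}$).

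First I would handle the generic case. Fix $V \in \Fc$ transverse to $\xi(y)$; I want to show $g_n(V) \to \xi(x)$. The key point is that $g_n^{-1}(\xi(x))$-transversality translates into a ping-pong statement: since the \emph{last} letter $x_{n,m_n}$ of $g_n$ is (up to a bounded tail) either $\gamma^{\pm 1}$ or a long element of $U$, applying the contraction lemmas shows that $x_{n,m_n}(V)$ lands in one of the three small balls $\Bc_{\Fc}(F_\gamma^{\pm}, \epsilon)$, $\Bc_{\Fc}(F_U, \epsilon)$ — provided $V$ avoids the relevant bad set $\mathcal{Z}$. That avoidance is exactly guaranteed by transversality of $V$ to $\xi(y)$: the terminal structure of $g_n$ determines the initial letter of $g_n^{-1}$, hence which ball $\xi(y)$ lies near, hence which $\mathcal{Z}$-set $V$ stays out of (here one shrinks $\epsilon$ once more, or uses that $V$ is a \emph{fixed} flag transverse to the limit so eventually stays a definite distance from $\mathcal{Z}$). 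Then each successive letter $x_{n,m_n-1}, x_{n,m_n-2}, \dots$ maps that ball (which lies in the complement of the next $\mathcal{Z}$-set, by reducedness of the word) into the next ball, contracting distances by a factor $\epsilon < 1$ at each step. After applying all $m_n \to \infty$ letters we get $\dist_{\Fc}(g_n(V), x_{n,1}\cdots x_{n,N}(F_0)) \leq \epsilon^{m_n - N} \diam \Fc$ for any fixed reference $F_0 \in \Oc$ and any fixed $N$; letting $n \to \infty$ and then $N \to \infty$ and invoking the definition of $\xi(x)$ (and its continuity, already proved) yields $g_n(V) \to \xi(x)$.

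The main obstacle is the case where the last letter of $g_n$ lies in $U$ and, more importantly, bookkeeping the two boundary cases $x \in \Wc_\infty$ versus $x \in \Wc \smallsetminus \Wc_\infty$ simultaneously with the two cases for $y$. When $y \in \Wc \smallsetminus \Wc_\infty$, the initial structure of $g_n^{-1}$ is ``finite word of $y$, then a long $U$-element'', and one must use Observation~\ref{obs:strongly_dynamics_pres_div_cartan} (the $\Psf_1$-divergence of $U$ together with $\Lambda_{1,d-1}(U) = \{F_U\}$) rather than the crude Lipschitz contraction to see that the long $U$-element at the end of $g_n$ sends $V$ near $F_U$ — this requires $V$ to be transverse to $F_U$, which follows since $V$ is transverse to $\xi(y) = (\text{word})\cdot F_U$ and we can strip off the finite word. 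I would isolate this as a sub-lemma: \emph{if $w \in U$ has $|w| \geq N$ and $F$ is transverse to $F_U$ with $\dist_{\Fc}(F, \mathcal{Z}_U) \geq \delta$, then $\dist_{\Fc}(w(F), F_U) \to 0$ as $N \to \infty$, uniformly}, which is immediate from Observation~\ref{obs:strongly_dynamics_pres_div_cartan}. With that in hand, the remaining steps are the same geometric-series estimate as above. Once strong dynamics preservation is verified, $\xi = (\xi^1, \xi^{d-1})$ together with the already-established $\rho$-equivariance, transversality, and continuity witnesses that the inclusion $\Gamma \hookrightarrow \SL(d,\Kb)$ is $\Psf_1$-Anosov relative to $\{U^\prime\}$ (after passing to $U^\prime$ and the power $\gamma^N$ as in Lemmas~\ref{lem:gamma contraction} and~\ref{lem:U contraction}), completing the proof of Proposition~\ref{prop:pingpong with unipotents}.
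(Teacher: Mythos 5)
Your plan has a genuine gap at the very step you describe as ``the key point.'' You fix $V$ transverse to $\xi(y)$ and try to run the ping-pong directly on $V$, arguing that transversality to $\xi(y)$ forces $V$ to avoid the relevant $\mathcal{Z}$-set. That implication is false. The $\mathcal{Z}$-sets are $\epsilon$-neighborhoods of the non-transversality loci of the \emph{fixed} flags $F_U$, $F_\gamma^+$, $F_\gamma^-$ — they are defined once and for all, independently of $\xi(y)$. Even if, say, $y$ begins with $\gamma$ (so $g_n$ ends in $\gamma^{-1}$ and $\xi(y)$ lies within $\epsilon$ of $F_\gamma^+$), transversality of $V$ to $\xi(y)$ gives no control on whether $V$ is transverse to $F_\gamma^+$: a flag $V$ with first component lying in $H_\gamma^+$ but outside the hyperplane component of $\xi(y)$ is transverse to $\xi(y)$ yet sits inside $\mathcal{Z}_\gamma^+$. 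For such a $V$, Lemma~\ref{lem:gamma contraction} simply does not apply, and the first step of the ping-pong fails (concretely, iterating $\gamma^{-1}$ on such a $V$ need not converge to $F_\gamma^-$). Neither ``shrink $\epsilon$'' nor ``$V$ is a fixed flag so it stays a definite distance from $\mathcal{Z}$'' repairs this, because the obstruction is at the level of which non-transversality locus $V$ must avoid, and transversality to $\xi(y)$ does not give you that.

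The paper avoids this issue entirely by never running the ping-pong on an arbitrary transverse $V$. Instead, one proves the convergence $\gamma_n F \to \xi(x)$ only for $F$ in the fixed open set $\Oc = \Fc \smallsetminus \overline{\mathcal{Z}_U \cup \mathcal{Z}_\gamma^+ \cup \mathcal{Z}_\gamma^-}$, which by construction avoids \emph{all} the bad sets, so the ping-pong runs without any transversality analysis on $F$ at the terminal end. Symmetrically one shows $\gamma_n^{-1} F \to \xi(y)$ for $F \in \Oc$. Then Observation~\ref{obs:strongly_dynamics_pres_div_cartan}(3) — convergence on an open set of $k$-planes for $g_n$ and on an open set of $(d-k)$-planes for $g_n^{-1}$ implies the full strongly dynamics preserving statement — does the final bootstrapping for you. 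You cite Observation~\ref{obs:strongly_dynamics_pres_div_cartan} only for the $U$-divergence step, but item (3) of that observation is exactly the tool that replaces your unjustified ``$V$ avoids $\mathcal{Z}$'' claim; without invoking it (or something equivalent) the argument does not close.
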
 

\begin{proof} Suppose that $(\gamma_n)_{n \geq 1}$ is an escaping sequence in $\Gamma$ with $\gamma_n \to x \in \Wc$ and $\gamma_n^{-1} \to y \in \Wc$. We claim that
$$
\lim_{n \to \infty} \gamma_n F = \xi(x)
$$
for all $F \in \Oc$. To that end fix $F \in \Oc$. 

By Lemma~\ref{lem:free product}, we can write $\gamma_n = z_{n,1} z_{n,2} \cdots z_{n,m_n}$ as a reduced word in $S$ which has no consecutive elements in $U$.

\medskip 

\noindent \emph{Case 1:} Assume $x=x_1 x_2 \cdots  \in \Wc_\infty$. Then $z_{n,j} = x_j$ for $n$ sufficiently large (depending on $j$). For any $k \geq 1$ and $n$ sufficiently large (depending on $k$), Lemmas~\ref{lem:gamma contraction}  and~\ref{lem:U contraction} imply that 
\begin{align*}
\dist_{\Fc}( x_1 \cdots x_k F, \gamma_n F) & = \dist_{\Fc}( x_1 \cdots x_k F, x_1 \cdots x_k z_{n,k+1} \cdots z_{n,m_n} F) 
\leq \epsilon^k \diam \Fc. 
\end{align*}
So 
$$
\lim_{n \to \infty} \gamma_n F = \lim_{k \to \infty} x_1 \cdots x_k F = F_x = \xi(x). 
$$

\noindent \emph{Case 2:} Assume $x = x_1 \cdots x_m \in \Wc \smallsetminus \Wc_\infty$. Then passing to a tail of $\gamma_n$ we can assume that $z_{n,j} = x_j$ for all $1 \leq j \leq m$, $z_{n,m+1} \in U$, and $\lim_{n \to \infty} \abs{z_{n,m+1}}=\infty$. Let $\bar{\gamma}_n := z_{n,m+2} \cdots z_{n,m_n}$. If $\bar{\gamma}_n=\id$, then by Observation~\ref{obs:strongly_dynamics_pres_div_cartan}
$$
\lim_{n \to \infty} \gamma_n F = x_1 \cdots x_m \lim_{n \to \infty} z_{n,m+1} F =  x_1 \cdots x_m F_U = \xi(x)
$$
since $F \in \Oc$ is transverse to $F_U$ and $\Lambda_{1,d-1}(U) = \{F_U\}$.  Otherwise, if $\bar{\gamma}_n\neq \id$, then $z_{n,m+2} \in \{ \gamma, \gamma^{-1}\}$. So 
$$
\bar{\gamma}_n F \in \Bc_{\Fc}(F_\gamma^+,\epsilon) \cup \Bc_{\Fc}(F_\gamma^-,\epsilon).
$$ 
In particular, any accumulation point of $(\bar{\gamma}_n F)_{n \geq 1}$ is transverse to $F_U$. Then since $z_{n,m+1} \in U$, $\lim_{n \to \infty} \abs{z_{n,m+1}}=\infty$, and $\Lambda_{1,d-1}(U) = \{ F_U\}$, Observation~\ref{obs:strongly_dynamics_pres_div_cartan} implies that
\begin{equation*}
\lim_{n \to \infty} \gamma_n F = x_1 \cdots x_m \lim_{n \to \infty} z_{n,m+1} \bar{\gamma}_n F =  x_1 \cdots x_m F_U = \xi(x). 
\end{equation*}

Similar reasoning shows that 
$$
\lim_{n \to \infty} \gamma_n^{-1} F = \xi(y)
$$
for all $F \in \Oc$. Thus by Observation~\ref{obs:strongly_dynamics_pres_div_cartan}
$$
\lim_{n \to \infty} \gamma_n V = \xi^k(x)
$$
for all $V \in \Gr_k(\Kb^d)$ transverse to $\xi^{d-k}(y)$. 
\end{proof} 

Thus the inclusion $\Gamma \hookrightarrow \SL(d,\Kb)$ is  $\Psf_1$-Anosov relative to $\Pc=\left\{ U \right\}$.

\section{Pappus--Schwartz representations} \label{sec:pappus--schwartz}

In \cite{Schwartz}, certain representations of the (projectivized) modular group $\PSL(2,\Zb)$ into $\PGL(3,\Rb)$ were obtained by considering the iterated application of Pappus's Theorem, from projective geometry, on certain configurations of points and lines in the real projective plane. 

Here we establish that these representations are relatively Anosov. This mostly involves reformulating results in \cite{Schwartz} in the language of (relatively) Anosov representations.

We first define the configurations of points and lines we consider. Given points $p,q \in \proj(\Rb^3)$, write $pq$ to denote the projective line containing $p$ and $q$. Dually, given projective lines $P, Q \subset \proj(\Rb^3)$ write $PQ$ to denote the intersection of the lines $P$ and $Q$.
In the discussion that follows we identify elements of $\Gr_2(\Rb^3)$ with projective lines in $\proj(\Rb^3)$.

\begin{itemize}
\item An {\bf overmarked box} is a pair of 6-tuples $\left( (p,q,r,s,t,b), (P,Q,R,S,T,B) \right)$ in $( \proj(\Rb^3) )^6 \times ( \Gr_2(\Rb^3) )^6$ satisfying the incidence relations required by Pappus's Theorem (shown in the following figure.)

\begin{center} \begin{tikzpicture}
\draw (-0.5,49/12) -- (0,4) node[anchor=south]{$p$} -- (3,3.5) node[anchor=south]{$t$} -- (6,3) node[anchor=south]{$q$} -- (6.5,35/12) node[anchor=west]{$T$};
\draw (-0.5,0) -- (0,0) node[anchor=north]{$s$} -- (3,0) node[anchor=north]{$b$} -- (6,0) node[anchor=north]{$r$} -- (6.5,0) node[anchor=west]{$B$};
\draw (-0.5,-7/12) node[anchor=north east]{$P$} -- (0,0) -- (3,3.5) -- (3.5,49/12);
\draw (2.5, 49/12) -- (3,3.5) -- (6,0) -- (6.5,-7/12) node[anchor=north west]{$Q$};
\draw (-0.5,14/3) --(0,4) -- (3,0) -- (3.5,-2/3) node[anchor=north west]{$S$};
\draw (2.5,-0.5) node[anchor=north east]{$R$} -- (3,0) -- (6,3) -- (6.5,3.5);
\end{tikzpicture} \end{center}

\item A {\bf marked box} is an equivalence class of overmarked boxes under the involution 
$$ 
\left( (p,q,r,s,t,b), (P,Q,R,S,T,B) \right) \mapsto \left( (q,p,s,r,t,b), (Q,P,S,R,T,B) \right)
$$
(corresponding to ``flipping around the central axis $tb$''.) 

\item The {\bf convex interior} of a marked box is the open quadrilateral with vertices $p,q,r,s$ (in that order). We shall not make much direct use of convex interiors of marked boxes here, but they are useful mental tools for thinking of these objects geometrically.
\end{itemize}

\newcommand{\markbox}{\mathfrak{B}}

Given a marked box $\mathfrak{B}$, let $\rho_\markbox\colon \PSL(2,\Zb) \to \PGL(3,\Rb)$ be a Pappus--Schwartz representation as defined in \cite[Th.\ 2.4]{Schwartz} (see also \cite{BLV}). This representation is defined as follows. First, let 
$$
a = \begin{bmatrix} -1 & 1 \\ -1 & 0 \end{bmatrix} \quad \text{and} \quad d = \begin{bmatrix} 0 & 1 \\ -1 & 0 \end{bmatrix}.
$$
Then $\PSL(2,\Zb)$ has presentation  $\PSL(2,\Zb) = \langle a, d : a^3 = d^2 = 1 \rangle$. Then:
\begin{itemize} 
\item $\rho_\markbox(d)$ is the projective duality which sends 
$$
\markbox = \left[ \left( (p,q,r,s,t,b), (P,Q,R,S,T,B) \right) \right]
$$ 
to its ``dual'' / ``exterior'' marked box
$$
\iota(\markbox) := \left[ \left( (s,r,p,q,b,t), (R,S,Q,P,B,T) \right) \right] ,
$$ and 
\item $\rho_\markbox(a)$ is the 3-cycle which cycles between the original box, the dual to the ``top'' box produced by an application of Pappus's theorem to $\markbox$, and the dual to the ``bottom'' box (see \cite[Fig.\ 2.3]{Schwartz}).
In symbols, $\rho_\markbox(a)$ sends 
$$
\markbox = \left[ \left( (p,q,r,s,t,b), (P,Q,R,S,T,B) \right) \right]
$$ 
to
$$
\left[ \left( (PS,QR,p,q,(qs)(pr),t), (qs,pr,Q,P,(QR)(PS),T) \right) \right]
$$
to 
$$
\left[ \left( (s,r, PS, QR, b, (qs)(pr)), (S, R, qs, pr, B, (QR)(PS)) \right) \right]
$$
back to $\markbox$.
\end{itemize} 

Next let $\Hb^2_{\Rb}$ denote real hyperbolic 2-space and identify $\PSL(2,\Rb) = \Isom_0(\Hb^2_{\Rb})$ via the Poincar\'e upper half-plane model. If we let $\peripherals$ denote a set of representatives for the conjugacy classes of maximal parabolic subgroups in $\PSL(2,\Zb)$, then $\PSL(2,\Zb)$ is relatively hyperbolic with respect to $\peripherals$ and the Bowditch boundary naturally identifies with the Gromov boundary $\partial_\infty \Hb^2_{\Rb}$ of $\Hb^2_{\Rb}$. 

By \cite[Sec.\ 3.2, 3.3]{Schwartz} (see also \cite[Sec.\ 5.3]{BLV}), there is a continuous $\rho_\markbox$-equivariant map 
$$
\xi_\markbox = (\xi_\markbox^1,\xi^2_\markbox) \colon \partial_\infty \Hb^2_{\Rb} \to \proj(\Rb^3) \times \Gr_2(\Rb^3).
$$ 
Moreover, this map is transverse \cite[Th.\ 3.3]{Schwartz}.

The strongly dynamics preserving property follows from the proof of \cite[Lem.\ 4.2.3]{Schwartz}. For the reader's convenience we will derive the property directly from the statement of \cite[Lem.\ 4.2.3]{Schwartz}.

\begin{lemma}\cite[Lem.\ 4.2.3]{Schwartz} If $(\gamma_n)_{n \geq 1}$ is a sequence in $\PSL(2,\Zb)$ and $\epsilon > 0$, then there exist $N \geq 1$ and $x,y \in \partial_\infty \Hb^2_{\Rb}$ such that 
$$
\rho_\markbox(\gamma_N)\left( \proj(\Rb^3) \smallsetminus \Nc_{\proj}( \xi^2_\markbox(y), \epsilon)\right) \subset \Bc_{\proj}(\xi_\markbox^1(x), \epsilon)
$$
(where $\Nc_{\proj}$ and $\Bc_{\proj}$ denote respectively an open neighborhood and an open ball with respect to the angle metric defined on $\proj(\Rb^3)$ in Section~\ref{sec:angle metrics}).
\end{lemma}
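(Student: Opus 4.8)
The statement is, up to a change of language, \cite[Lem.\ 4.2.3]{Schwartz}, and the plan is to recall that lemma and translate it into the angle-metric formulation used here. In \cite{Schwartz} the representation $\rho_\markbox$ is built from the action of $\PSL(2,\Zb)$ on the space of marked boxes nested inside $\markbox$; the combinatorics of this nesting is governed by the Farey triangulation of $\Hb^2_\Rb$, and the boundary map $\xi_\markbox$ sends a point $z \in \partial_\infty\Hb^2_\Rb$ to the pair $(\xi_\markbox^1(z),\xi_\markbox^2(z))$ obtained as the limit of the marked boxes indexed by the geodesic ray toward $z$, with $\xi_\markbox^1(z)$ the limiting distinguished point and $\xi_\markbox^2(z)$ the limiting distinguished line. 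Schwartz's Lemma 4.2.3 is precisely a \emph{shrinking} statement: for any sequence $(\gamma_n)$ in $\PSL(2,\Zb)$ and any $\epsilon>0$, one may choose an index $N$ so that $\rho_\markbox(\gamma_N)$ collapses the complement of a small neighborhood of a repelling line into a small ball around an attracting point. So I would (i) quote this statement from \cite{Schwartz}, (ii) identify the attracting point and repelling line with $\xi_\markbox^1(x)$ and $\xi_\markbox^2(y)$ for suitable $x,y\in\partial_\infty\Hb^2_\Rb$, and (iii) record the resulting inclusion of neighborhoods.

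For step (ii) I would first pass to a subsequence along which $\gamma_n \to x$ and $\gamma_n^{-1}\to y$ in $\partial_\infty\Hb^2_\Rb$, which is possible by compactness of $\overline{\Hb^2_\Rb}$ and the fact that $\PSL(2,\Zb)$ acts as a convergence group on $\partial_\infty\Hb^2_\Rb$. The nesting combinatorics then force the marked boxes defining $\rho_\markbox(\gamma_n)\markbox$ to converge to the nested sequence defining $\xi_\markbox(x)$, and likewise $\rho_\markbox(\gamma_n^{-1})\markbox$ to converge to that defining $\xi_\markbox(y)$; the inputs are the geometric convergence of marked boxes from \cite{Schwartz} together with the continuity and $\rho_\markbox$-equivariance of $\xi_\markbox$ already recorded from \cite[Sec.\ 3.2, 3.3]{Schwartz}. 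This matches the attracting/repelling data in Lemma 4.2.3 with $(\xi_\markbox^1(x),\xi_\markbox^2(y))$, and step (iii) is then immediate once one observes that a neighborhood of a marked box in Schwartz's metric is squeezed between uniform angle-metric neighborhoods of its distinguished point and of its distinguished line, since the map sending a marked box to its distinguished (point, line) is Lipschitz and open on the relevant family of boxes.

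The main obstacle is bookkeeping, not analysis: one must reconcile Schwartz's conventions --- the explicit form of the convex interiors, the ``top/bottom box'' recursion defining $\rho_\markbox(a)$, and the indexing of nested boxes by the dual Farey tree --- with the neighborhoods $\Nc_{\proj}$ and $\Bc_{\proj}$ used here, and check that ``$p\in\proj(\Rb^3)$ lies outside $\Nc_{\proj}(\xi_\markbox^2(y),\epsilon)$'' translates, uniformly over the boxes in play, into ``$p$ lies in a slightly shrunken copy of the box transverse to the repelling box'', which is what Schwartz's contraction acts on. Once this dictionary is in place, no further estimate is needed: the contraction is Schwartz's, and our contribution is only the translation. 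I would therefore present the proof as: recall \cite[Lem.\ 4.2.3]{Schwartz}; translate its hypothesis and conclusion through $\xi_\markbox$ as above; conclude. I would also flag, for the derivation of the strongly dynamics preserving property that follows, that this lemma is a priori weaker than that property, which is recovered from it by applying the lemma to tails of $(\gamma_n)$ together with a diagonal argument.
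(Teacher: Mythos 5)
Your approach matches the paper's: the lemma is stated there as a direct citation to Schwartz's Lemma 4.2.3, with no proof given, exactly as you propose. The paper simply cites Schwartz and moves on; your supplementary commentary about the marked-box nesting, the convergence to $\xi_\markbox(x)$ and $\xi_\markbox(y)$, and the angle-metric translation is a correct account of the bookkeeping one would do to expand the citation, but none of it appears in the paper.
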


\begin{proposition}
$\xi_\markbox$ is strongly dynamics-preserving.

\begin{proof} Suppose that $(\gamma_n)_{n \geq 1}$ is a sequence in $\PSL(2,\Zb)$ such that $\gamma_n \to x \in \partial_\infty \Hb^2_{\Rb}$ and $\gamma_n^{-1} \to y \in \partial_\infty \Hb^2_{\Rb}$. It is enough to verify that every subsequence of $(\gamma_n)_{n \geq 1}$ has a subsequence that verifies the strongly dynamics preserving property. 

So fix a subsequence $(\gamma_{n_j})_{j \geq 1}$. Replacing $(\gamma_{n_j})_{j \geq 1}$ by a subsequence we can suppose that for each $j \geq 1$ there exist $x_j, y_j \in \partial_\infty \Hb^2_{\Rb}$ such that
$$
\rho_\markbox(\gamma_{n_j})\Big( \proj(\Rb^3) \smallsetminus \Nc( \xi^2_\markbox(y_j), 2^{-j})\Big) \subset \Bc_{\proj}(\xi_\markbox^1(x_j), 2^{-j}).
$$
Passing to a further subsequence, we can assume that $x_j \to x_\infty$ and $y_j \to y_\infty$. Then 
$$
\rho_\markbox(\gamma_{n_j})v \to \xi_\markbox^1(x_\infty)
$$
for all $v \in \proj(\Rb^3) \smallsetminus \xi_\markbox^2(y_\infty)$. 

Fix $z \in \partial_\infty \Hb^2_{\Rb} \smallsetminus \{x,y,x_\infty, y_\infty\}$. Then by the transversality, equivariance, and continuity of the boundary map, 
$$
\xi_\markbox^1(x_\infty)= \lim_{j \to \infty} \rho_\markbox(\gamma_{n_j})\xi_\markbox^1(z) =  \lim_{j \to \infty} \xi_\markbox^1(\gamma_{n_j}(z))=  \xi_\markbox^1(x).
$$
By Observation~\ref{obs:strongly_dynamics_pres_div_cartan} and transversality, 
$$
\xi_\markbox^2(y_\infty)= \lim_{j \to \infty} \rho_\markbox(\gamma_{n_j})^{-1}\xi_\markbox^2(z). 
$$
So a similar argument also shows that $ \xi_\markbox^2(y_\infty)= \xi_\markbox^2(y)$. Thus
$$
\rho_\markbox(\gamma_{n_j})v \to \xi_\markbox(x)
$$
for all $v \in \proj(\Rb^3) \smallsetminus \xi^2_\markbox(y)$.
\end{proof}
\end{proposition}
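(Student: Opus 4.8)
The plan is to reduce the strongly dynamics-preserving property to Schwartz's lemma together with a subsequence argument, exactly as written in the excerpt's proof. First I would observe that the property is local: it suffices to show that every subsequence of $(\gamma_n)_{n \geq 1}$ admits a further subsequence along which the strongly dynamics-preserving conclusion holds, since convergence of $\rho_\markbox(\gamma_n)v$ to a fixed limit follows from every subsequence having a subsubsequence converging to that same limit. So I would fix an arbitrary subsequence $(\gamma_{n_j})_{j \geq 1}$ and apply \cite[Lem.\ 4.2.3]{Schwartz} with $\epsilon = 2^{-j}$ to produce, after passing to a subsequence, points $x_j, y_j \in \partial_\infty \Hb^2_{\Rb}$ with
$$
\rho_\markbox(\gamma_{n_j})\Big( \proj(\Rb^3) \smallsetminus \Nc_{\proj}( \xi^2_\markbox(y_j), 2^{-j})\Big) \subset \Bc_{\proj}(\xi_\markbox^1(x_j), 2^{-j}).
$$

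Next, by compactness of $\partial_\infty \Hb^2_{\Rb}$, I would pass to a further subsequence so that $x_j \to x_\infty$ and $y_j \to y_\infty$. Combining the shrinking neighborhoods with the continuity of $\xi_\markbox^1$ and $\xi_\markbox^2$, one deduces that $\rho_\markbox(\gamma_{n_j})v \to \xi_\markbox^1(x_\infty)$ for every $v \in \proj(\Rb^3) \smallsetminus \xi_\markbox^2(y_\infty)$ (the point $v$ eventually lies outside $\Nc_{\proj}(\xi^2_\markbox(y_j), 2^{-j})$ since $\xi^2_\markbox(y_j) \to \xi^2_\markbox(y_\infty)$ and $v \notin \xi^2_\markbox(y_\infty)$, so its image lies in the shrinking ball around $\xi_\markbox^1(x_j)$). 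The remaining task is to identify $x_\infty$ with $x$ and $y_\infty$ with $y$. For this I would pick any point $z \in \partial_\infty \Hb^2_{\Rb} \smallsetminus \{x, y, x_\infty, y_\infty\}$; since $z \neq y_\infty$ and $\xi_\markbox$ is transverse, $\xi^1_\markbox(z)$ is transverse to $\xi^2_\markbox(y_\infty)$, so the convergence above gives $\rho_\markbox(\gamma_{n_j})\xi^1_\markbox(z) \to \xi^1_\markbox(x_\infty)$; on the other hand, equivariance and continuity of $\xi_\markbox$ together with $\gamma_{n_j}(z) \to x$ (which holds because $\gamma_n \to x$ and $\gamma_n^{-1} \to y \neq z$, so $z$ is not the repelling point) give $\rho_\markbox(\gamma_{n_j})\xi^1_\markbox(z) = \xi^1_\markbox(\gamma_{n_j}(z)) \to \xi^1_\markbox(x)$, hence $\xi^1_\markbox(x_\infty) = \xi^1_\markbox(x)$, which forces $x_\infty = x$ since $\xi^1_\markbox$ is injective (injectivity follows from transversality).

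For the identification $y_\infty = y$ I would run the symmetric argument but on the level of $\Gr_2(\Rb^3)$: applying Observation~\ref{obs:strongly_dynamics_pres_div_cartan} to the singular value data of lifts of $\rho_\markbox(\gamma_{n_j})$, the conclusion $\rho_\markbox(\gamma_{n_j})v \to \xi^1_\markbox(x_\infty)$ for $v$ off a hyperplane is equivalent to $\rho_\markbox(\gamma_{n_j})^{-1} W \to \xi^2_\markbox(y_\infty)$ for $W$ in a nonempty open set of $\Gr_2(\Rb^3)$, and then transversality and continuity of $\xi^2_\markbox$ applied to $\xi^2_\markbox(z)$ (now choosing $z$ avoiding the relevant exceptional points) yield $\xi^2_\markbox(y_\infty) = \xi^2_\markbox(y)$, hence $y_\infty = y$. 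Putting it together, along the subsequence $(\gamma_{n_j})$ we get $\rho_\markbox(\gamma_{n_j})v \to \xi^1_\markbox(x)$ for all $v \in \proj(\Rb^3) \smallsetminus \xi^2_\markbox(y)$, which is exactly the strongly dynamics-preserving property for the original sequence in the $\Psf_1$ setting. The main obstacle — and it is a mild one — is being careful that the logical structure of the ``subsequence of a subsequence'' reduction is airtight and that the exceptional finite sets of points $z$ one must avoid are genuinely finite, so that a valid choice of $z$ always exists; everything else is bookkeeping with the boundary map's continuity, equivariance, transversality, and injectivity, together with the quoted lemma of Schwartz.
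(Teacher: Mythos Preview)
Your proposal is correct and follows essentially the same route as the paper's proof: the subsequence reduction, the application of Schwartz's lemma with $\epsilon = 2^{-j}$, the extraction of $x_\infty, y_\infty$, and the identification via a test point $z$ together with equivariance, continuity, transversality, and Observation~\ref{obs:strongly_dynamics_pres_div_cartan} are all exactly what the paper does. One small remark: the paper stops at $\xi_\markbox^1(x_\infty)=\xi_\markbox^1(x)$ and $\xi_\markbox^2(y_\infty)=\xi_\markbox^2(y)$, which is already enough for the conclusion; your further step to deduce $x_\infty=x$ via ``injectivity follows from transversality'' is unnecessary, and in fact transversality alone does not obviously force injectivity of $\xi_\markbox^1$ (though injectivity is true here by Schwartz's construction), so you could simply drop that sentence.
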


Hence $\xi_\markbox$ is $\Psf_1$-Anosov relative to $\peripherals$.

\section{Semisimplification} \label{sec:semisimplification} 

A representation into $\SL(d,\Kb)$ is called \emph{semisimple} if the Zariski closure of its image is a reductive group. Associated to a representation $\rho\colon \Gamma \to \SL(d,\Kb)$ there is a natural conjugacy class of semisimple representations defined as follows. Let $\mathsf{G}$ be the Zariski closure of $\rho(\Gamma)$ in $\SL(d,\Kb)$ and  choose a Levi decomposition $\mathsf{G} = \mathsf{L} \ltimes \mathsf{U}$, where $\mathsf{U}$ is the unipotent radical of $\mathsf{G}$. Let $\rho^{ss}$ denote the representation obtained by composing $\rho$ with the projection onto $\mathsf{L}$. We call any representation in the conjugacy class of $\rho^{ss}$ a \emph{semisimplification} of $\rho$. Since $\mathsf{L}$ is unique up to conjugation, this definition does not depend on the chosen Levi decomposition.  

When $\Gamma$ is a word-hyperbolic group, it is known that $\rho$ is $\Psf_k$-Anosov if and only if some (any) semisimplifcation of $\rho$ is $\Psf_k$-Anosov \cite[Prop.\ 4.13]{GGKW}. This is quite useful, see for instance the proof of Theorem 1.2 in~\cite{CT2020} or the proof of Proposition 1.2 in~\cite{KP2022}. 
 
In this section we observe that the forward direction of this statement  is also true for relatively Anosov representations, while the backward direction is false. 

\begin{proposition} There exists a representation $\rho \colon \Gamma \to \SL(d,\Kb)$ of a relatively hyperbolic group $(\Gamma,\peripherals)$  where every semisimplification of $\rho$ is $\Psf_1$-Anosov relative to $\peripherals$, but $\rho$ is not $\Psf_1$-Anosov relative to $\peripherals$. \end{proposition}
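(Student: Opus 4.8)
The plan is to construct an explicit counterexample, as simple as possible, living inside $\SL(3,\Kb)$. First I would choose the relatively hyperbolic group to be $(\Gamma,\peripherals)$ with $\Gamma = \Zb$ and $\peripherals = \{\Gamma\}$; more precisely, since a relatively hyperbolic pair usually requires $\Gamma$ to be non-elementary or at least to have the peripheral structure be proper, I would instead take $\Gamma = \Zb^2$ with $\peripherals = \{\Zb^2\}$, or better yet a free product $\Gamma = \Zb * \Zb$ with one peripheral factor, so that the Bowditch boundary and the relative structure are genuinely non-trivial. The key idea is that a single parabolic generator should be sent to a matrix whose semisimplification is trivial (or diagonalizable with all eigenvalues of modulus one and suitable limit flag), so that $\rho^{ss}$ restricted to the peripheral is well-behaved, but whose unipotent part destroys the required convergence/transversality at the peripheral fixed point.

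Concretely, I would let $u \in \SL(3,\Kb)$ be a unipotent matrix with a single Jordan block of size $3$, so $\lambda_j(u) = 1$ for all $j$ but $\mu_1(u^n)/\mu_2(u^n) \asymp n^2 / n = n \to \infty$; then $u$ generates a $\Psf_1$-divergent weakly unipotent group whose $(1,2)$-limit set is a single flag. This would actually satisfy the peripheral constraints of a relatively Anosov representation, so it is not the right choice. Instead, the natural obstruction to relative Anosovness at the peripheral subgroup is either failure of $\Psf_1$-divergence or failure of transversality of the boundary map along the cusp. So I would take $u$ to be unipotent with Jordan type $(2,1)$ acting block-diagonally, arrange the size-two block so that $\mu_1(u^n)/\mu_2(u^n) \asymp n$ but $\mu_1(u^n)/\mu_3(u^n)$ stays bounded away from giving the dominated splitting, and choose the loxodromic generator so that its attracting/repelling flags are \emph{not} transverse to the cusp limit flag — this forces strong dynamics preservation to fail for $\rho$ while the semisimplification (which replaces $u$ by its semisimple part, a diagonalizable matrix whose powers no longer diverge at all, hence forcing a degenerate but still consistent boundary behavior) is handled more easily. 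The cleanest version: pick $\rho$ so that $\rho^{ss}$ has finite (or precompact) image on the peripheral, automatically $\Psf_1$-Anosov relative to $\peripherals$ by a direct check against Definition~\ref{defn:Pk Anosov}, while $\rho$ itself, having a genuine nontrivial unipotent on the peripheral pointing "the wrong way", violates the single-point limit set condition of Observation~\ref{obs:limit set of rel anosov} or Proposition~\ref{prop:eigenvalue data in rel Anosov repn}(1).

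So the concrete construction I would carry out: let $\Gamma = \Zb$, presented with generator $t$, and set $d=3$. Let
$$
\rho(t) = \begin{pmatrix} 1 & 1 & 0 \\ 0 & 1 & 0 \\ 0 & 0 & 1 \end{pmatrix}.
$$
Then $\mu_1(\rho(t)^n)/\mu_2(\rho(t)^n) \asymp n \to \infty$ but $\mu_2(\rho(t)^n)/\mu_3(\rho(t)^n) \to 0$ — wait, this is actually $\Psf_1$-divergent. The right move is to make $\rho^{ss}(t)=\id$ (take $\rho(t)$ unipotent) but arrange $\rho(t)$ to be \emph{not} $\Psf_1$-divergent by using a \emph{non-peripheral} infinite-order element; by Proposition~\ref{prop:eigenvalue data in rel Anosov repn}(2), every infinite-order non-peripheral element of a relatively Anosov $\rho$ must be $\Psf_1$-proximal, and a unipotent element is never proximal. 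So: take $(\Gamma,\peripherals)$ to be $\Zb * \Zb$ with $\peripherals = \{P\}$ one $\Zb$-factor, choose $\rho$ so that the \emph{generator of the other (non-peripheral) factor} is sent to a \emph{unipotent} matrix $u$ (hence never $\Psf_1$-proximal), but so that the Zariski closure $\mathsf{G}$ of $\rho(\Gamma)$ has a unipotent radical whose quotient kills exactly that unipotent, i.e. $\rho^{ss}$ sends that non-peripheral generator to something proximal. I would engineer this using an abelian-by-something structure, e.g. $\rho(\Gamma)$ inside the group of upper-triangular matrices so that $\rho^{ss}$ lands in the diagonal torus: pick the diagonal parts to give a genuine $\Psf_1$-Anosov representation of $\Zb * \Zb$ relative to $\{P\}$ (real diagonal loxodromic on the free generator, trivial or unit-modulus on $P$), and add a nonzero off-diagonal cocycle term on the non-peripheral generator that makes $\rho$ itself non-$\Psf_1$-proximal there. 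Then $\rho^{ss}$ is $\Psf_1$-Anosov relative to $\peripherals$ by the rank-one / diagonal check (using e.g.\ Proposition~\ref{prop:representations of rank one groups} or a direct ping-pong verification as in Proposition~\ref{prop:pingpong with unipotents}), while $\rho$ is not $\Psf_1$-Anosov relative to $\peripherals$ because it violates Proposition~\ref{prop:eigenvalue data in rel Anosov repn}(2). The main obstacle I anticipate is verifying the $\emph{positive}$ direction — that the semisimplification really is relatively Anosov — without circularity: this requires checking that the diagonal torus representation of $\Zb*\Zb$ with the prescribed eigenvalue data admits a continuous, transverse, strongly dynamics-preserving equivariant boundary map, which I would do either by an explicit ping-pong argument in the spirit of Section~\ref{sec:pingpong} or by embedding into the rank-one picture of Section~\ref{sec:homogeneous_repn}; getting the cocycle term on the non-peripheral generator to be simultaneously "invisible to $\rho^{ss}$" (i.e.\ in the unipotent radical) and "fatal for $\rho$" (i.e.\ making the element non-proximal while keeping $\rho$ a genuine homomorphism of $\Zb*\Zb$) is the delicate bookkeeping step.
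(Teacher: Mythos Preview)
Your final approach has a fatal gap. You want to send a non-peripheral infinite-order generator $g$ to a matrix $\rho(g)$ which is \emph{not} $\Psf_1$-proximal, while arranging that $\rho^{ss}(g)$ \emph{is} $\Psf_1$-proximal. This is impossible: for every $\gamma\in\Gamma$ the elements $\rho(\gamma)$ and $\rho^{ss}(\gamma)$ have the same characteristic polynomial (projection to the Levi factor kills only unipotent data), hence the same list $\lambda_1(\cdot)\ge\cdots\ge\lambda_d(\cdot)$. Since $\Psf_1$-proximality is the eigenvalue condition $\lambda_1>\lambda_2$, it holds for $\rho(\gamma)$ if and only if it holds for $\rho^{ss}(\gamma)$. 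In particular your proposed ``off-diagonal cocycle term'' on an upper-triangular matrix with proximal diagonal cannot destroy proximality, and conversely a unipotent $\rho(g)$ forces $\rho^{ss}(g)$ to have all eigenvalues of modulus one, hence never proximal. So Proposition~\ref{prop:eigenvalue data in rel Anosov repn}(2) cannot distinguish $\rho$ from $\rho^{ss}$.

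The obstruction must therefore live at the \emph{peripheral} subgroup, where the relevant conditions (Observation~\ref{obs:limit set of rel anosov}: $\Psf_k$-divergence and single-point $(k,d-k)$-limit set) are \emph{singular-value} conditions, sensitive to Jordan block structure and not just to eigenvalues. You were circling this in your earlier paragraphs but abandoned it. The paper's construction does exactly this: take $\Gamma=\langle a,b\rangle\le\PSL(2,\Rb)$ free with $a$ hyperbolic and $b$ parabolic, $\peripherals=\{\langle b\rangle\}$, and define $\rho\colon\Gamma\to\SL(4,\Rb)$ by $\rho(a)=\id_2\oplus\tilde a$ and $\rho(b)=\tilde b\oplus\tilde b$. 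Then $\rho(b^n)$ has singular values $\asymp n,n,n^{-1},n^{-1}$, so $\mu_1/\mu_2\not\to\infty$ and $\rho$ is not $\Psf_1$-divergent on the peripheral; equivalently $\rho(b^n)[x_1:x_2:x_3:x_4]\to[x_2:0:x_4:0]$, which is a line rather than a point, so no strongly dynamics preserving map can exist. The Zariski closure of $\rho(\Gamma)$ is $U\times\SL(2,\Rb)$ with $U$ the upper unipotents in the first block; projecting off $U$ gives $\rho^{ss}(a)=\id_2\oplus\tilde a$, $\rho^{ss}(b)=\id_2\oplus\tilde b$, which is $\id_2\oplus(\text{inclusion of a geometrically finite Fuchsian group})$ and hence $\Psf_1$-Anosov relative to $\peripherals$. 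The point you missed is that one needs \emph{two equal-sized} Jordan blocks on the peripheral generator (so that $\mu_1/\mu_2$ stays bounded), only one of which lies in the unipotent radical of the Zariski closure.
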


\begin{proof} Let $\Gamma = \ip{a,b} \leq \PSL(2,\Rb)$ be a discrete free group where $a$ is hyperbolic and
$$
b = \begin{bmatrix} 1 & 1 \\ 0 & 1 \end{bmatrix}.
$$
Then $\Gamma$ is hyperbolic relative to $\peripherals=\{ \ip{b}\}$. Fix lifts $\tilde{a}, \tilde{b} \in \SL(2,\Rb)$ of $a,b \in \PSL(2,\Rb)$ and consider the representation $\rho \colon \Gamma \to \SL(4,\Rb)$ defined by 
$$
\rho(a) = \id_2 \oplus \tilde{a} \quad \text{and} \quad \rho(b) = \tilde{b} \oplus \tilde{b}.
$$
Notice that 
$$
\lim_{n \to \infty} \rho(b^n)[x_1 : x_2 : x_3 : x_4] =[x_2: 0 : x_4 : 0]
$$
for all $[x_1 :x_2:x_3 : x_4] \in \proj(\Rb^4)$ with $x_2 \neq 0$ or $x_4 \neq 0$.  So there cannot exist a $\rho$-equivariant strongly dynamics preserving map into $\proj(\Rb^d) \times \Gr_{d-1}(\Rb^d)$ and so $\rho$ is not $\Psf_1$-Anosov relative to $\peripherals$. However, the representation $\rho^{ss} \colon \Gamma \to \SL(4,\Rb)$ defined by 
$$
\rho^{ss}(a) = \id_2 \oplus \tilde{a} \quad \text{and} \quad \rho(b) = \id_2 \oplus \tilde{b}
$$
is a semisimplification of $\rho$ and is $\Psf_1$-Anosov relative to $\peripherals$. 
\end{proof}

\begin{proposition}\label{prop:ss is Anosov}
Suppose that $(\Gamma,\peripherals)$ is relatively hyperbolic. If $\rho\colon \Gamma \to \SL(d,\Kb)$ is $\Psf_k$-Anosov relative to $\peripherals$, then so is every semisimplification of $\rho$.
\end{proposition}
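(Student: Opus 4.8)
The plan is to realize $\rho^{ss}$ as a limit of conjugates of $\rho$ and to transport the Anosov boundary map of $\rho$ through this degeneration, in the spirit of the word-hyperbolic case \cite[Prop.\ 4.13]{GGKW}. First I would set up the degeneration. Let $\mathsf{G}$ be the Zariski closure of $\rho(\Gamma)$ in $\SL(d,\Kb)$, with Levi decomposition $\mathsf{G} = \mathsf{L} \ltimes \mathsf{U}$. After conjugating we may assume $\mathsf{G}$ consists of block upper-triangular matrices with $\mathsf{L}$ the block-diagonal subgroup, so there is a $\rho$-invariant filtration $0 = W_0 \subsetneq W_1 \subsetneq \dots \subsetneq W_m = \Kb^d$ and $\rho^{ss}$ is the block-diagonal part of $\rho$. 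For $t > 0$ let $j_t \in \SL(d,\Kb)$ be scalar multiplication by $t^{a_\ell}$ on the $\ell$-th block, with $a_1 < \dots < a_m$ normalized so $\det j_t = 1$. Conjugating a block upper-triangular matrix by $j_t$ multiplies the $(\ell,\ell')$-block ($\ell < \ell'$) by $t^{a_\ell - a_{\ell'}}\to 0$, so $\lim_{t\to\infty} j_t \rho(\gamma) j_t^{-1} = \rho^{ss}(\gamma)$ for all $\gamma \in \Gamma$. Since the characteristic polynomial is constant along this path, $\rho^{ss}(\gamma)$ and $\rho(\gamma)$ have the same eigenvalues; in particular Proposition~\ref{prop:eigenvalue data in rel Anosov repn} shows $\rho^{ss}(P)$ is weakly unipotent for every $P \in \peripherals$ and $\rho^{ss}(\gamma)$ is $\Psf_k$-proximal for every non-peripheral infinite-order $\gamma$.

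Next I would build the candidate boundary map. For $V \in \Gr_j(\Kb^d)$ the limit $\mathrm{gr}(V) := \lim_{t\to\infty} j_t V$ exists (one-parameter subgroup orbits in a Grassmannian always converge to their top-weight part) and equals the associated graded subspace $\bigoplus_{\ell} \big((V\cap W_\ell) + W_{\ell-1}\big)/W_{\ell-1}$, which still has dimension $j$ since the graded dimension telescopes. Letting $\xi = (\xi^k,\xi^{d-k})$ be the Anosov boundary map of $\rho$, set $\xi^{ss,k} := \mathrm{gr}\circ \xi^k$ and $\xi^{ss,d-k} := \mathrm{gr}\circ\xi^{d-k}$. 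Equivariance of $\xi^{ss} := (\xi^{ss,k},\xi^{ss,d-k})$ is immediate from $j_t\,\xi(\gamma x) = \big(j_t\rho(\gamma)j_t^{-1}\big)\big(j_t\,\xi(x)\big)$ on letting $t\to\infty$. It remains to verify that $\xi^{ss}$ is continuous, transverse, and strongly dynamics preserving, so that it is a boundary map as in Definition~\ref{defn:Pk Anosov}.

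For transversality the key linear-algebra fact is: if $g \in \SL(d,\Kb)$ is $\Psf_k$-proximal and preserves $W_\bullet$, then $W_\ell = (V_g^+\cap W_\ell)\oplus(W_g^-\cap W_\ell)$ for every $\ell$ (split the generalized eigenspaces of $g|_{W_\ell}$ by the modulus of the eigenvalue), and this is exactly equivalent to $\mathrm{gr}(V_g^+)\oplus\mathrm{gr}(W_g^-) = \Kb^d$. Applying this to $g=\rho(\gamma)$ with $\gamma$ infinite-order non-peripheral gives $\xi^{ss,k}(\gamma^+)\oplus\xi^{ss,d-k}(\gamma^-) = \Kb^d$. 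To upgrade this to all distinct pairs — and to obtain continuity of $\xi^{ss}$, for which one needs $\mathrm{gr}$ to be continuous along the image of $\xi$ — I would show that the upper semicontinuous, $\Gamma$-invariant functions $x\mapsto\dim(\xi^k(x)\cap W_\ell)$ and $y\mapsto\dim(\xi^{d-k}(y)\cap W_\ell)$ are constant. The ingredients are the proximal computation above, density of the pairs $(\gamma^+,\gamma^-)$ in $\partial(\Gamma,\peripherals)^{(2)}$, minimality of the $\Gamma$-action on $\partial(\Gamma,\peripherals)$, and — the essential extra input — the convergence dynamics of $\rho$: by Observation~\ref{obs:strongly_dynamics_pres_div_cartan}, along $\gamma_n\to x$ the singular subspaces $U_k(\rho(\gamma_n))$ and $U_{d-k}(\rho(\gamma_n)^{-1})$ converge to $\xi^k(x)$ and $\xi^{d-k}(y)$, and combining this with the $\rho(\gamma_n)$-invariance of $W_\bullet$ pins down $\dim(\xi^k(x)\cap W_\ell)$ independently of $x$.

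Finally, for the strongly dynamics preserving property of $\xi^{ss}$: given $\gamma_n\to x$ and $\gamma_n^{-1}\to y$, by Observation~\ref{obs:strongly_dynamics_pres_div_cartan} it suffices to check $\frac{\mu_k}{\mu_{k+1}}(\rho^{ss}(\gamma_n))\to\infty$ together with $U_k(\rho^{ss}(\gamma_n))\to\xi^{ss,k}(x)$ and $U_{d-k}(\rho^{ss}(\gamma_n)^{-1})\to\xi^{ss,d-k}(y)$. The last two convergences follow from the corresponding statements for $\rho$ by applying $\mathrm{gr}$ and estimating the bounded distortion caused by $j_t$; the first — that $\rho^{ss}$ is $\Psf_k$-divergent — is the main obstacle, and I expect it to fall to the same mechanism as the constancy statement above: along an escaping sequence the singular value decomposition of $\rho(\gamma_n)$ becomes asymptotically adapted to the filtration $W_\bullet$, so that passing to the associated graded does not collapse the $k$-th singular-value gap. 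Granting these verifications, $\xi^{ss}$ witnesses that $\rho^{ss}$ is $\Psf_k$-Anosov relative to $\peripherals$; since all semisimplifications are conjugate, the same argument applies to any of them.
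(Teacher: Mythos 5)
Your construction of the boundary map $\xi^{ss} = \mathrm{gr}\circ\xi$ via the explicit degeneration $j_t \rho j_t^{-1} \to \rho^{ss}$ is a legitimate alternative to what the paper does (it cites~\cite[Prop.\ 4.13]{GGKW} for the existence of a continuous, equivariant, transverse, dynamics-preserving $\xi_{ss}$ rather than re-deriving it). Your constancy argument for $x\mapsto\dim(\xi^k(x)\cap W_\ell)$ via upper semicontinuity, $\Gamma$-invariance, and minimality of the Bowditch boundary action is sound, and it does deliver both continuity of $\xi^{ss}$ and, combined with the split $W_\ell = (V_g^+\cap W_\ell)\oplus(W_g^-\cap W_\ell)$ at proximal fixed pairs, transversality on a dense set and then everywhere. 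So that half of the argument is in good shape.

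The genuine gaps are in the final paragraph, and they are exactly where the paper puts most of its effort.

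First, $\Psf_k$-divergence of $\rho^{ss}$ does \emph{not} follow from the singular value decomposition of $\rho(\gamma_n)$ becoming ``asymptotically adapted'' to $W_\bullet$. The unipotent family $\left(\begin{smallmatrix}1 & t\\ 0 & 1\end{smallmatrix}\right)$ has $\mu_1/\mu_2 \to \infty$, preserves the obvious flag, and its block-diagonal part is the identity: the singular value gap collapses completely on passing to the associated graded. What actually saves the day is the eigenvalue gap $\lambda_k/\lambda_{k+1}$: by the Cartan projection estimate~\cite[Th.\ 6.1]{ZZ2022a} one has $\log\frac{\lambda_k}{\lambda_{k+1}}(\rho(\gamma)) \geq \alpha\,\ell_X(\gamma)$, the Jordan projection is preserved by semisimplification, and then~\cite[Th.\ 4.12]{GGKW} (AMS/Benoist) supplies, for the \emph{semisimple} representation $\rho^{ss}$, a comparison $\frac{1}{C}\mu_j(\rho^{ss}(\gamma)) \leq \lambda_j(\rho^{ss}(\gamma f)) \leq C\mu_j(\rho^{ss}(\gamma))$ over a finite set $F$, which lets you convert eigenvalue gaps of loxodromic modifications $\gamma_n\alpha f$ back into singular value gaps of $\rho^{ss}(\gamma_n)$. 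That conversion is only available because $\rho^{ss}$ is semisimple; none of this is visible from the filtration alone.

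Second, the claim that $U_k(\rho^{ss}(\gamma_n)) \to \xi^{ss,k}(x)$ follows ``by applying $\mathrm{gr}$ and estimating the bounded distortion caused by $j_t$'' is incorrect: $j_t$ is an \emph{unbounded} family in $\SL(d,\Kb)$ as $t\to\infty$, so there is no uniform bound relating the singular subspaces of $j_t\rho(\gamma_n)j_t^{-1}$ to those of $\rho(\gamma_n)$, and for fixed $n$ the intermediate conjugates need not have any singular value gap at all. The paper instead chains together three explicit estimates (Benoist's $\mu_1\cdots\mu_k$ versus $\lambda_1\cdots\lambda_k$ comparison over a finite set, Tsouvalas's bound on $\dist(V_g^+, U_k(g))$, and the BPS Lipschitz estimate on $U_k$), using the density of pairs $((\gamma_n f)^+,(\gamma_n f)^-)$ where the dynamics-preserving property pins down $\xi^{ss,k}$. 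You flagged the divergence as ``the main obstacle,'' which is the right instinct, but the expectation that it falls to the same mechanism as the constancy statement is mistaken, and both of these steps require substantive additional input.
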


The rest of the section is devoted to the proof of Proposition~\ref{prop:ss is Anosov}. So fix a relatively hyperbolic group $(\Gamma,\peripherals)$ and a representation $\rho\colon \Gamma \to \SL(d,\Kb)$ which is $\Psf_k$-Anosov relative to $\peripherals$. Then fix a semisimplification $\rho^{ss}$ of $\rho$. 

If $\gamma \in \Gamma$ is a loxodromic element (see~\cite[Sec.\ 3.2]{ZZ2022a}), then let $\gamma^\pm \in \partial(\Gamma,\peripherals)$ denote the attracting/repelling fixed points of $\gamma$. 

Following the proof of \cite[Prop.\ 4.13]{GGKW}, there exists a $\rho^{ss}$-equivariant, transverse, continuous map $\xi_{ss} \colon \partial(\Gamma, \peripherals) \to \Gr_k(\Kb^d) \times \Gr_{d-k}(\Kb^d)$ with the following property (called \emph{dynamics preserving} in~\cite{GGKW}): if $\gamma \in \Gamma$ is a loxodromic element, then $\rho^{ss}(\gamma)$ is $\Psf_k$-proximal and $\xi^k_{ss}(\gamma^+)$, $\xi^{d-k}_{ss}(\gamma^-)$ are the attracting/repelling spaces of $\rho^{ss}(\gamma)$. 

It remains to show that $\xi_{ss}$ is strongly dynamics preserving. We begin by showing $\rho^{ss}$ is $\mathsf{P}_k$-divergent. 

\begin{lemma}\label{lem:the ss is p1 divergent} $\displaystyle \lim_{n \to \infty} \frac{\mu_k}{\mu_{k+1}}(\rho^{ss}(\gamma_n)) = \infty$ for any escaping sequence $(\gamma_n)_{n \geq 1}$  in $\Gamma$.
\end{lemma}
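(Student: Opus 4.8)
The plan is to prove Lemma~\ref{lem:the ss is p1 divergent} by relating the singular values of $\rho^{ss}(\gamma_n)$ to those of $\rho(\gamma_n)$, using the fact that $\rho$ is $\Psf_k$-Anosov relative to $\peripherals$ (hence $\Psf_k$-divergent by Observation~\ref{obs:limit set of rel anosov}). The key point is that $\rho$ and $\rho^{ss}$ differ by the unipotent radical of the Zariski closure $\mathsf{G}$ of $\rho(\Gamma)$, and unipotent factors cannot destroy the growth gap between $\mu_k$ and $\mu_{k+1}$.

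First I would recall the standard structure: write $\mathsf{G} = \mathsf{L} \ltimes \mathsf{U}$, so that every $g \in \mathsf{G}$ factors as $g = \ell(g) u(g)$ with $\ell(g) \in \mathsf{L}$ and $u(g) \in \mathsf{U}$, and $\rho^{ss}(\gamma) = \ell(\rho(\gamma))$. The crucial input is that the function $g \mapsto \log\frac{\mu_k}{\mu_{k+1}}(\ell(g))$ and the function $g \mapsto \log\frac{\mu_k}{\mu_{k+1}}(g)$ differ by a bounded multiplicative error on $\mathsf{G}$ -- more precisely, I would use that on a reductive-by-unipotent group the difference $\abs{\log \mu_j(g) - \log \mu_j(\ell(g))}$ is controlled, which follows from the fact that in a suitable basis $\mathsf{U}$ is upper-triangular unipotent and $\mathsf{L}$ is block-diagonal, together with the Cartan (KAK) decomposition restricted to $\mathsf{G}$. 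An alternative, cleaner route: it is a classical fact (see e.g.~\cite{GGKW}) that for any $g$ in a linear algebraic group one has $\log\mu_j(g) = \log \mu_j(\ell(g)) + O(\log\norm{g}^{1/2})$-type estimates are too weak; instead one should use that the \emph{eigenvalue} gap is preserved, namely $\lambda_j(\rho(\gamma)) = \lambda_j(\rho^{ss}(\gamma))$ for all $j$ and all $\gamma$, since $u(\rho(\gamma))$ does not change eigenvalues.

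The argument I would then run: suppose for contradiction that $(\gamma_n)$ is escaping but $\frac{\mu_k}{\mu_{k+1}}(\rho^{ss}(\gamma_n))$ stays bounded along a subsequence. Since $\rho^{ss}(\Gamma)$ lies in the reductive group $\mathsf{L}$, and $\rho$ is $\Psf_k$-divergent, one knows $\frac{\mu_k}{\mu_{k+1}}(\rho(\gamma_n)) \to \infty$. On a reductive group the singular-value gap and the eigenvalue gap of high powers agree asymptotically, and more usefully, for a semisimple representation the quantity $\log\frac{\mu_k}{\mu_{k+1}}(\rho^{ss}(\gamma))$ is comparable (up to an additive constant independent of $\gamma$) to the $k$-th gap in the Cartan projection of $\rho(\gamma)$ inside $\mathsf{G}$; the Anosov property of $\rho$ guarantees this Cartan projection drifts to infinity in the relevant direction. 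Concretely I expect to invoke the word-hyperbolic statement~\cite[Prop.\ 4.13]{GGKW} locally -- it already establishes that $\rho^{ss}$ restricted to (virtually cyclic, loxodromic) subgroups is $\Psf_k$-proximal with matching limit data -- and upgrade it to divergence of the whole group by the contradiction above.

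The main obstacle I anticipate is making rigorous the comparison between $\frac{\mu_k}{\mu_{k+1}}(\rho(\gamma_n))$ and $\frac{\mu_k}{\mu_{k+1}}(\rho^{ss}(\gamma_n))$: singular values, unlike eigenvalues, are \emph{not} preserved under passage to the semisimplification, and the naive bound $\mu_j(gu) \leq \mu_j(g)\mu_1(u)$ is useless since $\mu_1(u(\rho(\gamma_n)))$ is unbounded. The right tool is that $u(\rho(\gamma_n))$ has polynomially-bounded norm in terms of $\norm{\rho(\gamma_n)}$ (a standard fact for the unipotent radical, since the projection $\mathsf{G} \to \mathsf{L}$ and the inclusion are algebraic morphisms), combined with the observation that the Anosov/divergence hypothesis makes $\frac{\mu_k}{\mu_{k+1}}(\rho(\gamma_n))$ grow \emph{exponentially} in word length while the unipotent distortion is only \emph{polynomial}; hence $\frac{\mu_k}{\mu_{k+1}}(\rho^{ss}(\gamma_n)) \gtrsim \frac{\mu_k}{\mu_{k+1}}(\rho(\gamma_n)) \cdot \norm{\rho(\gamma_n)}^{-O(1)}$ still tends to infinity. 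Once $\Psf_k$-divergence of $\rho^{ss}$ is in hand, strong dynamics preservation of $\xi_{ss}$ follows from the dynamics-preserving property on loxodromic elements together with Observation~\ref{obs:strongly_dynamics_pres_div_cartan}, density of loxodromic fixed pairs in $\partial(\Gamma,\peripherals)^{(2)}$, and the transversality and continuity of $\xi_{ss}$ -- exactly the endgame already used for the Pappus--Schwartz representations in Section~\ref{sec:pappus--schwartz}.
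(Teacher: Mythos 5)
Your plan hits the right big idea early on -- that eigenvalues survive semisimplification -- but the engine you build around it does not work, and the paper's proof takes a different, more careful route.

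The step that fails is the claimed comparison
\[
\frac{\mu_k}{\mu_{k+1}}\bigl(\rho^{ss}(\gamma_n)\bigr) \ \gtrsim\ \frac{\mu_k}{\mu_{k+1}}\bigl(\rho(\gamma_n)\bigr)\cdot \norm{\rho(\gamma_n)}^{-O(1)} \longrightarrow \infty .
\]
Writing $\rho(\gamma) = \ell\, u$ with $\ell = \rho^{ss}(\gamma)$ and $u$ in the unipotent radical, the naive singular-value bounds give
\[
\frac{\mu_k}{\mu_{k+1}}(\ell) \ \geq\ \frac{\mu_k}{\mu_{k+1}}(\rho(\gamma))\cdot \frac{1}{\mu_1(u)\,\mu_1(u^{-1})},
\]
and even granting (as you do) that $\mu_1(u^{\pm1})$ is bounded by a fixed power of $\norm{\rho(\gamma)^{\pm1}}$, the correction factor is a power of $\tfrac{\mu_1}{\mu_d}(\rho(\gamma))$. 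The $\Psf_k$-divergence of $\rho$ gives $\log \tfrac{\mu_k}{\mu_{k+1}}(\rho(\gamma)) \gtrsim \dist_X(x_0,\gamma x_0)$, but $\log\tfrac{\mu_1}{\mu_d}(\rho(\gamma))$ also grows linearly in $\dist_X(x_0,\gamma x_0)$, with an \emph{a priori unrelated and possibly larger} coefficient. There is no reason the first linear rate beats $C$ times the second, so the product on the right side need not tend to infinity. In short: the dichotomy you invoke is not ``exponential vs.\ polynomial'' -- both quantities grow exponentially in word length -- and the inequality does not close.

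The paper instead uses only the fact that \emph{eigenvalues} are preserved ($\lambda_j(\rho(\gamma)) = \lambda_j(\rho^{ss}(\gamma))$) plus the theorem of Benoist/Abels--Margulis--Soifer as formulated in \cite[Th.\ 4.12]{GGKW}: for a \emph{semisimple} representation there is a finite set $F_1\subset\Gamma$ and $C_1>1$ such that for every $\gamma$ one can find $f\in F_1$ with $\mu_j(\rho^{ss}(\gamma))$ and $\lambda_j(\rho^{ss}(\gamma f))$ comparable up to the factor $C_1$, for all $j$. This converts the singular-value divergence one wants into an eigenvalue-divergence statement, which is handled via the stable translation length estimate from the Anosov property and the transversality needed to ensure $\gamma_n\alpha f$ is eventually loxodromic. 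This sidesteps the comparison of two unrelated linear growth rates entirely; it is the standard tool for passing from singular values to eigenvalues in the reductive case and is the step your outline is missing. (You cite \cite[Prop.\ 4.13]{GGKW}, which is the statement you are ultimately trying to prove a relative analogue of, rather than the quantitative input \cite[Th.\ 4.12]{GGKW} that actually does the work.) Finally, note that the endgame you sketch about upgrading to strong dynamics preservation is not part of this lemma's proof; it is the subsequent lemma in the paper.
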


\begin{proof} By Theorem~\ref{thm:equivalence of definitions} there exists a weak cusp space $X$ for $(\Gamma,\peripherals)$ such that $\rho$ is $\Psf_k$-Anosov relative to $X$. Fix $x_0 \in X$. Then by~\cite[Th.\ 6.1]{ZZ2022a} there exist $\alpha, \beta > 0$ such that 
$$
\log \frac{\mu_{k}}{\mu_{k+1}}(\rho(\gamma)) \geq \alpha \dist_X(x_0, \gamma(x_0))-\beta
$$
for all $\gamma \in \Gamma$. So, for $\gamma \in \Gamma$ we have 
\begin{align}
\label{eqn:big gap on eigenvalues}
\log \frac{\lambda_{k}}{\lambda_{k+1}}(\rho^{ss}(\gamma))=\log \frac{\lambda_{k}}{\lambda_{k+1}}(\rho(\gamma))=\lim_{n \to \infty} \frac{1}{n} \log \frac{\mu_{k}}{\mu_{k+1}}(\rho(\gamma)^n) \geq \alpha \ell_X(\gamma)
\end{align}
where $\ell_X(\gamma) := \lim_{n \to \infty} \frac{1}{n} \dist_X(x_0, \gamma^n(x_0))$. 

Since $\rho^{ss}$ is semisimple, by~\cite[Th.\ 4.12]{GGKW} there exist  $C_1 > 1$ and a finite set $F_1 \subset \Gamma$ with the following property: 
for every $\gamma \in \Gamma$ there is some $f \in F_1$ such that
\begin{align}
\label{eqn:singular and eigen are comparable}
\frac{1}{C_1} \mu_j(\rho^{ss}(\gamma))  \leq  \lambda_j(\rho^{ss}(\gamma f)) \leq C_1 \mu_j(\rho^{ss}(\gamma)) ,
\end{align}
for all $1 \leq j \leq d$.

Now fix an escaping sequence $(\gamma_n)_{n \geq 1}$. It suffices to consider the case when 
$$
\lim_{n \to \infty} \frac{\mu_k}{\mu_{k+1}}(\rho^{ss}(\gamma_n))
$$
exists in $\Rb \cup \{\infty\}$ and show that the limit is infinite. Passing to a subsequence we can suppose that $\gamma_n \to x$ and $\gamma_n^{-1} \to y$. 
Pick $\alpha \in \Gamma$ such that $\alpha^{-1}(y) \notin F_1(x)$. For each $n$ fix $f_n \in F_1$ such that $\gamma_n\alpha f_n$ satisfies Equation~\eqref{eqn:singular and eigen are comparable}. Passing to a further subsequence we can suppose that $f: = f_n$ for all $n$. Then $\gamma_n \alpha f \to x$ and $(\gamma_n \alpha f)^{-1} \to f^{-1} \alpha^{-1}(y)$. By our choice of $\alpha$, we have $f^{-1} \alpha^{-1}(y) \neq x$ which implies that $\gamma_n \alpha f$ is a loxodromic element for $n$ sufficiently large. Further, $( \gamma_n \alpha f )^+ \to x$ and $(\gamma_n \alpha f )^- \to y$. Then, since  $(\gamma_n\alpha f)$ is escaping sequence, we must have $\lim_{n \to \infty} \ell_X(\gamma_n \alpha f) = \infty$.

Then by Equations~\eqref{eqn:singular and eigen are comparable} and~\eqref{eqn:big gap on eigenvalues}
\begin{align*}
\lim_{n \to \infty} \frac{\mu_k}{\mu_{k+1}}(\rho^{ss}(\gamma_n)) & \gtrsim \lim_{n \to \infty} \frac{\mu_k}{\mu_{k+1}}(\rho^{ss}(\gamma_n \alpha f)) \gtrsim \lim_{n \to \infty} \frac{\lambda_k}{\lambda_{k+1}}(\rho^{ss}(\gamma_n \alpha f)) = \infty.
\end{align*}
\end{proof}

To complete the proof that $\xi^{ss}$ is strongly dynamics preserving we recall a few results. First, since $\rho^{ss}$ is semisimple and $\rho^{ss}(\Gamma)$ contains a $\Psf_k$-proximal element, \cite{AMS} and~\cite[Cor.\ 6.3]{Benoist_actions_reductifs} imply that there exist a finite set $F_2 \subset \Gamma$ and some $C_2 > 0$ with the following property: for every $\gamma \in \Gamma$ there is some $f \in F$ such that $\rho^{ss}(\gamma f)$ is $\Psf_k$-proximal and 
\begin{align}
\label{eqn:products are almost the same} 
\frac{\mu_1 \cdots \mu_k}{\lambda_1 \cdots \lambda_k}(\rho^{ss}(\gamma f)) \leq C_2.
\end{align}
Also, by \cite[Prop.\ 2.5(i)]{Kostas2020}, there exists $C_3 > 0$ such that: if $g \in \SL(d,\Kb)$ is $\Psf_k$-proximal and $V^+_g \in \Gr_k(\Kb^d)$ is the attracting subspace, then 
\begin{align}
\label{kostas estimate}
\dist_{\Gr_k(\Kb^d)}\left( V^+_g, U_k(g)\right) \leq C_3\frac{\mu_{k+1}}{\mu_k}(g) \frac{\mu_1 \cdots \mu_k}{\lambda_1 \cdots \lambda_k}(g) .
\end{align}
Finally, by ~\cite[Lem.\ A.4]{BPS}, if $g,h \in \GL(d,\Kb)$, $\mu_k(g) > \mu_{k+1}(g)$, and $\mu_k(gh) > \mu_{k+1}(gh)$,  then 
\begin{equation}
\label{eqn:BPS estimate}
\dist_{\Gr_k(\Kb^d)}( U_k(gh), U_k(g) ) \leq \frac{\mu_1}{\mu_d}(h) \frac{\mu_{k+1}}{\mu_k}(g).
\end{equation}

\begin{lemma} $\xi_{ss}$ is strongly dynamics preserving. \end{lemma}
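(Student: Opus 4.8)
The plan is to deduce the strongly dynamics preserving property of $\xi_{ss}$ from the Cartan criterion of Observation~\ref{obs:strongly_dynamics_pres_div_cartan}, using the estimates \eqref{eqn:products are almost the same}, \eqref{kostas estimate}, and \eqref{eqn:BPS estimate} together with the dynamics preserving property of $\xi_{ss}$ on loxodromic elements. Fix an escaping sequence $(\gamma_n)_{n\ge 1}$ in $\Gamma$ with $\gamma_n\to x$ and $\gamma_n^{-1}\to y$ in $\partial(\Gamma,\peripherals)$. By Lemma~\ref{lem:the ss is p1 divergent}, $\frac{\mu_k}{\mu_{k+1}}(\rho^{ss}(\gamma_n))\to\infty$, so after passing to a subsequence both $U_k(\rho^{ss}(\gamma_n))$ and $U_{d-k}(\rho^{ss}(\gamma_n)^{-1})$ converge, and by Observation~\ref{obs:strongly_dynamics_pres_div_cartan} it suffices to show their limits are $\xi_{ss}^k(x)$ and $\xi_{ss}^{d-k}(y)$ respectively. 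I will carry out the first identification; the second follows from the identical argument applied to $(\gamma_n^{-1})$ with $d-k$ in place of $k$, noting that $\frac{\mu_{d-k}}{\mu_{d-k+1}}(g^{-1})=\frac{\mu_k}{\mu_{k+1}}(g)$ (so $\rho^{ss}$ is $\Psf_{d-k}$-divergent), that $\rho^{ss}(\Gamma)$ contains a $\Psf_{d-k}$-proximal element (the inverse of the $\Psf_k$-proximal one in its image), that for loxodromic $\gamma$ the repelling $(d-k)$-space $\xi_{ss}^{d-k}(\gamma^-)$ of $\rho^{ss}(\gamma)$ equals the attracting $(d-k)$-space of $\rho^{ss}(\gamma)^{-1}=\rho^{ss}(\gamma^{-1})$, and that the cited estimates hold for any index.

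To identify $\lim U_k(\rho^{ss}(\gamma_n))$, I would first perturb $(\gamma_n)$ to loxodromic elements exactly as in the proof of Lemma~\ref{lem:the ss is p1 divergent}. Using the finite set $F_2\subset\Gamma$ from \eqref{eqn:products are almost the same} and choosing $\alpha\in\Gamma$ with $\alpha^{-1}(y)\notin F_2(x)$, pass to a further subsequence so that a single $f\in F_2$ works for all $n$: $\rho^{ss}(\gamma_n\alpha f)$ is $\Psf_k$-proximal and $\frac{\mu_1\cdots\mu_k}{\lambda_1\cdots\lambda_k}(\rho^{ss}(\gamma_n\alpha f))\le C_2$. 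As in Lemma~\ref{lem:the ss is p1 divergent}, $\gamma_n\alpha f$ is loxodromic for $n$ large with $(\gamma_n\alpha f)^+\to x$, and since $(\gamma_n\alpha f)$ is again escaping, Lemma~\ref{lem:the ss is p1 divergent} gives $\frac{\mu_k}{\mu_{k+1}}(\rho^{ss}(\gamma_n\alpha f))\to\infty$. By the dynamics preserving property of $\xi_{ss}$, the attracting $k$-space $V_n^+$ of $\rho^{ss}(\gamma_n\alpha f)$ equals $\xi_{ss}^k\big((\gamma_n\alpha f)^+\big)$, so $V_n^+\to\xi_{ss}^k(x)$ by continuity of $\xi_{ss}$.

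Then I would chain the estimates: \eqref{kostas estimate} applied to $\rho^{ss}(\gamma_n\alpha f)$ together with \eqref{eqn:products are almost the same} gives $\dist_{\Gr_k(\Kb^d)}\big(V_n^+,U_k(\rho^{ss}(\gamma_n\alpha f))\big)\le C_3C_2\,\frac{\mu_{k+1}}{\mu_k}(\rho^{ss}(\gamma_n\alpha f))\to 0$, while \eqref{eqn:BPS estimate} with $g=\rho^{ss}(\gamma_n)$ and $h=\rho^{ss}(\alpha f)$ gives $\dist_{\Gr_k(\Kb^d)}\big(U_k(\rho^{ss}(\gamma_n\alpha f)),U_k(\rho^{ss}(\gamma_n))\big)\le \frac{\mu_1}{\mu_d}(\rho^{ss}(\alpha f))\,\frac{\mu_{k+1}}{\mu_k}(\rho^{ss}(\gamma_n))\to 0$. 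Hence $U_k(\rho^{ss}(\gamma_n))\to\xi_{ss}^k(x)$, and symmetrically $U_{d-k}(\rho^{ss}(\gamma_n)^{-1})\to\xi_{ss}^{d-k}(y)$. Observation~\ref{obs:strongly_dynamics_pres_div_cartan} then yields $\rho^{ss}(\gamma_n)V\to\xi_{ss}^k(x)$ for every $V\in\Gr_k(\Kb^d)$ transverse to $\xi_{ss}^{d-k}(y)$, which is precisely the strongly dynamics preserving condition; combined with the equivariance, continuity, and transversality of $\xi_{ss}$ already established, this shows $\rho^{ss}$ is $\Psf_k$-Anosov relative to $\peripherals$.

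The main obstacle is the loxodromic perturbation step: to invoke the dynamics preserving property of $\xi_{ss}$ one must produce genuine loxodromic elements of $\Gamma$ whose attracting fixed points limit to $x$, which requires keeping careful track of the finite correction set $F_2$ and the auxiliary element $\alpha$ so that the perturbed sequence stays escaping, remains $\Psf_k$-proximal after applying $\rho^{ss}$, and has the correct boundary dynamics. Since this is exactly the mechanism already developed for Lemma~\ref{lem:the ss is p1 divergent}, in the write-up I would reuse it rather than reprove it.
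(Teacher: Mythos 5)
Your argument is correct and follows essentially the same strategy as the paper's: reduce via Lemma~\ref{lem:the ss is p1 divergent} and Observation~\ref{obs:strongly_dynamics_pres_div_cartan} to showing $U_k(\rho^{ss}(\gamma_n))\to\xi_{ss}^k(x)$, perturb $\gamma_n$ by a bounded element from $F_2$ to get a $\Psf_k$-proximal image with the bound \eqref{eqn:products are almost the same}, use the dynamics preserving property to identify the attracting $k$-plane of the perturbed element with $\xi_{ss}^k$ of its attracting fixed point, and then chain \eqref{kostas estimate} and \eqref{eqn:BPS estimate} to transfer this back to $U_k(\rho^{ss}(\gamma_n))$.

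The one genuine difference is at the loxodromic perturbation step. You perturb by $\alpha f$, choosing $\alpha$ so that $\alpha^{-1}(y)\notin F_2(x)$ and thereby arranging that $\gamma_n\alpha f$ and $(\gamma_n\alpha f)^{-1}$ converge to distinct boundary points; convergence-group dynamics then give loxodromicity and $(\gamma_n\alpha f)^+\to x$ directly. The paper instead perturbs only by $f_n\in F_2$ and gets loxodromicity differently: $\Psf_k$-proximality of $\rho^{ss}(\gamma_n f_n)$ gives $\lambda_k/\lambda_{k+1}(\rho(\gamma_n f_n))>1$, so by Proposition~\ref{prop:eigenvalue data in rel Anosov repn}(1) the element $\gamma_n f_n$ is non-peripheral of infinite order, hence loxodromic, and then $(\gamma_n f_n)^+\to x$ is asserted from $\gamma_n f_n\to x$. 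Your route costs one extra bounded translate but makes the convergence of the attracting fixed points more transparent, since it does not need the separate appeal to Proposition~\ref{prop:eigenvalue data in rel Anosov repn}(1). After that, the two chains of estimates are identical. (Minor typo at the end: the conclusion should read $\Psf_k$-Anosov, not $\Psf_1$-Anosov.)
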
 

\begin{proof} 
Fix an escaping sequence $(\gamma_n)_{n \geq 1}$ in $\Gamma$ such that $\gamma_n \to x$ and $\gamma_n^{-1} \to y$. By Lemma~\ref{lem:the ss is p1 divergent} and Observation~\ref{obs:strongly_dynamics_pres_div_cartan} it suffices to show that $U_k(\rho^{ss}(\gamma_n)) \to \xi^k_{ss}(x)$ and $U_{d-k}(\rho^{ss}(\gamma_n)^{-1}) \to \xi^{d-k}_{ss}(y)$. 

For each $n$, fix $f_n \in F_2$ such that $\rho^{ss}(\gamma_n f_n)$ is $\Psf_k$-proximal and satisfies Equation~\eqref{eqn:products are almost the same}. 
Then $\frac{\lambda_k}{\lambda_{k+1}}(\rho(\gamma_n f_n)) = \frac{\lambda_k}{\lambda_{k+1}}(\rho^{ss}(\gamma_n f_n)) > 1$, and hence by Proposition~\ref{prop:eigenvalue data in rel Anosov repn}(1) each $\gamma_n f_n$ must be a non-peripheral element of $(\Gamma, \peripherals)$. So by the dynamics preserving property, $\xi^k_{ss}( (\gamma_n f_n)^+)$ is the attracting $k$-plane of  $\rho^{ss}(\gamma_n f_n)$.

Since $F_2$ is a finite set and $\gamma_n \to x$, we must have $\gamma_n f_n \to x$. This in turn implies that $(\gamma_n f_n)^+ \to x$. Then by Equations ~\eqref{eqn:BPS estimate},~\eqref{kostas estimate}, and~\eqref{eqn:products are almost the same} 
\begin{align*}
\limsup_{n \to \infty} & \dist_{\Gr_k(\Kb^d)}\left( \xi_{ss}^k(x), U_k(\rho^{ss}(\gamma_n)) \right) = \limsup_{n \to \infty} \dist_{\Gr_k(\Kb^d)}\left( \xi_{ss}^k((\gamma_n f_n)^+), U_k(\rho^{ss}(\gamma_nf_n)) \right) \\
& \lesssim \limsup_{n \to \infty} \frac{\mu_{k+1}}{\mu_k}(\rho^{ss}(\gamma_n f) ) \frac{\mu_1 \cdots \mu_k}{\lambda_1 \cdots \lambda_k}(\rho^{ss}(\gamma_n f)) =0. 
\end{align*}
So $U_k(\rho^{ss}(\gamma_n)) \to \xi^k_{ss}(x)$.

The proof that $U_{d-k}(\rho^{ss}(\gamma_n)^{-1}) \to \xi^{d-k}_{ss}(y)$ is nearly identical.
\end{proof}

\bibliographystyle{alpha}
\bibliography{geom} 

\end{document}